\theoremstyle{plain}
\declaretheorem[title=Theorem, parent=section]{theorem}
\declaretheorem[title=Lemma,sibling=theorem]{lemma}
\declaretheorem[title=Proposition,sibling=theorem]{proposition}
\declaretheorem[title=Corollary,sibling=theorem]{corollary}
\theoremstyle{definition}
\declaretheorem[title=Definition,sibling=theorem]{definition}
\declaretheorem[title=Remark,sibling=theorem]{remark}
\declaretheorem[title=Remark, numbered=no]{remark*}
\declaretheorem[title=Example, sibling=theorem]{example}
\declaretheorem[title=Assumption, numbered=no]{assumption*}
\numberwithin{equation}{section}
\newcommand{\N}{\mathbb{N}}
\newcommand{\R}{\mathbb{R}}
\newcommand{\C}{\mathbb{C}}
\newcommand{\cP}{\mathcal{P}}
\newcommand{\cN}{\mathcal{N}}
\newcommand{\cE}{\mathcal{E}}
\newcommand{\cM}{\mathcal{M}}
\newcommand{\cF}{\mathcal{F}}
\newcommand{\cX}{\mathcal{X}}
\newcommand{\eps}{\varepsilon}
\newcommand{\loc}{\mathrm{loc}}
\newcommand{\1}{\mathbbm{1}}
\DeclareMathOperator{\dist}{dist}
\DeclareMathOperator{\supp}{supp}
\DeclareMathOperator{\re}{Re}	
\DeclareMathOperator{\im}{Im}
\DeclareMathOperator{\tail}{Tail}
\renewcommand{\d}{\textnormal{\,d}}
\newcommand{\average}{{\mathchoice {\kern1ex\vcenter{\hrule height.4pt
width 6pt depth0pt} \kern-9.7pt} {\kern1ex\vcenter{\hrule
height.4pt width 4.3pt depth0pt} \kern-7pt} {} {} }}
\begin{document}

\allowdisplaybreaks

\title[The Neumann problem for the fractional Laplacian]{The Neumann problem for the fractional Laplacian: \\ optimal regularity via the Mellin transform}

\author{Serena Dipierro}
\author{Xavier Ros-Oton}
\author{Enrico Valdinoci}
\author{Marvin Weidner}

\address{Department of Mathematics and Statistics,
University of Western Australia, 35 Stirling Highway, WA6009, Crawley, Australia}
\email{serena.dipierro@uwa.edu.au}

\address{ICREA, Pg. Llu\'is Companys 23, 08010 Barcelona, Spain \& Universitat de Barcelona, Departament de Matem\`atiques i Inform\`atica, Gran Via de les Corts Catalanes 585, 08007 Barcelona, Spain \& Centre de Recerca Matem\`atica, Barcelona, Spain}
\email{xros@icrea.cat}
\urladdr{www.ub.edu/pde/xros}

\address{Department of Mathematics and Statistics,
University of Western Australia, 35 Stirling Highway, WA6009, Crawley, Australia}
\email{enrico.valdinoci@uwa.edu.au}

\address{Institute for Applied Mathematics, University of Bonn, Endenicher Allee 60, 53115, Bonn, Germany}
\email{mweidner@uni-bonn.de}
\urladdr{https://sites.google.com/view/marvinweidner/}

\keywords{fractional Laplacian; Neumann problem; regularity; Mellin transform}

\subjclass[2020]{47G20, 35B65}

\allowdisplaybreaks

\begin{abstract}
We establish the optimal regularity of solutions to the Neumann problem for the fractional Laplacian, $(-\Delta)^s u=h$ in $\Omega$, with the external condition $\mathcal N^s u=0$ in $\Omega^c$. 
For this, a key point is to establish a 1D Liouville theorem for functions with growth, which we prove by using complex analysis and the Mellin transform. 

More precisely, we prove a ``meta-theorem'' relating the classification of 1D solutions to general linear homogeneous equations of the type $Lu=0$ in $(0,\infty)$ to the (complex) roots of an explicit meromorphic function $f(z)$ that depends on $L$.
In case of the fractional Laplacian with Neumann conditions, we show that all solutions are $C^{2s+\alpha}$ when $s\leq 1/2$, and $C^{s+\frac12+\alpha}$ when $s\geq1/2$. 

Moreover, quite surprisingly, we prove that even in 1D there exist highly oscillating solutions of the type $u(x)=x^{a} \cos(b \log x)$ for $x>0$, with $a>0$ and $b>0$ that depend on~$s$, and $a<2s$ for $s\sim1$.
\end{abstract}

\allowdisplaybreaks

\maketitle

\section{Introduction}

Nonlocal equations on domains have attracted great interest in the PDE community in the last two decades, especially since the first works of Caffarelli and Silvestre on this topic; see 
\cite{BKK08,CKS,BCI2,CS3,DGLZ,RS-Cs,Grubb2,Grubb,BKK15,RS-Duke,AG,KD,AbRo20,DRSV,DRV,KiWe24,Gru22,FeRo24} and the references therein.

Most of these works concern nonlocal Dirichlet problems, in which we have an equation like $(-\Delta)^s u=f$ in $\Omega\subset \R^n$ together with an exterior condition $u=0$ in $\Omega^c$.
On the one hand, thanks to a number of important developments in the last years, the regularity for this kind of problems is very well understood; see in particular \cite{RS-Cs,Grubb2,Grubb,RS-Duke,AbRo20,DRSV,RoWe24}.

On the other hand, much less is known about nonlocal Neumann problems of the type
\begin{equation}  \label{NP}
\left\{
\begin{array}{rcll}
(-\Delta)^s u &=& h & \quad \textrm{in}\quad \Omega, \\
\mathcal N^s u &=& 0 & \quad \textrm{in}\quad \Omega^c,
\end{array}
\right.
\end{equation}
where
\[\mathcal N^s u(x) = c_{n,s}\int_\Omega \frac{u(x)-u(y)}{|x-y|^{n+2s}}\,dy,\qquad x\in \Omega^c.\]
This problem arises when minimizing the energy functional 
\[\mathcal E(u) = \frac{c_{n,s}}{4} \int \int_{(\Omega^c\times\Omega^c)^c} \frac{\big|u(x)-u(y)\big|^2}{|x-y|^{n+2s}}\,dx\,dy - \int_\Omega h u, \]
and it also has a very natural probabilistic interpretation, as well as a corresponding integration by parts formula; see \cite{DRV,DGLZ} for more details.

The behavior and smoothness of solutions to \eqref{NP} near the boundary is very far from understood.
The only known regularity result for this problem was obtained in \cite{AFR23}, and states that solutions are H\"older continuous (namely, $C^{\max\{0,2s-1\}+\alpha}$ for some $\alpha>0$) up to the boundary.

Unfortunately, this is still very far from the optimal regularity of solutions, and does not answer the following key questions:

\begin{itemize}
\item[(i)] \em Are solutions classical, i.e. $C^{2s+\alpha}$, up to the boundary?  \vspace{2mm}
\item[(ii)] Are solutions $C^1$ up to the boundary and satisfy \,$\partial_\nu u=0$ on $\partial\Omega$?   \vspace{2mm}
\item[(iii)] What is the optimal regularity of solutions?
\end{itemize}
The goal of this paper is to answer all of these questions.
As we will see, a key difficulty is to understand solutions of \eqref{NP} in dimension $n=1$. As explained in detail below, we classify all 1D solutions by using complex analysis and the Mellin transform.

\subsection{Main result}

Our first main result is the following, which provides for the first time a higher order regularity estimate for solutions to \eqref{NP}.

\begin{theorem}\label{thm0}
Let $\Omega\subset \R^n$ be a $C^{1,1}$ domain, and $u$ be the solution of \eqref{NP} satisfying $\int_\Omega u=0$. 

Then, there exists $\alpha \in (0,\frac{1}{2})$, depending only on $s$, such that, if  $h \in C^\alpha(\overline\Omega)$:
\[\begin{array}{llll}
\displaystyle
u \in C^{2s+\alpha}(\overline\Omega) \qquad & \textrm{with}\qquad &
\|u\|_{C^{2s+\alpha}(\overline\Omega)} \leq C\|h\|_{C^{\alpha}(\overline\Omega)} \qquad  &  \textrm{if} \quad s\leq {\textstyle\frac12}, \vspace{2mm}\\
\displaystyle
u \in C^{s+\frac12+\alpha}(\overline\Omega) \qquad & \textrm{with}\qquad  &
\|u\|_{C^{s+\frac12+\alpha}(\overline\Omega)} \leq C\|h\|_{C^{\min\{\alpha+\frac12-s,0\}}(\overline\Omega)} \qquad &   \textrm{if} \quad s\geq {\textstyle\frac12}.
\end{array}\]
Moreover, when $2s+\alpha>1$ (in particular when $s \ge \frac{1}{2}$), we have $\partial_{\nu} u=0$ on $\partial\Omega$.
The constant $C$ depends only on $n$, $s$, and $\Omega$.
\end{theorem}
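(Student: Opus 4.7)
The plan is a blow-up contradiction argument that reduces boundary regularity for \eqref{NP} to a 1D Liouville-type classification on the half-line, which we propose to obtain via the Mellin transform and complex analysis. Starting from the base H\"older estimate of \cite{AFR23} and the well-developed interior theory \cite{RS-Cs,Grubb2,Grubb}, we flatten $\partial\Omega$ by a $C^{1,1}$ diffeomorphism and localize at a boundary point $x_0$. By now standard arguments, it suffices to prove a pointwise expansion
\[
\bigl|u(x)-P_{x_0}(x)\bigr|\le C\,|x-x_0|^{\beta},
\]
with $P_{x_0}$ a polynomial of degree $\lfloor\beta\rfloor$ and $\beta=2s+\alpha$ for $s\le\tfrac12$, $\beta=s+\tfrac12+\alpha$ for $s\ge\tfrac12$. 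If the expansion failed, a Campanato-type dichotomy would produce rescalings $u_k(x)=r_k^{-\beta}\bigl(u(x_0+r_k x)-\tilde P_k(x)\bigr)$ converging, via interior estimates and tail control, to a nontrivial global solution $U$ of the homogeneous Neumann problem on the half-space $\{x_n>0\}$ with growth $|U(x)|\le C(1+|x|)^{\beta'}$ for some $\beta'<\beta$. Translation invariance of $U$ along $\partial\{x_n>0\}$ combined with a tangential differentiation and classification step reduces everything to classifying 1D solutions $v\colon\R\to\R$ of
\[
(-\Delta)^s v=0 \text{ in }(0,\infty), \qquad \cN^s v=0 \text{ in }(-\infty,0),
\]
with polynomial growth $|v(x)|\le C(1+|x|)^{\beta'}$.

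The heart of the argument, and the step we expect to be the main obstacle, is the 1D Mellin classification. Scale-invariance dictates that the natural basis consists of the homogeneous profiles $v_z(x)=x^z$ for $x>0$, extended to $(-\infty,0)$ by the Neumann constraint, which yields an explicit Beta-function multiplier. Plugging this ansatz into $(-\Delta)^s v=0$ on $(0,\infty)$ produces a single compatibility condition $f(z)=0$ for a computable meromorphic function $f$, and the Mellin transform represents every admissible $v$ as a (possibly logarithmic) residue expansion over the zeros of $f$ in the strip $\{0<\re z<\beta'\}$. The analytic tasks are: (i) to write $f$ explicitly from the 1D Neumann Laplacian via classical Mellin identities for $x_+^z$; (ii) to locate all its zeros in the relevant strip; and (iii) to show that the only zeros with $\re z<\beta'$ correspond to integer exponents already absorbed by $P_{x_0}$ (together with the normalization $\int_\Omega u=0$), forcing $v\equiv 0$ and contradicting the previous step. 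The threshold $s+\tfrac12$ for $s\ge\tfrac12$ will arise as the real part of the first nontrivial, possibly complex-conjugate, zero of $f$, which also accounts for the oscillating solutions $x^a\cos(b\log x)$ announced in the abstract.

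Finally, the vanishing Neumann condition follows from the same classification: when $2s+\alpha>1$ the expansion potentially contains a linear term, but the Mellin analysis shows that $v(x)=x$ on $(0,\infty)$, extended by the Neumann constraint on $(-\infty,0)$, fails to satisfy $(-\Delta)^s v=0$ on $(0,\infty)$, so the integer $z=1$ is \emph{not} a zero of $f$ and no admissible linear mode has a nonzero normal component. Unflattening the boundary then yields $\partial_\nu u=0$ on $\partial\Omega$. The inhomogeneous term $h$ enters only through the Campanato iteration and is handled by standard perturbative bookkeeping, giving $\|h\|_{C^\alpha}$ on the right-hand side for $s\le\tfrac12$ and the weaker $\|h\|_{C^{\alpha+1/2-s}}$ for $s\ge\tfrac12$.
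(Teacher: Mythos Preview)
Your overall strategy---blow-up contradiction, reduction to a 1D Liouville theorem on the half-line, and classification via the Mellin transform by locating the complex zeros of an explicit meromorphic symbol $f$---is exactly the route the paper takes. The identification of the threshold $s+\tfrac12$ (for $s\ge\tfrac12$) with the real part of the first nontrivial zero of $f$, and the remark about oscillating solutions $x^a\cos(b\log x)$, are also on target.

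There is, however, a genuine gap. When $s\le\tfrac12$ the target exponent is $\beta=2s+\alpha>2s$, so the blow-up limit $v$ grows like $|x|^{2s+\alpha-\varepsilon}$. For such growth the 1D operators $(-\Delta)^s$ and $\cN^s$ are \emph{not} well-defined---the tail integrals diverge---so your ``1D Mellin classification'' has no equation to start from. The paper resolves this by (i) introducing a notion of distributional solution ``up to polynomials'' for the pair $((-\Delta)^s,\cN^s)$ acting on functions on all of $\R$ (rather than a single regional operator on $(0,\infty)$), with test functions constrained by vanishing moments; (ii) proving a separate two-operator Liouville theorem in this framework; and (iii) in the blow-up, first taking \emph{tangential incremental quotients} (which have sublinear growth, hence the usual theory applies) to force the limit to be 1D, and only then invoking the generalized 1D result. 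This is not bookkeeping; it is one of the main technical contributions, and your proposal does not address it.

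Two smaller points. First, when $\beta>1$ and $\partial\Omega$ is curved, the object you subtract at each boundary point is not a polynomial but a nonlinear corrector $P_b(x)=b\cdot\Phi^{-1}(x)$ built from the flattening diffeomorphism so that $\partial_\nu P_b=0$ on $\partial\Omega$; controlling $(-\Delta)^s P_b$ near the boundary requires careful estimates and is what makes the compactness step close. Second, your argument for $\partial_\nu u=0$ via ``$z=1$ is not a zero of $f$'' is not how the paper proceeds: the tangential linear modes $b\cdot x$ (with $b_n=0$) \emph{do} solve the limiting half-space problem, and they are eliminated in the blow-up by an $L^2$-projection orthogonality built into the contradiction scheme. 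The vanishing normal derivative then follows directly from the expansion $u-u(0)-P_b=O(|x|^\beta)$ together with $\partial_\nu P_b=0$.
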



As we will see, this regularity result is optimal in the sense that solutions are in general \emph{not} $C^{2s+\frac12}$ for $s \le \frac{1}{2}$, and they are \emph{not} $C^{s+1}$ for $s \ge \frac{1}{2}$. Moreover, they are not $C^{s+\frac12+\alpha}$ for $s$ close to $0$, and \emph{not} $C^{2s+\alpha}$ for $s$ close to $1$ (for any $\alpha\geq0$). We refer to \autoref{thm:main-intro} for a more detailed version of this result, where the optimal regularity exponent is identified precisely.

Notice that our result entails, in particular, that solutions of \eqref{NP} are always \emph{more} regular than those of the Dirichlet problem for the fractional Laplacian (see~\cite{RS-Cs,RS-Duke}), since the latter ones are in general not better than $C^s$.

In order to understand the optimal regularity of solutions, a useful rule of thumb is that one expects solutions to be smooth in tangential directions, while the optimal regularity is determined by the corresponding 1D problem, say for $\Omega=(0,\infty)$.
Moreover, after a contradiction and compactness argument, one expects the behavior of solutions near $x=0$ to be dictated by the nontrivial solutions to the 1D problem with $f\equiv0$. This leads to the following question:
\begin{itemize}
\item \em Can one characterize all solutions to  
\begin{equation}\label{poiu}
\begin{array}{rcll}
(-\Delta)^s u &=& 0 & \quad \textrm{in}\quad (0,\infty), \\
\mathcal N^s u &=& 0 & \quad \textrm{in}\quad (-\infty,0),
\end{array}
\end{equation}
satisfying the growth condition $|u(x)|\leq C(1+|x|^\gamma)$\,?
\end{itemize}

This turns out to be an extremely delicate issue, and resolving this difficulty will be, in our approach, one of the key steps in this context.

\begin{remark}
Notice that this type of questions arises as well in the study of nonlocal Dirichlet problems, i.e. with $u=0$ in $(-\infty,0)$.
However, in that case one can use the extension property of $(-\Delta)^s$ and transform the question into a \emph{local} PDE problem with a local boundary condition; see \cite[Theorem~1.10.16]{FeRo24}.
This is impossible here, since the boundary condition is purely nonlocal, so there is no hope to use local PDE techniques as in \cite{RS-Duke,FeRo24}.
\end{remark}

\subsection{1D Liouville theorems and the Mellin transform}

As said above, none of the known methods to classify global solutions to elliptic equations works for \eqref{poiu}.

Since the condition $\mathcal N^s u=0$ determines the values of $u$ for $x<0$ in terms of the values of $u$ for $x>0$, equation~\eqref{poiu} can be reformulated in terms of a single nonlocal operator $L$ in the half-line $(0,\infty)$, i.e.
\[Lu=0\quad \textrm{in}\quad (0,\infty).\]
We refer to \eqref{hghg}-\eqref{hghg2} for the definition of the operator $L$. Moreover, since both $(-\Delta)^s$ and $\mathcal N^s$ are homogeneous operators of order $2s$, then
so is $L$, in the sense that 
\begin{align*}
[L u(\lambda \cdot)](x) = \lambda^{2s}  [Lu] (\lambda x) ~~ \forall x >0.
\end{align*}
In particular, for any $\beta<2s$ we have 
\begin{equation}\label{vhfu}
L(x^\beta)= f(\beta)x^{\beta-2s},
\end{equation}
where $f(\beta)=L(x^\beta)|_{x=1}\in \R$.

We can actually compute explicitly
\begin{equation}\label{f}
f(\beta) = \frac{\Gamma(\beta + 1) \sin(\pi s)}{\Gamma(\beta - 2s + 1)\sin(\pi(\beta -2s))}  \left(  \frac{\sin(\pi(\beta-s))}{\sin(\pi s)} + \frac{\Gamma(2s-\beta) \Gamma(\beta+1)}{\Gamma(2s)} \right).
\end{equation}

Rephrasing the problem stated in~\eqref{poiu},
the natural question that arises here is then:
\begin{itemize}
\item \em If $L$ is a linear and homogeneous operator in $(0,\infty)$, it is true that all solutions to $Lu=0$ in $(0,\infty)$ (with appropriate growth at infinity) must necessarily be homogeneous?
\end{itemize}

If we could answer this general question positively, the classification of solutions to \eqref{poiu} would reduce to finding the zeros of $f(\beta)$.

Our first classification result, which addresses this question, can be stated as follows:

\begin{theorem} \label{thm-intro5}
Let $s\in(0,1)$, $\eps > 0$, and $L$ be the operator associated to \eqref{poiu}, given by \eqref{hghg}-\eqref{hghg2}.

Then, any distributional solution of $Lu=0$ in $(0,\infty)$ satisfying $|u(x)|\leq C(1+|x|^{2s-\varepsilon})$ must be a linear combination of $x^{\beta_k}$, where $\beta_k\in \mathbb C$ are the \textbf{complex} zeros of $f(\beta)$ in the strip $\{0<\re(\beta)<2s\}$.
\end{theorem}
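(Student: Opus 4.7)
The plan is to use the Mellin transform---or equivalently, the two-sided Laplace transform after the substitution $x = e^t$---to convert $Lu = 0$ into the algebraic identity $f(\xi)\,\widetilde v(\xi) = 0$, and then to classify the solutions by locating the zeros of the meromorphic symbol $f$. Setting $t := \log x$ and $v(t) := u(e^t)$ turns $(0,\infty)$ into $\R$ and the homogeneity of $L$ into translation invariance for the auxiliary operator $\widetilde L v(t) := e^{2st}(Lu)(e^t)$. By \eqref{vhfu}, $\widetilde L(e^{\beta t}) = f(\beta) e^{\beta t}$, so $\widetilde L$ is a convolution operator on $\R$ whose Fourier--Laplace symbol is precisely $f$. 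The growth hypothesis on $u$ becomes $v$ bounded on $(-\infty,0]$ with $|v(t)| \le C e^{(2s-\eps)t}$ on $[0,\infty)$.

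The key technical step is to give meaning to the Mellin transform $\widetilde v$ on the strip $\{0<\re\xi<2s\}$: the natural strips of absolute convergence of the two-sided Laplace integrals of $v$ near $-\infty$ and $+\infty$ are $\{\re\xi<0\}$ and $\{\re\xi>2s-\eps\}$, which do not overlap. I would split $v = v_- + v_+$ by a smooth cutoff centered at $0$ and rewrite $\widetilde L v = 0$ as $\widetilde L v_+ = -\widetilde L v_-$. The kernel decay of $\widetilde L$ endows $\widetilde L v_\pm$ with better behavior at the opposite infinity than $v_\pm$ itself, so the Laplace transforms of each side of this identity admit meromorphic continuation into an intermediate strip. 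Matching them there yields, in the sense of meromorphic functions (or distributions), the relation
\begin{equation*}
f(\xi)\,\widetilde v(\xi) = 0
\end{equation*}
on $\{0<\re\xi<2s\}$. Since $f$ is a non-trivial meromorphic function with isolated zeros $\beta_k$ in that strip, $\widetilde v$ is forced to be supported on $\{\beta_k\}$ as a distribution---a linear combination of deltas $\delta_{\beta_k}$, plus derivatives of deltas in the case of higher-order zeros.

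Inverting the transform, each $\delta_{\beta_k}$ yields a contribution $c_k e^{\beta_k t}$, so that $v(t) = \sum_k c_k e^{\beta_k t}$ and $u(x) = \sum_k c_k x^{\beta_k}$, as claimed.

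The hard part will be the rigorous setup of the meromorphic/distributional Mellin transform for $v$ under the stated growth---in particular, making sense of $\widetilde v$ on the intermediate strip via the splitting argument and verifying that no spurious singularities appear---which in turn hinges on precise decay and analyticity estimates for the kernel of $\widetilde L$ derived from the explicit structure of $L$. A secondary technical matter is to verify from the explicit formula \eqref{f} that the zeros of $f$ in $\{0<\re\beta<2s\}$ are simple, so that only pure power terms---and not additional terms of the form $x^{\beta_k}(\log x)^j$---appear in the final classification.
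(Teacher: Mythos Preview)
Your high-level strategy is exactly the paper's: exploit homogeneity via the Mellin transform to reduce $Lu=0$ to the relation $f(\xi)\,\widetilde u(\xi)=0$, and read off the answer from the zeros of $f$. The substitution $x=e^t$ turning $L$ into a convolution operator with symbol $f$ is correct and is, modulo a rotation of the complex plane, precisely the Mellin picture the paper uses.

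Where you diverge from the paper is in the \emph{technical framework} for making sense of $\widetilde v$. You propose the classical Wiener--Hopf style route: split $v=v_-+v_+$, observe that $\widehat{v_\pm}$ live in non-overlapping half-planes, and bridge the gap using the equation $\widetilde L v_+=-\widetilde L v_-$ to manufacture a meromorphic continuation. The paper instead never attempts to define $\widetilde v$ directly. It works by duality: it introduces a space $\cX_{M,m}$ of holomorphic test functions $\phi$ on a wide strip with polynomial decay, shows via the Paley--Wiener theorem that $\cM^{-1}[\phi]$ decays polynomially at both $0$ and $\infty$, and defines $\widetilde{\cM}[u]$ only as the pairing $\phi\mapsto\int_0^\infty u\,\cM^{-1}[\phi]$. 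The equation then becomes: this functional vanishes on every $\psi$ such that $\psi(\cdot+2s)/f(\cdot+2s-1)\in\cX_{M,m}$. Subtracting off suitable test functions with prescribed values at the zeros of $f$ (constructed explicitly), one reduces to the fundamental lemma that the pairing vanishing on all of $\cX_{M,m}$ forces $u=0$.

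Your route is plausible and, with care, can be pushed through (the computation $w:=\widetilde L v_+=-\widetilde L v_-$ indeed decays at both ends once you use kernel decay from each representation, so $\hat w$ is holomorphic on a wide strip, and $\hat{v_\pm}=\pm\hat w/f$ gives meromorphic continuations). The real gap in your sketch is the step ``$\widetilde v$ is forced to be supported on $\{\beta_k\}$ as a distribution'': at that point $\widetilde v$ is only a formal object (you have two meromorphic functions $\hat{v_+},\hat{v_-}$ on overlapping regions, not a single distribution on a line), and extracting that $u$ is exactly the stated finite sum requires an additional argument---for instance a contour-shifting computation of $u$ from the continuation, or a residue argument. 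The paper's duality framework sidesteps this entirely: it never needs to say what $\widetilde v$ ``is'', only which test functions annihilate it.

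Your remark about multiplicity is well taken: the general result (the paper's Theorem \ref{thm:Liouville}) does produce terms $x^{\beta_k}(\log x)^l$ when zeros are not simple, and the statement of Theorem~\ref{thm-intro5} is slightly informal on this point.
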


The definition of distributional solutions is somewhat delicate and depends on the test function space; see \autoref{sec2} for more details.

A surprising fact about this result is that it allows for \emph{oscillatory} solutions of the type 
\[u(x)=x^{a+ib} = x^a \cos(b \log x).\]
As we will see, the function $f$  in \eqref{f} does have complex zeros, and so equation~\eqref{poiu} possesses oscillatory solutions of the above type. In particular, {\em not all solutions are homogeneous}
(hence, the answer to the question above is in the negative).

This is quite remarkable and unexpected, since the equation $(-\Delta)^s u=0$ satisfies a maximum principle and a Harnack inequality. Moreover, this property stands in stark contrast to the nonlocal Dirichlet problem.

Note that operator $L$ has no special properties that we can use, beyond linearity and homogeneity. In fact, the question whether all solutions to $Lu = 0$ in $(0,\infty)$ must be homogeneous is a very natural and fundamental one, arising in connection with many different classes of differential operators $L$. To emphasize this point, we actually prove a general version of \autoref{thm-intro5} that applies to \emph{any} linear and homogeneous operator $L$ (see \autoref{thm:Liouville} and the discussion in \autoref{sec2}) and essentially reduces the classification of solutions to finding the \emph{complex} zeros of the corresponding function $f(\beta)$.

\vspace{2mm}

Our proof of \autoref{thm-intro5} heavily relies on the Mellin transform, which is a multiplicative version of the Fourier transform\footnote{Actually, we have $\cM [w(x)](z)= \mathcal F [w(e^{-x})](-iz)$ for all $z\in \C$.} and is often used in number theory. It is given as
\begin{align*}
\cM w(z):=\int_0^\infty x^{z-1} w(x) \d x.
\end{align*}

Notice that $\cM$ is not well defined for functions with polynomial growth at infinity, in the same way as the Fourier transform is not defined for functions with exponential growth.
Still if $u$ is a solution to $Lu = 0$, then by the self-adjointness of $L$ and by \eqref{eq:f}, at least formally, we should have
\begin{align}
\label{eq:Mellin-proof-key-intro}
\begin{split}
0 &= \cM[L u](z) = \int_0^{\infty} x^{z-1} Lu(x) \d x \\
&= \int_0^{\infty} L(x^{z-1}) u(x) \d x = f(z-1) \int_0^{\infty} x^{z-1-2s} u(x) \d x = f(z-1) \cM[u](z-2s). 
\end{split}
\end{align}
This implies that $\cM[u](z-2s)$ must be a sum of delta functions supported at the zeros of $f(z-1)$.

Since, at least formally, $\cM[w](z) = \delta(z - \alpha)$ if and only if $w(x)=x^{-\alpha}$, and since\footnote{This follows from \eqref{vhfu} and \eqref{eq:Mellin-proof-key-intro}.} $f(\beta) = f(2s - 1 - \beta)$, this yields the desired result. 

Of course this argument is far from being correct, and one of the contributions of the present work is to develop the right setting in order to define the Mellin transform in a distributional way, for functions that grow polynomially at infinity.
This requires careful choices of function spaces and the use of some fine results from complex analysis such as the Paley-Wiener theorem.

\vspace{2mm}

\autoref{thm-intro5} is stated for functions with growth controlled by $|x|^{2s-\varepsilon}$, so that equation~\eqref{poiu} makes sense.
Still, in our proof of \autoref{thm0} we actually need a similar result for functions with growth of the type $|x|^{2s+\alpha}$, and this will require much more work.
Indeed, we first need to introduce a generalized notion of solution to  \eqref{poiu}, and then establish an alternative version of the classification result \autoref{thm-intro5} for the combined action of two homogeneous operators. We refer to \autoref{thm:Neumann-Liouville-growth} for the corresponding classification result of solutions to \eqref{poiu} and to \autoref{thm:Liouville-2D} for a general classification result.

\subsection{The zeros of $f$}

The use of the Mellin transform and \autoref{thm-intro5} above allow us to reduce the problem of classifying 1D solutions to the problem of understanding the complex zeros of the function $f$ in \eqref{f}.
Unfortunately, this turns out to be very delicate too.

In case $s=\frac12$ the equation $f(\beta)=0$ is equivalent to 
\[\sin(2\pi\beta)= 2\pi\beta.\]
This is a transcendental equation, with infinitely many zeros with $\re(\beta)>0$, the smallest one being 
\[\beta= 1.193292...\pm i\,0.4406488...\]

Using fine properties of Gamma functions, we are able to prove the following:

\begin{proposition}\label{prop:zeros-final}
Let $f$ be given by \eqref{f}.
Then, 
\begin{itemize}
\item[(i)] If $s\leq \frac12$, then~$f$ has no zeros in the strip $\{0<\re(\beta)<2s+\alpha\}$, for some small $\alpha>0$.

Moreover, it has at least one zero in the strip $\{2s<\re(\beta)<2s+\frac12\}$.

\vspace{2mm}

\item[(ii)] If $s\geq \frac12$, then~$f$ has no zeros in the strip $\{2s-1<\re(\beta)<s+\frac12+\alpha\}$, for some small $\alpha>0$.

Moreover, it has at least one zero in the strip $\{s+\frac12<\re(\beta)<s+1\}$.
\end{itemize}
\end{proposition}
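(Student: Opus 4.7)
The plan is to reduce \autoref{prop:zeros-final} to the study of a single meromorphic function carrying the essential information about the roots of $f$, and then to combine real-line sign analysis with the argument principle. First, using the reflection identity $\Gamma(2s-\beta)\Gamma(\beta-2s+1)=-\pi/\sin(\pi(\beta-2s))$, I would rewrite \eqref{f} in the form
\[
f(\beta)=\frac{1}{\Gamma(2s)}\,\frac{\Gamma(\beta+1)/\Gamma(\beta-2s+1)}{\sin^{2}(\pi(\beta-2s))}\,h(\beta),\qquad
h(\beta):=\Gamma(2s)\sin(\pi(\beta-2s))\sin(\pi(\beta-s))-\pi\sin(\pi s)\,\frac{\Gamma(\beta+1)}{\Gamma(\beta-2s+1)}.
\]
The prefactor has only real, explicitly located zeros and poles, so in the open strips under consideration the zeros of $f$ coincide with those of $h$. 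A direct manipulation of \eqref{f} also yields the symmetry $f(\beta)=f(2s-1-\beta)$, so zeros come in pairs reflected across $\re(\beta)=s-\tfrac12$. Moreover, the reflection formula $\Gamma(2s)\sin(2\pi s)=\pi/\Gamma(1-2s)$ forces $f(0)=f(2s-1)=0$, and these two known zeros sit precisely on the boundary of (or outside) the strips in (i)--(ii), matching the statement.

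Next I would rule out real zeros of $h$ in the interior of each strip. For case~(i) with $\beta\in(0,2s]$ and $s\le\tfrac12$, the ratio $\Gamma(\beta+1)/\Gamma(\beta-2s+1)$ is strictly positive, the sine product $\sin(\pi(\beta-2s))\sin(\pi(\beta-s))$ is nonpositive on $[s,2s]$, and a short Taylor expansion at the boundary zero $\beta=0$ shows that $h$ becomes and stays strictly negative on $(0,s]$ as well. By continuity this negativity extends to $(0,2s+\alpha]$ for some $\alpha=\alpha(s)>0$. Case~(ii) is handled analogously, combining the symmetry $f(\beta)=f(2s-1-\beta)$ with a parallel sign analysis to rule out real zeros of $h$ in the open segment $(2s-1,s+\tfrac12+\alpha)$.

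To upgrade these real-line bounds to the full complex strips I would invoke the argument principle on tall rectangles $R_{T}$. The product-to-sum identity
\[
\sin(\pi(\beta-2s))\sin(\pi(\beta-s))=\tfrac{1}{2}\bigl(\cos(\pi s)-\cos(\pi(2\beta-3s))\bigr),
\]
combined with Stirling's asymptotic $|\Gamma(\beta+1)/\Gamma(\beta-2s+1)|\asymp |\beta|^{2s}$, shows that for $|\im\beta|$ above some explicit $T_{0}(s)$ the trigonometric term dominates the Gamma-ratio term in $h$. The change of argument of $h$ along the horizontal sides of $R_{T}$ can then be read off explicitly from $\cos(\pi(2\beta-3s))$, while on the vertical sides the compactness/continuity estimate from the previous step keeps $\arg h$ in a bounded arc. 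Summing the contributions forces the winding number to vanish, and hence rules out complex zeros inside the strips of (i)--(ii).

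Finally, for the existence of at least one zero, the same contour argument, applied this time to a tall rectangle inside $\{2s<\re(\beta)<2s+\tfrac12\}$ (resp.\ $\{s+\tfrac12<\re(\beta)<s+1\}$), yields a strictly positive winding number: the real zeros of the dominant cosine $\cos(\pi(2\beta-3s))$ sit on the vertical line $\re(\beta)=\tfrac{3s}{2}+\tfrac14$, which lies strictly inside these strips in both regimes, so a Rouch\'e-type comparison of $h$ with $-\tfrac{\Gamma(2s)}{2}\cos(\pi(2\beta-3s))$ forces at least one genuine zero of $h$, and hence of $f$, inside the rectangle. The main obstacle throughout is the compact \emph{transition region} in which the Gamma-ratio correction is of comparable size to the sine product: there, pure asymptotics do not suffice and one needs quantitative control of the two terms of $h$, which is precisely what pins down the admissible $\alpha(s)$ and verifies the Rouch\'e hypothesis.
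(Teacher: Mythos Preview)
Your reduction to $h$ and the overall plan (argument principle on tall rectangles, Stirling asymptotics to isolate the dominant trigonometric term for large $|\im\beta|$) matches the paper's strategy; the paper works instead with $g(\beta)=\frac{\Gamma(2s-\beta)\Gamma(\beta+1)}{\Gamma(2s)}-\frac{\sin(\pi(s-\beta))}{\sin(\pi s)}$, which differs from your $h$ by a nonvanishing factor on the relevant strips. However, there is a genuine gap in your vertical-side analysis. You write that ``compactness/continuity'' keeps $\arg h$ in a bounded arc on the vertical sides, but the vertical segments run over \emph{all} $|\im\beta|\le T$, and your real-axis sign analysis says nothing about $h$ there once $\im\beta\neq 0$. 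The paper spends most of its effort precisely on this: on each vertical side $\re\beta\in\{0,2s,2s-1,s+\tfrac12\}$ it proves a pointwise half-plane inclusion such as $\im g(\beta)\ge 0$, using the conjugation identity $\Gamma(\bar z)=\overline{\Gamma(z)}$ and a sharp phase estimate showing that the argument of $\Gamma(x+iy)/\Gamma(x_0+iy)$ lies in $[0,\tfrac{\pi}{2}(x-x_0)]$ (derived via $\im\psi(x+iy)\in(0,\tfrac{\pi}{2}]$). Without an analogue of this, your winding-number count is not justified in the transition region you yourself flag as the obstacle.

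Your Rouch\'e argument for the existence part also breaks down. Using the product-to-sum identity one has $h=\tfrac{\Gamma(2s)}{2}\cos(\pi s)-\tfrac{\Gamma(2s)}{2}\cos(\pi(2\beta-3s))-\pi\sin(\pi s)\,\Gamma(\beta+1)/\Gamma(\beta-2s+1)$, so with your comparison $g_0=-\tfrac{\Gamma(2s)}{2}\cos(\pi(2\beta-3s))$ the remainder is $h-g_0=\tfrac{\Gamma(2s)}{2}\cos(\pi s)-\pi\sin(\pi s)\,\Gamma(\beta+1)/\Gamma(\beta-2s+1)$. On the vertical side $\re\beta=2s$ at height $y=0$ this equals $\Gamma(2s)\bigl(\tfrac12\cos(\pi s)-2\pi s\sin(\pi s)\bigr)$, while $|g_0|=\tfrac{\Gamma(2s)}{2}|\cos(\pi s)|$; the Rouch\'e inequality $|h-g_0|<|g_0|$ then requires $|1-4\pi s\tan(\pi s)|<1$, which fails for $s$ near $\tfrac12$. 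The paper avoids any Rouch\'e comparison: it tracks the image $g(\partial R)$ side by side (including a small semicircular detour around the pole at $\beta=2s$) and shows directly that the image winds once around the origin, using the same phase lemma for the Gamma ratio on the vertical segments.
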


\begin{remark}\label{rk:zeros-final}
In other words, for any $s\in(0,1)$, we may denote by $B_0$ the real part of the smallest zero of $f$ satisfying $\re(\beta)>\max\{0,2s-1\}$.
Then, \autoref{prop:zeros-final} yields that~$B_0>\min\{2s,s+\frac12\}$. 

This turns out to be optimal, as we prove that $B_0<s+\frac12$ for $s\sim 0$ and $B_0<2s$ for $s\sim 1$ (see \autoref{rem-s-asymp}).

By the classification result in \autoref{thm-intro5}, the value of $B_0$ determines the optimal regularity of solutions to \eqref{NP}. In fact, solutions are in general not better than $C^{B_0}$.
\end{remark}

\begin{remark}
Note that $f$ has a (trivial) zero at $\beta = 2s-1$, whenever $s \ge \frac{1}{2}$. This reflects the fact that $(-\Delta)^s |x|^{2s-1} = 0$ in $\R \setminus \{ 0 \}$ is the fundamental solution. Since $|x|^{2s-1}$ is only a distributional solution, but not a weak solution, it does not determine the boundary regularity of solutions to \eqref{NP}.
\end{remark}

\subsection{Higher regularity in domains of $\R^n$}

All the results described so far deal with the classification of 1D solutions for \eqref{poiu}. As was mentioned before, by passing from 1D to general domains $\Omega \subset \R^n$, we can establish higher regularity estimates for solutions to \eqref{NP} to obtain \autoref{thm0}. Actually, in this paper we also show the \emph{optimal regularity} of solutions in general domains up to the optimal exponent $B_0$.

\begin{theorem}
\label{thm:main-intro}
Let $\Omega\subset \R^n$ be a $C^{1,1}$ domain and $u$ be the solution of \eqref{NP} satisfying $\int_\Omega u=0$. 

Then, for any $\eps \in (0,B_0-s)$ such that $B_0 - \eps \not \in \{1,2s\}$, and any $h \in \cX$, it holds
\begin{align*}
u \in C^{B_0 - \eps}(\overline\Omega) \qquad \textrm{with}\qquad \Vert u \Vert_{C^{B_0 - \eps}(\overline{\Omega})} \le C \Vert h \Vert_{\cX},
\end{align*}
where the constant $C$ depends only on $n,s$, and $\Omega$, the value $B_0$ is as in \autoref{rk:zeros-final}, and
\begin{align*}
\cX = \begin{cases}
C^{B_0 -2s -\eps}(\overline{\Omega}) & ~~ \text{ if } B_0 - \eps > 2s,\\
d^{2s-B_0 + \eps}_{\Omega} L^{\infty}(\Omega) & ~~ \text{ if } B_0 - \eps < 2s.
\end{cases}
\end{align*}

Moreover, if $B_0 - \eps > 1$, then $\partial_{\nu} u = 0$ on $\partial \Omega$.
\end{theorem}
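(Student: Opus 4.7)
The proof follows the standard blow-up and compactness strategy: reduce the boundary regularity question to a 1D Liouville problem in the half-line, apply the Mellin-transform classification (\autoref{thm-intro5} and its refined version \autoref{thm:Neumann-Liouville-growth}), and conclude using the location of the zeros of $f$ provided by \autoref{prop:zeros-final}. The starting point is the Hölder estimate of \cite{AFR23}, which gives a baseline regularity $u\in C^{\max\{0,2s-1\}+\gamma}(\overline\Omega)$, together with the fact that finite differences of $u$ along directions tangent to the $C^{1,1}$ boundary retain this regularity by interior Schauder-type estimates and translation invariance. Hence the optimal exponent is dictated by the normal direction and, after flattening $\partial\Omega$ locally, the question reduces to analyzing 1D solutions in the half-line.

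The core of the argument is a Campanato-type excess decay at each boundary point $x_0\in\partial\Omega$: writing $\cP_k$ for the space of polynomials of degree at most $k$, the goal is to prove
\begin{equation*}
\inf_{P\in \cP_{\lfloor B_0-\eps\rfloor}} r^{-(B_0-\eps)}\,\|u-P\|_{L^\infty(B_r(x_0)\cap\Omega)} \le C\,\|h\|_{\cX}
\end{equation*}
for all small $r>0$. If this fails, one extracts sequences $u_k,h_k,x_k,r_k\downarrow 0$ and rescales $\tilde u_k(x)=M_k^{-1} r_k^{-(B_0-\eps)}(u_k(x_k+r_k x)-P_k(x_k+r_k x))$, where $P_k$ realizes the infimum and $M_k\to\infty$. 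The two choices of $\cX$ in the statement are precisely those that make the rescaled datum $\tilde h_k$ vanish in the limit: a direct scaling computation (using $r_k^{B_0-2s-\eps}$ on the Hölder seminorm in the first case, and the weight $\tilde d_\Omega^{\,2s-B_0+\eps}$ in the second) gives $\|\tilde h_k\|_{\cX_k}\lesssim M_k^{-1}\|h_k\|_{\cX}\to 0$. Combined with the baseline Hölder estimates and uniform control of the nonlocal tails, a subsequence converges locally uniformly on $\R_+$ to a nontrivial limit $U$ solving the homogeneous 1D Neumann problem \eqref{poiu} in a distributional sense, with growth $|U(x)|\le C(1+|x|^{B_0-\eps})$.

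Since $B_0-\eps$ may exceed $2s$, the classification \autoref{thm-intro5} does not apply directly and one invokes the stronger version \autoref{thm:Neumann-Liouville-growth}: $U$ must be a linear combination of $x^{\beta_k}$ with $\beta_k$ a complex zero of $f$ in the strip $\max\{0,2s-1\}<\re(\beta_k)<B_0-\eps$, plus an element of $\cP_{\lfloor B_0-\eps\rfloor}$. By \autoref{prop:zeros-final} and \autoref{rk:zeros-final} there are no such zeros of $f$, so $U\in \cP_{\lfloor B_0-\eps\rfloor}$. This contradicts the unit excess of $\tilde u_k$, since polynomials of this degree were subtracted in the normalization. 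The pointwise statement $u\in C^{B_0-\eps}(\overline\Omega)$ then follows by the standard Campanato characterization. Moreover, when $B_0-\eps>1$, \autoref{prop:zeros-final} also rules out any zero of $f$ below $\min\{2s,s+\tfrac12\}$, in particular at $\beta=1$, so no linear normal term can appear in the boundary expansion and $\partial_\nu u\equiv 0$ on $\partial\Omega$.

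The main obstacle is the compactness step in the regime $B_0-\eps>2s$: the blow-up limit $U$ then grows strictly faster than $|x|^{2s}$, so neither $(-\Delta)^s U$ nor $\mathcal N^s U$ makes sense classically, and even identifying the limiting equation requires the distributional framework developed in \autoref{sec2} together with the two-operator classification \autoref{thm:Liouville-2D}. Ensuring that $U$ is a bona fide distributional solution in the correct test function space, while maintaining uniform control of the nonlocal tails (made especially delicate by the fact that the Neumann condition $\mathcal N^s u=0$ couples the two sides of the boundary), is the crux of the argument.
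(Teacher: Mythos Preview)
Your overall strategy---blow-up, compactness, 1D Liouville via the Mellin classification, then \autoref{prop:zeros-final}---is the same as the paper's, and your last paragraph correctly identifies one of the two key obstacles (the supercritical growth when $B_0-\eps>2s$). But you have missed the other obstacle, and it creates a genuine gap in the regime $B_0-\eps>1$.

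The issue is your correction space $\cP_{\lfloor B_0-\eps\rfloor}$. When $B_0-\eps>1$ you are subtracting an arbitrary affine $P(x)=a+b\cdot x$, but a polynomial with $b_n\neq 0$ does \emph{not} satisfy $\cN^s_\Omega P=0$ in $\Omega^c$ (nor its half-space analogue), so $v=u-P$ no longer solves a homogeneous Neumann problem and the compactness step for the exterior condition breaks down. One cannot assume the optimal $b_n$ vanishes: that would presuppose $\partial_\nu u=0$, which is part of the conclusion. The paper resolves this by subtracting only \emph{tangential} corrections: in the half-space $a+b'\cdot x'$, and in a curved domain $P_b(x)=b\cdot\Phi^{-1}(x)$ with $b_n=0$, where $\Phi$ flattens the boundary (\autoref{subsec:correction}). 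These are extended to $\Omega^c$ via the Neumann formula \eqref{eq:Pb-complement-rep}, so that $u-u(0)-P_b$ is again a homogeneous Neumann solution with source $h-L_\Omega P_b$; controlling $L_\Omega P_b$ near $\partial\Omega$ (\autoref{lemma:LPb-estimate}, \autoref{lemma:LPb-estimate-2}) is then a substantial computation, without which the rescaled $\tilde h_k$ cannot be shown to vanish.

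Two secondary points. First, your reduction to 1D is too quick: the blow-up limit is a priori $n$-dimensional, and ``translation invariance'' is not available since $\Omega$ is not translation-invariant. The paper handles this (in the $B_0-\eps>2s$ regime, \autoref{thm:bdry-expansion-higher}) by taking \emph{tangential} increments $v^{(h)}$ of the limit; these have subcritical growth, so the $n$-dimensional Liouville theorem \autoref{thm:Neumann-Liouville-nd} applies and forces them to be constant, whence $v$ is one-dimensional up to a tangential linear term. Second, your argument for $\partial_\nu u=0$ via ``no zero of $f$ at $\beta=1$'' is not the correct mechanism: the conclusion comes directly from the expansion $u\approx u(0)+P_b$ together with $\partial_\nu P_b=0$ on $\partial\Omega$, which holds by construction (\eqref{eq:Pb-normal-zero}).
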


This result yields the optimal boundary regularity of solutions to the nonlocal Neumann problem~\eqref{NP} for all $s \in (0,1)$ and provides a Schauder-type theory for \eqref{NP}. Together with \autoref{prop:zeros-final} it immediately implies \autoref{thm0} and altogether these results yield a complete understanding of the key questions (i)-(ii)-(iii).

In fact, the answers to these key questions are:

\begin{itemize}
\item[(i)] \em On the one hand, if~$s\le\frac12$, all solutions are classical, i.e. $C^{2s+\alpha}$, up to the boundary.
On the other hand, if~$s>\frac12$, all solutions are~$C^{s+\frac12+\alpha}$ up to the boundary.
  \vspace{2mm}
\item[(ii)] If~$s\ge\frac12$,
all solutions are~$C^1$ up to the boundary and satisfy \,$\partial_\nu u=0$ on $\partial\Omega$.   \vspace{2mm}
\item[(iii)] The optimal regularity exponent is given by $B_0$ from \autoref{rk:zeros-final}. Moreover, the above answers are optimal, in the sense that solutions are not necessarily~$C^{s+\frac12+\alpha}$ when~$s$ is close to~$0$, and
not necessarily~$C^{2s+\alpha}$ when~$s$ is close to~$1$.
\end{itemize}

As was mentioned before, the key ingredient to prove \autoref{thm:main-intro} is the classification of solutions in 1D. However, we still need to pass from 1D to general domains $\Omega\subset\R^n$. 
This turns out to be quite delicate and also requires several new ideas.

The general strategy is to prove a pointwise boundary estimate by contradiction and compactness, relying ultimately on the 1D Liouville theorem (see \autoref{thm:bdry-expansion} and \autoref{thm:bdry-expansion-higher}).
However, two important difficulties arise. 
\begin{itemize}
\item On the one hand, when the order of the estimate is greater than one, we need to subtract a linear term from the equation. In case $\Omega = \{ x_n > 0\}$ is a half-space, this term is needed to account for solutions of the form $b \cdot x$ with $b_n = 0$ that only depend on the tangential directions $x'$. In general domains $\Omega \subset \R^n$, we need to understand very precisely what is exactly the right function to subtract, and prove that its fractional Laplacian $(-\Delta)^s$ is well behaved near the boundary (see \autoref{subsec:correction} for more details.).

\item On the other hand, when the order of the estimate is greater than $2s$ we need to develop a theory of generalized solutions to \eqref{NP} in unbounded domains and with polynomial growth at infinity. It turns out that the contradiction and compactness argument is much more delicate in this case, since it seems no longer possible to reformulate \eqref{NP} in terms of a single operator $L$ in $(0,\infty)$. Still, one can develop such a theory, separately, for the two operators $(-\Delta)^s$ and $\cN^s$, and deduce regularity from a 1D classification result for the combined action of these two operators on functions that grow like $|x|^{2s+\alpha}$ (see \autoref{thm:Neumann-Liouville-growth}).
\end{itemize}

\subsection{Outline}

This article is structured as follows. In \autoref{sec2}, we present and discuss our Liouville theorems that hold for general homogeneous operators in $\R$ (see \autoref{thm:Liouville} and \autoref{thm:Liouville-2D}). \autoref{sec3} provides several preliminary facts about the distributional Mellin transform, while \autoref{sec4} is dedicated to the proofs of these results.  In \autoref{sec5}, we apply our new 1D Liouville theorems to the nonlocal Neumann problem and establish \autoref{thm-intro5}, as well as \autoref{prop:zeros-final}. Finally, in \autoref{sec6}, we prove the higher regularity for the nonlocal Neumann problem and show our main results \autoref{thm0} and \autoref{thm:main-intro}.

\subsection{Acknowledgments}
XR and MW were supported by the European Research Council under the Grant Agreement No. 101123223 (SSNSD), and by AEI project PID2024-156429NB-I00 (Spain).
XR was also supported by the AEI--DFG project PCI2024-155066-2 (Spain--Germany), the AEI grant RED2024-153842-T (Spain), and by the Spanish State Research Agency through the Mar\'ia de Maeztu Program for Centers and Units of Excellence in R{\&}D (CEX2020-001084-M). MW was also supported by the Deutsche Forschungsgemeinschaft (DFG, German Research Foundation) under Germany's Excellence Strategy - EXC-2047/1 - 390685813 and through the CRC 1720.
SD was supported
the Australian Future Fellowship FT230100333.
EV was supported by the Australian Laureate
Fellowship FL190100081.

\section{Liouville theorems for homogeneous operators in 1D via the Mellin transform}  
\label{sec2}

The goal of this section is to present two classification results of solutions to  homogeneous linear equations in 1D that hold true in a very general setting, namely \autoref{thm:Liouville} and \autoref{thm:Liouville-2D}. These ''meta-theorems`` relate the classification of 1D solutions to the complex roots of an explicit meromorphic function $f$. 

As explained before, these results are crucial for our proof of the optimal regularity of the nonlocal Neumann problem~\eqref{NP}, since they yield \autoref{thm-intro5} (and also \autoref{thm:Neumann-Liouville} and \autoref{thm:Neumann-Liouville-growth}). Due to their wide applicability, we believe such classification results to be of general interest, and introduce them here in a rather general framework.

\subsection{A Liouville theorem for operators acting on $(0,\infty)$}

Let $L$ be a linear operator acting on functions $u : (0,\infty) \to \C$ 
that satisfies, for some $s > 0$ and a function $f : \C \to \C$,
\begin{align}
\label{eq:f}
L (x^{\beta})(x) = f(\beta) x^{\beta - 2s} ~~ \forall \beta \in \C, ~~ \forall x \in (0,\infty).
\end{align}

Given $d \in \N \cup \{0\}$ and $a,b \in \R$ we introduce the space
\begin{align*}
C^d_{a,b}(0,\infty):= \Big\{ \varphi \in C^d_{loc}(0,\infty) : \Vert \varphi \Vert_{C^d_{a,b}(0,\infty)} < \infty \Big\},
\end{align*}
where we denote
\begin{align*}
\Vert \varphi \Vert_{C^d_{a,b}(0,\infty)} = \inf \Big\{ C > 0 : ~\forall k \in \{0,\dots,d\} :~& |\varphi^{(k)}(x)| \le C x^{a}, ~\forall x \in (0,1], \\
& |\varphi^{(k)}(x)| \le C x^{-b}, ~\forall x \in (1,\infty]  \Big\}.
\end{align*}
We observe that, when~$d_1 \le d_2$, $a_1 \le a_2$, and $b_1 \le b_2$,
we have that $C^{d_2}_{a_2,b_2}(0,\infty) \subset C^{d_1}_{a_1,b_1}(0,\infty)$.

We assume that there exist $d,a,b$ such that, for any $\varphi \in C^d_{a,b}(0,\infty)$, the map $x \mapsto L \varphi(x)$ is continuous and, for some $\alpha_1 \in [0,2s)$ and $\alpha_2 \in [0,2s-\alpha_1)$,
\begin{align}
\label{eq:L-decay}
|L \varphi (x)| \le C \begin{cases}
x^{-\alpha_1}, ~~ \text{ for } x \in (0,1),\\
x^{-2s+\alpha_2}, ~~ \text{ for } x \in [1,\infty),\\
\end{cases}, 
~~ \int_{r}^{\infty} |L \varphi(x)| \d x \le C r^{-2s+\eps} ~~ \forall \eps \in (0,1), ~~ \forall r > 0,
\end{align}

Moreover, we assume that $L$ is self-adjoint in the sense that
\begin{align}
\label{eq:self-adjoint}
\int_0^{\infty} g(x) L\varphi(x) \d x = \int_0^{\infty} L g(x) \varphi(x) \d x,
\end{align}
for any $\varphi \in C^d_{a,b}(0,\infty)$ and $g(x) = x^{\beta-1}$, where  $\re(\beta) \in (\alpha_1,2s-\alpha_2)$.

\begin{remark}
The first assumption in \eqref{eq:L-decay} might look a bit complicated, but it is very natural. In fact, for $L$ being the regional fractional Laplacian, i.e.
\begin{align*}
Lu(x) = c_{s} \int_0^{\infty} (u(x) - u(y)) |x-y|^{-1-2s} \d y,
\end{align*}
it holds true with $\alpha_1 = \alpha_2 = 0$, when $\varphi \in C^2_{0,1+2s}(0,\infty)$. However, when $L$ is the regional operator associated to the nonlocal Neumann problem~\eqref{hghg}-\eqref{hghg2} we need to take $\alpha_1 \in (\max \{ 0 , 2s-1 \} , 2s)$ and $\alpha_2 \in [0,\max\{2s,1\})$ (see \autoref{lemma:Neumann-L}).\\
The self-adjointness in \eqref{eq:self-adjoint} is only satisfied
under additional assumptions on the growth of $\varphi$ at zero in order to rule out the appearance of boundary terms. For both the regional fractional Laplacian and the Neumann operator, it is enough to take $\varphi \in C^2_{2,1+2s}(0,\infty)$.
\end{remark}

\begin{definition}
\label{def:distr-sol}
We say that $u : (0,\infty) \to \C$ with
\begin{align*}
|u(x)| \le C (1+ x)^{2s-\eps} ~~ \forall x \in (0,\infty)
\end{align*}
for some $\eps > 0$ is a (distributional) solution to
\begin{align*}
Lu = 0 ~~ \text{ in } (0,\infty)
\end{align*}
if for every $\varphi \in C^d_{a,b}(0,\infty)$ it holds
\begin{align}
\label{eq:distributional-sol}
\int_0^{\infty} u(x) L \varphi(x) \d x = 0.
\end{align}
\end{definition}

\begin{remark}
\label{remark:distr-well-def}
The integral in \eqref{eq:distributional-sol} converges absolutely by the same arguments as in the proof of \cite[Lemma 2.2.11]{FeRo24}. Indeed, the convergence of the integral at zero is immediate from the growth of $u$ and boundedness of $|L\varphi(x)|$. For the
convergence at infinity we use the second property in \eqref{eq:L-decay} with some $\eps' < \eps$:
\begin{align*}
\int_1^{\infty} |u(x)| |L\varphi(x)| \d x \le C \sum_{k = 0}^{\infty}  2^{k(2s-\eps)}  \int_{2^k}^{2^{k+1}} |L\varphi(x)| \d x  \le C \sum_{k = 0}^{\infty}  2^{k(\eps'-\eps)} \le C. 
\end{align*}
By an analogous reasoning, we see that both integrals in \eqref{eq:self-adjoint} converge absolutely.
\end{remark}

\begin{definition}
Let~$M > 1$.
We say that a meromorphic function $f : \C \to \C$ is $M$-admissible if it satisfies the following properties:
\begin{itemize}
\item There are only finitely many zeros of $f$ in the strip $S_M := \{ z \in \C : |\re(z)| < M \}$.
\item For any sequence $(z_n) \subset S_{M}$ with $|\im (z_n)| \to \infty$, it holds that
$$\liminf_{n \to \infty} |f(z_n)| > 0.$$
\end{itemize}
\end{definition}

Our goal is to prove the following Liouville-type theorem

\begin{theorem}
\label{thm:Liouville}
Let $L$ be as before, satisfying \eqref{eq:f}, \eqref{eq:L-decay}, and \eqref{eq:self-adjoint} for some $d,a,b$, and assume that $f$ is $M$-admissible for some $M > \max\{ 3 + 2s, 2 + 4s\}$. Let $u$ be a distributional solution to
\begin{align*}
L u = 0 ~~ \text{ in } (0,\infty)
\end{align*}
with test-function space $C^d_{a,b}(0,\infty)$ and such that for some $\eps \in (0,2s)$ and $C > 0$,
\begin{align}
\label{eq:u-growth-ass}
|u(x)| \le C (1 + x)^{2s-\eps} ~~ \forall x \in (0,\infty).
\end{align}
Then, there are coefficients $a_{\beta,l} \in \C$ such that
\begin{align}
\label{eq:Liouville-zero}
u(x) = \sum_{\beta \in \{ f = 0 \} \cap S_{2s-\eps} } \sum_{l = 0}^{k(\beta) - 1} a_{\beta,l} x^{\beta} (\log x)^l,
\end{align}
where $k(\beta)$ denotes the multiplicity of $\beta$.
\end{theorem}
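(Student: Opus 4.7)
The plan is to make the Mellin-transform heuristic \eqref{eq:Mellin-proof-key-intro} rigorous within a distributional framework. At the formal level, the equation $Lu = 0$ together with \eqref{eq:f} and \eqref{eq:self-adjoint} forces
\begin{equation*}
f(z-1)\,\cM[u](z-2s) = 0,
\end{equation*}
so that $\cM[u]$ must be supported at the shifted zeros of $f$. By the $M$-admissibility of $f$, only finitely many such zeros lie in the relevant strip, hence $\cM[u]$ is a finite combination of Dirac derivatives $\delta^{(l)}$ with $l < k(\beta)$. Mellin-inverting each such distribution produces a term $x^{\beta}(\log x)^{l}$, and using the symmetry $f(\beta)=f(2s-1-\beta)$ noted after \eqref{eq:Mellin-proof-key-intro} to reindex by the zeros of $f$ themselves, one obtains \eqref{eq:Liouville-zero}.

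\textbf{Step 1: A distributional Mellin transform.} Under the growth \eqref{eq:u-growth-ass}, the naive integral $\int_0^\infty x^{z-1} u(x)\,dx$ converges on no vertical strip, so $\cM[u]$ must be defined as a continuous linear functional. For $\varphi \in C^d_{a,b}(0,\infty)$ the Mellin transform $\cM[\varphi]$ is holomorphic and rapidly decreasing on a vertical strip, a Paley-Wiener-type statement to be developed in Section~\ref{sec3}. I would then define $\cM[u]$ via the pairing
\begin{equation*}
\bigl\langle \cM[u],\; \cM[\varphi](1-\cdot) \bigr\rangle := \int_0^\infty u(x)\,\varphi(x)\,dx,
\end{equation*}
which converges absolutely by \eqref{eq:u-growth-ass} and the decay built into $C^d_{a,b}$, as in \autoref{remark:distr-well-def}. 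The numerical bound $M > \max\{3+2s,\,2+4s\}$ reserves enough room inside $S_M$ to accommodate the three shifts ($z$, $z-1$, $z-2s$) that appear when pushing the equation to the Mellin side and then inverting.

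\textbf{Step 2: The distributional equation for $\cM[u]$.} Using \eqref{eq:f} and the self-adjointness \eqref{eq:self-adjoint} with $g(x) = x^{\beta-1}$ for $\re(\beta) \in (\alpha_1,\,2s-\alpha_2)$, one obtains the intertwining identity
\begin{equation*}
\cM[L\varphi](\beta) \;=\; \int_0^\infty L(x^{\beta-1})(x)\,\varphi(x)\,dx \;=\; f(\beta-1)\,\cM[\varphi](\beta-2s),
\end{equation*}
which extends analytically to the full strip where $\cM[\varphi]$ is defined. Feeding this into the distributional equation $\int u \cdot L\varphi = 0$ and applying Parseval in the sense of Step~1 yields $f(z-1)\,\cM[u](z-2s) = 0$ as a distribution. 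The $M$-admissibility---finitely many zeros inside $S_M$ and the bound $\liminf|f(z_n)| > 0$ for $|\im z_n| \to \infty$---implies that $1/f(z-1)$ is bounded on horizontal tails away from the zero set, so that any distribution annihilated by multiplication by $f(z-1)$ is a finite linear combination of Dirac derivatives supported at the zeros of $f(\cdot - 1)$, with orders strictly less than $k(\beta)$.

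\textbf{Step 3: Inversion and main obstacle.} Inverse Mellin transforms send $\delta^{(l)}$ at a point $\alpha$ to $x^{-\alpha}(\log x)^l$ up to combinatorial factors; the symmetry $f(\beta) = f(2s-1-\beta)$ then lets one reindex the resulting expansion directly by the zeros of $f$ to produce \eqref{eq:Liouville-zero}. The inversion requires shifting the Mellin contour across the support of $\cM[u]$ and controlling the horizontal segments as $|\im z| \to \infty$, which is exactly what the second clause of $M$-admissibility, combined with the Paley-Wiener decay of $\cM[\varphi]$, makes possible. The technical heart of the argument, and the principal obstacle, is Step~1: constructing the correct Paley-Wiener space of holomorphic test functions so that the pairing is well-defined, the identity $f(z-1)\cM[u](z-2s)=0$ holds in a distributional sense that remembers supports, and one can cleanly invert. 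Balancing these requirements against the decay hypotheses \eqref{eq:L-decay} and the admissibility of $f$ is precisely the content of the preparatory Sections~\ref{sec3} and \ref{sec4}.
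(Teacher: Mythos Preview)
Your overall strategy matches the paper's: push the equation to the Mellin side, deduce $f(z-1)\,\widetilde{\cM}[u](z-2s)=0$ distributionally, and conclude that $\widetilde{\cM}[u]$ is a finite combination of Dirac derivatives at the zeros of $f$. Two points deserve correction or sharpening.

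First, the sentence ``any distribution annihilated by multiplication by $f(z-1)$ is a finite linear combination of Dirac derivatives'' is not a lemma you can simply invoke here---it is the actual content of the proof. The functionals live on the Paley--Wiener-type space $\cX_{M,m}$ of holomorphic functions on the strip with polynomial decay, not on $C_c^\infty(\R)$, so the standard structure theorem for point-supported distributions does not apply directly. The paper establishes this constructively: given an arbitrary test function $g\in\cX_{\bar M+1,m}$, it builds (via \autoref{lemma:prescribed-values}) an explicit correction so that $\psi:=g-(\text{correction})$ vanishes to order $k(\beta)$ at each $\beta+1$ with $f(\beta)=0$; then $\psi(\cdot+2s)/f(\cdot+2s-1)\in\cX_{M',m}$ and \autoref{lemma:PDE-Mellin-2} gives $\langle\widetilde{\cM}[u],\psi\rangle=0$. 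Subtracting the candidate combination $\sum c_{\beta,l}\,x^\beta(\log x)^l$ from $u$ and choosing the $c_{\beta,l}$ to kill the correction terms makes $\langle\widetilde{\cM}[v],g\rangle=0$ for \emph{all} $g$, whence $v=0$ by the fundamental-lemma \autoref{lemma:u-zero}. Your abstract division statement is equivalent to this, but you should expect to prove it by exactly such an interpolation/subtraction argument.

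Second, the symmetry $f(\beta)=f(2s-1-\beta)$ plays no role in the proof of \autoref{thm:Liouville}. It is mentioned in the introduction only for the specific Neumann symbol, as a heuristic bookkeeping device; for a general $L$ satisfying \eqref{eq:f}--\eqref{eq:self-adjoint} there is no such symmetry, and none is needed. The indexing works directly: $\psi(\cdot+2s)/f(\cdot+2s-1)$ forces $\psi$ to vanish at $\beta+1$ for each zero $\beta$ of $f$, and since $\widetilde{\cM}[x^\beta]=\delta(\cdot-(\beta+1))$ by \autoref{lemma:Mellin-Dirac}, the expansion \eqref{eq:Liouville-zero} comes out already indexed by the zeros of $f$. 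Drop the reindexing step.
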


\begin{remark}
\label{remark:hom}
In applications, we will consider operators $L$ acting on functions $u : (0,\infty) \to \R$. We will assume that $L$ is homogeneous of order $2s$ for some $s \in (0,1)$ in the following sense:\\
For any function $u$ and $\lambda > 0$, it holds
\begin{align}
\label{eq:hom}
[L u(\lambda \cdot)](x) = \lambda^{2s}  [Lu] (\lambda x) ~~ \forall x >0.
\end{align}
Then, we can extend $L$ to operate on functions $u : (0,\infty) \to \C$ by writing $u(x) = u_1(x) + i u_2(x)$ and setting $Lu(x) = L u_1(x) + i L u_2(x)$.
In this setting, \eqref{eq:hom} remains valid for $u : (0,\infty) \to \C$, and we have, for any $\beta \in \C$,
\begin{align*}
L(x^{\beta})(x) = x^{\beta-2s} L(x^{\beta})(1),
\end{align*}
i.e. \eqref{eq:f} holds true with $f(\beta) = L(x^{\beta})(1)$.
\end{remark}

\begin{remark}
Note that we did not assume $s \in (0,1)$. In applications to operators of fractional Laplacian type with $s \in (0,1)$, one can treat functions with growth of order $2s+k-\eps$ by differentiating the equation $k$ times (which leads to an operator of order $2s' = 2s + k$). An alternative approach is discussed in \autoref{subsec:2D-Liouville-intro}
\end{remark}

\begin{example} [Dirichlet problem for the fractional Laplacian]
Our \autoref{thm:Liouville} entails, as a particular case, the well-known Liouville theorem for
\begin{align*}
\begin{cases}
(-\Delta)^s u = 0 &~~ \text{ in } (0,\infty),\\
u = 0 &~~ \text{ in } (-\infty,0).
\end{cases}
\end{align*}
Note that the proof is completely different from previous ones since it does not rely on the Caffarelli-Silvestre extension. To prove it, we observe that we can equivalently rewrite, for $x \in (0,\infty)$,
\begin{align*}
(-\Delta)^s u (x) &= c_{s} \int_0^{\infty} (u(x) - u(y)) |x-y|^{-1-2s} \d y + u(x) c_{s} \int_{-\infty}^0 |x-y|^{-1-2s} \d y \\
&= (-\Delta)^s_{\R_+} u (x) + u(x) x^{-2s} \frac{c_{s}}{2s} =: Lu(x),
\end{align*}
where $(-\Delta)^s_{\R_+}$ denotes the regional fractional Laplacian in $(0,\infty)$ and $c_s = 4^s s \frac{\Gamma(s + \frac{1}{2})}{\Gamma(1 -s)} \pi^{-\frac{1}{2}}$. Note that
\begin{align*}
f(\beta)& = L(x^{\beta})(1) = (-\Delta)^s_{\R_+} [x^{\beta}](1) + \frac{c_{n,s}}{2s} \\
&= (-\Delta)^s (x_+)^{\beta}(1) - c_{s} \int_{-\infty}^0 |1-y|^{-1-2s} \d y + \frac{c_{s}}{2s} \\
&= (-\Delta)^s (x_+)^{\beta}(1)\\
&= \frac{\Gamma(\beta + 1)}{\Gamma(\beta - 2s + 1)} \frac{\sin(\pi(\beta-s))}{\sin(\pi(\beta -2s))},
\end{align*}
and therefore
\begin{align*}
f(\beta) = 0, ~~ \beta \in (-2s,2s) ~~ \Leftrightarrow ~~ \beta \in \{s-1,s \}.
\end{align*}
Since $u$ is bounded at zero, it follows from \autoref{thm:Liouville} that
\begin{align*}
u(x) = a (x_+)^s.
\end{align*}
\end{example}

In a similar way, it is possible to reprove the Liouville theorem for the regional fractional Laplacian from \cite{FaRo22} by using \autoref{thm:Liouville}.

\subsection{A Liouville theorem for two operators acting on $\R$}
\label{subsec:2D-Liouville-intro}

In comparison to the previous subsection, where we introduced an abstract Liouville theorem for a single homogeneous operator $L$ acting on functions $f : (0,\infty) \to \C$, the goal of the current subsection is to present an extension of \autoref{thm:Liouville} to pairs of operators acting on functions $f : \R \to \C$. This theorem will be useful in our application to the nonlocal Neumann problem, whenever we want to classify solutions that grow like $x^{2s+\eps}$ as $x \to +\infty$, since in this case it is no longer possible to find a suitable regional operator $L$ that is well-defined for such functions.

Let $L,N$ be linear operators acting on functions $u : \R \to \C$ that satisfy for some $s > 0$ and functions $f_{L,+}, f_{L,-}, f_{N,+}, f_{N,-}$ for any $\beta \in \C$:
\begin{align}
\label{eq:f-2D}
\begin{split}
L(x_+^{\beta})(x) &= f_{L,+}(\beta) x_+^{\beta-2s}, \qquad L(x_-^{\beta})(x) = f_{L,-}(\beta) x_+^{\beta-2s} \qquad \forall x \in (0,\infty),\\
N(x_+^{\beta})(x) &= f_{N,+}(\beta) x_-^{\beta-2s}, \qquad N(x_-^{\beta})(x) = f_{N,-}(\beta) x_-^{\beta-2s} \qquad \forall x \in (-\infty,0).
\end{split}
\end{align}
Moreover, given $d \in \N \cup \{ 0 \}$ and $a,b \in \R$, we define the space
\begin{align*}
C_{a,b}^d(\R) := \left\{ \varphi \in C^d_{\loc}(\R) : \varphi |_{(0,\infty)} , \varphi(-\cdot) |_{(0,\infty)} \in C_{a,b}^d(0,\infty) \right\}, \end{align*}
endowed with the norm
\begin{align*}
 \Vert \varphi \Vert_{C_{a,b}^d(\R)} := \Vert \varphi \Vert_{C_{a,b}^d(0,\infty)} + \Vert \varphi(-\cdot) \Vert_{C_{a,b}^d(0,\infty)}.
\end{align*}

Additionally, given $k \in \N$, we define
\begin{align*}
C_{a,b}^{d,k}(\R) := \left\{ \varphi \in C^d_{a,b}(\R) : \int_0^{\infty} x^j \varphi(x) \d x = \int_{0}^{\infty} x^j \varphi(-x) \d x = 0 ~~ \forall j \in \{0,\dots,k-1\} \right\},
\end{align*}
and we also set $C_{a,b}^{d,0}(\R): = C^d_{a,b}(\R)$.

We assume that there exist $d,a,b$, and $k \in \N \cup \{ 0 \}$ such that, for any $\varphi \in C^{d,k}_{a,b}(\R)$, the maps $x \mapsto L \varphi(x)$ and $x \mapsto N \varphi(x)$ are continuous and,
for some $\alpha_1 \in [0,2s)$ and $\alpha_2 \in [0,2s-\alpha_1)$,
\begin{align}
\label{eq:L-decay-2D}
\begin{split}
|N \varphi (-x)| + |L \varphi (x)| &\le C \begin{cases}
|x|^{-\alpha_1}, ~~~\quad\quad \text{ for } x \in (0,1),\\
|x|^{-2s-k+\alpha_2}, ~~ \text{ for } x \in [1,\infty),\\
\end{cases}\\
\int_{-\infty}^{-r} |N \varphi(x)| \d x + \int_{r}^{\infty} |L \varphi(x)| \d x &\le C r^{-2s-k+\eps} ~~ \forall \eps \in (0,1), ~~ \forall r > 0.
\end{split}
\end{align}

Moreover, we assume that the pair $(L,N)$ is self-adjoint in the sense that
\begin{align}
\label{eq:self-adjoint-2D}
\begin{split}
\int_0^{\infty} g(x) L\varphi(x) \d x = \int_0^{\infty} L (g\1_{\R_+})(x) \varphi(x) \d x + \int_{-\infty}^0 N (g \1_{\R_+})(x) \varphi(x) \d x,\\
\int_{-\infty}^0 g(x) N\varphi(x) \d x  = \int_0^{\infty} L (g\1_{\R_-})(x) \varphi(x) \d x + \int_{-\infty}^0 N (g \1_{\R_-})(x) \varphi(x) \d x
\end{split}
\end{align}
for any $\varphi \in C^{d,k}_{a,b}(\R)$ and $g(x) = |x|^{\beta-1}$, where $\re(\beta) \in (\alpha_1,2s + k -\alpha_2)$.

\begin{definition}
\label{def:distr-sol-2D}
We say that $u : \R \to \C$ with
\begin{align*}
|u(x)| \le C (1+ |x|)^{2s+k-\eps} ~~ \forall x \in \R
\end{align*}
for some $\eps > 0$ and $k \in \N \cup \{ 0 \}$ is a (distributional) solution to
\begin{align*}
\begin{cases}
Lu &\overset{k}{=} 0 ~~ \text{ in } (0,\infty),\\
Nu &\overset{k}{=} 0 ~~ \text{ in } (-\infty,0),
\end{cases}
\end{align*}
if for every $\varphi \in C^{d,k}_{a,b}(\R)$ it holds
\begin{align}
\label{eq:distributional-sol-2D}
\int_{-\infty}^0 u(x) N \varphi(x) \d x + \int_0^{\infty} u(x) L \varphi(x) \d x = 0.
\end{align}
\end{definition}

We can now state our Liouville theorem for the combined action of two operators:

\begin{theorem}
\label{thm:Liouville-2D}
Let $L,N$ be as before, satisfying \eqref{eq:f-2D}, \eqref{eq:L-decay-2D}, and \eqref{eq:self-adjoint-2D} for some $d,a,b,k$, and assume that $f_1 := \frac{f_{L,+} f_{N,-} - f_{L,-} f_{N,+}}{f_{N,-}}$ and $f_2 := \frac{f_{L,+} f_{N,-} - f_{L,-} f_{N,+}}{f_{L,-}}$ are $M$-admissible for some $M > \max\{ 3 + 2s, 2 + 4s, 2k + \frac{3}{2}, k + 2s + 2\}$. Let $u$ be a distributional solution to
\begin{align*}
\begin{cases}
Lu &\overset{k}{=} 0 ~~ \text{ in } (0,\infty),\\
Nu &\overset{k}{=} 0 ~~ \text{ in } (-\infty,0),
\end{cases}
\end{align*}
with test-function space $C^{d,k}_{a,b}(0,\infty)$ and such that, for some $\eps \in (0,2s)$ and $C > 0$,
\begin{align}
\label{eq:u-growth-ass-2D}
|u(x)| \le C (1 + |x|)^{2s+k-\eps}  ~~ \forall x \in  \R.
\end{align}

Then, there exist coefficients $a^{(1)}_{\beta,l},a^{(1)}_{\beta,l}, A^{(1)}_j, A^{(2)}_j \in \C$ such that
\begin{align}
\label{eq:Liouville-zero-2D}
\begin{split}
u(x) &= \sum_{\beta \in \{ f_1 = 0 \} \cap S_{2s+k-\eps} } \sum_{l = 0}^{k_1(\beta) - 1} a^{(1)}_{\beta,l} x^{\beta} (\log x)^l + \sum_{\beta \in \{ f_2 = 0 \} \cap S_{2s+k-\eps} } \sum_{l = 0}^{k_2(\beta) - 1} a^{(2)}_{\beta,l} x^{\beta} (\log x)^l\\
&\quad  + \sum_{j = 0}^{k-1} \1_{\{|f_1(2s+j)| < \infty \}} A^{(1)}_{j} x^{2s+j} + \sum_{j = 0}^{k-1} \1_{\{|f_2(2s+j)| < \infty \}} A^{(2)}_{j} x^{2s+j},
\end{split}
\end{align}
where $k_1(\beta), k_2(\beta)$ denote the multiplicities of $\beta$ with respect to $f_1, f_2$.
\end{theorem}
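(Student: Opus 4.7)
My plan is to follow the blueprint of \autoref{thm:Liouville}, upgraded to a $2\times 2$ Mellin-matrix version, so that the distributional equation \eqref{eq:distributional-sol-2D} decouples into scalar equations in the Mellin variable governing $U_+:=\cM u_+$ and $U_-:=\cM u_-$, where $u_+:=u|_{(0,\infty)}$ and $u_-(x):=u(-x)$ for $x>0$. Both $U_\pm$ will be defined as distributional Mellin transforms of polynomially bounded functions, extending the \autoref{sec3} framework to the higher admissible growth $2s+k-\eps$; the $k$ vanishing-moment conditions built into $C^{d,k}_{a,b}(\R)$ are precisely what allow the Paley--Wiener contour in the inversion step to be pushed past the potential poles at $z\in\{1,\dots,k\}$.

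The algebraic heart of the argument uses the self-adjointness \eqref{eq:self-adjoint-2D} with $g(x)=|x|^{z-1}$ together with \eqref{eq:f-2D} to produce, on a suitable vertical strip,
\begin{align*}
\int_0^\infty x^{z-1} L\varphi(x)\,\d x &= f_{L,+}(z-1)\,\cM_+\varphi(z-2s)+f_{N,+}(z-1)\,\cM_-\varphi(z-2s),\\
\int_0^\infty x^{z-1} N\varphi(-x)\,\d x &= f_{L,-}(z-1)\,\cM_+\varphi(z-2s)+f_{N,-}(z-1)\,\cM_-\varphi(z-2s),
\end{align*}
where $\cM_\pm\varphi(z):=\int_0^\infty x^{z-1}\varphi(\pm x)\,\d x$. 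Inserting these into \eqref{eq:distributional-sol-2D} via the Mellin--Parseval pairing, regrouping by $\cM_+\varphi(1-z-2s)$ and $\cM_-\varphi(1-z-2s)$, and exploiting that these two Mellin transforms are arbitrary holomorphic functions subject only to mandatory zeros at $z\in\{1,\dots,k\}$, I will deduce that the two combinations
\begin{align*}
A(z):=U_+(z)f_{L,+}(-z)+U_-(z)f_{L,-}(-z),\qquad B(z):=U_+(z)f_{N,+}(-z)+U_-(z)f_{N,-}(-z)
\end{align*}
are distributions supported only at the $k$ exceptional points $z_j:=-2s-j$, $j=0,\dots,k-1$. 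Eliminating one unknown from this $2\times 2$ linear system and using the identities $f_{L,+}(-z)f_{N,-}(-z)-f_{L,-}(-z)f_{N,+}(-z)=f_{N,-}(-z)f_1(-z)=f_{L,-}(-z)f_2(-z)$, I will conclude that $U_+(z)\,f_1(-z)$ and $U_-(z)\,f_2(-z)$ are distributions of finite order supported at $\{z_0,\dots,z_{k-1}\}$.

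At this stage the $M$-admissibility of $f_1,f_2$ with the quantitative bound imposed in the hypotheses confines each $U_\pm$ to a finite subset of the strip $|\re z|<M$, consisting of the zeros of $f_1$ (resp.\ $f_2$) together with the exceptional points $z_j$. A residue/Mellin-inversion argument, structurally identical to the one closing the proof of \autoref{thm:Liouville}, then converts each $\delta$-derivative at a zero $\beta$ of $f_i$ into the basis function $x^\beta(\log x)^l$, and each contribution at $z_j$ into the explicit term $x^{2s+j}$; the indicator $\mathbf{1}_{\{|f_i(2s+j)|<\infty\}}$ is there to avoid double-counting whenever $2s+j$ already coincides with a zero of $f_i$. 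I expect the main obstacle to be the rigorous construction and manipulation of the two distributional Mellin transforms $U_\pm$ at the higher growth $2s+k-\eps$ --- specifically, justifying the elimination in the $2\times 2$ system and the Paley--Wiener contour shift that produces exactly the claimed contributions --- which is precisely where the vanishing-moment hypothesis of order $k$ on the test functions becomes essential.
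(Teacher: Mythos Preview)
Your matrix-Mellin plan is sound and would yield the result, but it follows a genuinely different route from the paper. Rather than introducing the distributional objects $U_\pm$ and performing the elimination on the $2\times2$ system --- which, as you correctly flag, requires making rigorous sense of multiplying distributions by meromorphic symbols and could in principle introduce spurious support at zeros of $f_{N,-}$ or $f_{L,-}$ --- the paper does the elimination entirely on the \emph{test-function} side. The key observation (\autoref{lemma:PDE-Mellin-2-2D}) is that by linking the pair via $\phi_2=-\dfrac{f_{L,-}(\cdot+2s-1)}{f_{N,-}(\cdot+2s-1)}\,\phi_1$, the contribution of $u|_{(-\infty,0)}$ in \eqref{eq:distributional-sol-2D} drops out identically, leaving the scalar identity $\int_0^\infty u(x)\,\cM_c^{-1}\!\big[f_1(\cdot-1)\phi_1(\cdot-2s)\big](x)\,\d x=0$. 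After the substitution $\phi_1=\psi(\cdot+2s)/f_1(\cdot+2s-1)$ the argument reduces verbatim to the one-operator proof of \autoref{thm:Liouville}; the only new wrinkle is that $\psi$ must be engineered to vanish at $\{f_1=0\}\cup\{f_2=0\}$ (so that both $\phi_1$ and the linked $\phi_2$ are holomorphic) and additionally at the moment points $\{2s+j+1: 0\le j\le k-1\}$ (so that $\phi_1,\phi_2\in\cX_{M,m}^k$). The latter constraint is precisely the source of the extra $x^{2s+j}$ terms in \eqref{eq:Liouville-zero-2D}.

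What your route buys is conceptual symmetry: it treats $u_+$ and $u_-$ on an equal footing and makes transparent why both $f_1$ and $f_2$ must enter. What the paper's route buys is that the obstacle you single out --- justifying the elimination in the $2\times2$ system at the level of distributions --- simply never arises: all divisions and linear combinations are performed on holomorphic test functions \emph{before} pairing with $u$, where analyticity and the $M$-admissibility of $f_1,f_2$ directly guarantee membership in the correct space $\cX_{M,m}^k$.
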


\section{The distributional Mellin transform}

\label{sec3}

In this section we collect several elementary results about the Mellin transform. In particular, we make sense of the Mellin transform applied to functions with polynomial growth by introducing a distributional variant of the Mellin transform. 

Given a complex valued function $u : (0,\infty) \to \C$, we define the Mellin transform of $u$ at $z \in \C$ to be
\begin{equation}\label{ORIGIMELL}
\cM[u](z) = \int_0^{\infty} x^{z-1} u(x) \d x,
\end{equation}
whenever the integral is absolutely convergent. Moreover, given a complex function $\phi : \C \to \C$, we define the $c$-inverse Mellin transform of $\phi$ at $x \in (0,\infty)$ to be
\begin{align*}
\cM^{-1}_c[\phi](x) = \frac{1}{2\pi} \int_{-\infty}^{\infty} x^{-c - i z} \phi \left(c + i z \right) \d z, ~~ c \in \R,
\end{align*}
whenever the (line) integral is well-defined. We will also denote $\cM^{-1} := \cM^{-1}_{1/2}$, i.e. the case $c = \frac{1}{2}$ is the default definition of the inverse Mellin transform, since it gives rise to a natural Plancherel identity (see \eqref{eq:Plancherel}). Note that the freedom to choose $c \in \R$ is important for instance in the Mellin inversion theorem, which we state below. 

\begin{lemma}[Mellin inversion theorem]
\label{lemma:MIT}
Let $u$ be continuous and assume that $\cM[u](z)$ exists whenever $\re(z) \in (a,b)$ for some $a,b \in \R$. 

Then, for every $c \in (a,b)$,
\begin{align}
\label{eq:Mellin-inversion}
\cM^{-1}_c[\cM[u]](x) := \frac{1}{2\pi} \int_{-\infty}^{\infty} x^{-c - i z} \cM[u](c + iz) \d z = u(x) ~~ \forall x \in (0,\infty).
\end{align}

Moreover, if $u$ is holomorphic in $\{ z \in \C : a < \re(z) < b \}$ for some $a,b \in \R$, and $|u(z)| \le C (1 + |z|)^{-2}$ in $\{ z \in \C : a < \re(z) < b \}$, then for any $c \in (a,b)$ the function
\begin{align}
\label{eq:Mellin-inversion-reverse}
\cM^{-1}_c[u](x) = \frac{1}{2\pi} \int_{-\infty}^{\infty} x^{-c - i z} u(c+iz) \d z
\end{align}
satisfies $\cM[\cM^{-1}_c[u]] = u$ in $(0,\infty)$.
\end{lemma}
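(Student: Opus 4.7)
The plan is to reduce Mellin inversion to the classical Fourier inversion via the substitution $x = e^{-t}$, which intertwines the multiplicative structure on $(0,\infty)$ with the additive structure on $\R$. For any $c \in \R$, set $v_c(t) := e^{-ct}\, u(e^{-t})$. A direct change of variables in \eqref{ORIGIMELL} yields
\begin{equation*}
\cM[u](c+i\xi) \;=\; \int_{-\infty}^{\infty} e^{-i\xi t}\, v_c(t)\, dt \;=\; \widehat{v_c}(\xi),
\end{equation*}
so the Mellin transform of $u$ along the vertical line $\{\re(z)=c\}$ coincides with the Fourier transform of $v_c$. The standing hypothesis on $\cM[u]$ translates to $v_c \in L^1(\R)$ for every $c \in (a,b)$.

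For the direct inversion \eqref{eq:Mellin-inversion}, I would apply Fourier inversion to $v_c$ at the continuity point $t = -\log x$ and then undo the substitution. Multiplying the identity $v_c(t) = \tfrac{1}{2\pi}\int e^{i\xi t}\widehat{v_c}(\xi)\,d\xi$ by $e^{ct}$ and substituting $t = -\log x$ produces exactly $u(x) = \cM_c^{-1}[\cM[u]](x)$. Since pointwise Fourier inversion requires extra integrability of $\widehat{v_c}$ that is not part of the hypotheses, the honest execution passes through a summability procedure: mollify $v_c$ by a Gaussian family $g_\lambda$, apply pointwise Fourier inversion to the smooth $L^1$ function $v_c \ast g_\lambda$ where it is standard, and let $\lambda \to 0$, combining dominated convergence on the Fourier side with the continuity of $u$ (and hence of $v_c$) on the spatial side.

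For \eqref{eq:Mellin-inversion-reverse}, fix $c \in (a,b)$ and set $\phi := \cM_c^{-1}[u]$. The decay $|u(z)| \le C(1+|z|)^{-2}$ ensures $\xi \mapsto u(c+i\xi) \in L^1(\R)$, so $\phi$ is well defined and continuous on $(0,\infty)$. Running the substitution in reverse identifies $\tilde\phi(t) := e^{-ct}\phi(e^{-t})$ as the inverse Fourier transform of $\xi \mapsto u(c+i\xi)$; by Fourier inversion, now directly applicable, $\widehat{\tilde\phi}(\xi) = u(c+i\xi)$, and therefore $\cM[\phi](c+i\xi) = u(c+i\xi)$ for every $\xi \in \R$. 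To upgrade equality along one vertical line to equality throughout the strip, I would deform the contour: for any $c, c' \in (a,b)$, Cauchy's theorem on the rectangle $[c,c'] \times [-R,R]$ combined with the $|z|^{-2}$ bound on the horizontal sides (which vanish as $R \to \infty$) yields $\cM_{c'}^{-1}[u] = \cM_c^{-1}[u]$, so $\phi$ is independent of $c$ and the pointwise identity extends to all of $\{a < \re(z) < b\}$.

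The main obstacle will be the first part: the absence of an a priori $L^1$-bound on $\cM[u](c + i\cdot)$ means that pointwise Fourier inversion is not immediate and must be realized via the mollification argument sketched above. The second half is comparatively routine, combining Fubini with a clean contour shift justified by the quadratic decay.
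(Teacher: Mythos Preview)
Your approach is correct and is precisely the standard reduction to Fourier inversion that the paper alludes to; the paper does not give its own proof but merely cites references and notes that ``the proof makes use of well-known inversion formulas for the Fourier / Laplace transform.'' Your handling of both parts---the mollification for \eqref{eq:Mellin-inversion} when $\widehat{v_c}$ is not a priori integrable, and the contour shift for \eqref{eq:Mellin-inversion-reverse}---is the expected route and matches what one finds in the cited sources.
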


For references of these two results, we refer to \cite[p.341-343]{McL63}, \cite[p.80]{PaKa01}, and \cite[Theorem 11.1.1]{MiLa86}. The proof makes use of well-known inversion formulas for the Fourier / Laplace transform.

\subsection{Properties of the inverse Mellin transform}

Given $M,m > 0$, let us denote by $S_M = \{ z \in \C : |\re(z)| <  M \}$ the vertical strip of radius $M$ and introduce the following function space
\begin{align*}
\cX_{M,m}: = \left\{ \phi : \C \to \C ~\mid~ \phi \text{ holomorphic in } S_M ,~~ \exists C > 0 :  |\phi(z)| \le C (1 + |z|)^{-m} ~~ \forall z \in S_M \right\}.
\end{align*}

We will prove that the inverse Mellin transform $\cM^{-1}[\phi]$ is well-defined, whenever $\phi \in \cX_{M,m}$ for $M > \frac{1}{2}$ and $m \ge 2$. 
Obviously, for $M' \le M$ and $m' \le m$ we have that
\begin{align*}
\cX_{M,m} \subset \cX_{M',m'}.
\end{align*}
Moreover, by possibly choosing $M$ and $m$ larger, respectively, we gain decay properties of $\cM^{-1}[\phi]$ at zero, at infinity, and regularity of $\cM^{-1}[\phi]$, respectively (see \autoref{lemma:inverse-Mellin}). 

Before we prove this result, we need to recall the Paley-Wiener theorem (see \cite[Chapter 4, Theorem 2.1, Theorem 3.1]{StSh03}).

Here and below we use the following definition of the Fourier transform:
\begin{align*}
\cF[f](\xi): = \int_{-\infty}^{\infty} e^{-ix \xi} f(x) \d x, \qquad \cF^{-1}[f](x) := \frac{1}{2\pi} \int_{-\infty}^{\infty} e^{ix \xi} f(\xi) \d \xi.
\end{align*}

\begin{lemma}[Paley-Wiener theorem]
\label{lemma:Paley-Wiener}
Let $M > 0$.
\begin{itemize}
\item[(i)] Let $f$ be holomorphic in $\{ z \in \C : |\im (z)| < M \}$ and assume that $|f(z)| \le c_1 (1 + |z|)^{-2}$ for every $z \in \C$ such that $|\im(z)| <  M$. Then, for all~$ M' \in (0, M)$ and~$ \xi \in \R$,
\begin{align*}
|\cF[f](\xi)| \le c_2 e^{- M' |\xi|}. 
\end{align*}
\item[(ii)] Let $f$ and $\cF[f]$ be such that,  for all~$ x$, $\xi \in \R$,
\begin{align*}
|f(x)| \le c_3 (1 + |x|)^{-2} \qquad{\mbox{and}}\qquad |\cF[f](\xi)| \le c_3 e^{- M |\xi|}.
\end{align*}
Then $f$ is the restriction to $\R$ of a function $f$ that is holomorphic in $\{ z \in \C : |\im (z)| < M \}$.
\end{itemize}
\end{lemma}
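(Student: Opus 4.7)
The plan is to prove both parts by the standard contour-shifting / Fourier-inversion arguments; no specialized machinery beyond Cauchy's theorem and dominated convergence is needed.

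For part (i), I would exploit that for $z = x + iy$, the exponential has magnitude $|e^{-iz\xi}| = e^{y\xi}$, so one gains decay in $\xi > 0$ by shifting the contour downward. Fix $\xi > 0$ and $M' \in (0, M)$. Consider the rectangular contour with vertices $\pm R$ and $\pm R - iM'$. Since $f$ is holomorphic in $\{|\im(z)| < M\}$ and $M' < M$, Cauchy's theorem makes the closed contour integral of $e^{-iz\xi} f(z)$ vanish. On the two vertical sides, the integrand is bounded by $e^{M'\xi}\, c_1 (1+R)^{-2} M'$, which tends to $0$ as $R \to \infty$. Therefore
\begin{equation*}
\cF[f](\xi) = \int_{-\infty}^{\infty} e^{-i(x - iM')\xi} f(x - iM')\,\d x,
\end{equation*}
and the triangle inequality gives
\begin{equation*}
|\cF[f](\xi)| \le e^{-M'\xi} \int_{-\infty}^{\infty} |f(x - iM')|\,\d x \le c_1 e^{-M'\xi} \int_{-\infty}^{\infty} \frac{\d x}{(1+|x - iM'|)^2} \le c_2 e^{-M'\xi}.
\end{equation*}
For $\xi < 0$, I would repeat the argument shifting the contour upward to $\R + iM'$, yielding $|\cF[f](\xi)| \le c_2 e^{M'\xi}$, which combines into $|\cF[f](\xi)| \le c_2 e^{-M'|\xi|}$.

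For part (ii), I would define the candidate extension
\begin{equation*}
\tilde f(z) := \frac{1}{2\pi}\int_{-\infty}^{\infty} e^{iz\xi}\,\cF[f](\xi)\,\d\xi, \qquad z \in \{|\im(z)| < M\}.
\end{equation*}
For any $z$ with $|\im(z)| \le M'' < M$, the bound $|e^{iz\xi} \cF[f](\xi)| \le c_3 e^{(M'' - M)|\xi|}$ is integrable in $\xi$, so the integral converges absolutely and locally uniformly on each smaller strip. Differentiating under the integral sign (justified by the same dominated bound applied to $i\xi \cF[f](\xi)$, which still has exponential decay) shows $\tilde f$ is holomorphic on $\{|\im(z)| < M\}$. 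On the real axis, $\tilde f(x) = \cF^{-1}[\cF[f]](x)$, which equals $f(x)$ by the classical Fourier inversion formula, since the decay $|f(x)| \le c_3(1+|x|)^{-2}$ places $f$ in $L^1 \cap C(\R)$ and $\cF[f] \in L^1(\R)$ by its exponential decay.

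I do not expect a serious obstacle here: the only non-trivial points are (a) vanishing of the vertical sides of the rectangle in (i), which follows at once from the quadratic decay hypothesis, and (b) justifying holomorphicity of $\tilde f$ in (ii), which is immediate from the exponential decay of $\cF[f]$ and differentiation under the integral. The quadratic decay $(1+|z|)^{-2}$ in (i) is slightly stronger than strictly necessary (integrability along horizontal lines would suffice), but it makes the side-integral estimates and the final $L^1$ bound entirely routine.
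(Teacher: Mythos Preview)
Your proof is correct and follows the standard contour-shifting argument for (i) and the Fourier-inversion construction for (ii). The paper does not supply its own proof of this lemma; it simply recalls the statement and refers to Stein--Shakarchi \cite[Chapter 4, Theorems 2.1 and 3.1]{StSh03}, where essentially the same argument appears. (One small quibble: in (ii) the decay $(1+|x|)^{-2}$ gives $f\in L^1$ but not $f\in C(\R)$; continuity of the representative follows instead from $\cF[f]\in L^1$ and the inversion formula, which is what you actually use.)
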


The following result is a direct application of the Paley-Wiener theorem.

\begin{lemma}
\label{lemma:inverse-Mellin}
Let $c \in \R$, $\phi \in \cX_{M,m}$ for some $M > |c|$ and $m \ge 2$. Then,
\begin{align*}
\cM^{-1}_c[\phi](x) = \frac{1}{2\pi} x^{-c} \cF \left[\phi \left(c + i \cdot \right)\right](\log x). 
\end{align*}
Moreover, for every $M' \in (0,M)$ there exists $C > 0$ such that
\begin{align*}
|\cM^{-1}_c[\phi](x)| \le C 
\begin{cases}
x^{M' - 2|c|}, &~~ \forall x \in (0,1],\\
x^{-M' + 2|c|}, &~~ \forall x \in [1,\infty).
\end{cases}
\end{align*}
In addition, for any $k \in \N$ such that $k+2 \le m$, there exists $C > 0$ such that
\begin{align*}
|(\cM^{-1}_c[\phi])^{(k)}(x)| \le C
\begin{cases}
x^{M' - k - 2|c|}, &~~ \forall x \in (0,1],\\
x^{-M' - k +2|c|}, &~~ \forall x \in [1,\infty).
\end{cases}
\end{align*}
\end{lemma}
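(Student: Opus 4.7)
The strategy is to reduce the inverse Mellin transform to a Fourier transform and then extract exponential decay via the Paley--Wiener theorem (\autoref{lemma:Paley-Wiener}(i)). The first step is the Fourier-transform identity: factoring $x^{-c-iz} = x^{-c} e^{-iz \log x}$ inside the defining integral of $\cM^{-1}_c[\phi]$ immediately produces $\cM^{-1}_c[\phi](x) = \frac{1}{2\pi} x^{-c} \cF[\phi(c+i\cdot)](\log x)$, and absolute convergence of the integral is guaranteed by $m \ge 2$.

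Next, I would apply Paley--Wiener to $\psi(z) := \phi(c+iz)$. Writing $z = u+iv$, one has $c+iz = (c-v)+iu$, so $\psi$ is holomorphic precisely on the strip $\{c-M < \im(z) < c+M\}$. The largest symmetric strip around $\R$ contained in it is $\{|\im(z)| < M-|c|\}$, which is non-empty thanks to the hypothesis $M > |c|$. On this strip, the trivial estimate $|c+iz| \ge |z| - |c|$ gives $|\psi(z)| \le C(1+|z|)^{-m} \le C(1+|z|)^{-2}$, so Paley--Wiener yields, for each $M'' \in (0, M-|c|)$, the exponential bound $|\cF[\psi](\xi)| \le C_{M''} e^{-M'' |\xi|}$. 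Combining with the Fourier identity, one obtains $|\cM^{-1}_c[\phi](x)| \le C x^{-c+M''}$ on $(0,1]$ (using $|\log x| = -\log x$ there) and $|\cM^{-1}_c[\phi](x)| \le C x^{-c-M''}$ on $[1,\infty)$. For any prescribed $M' \in (0, M)$, I would then choose $M''$ in the non-empty interval $[\max\{0, M' - |c|\}, M - |c|)$; the elementary inequalities $\pm c \le |c|$ yield $-c + M'' \ge M' - 2|c|$ and $-c - M'' \le -M' + 2|c|$, which are the claimed exponents.

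For the derivative bounds, the plan is to differentiate under the integral sign in the definition of $\cM^{-1}_c[\phi]$. Since $\frac{d^k}{dx^k} x^{-c-iz} = \prod_{j=0}^{k-1}(-c-iz-j)\, x^{-c-iz-k}$, the polynomial prefactor grows like $(1+|z|)^k$, so the integrand after differentiation is dominated by a multiple of $(1+|z|)^{k-m}$ uniformly on compact subsets of $(0,\infty)$, which is integrable exactly when $k+2 \le m$. Setting $\tilde\phi(w) := \prod_{j=0}^{k-1}(-w-j) \phi(w) \in \cX_{M, m-k}$, direct comparison of the two integral formulas gives the identity $(\cM^{-1}_c[\phi])^{(k)}(x) = x^{-k} \cM^{-1}_c[\tilde\phi](x)$, and applying the already-established pointwise bound to $\tilde\phi$ yields the desired estimates on $(\cM^{-1}_c[\phi])^{(k)}$, with the extra $x^{-k}$ producing exactly the $-k$ shift in the exponent. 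The only delicate point in the argument is the book-keeping on exponents in the second paragraph: Paley--Wiener naturally produces decay only in a strip of half-width $M - |c|$, not $M$, but the $2|c|$-shift in the target exponent $M'-2|c|$ is absorbed exactly by the $x^{-c}$ prefactor from the Fourier identity, which is why the hypothesis $M > |c|$ suffices.
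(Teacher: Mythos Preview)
Your proposal is correct and follows essentially the same route as the paper: the Fourier identity, then Paley--Wiener applied to $\phi(c+i\cdot)$ on the symmetric strip of half-width $M-|c|$, and differentiation under the integral for the derivative bounds. Your presentation is in fact slightly cleaner in two places: you spell out the $M'$ versus $M''$ bookkeeping that the paper leaves implicit (the paper derives $x^{-c\pm M'}$ for $M'\in(0,M-|c|)$ and simply asserts this ``yields the second claim''), and you package the derivative step as the identity $(\cM^{-1}_c[\phi])^{(k)} = x^{-k}\cM^{-1}_c[\tilde\phi]$ with $\tilde\phi\in\cX_{M,m-k}$, reducing to the already-established case, whereas the paper re-runs Paley--Wiener directly on the modified integrand.
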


\begin{proof}
By the definitions of $\cM^{-1}_c$ and $\cF$ we immediately verify the first claim:
\begin{align*}
\cM^{-1}_c[\phi](x) = \frac{1}{2\pi} x^{-c}  \int_{-\infty}^{\infty} e^{-i z \log(x)} \phi \left(c + i z \right) \d z = \frac{1}{2\pi} x^{-c} \cF \left[\phi \left(c + i \cdot \right)\right](\log x).
\end{align*}
Since $\phi$ is holomorphic in $S_M$ and satisfies $|\phi(z)| \le C(1 + |z|)^{-2}$ for any $z \in S_M$, we deduce that $\tilde{\phi}(z) = \phi \left(c + i z \right)$ is holomorphic and satisfies $|\tilde{\phi}(z)| \le C (1 + |z|)^{-2}$ in
\begin{align*}
\left\{ z \in \C : \left|\re \left(c + i z \right)\right| < M \right\} = \left\{ z \in \C : \im(z) \in \left(-M + c , M + c \right) \right\}.
\end{align*}
Since $(-(M - |c|) , M - |c|) \subset \left(-M + c , M + c \right)$, we can apply \autoref{lemma:Paley-Wiener}(i) with $M' := M - |c|$ and deduce that 
\begin{align*}
\cF[\tilde{\phi}](x) \le C e^{- M' |x|} ~~ \forall M' \in \left( 0 , M - |c| \right), ~~ \forall x \in \R.
\end{align*}
Thus, 
\begin{align*}
|\cM^{-1}_c[\phi](x)| \le C x^{-c} | \cF [\tilde{\phi}](\log x) | \le C x^{-c} e^{- M' |\log x|} \le C \begin{cases}
x^{-c} x^{M'} &~~ \forall x \in (0,1],\\
x^{-c} x^{-M'} &~~ \forall x \in [1,\infty),
\end{cases}
\end{align*}
which yields the second claim. 

To prove the third claim, we let $k \in \N$, assume $k+2 \le m$ and observe
\begin{align*}
(\cM^{-1}_c[\phi])^{(k)}(x) &= \frac{1}{2\pi} \int_{-\infty}^{\infty} \left[x^{-c - iz}\right]^{(k)}(x) \phi(z) \d z \\
&= \frac{1}{2\pi} \int_{-\infty}^{\infty}  \prod_{j = 0}^{k-1} \left(-c - j - iz \right) x^{-c - iz - k}  \phi(z) \d z \\
&= \frac{1}{2\pi} x^{-c - k}\int_{-\infty}^{\infty} e^{-iz \log(x)}  \prod_{j = 0}^{k-1}\left(-c - j  - iz \right)  \phi \left(c + i z \right) \d z \\
&=\frac{1}{2\pi} x^{-c - k} \cF \left[ \prod_{j = 0}^{k-1}\left(-c - j - i\cdot \right)  \phi \left(c + i \cdot \right)\right](\log x).
\end{align*}
We define $$\bar{\phi}(z) :=  \prod_{j = 0}^{k-1}\left(-c - j - iz \right)  \phi \left(c + i z \right)$$ and observe that~$\bar{\phi}$ is holomorphic. 

In addition, using that $m \ge k + 2$ we find that,
for all~$z \in \{ z \in \C : |\im(z)| < M - |c| \}$,
\begin{align*}
|\bar{\phi}(z)| \le C \left|\prod_{j = 0}^{k-1}\left(-c - j - iz \right) \right| (1 + |z|)^{-m} \le C (1 + |z|)^{k-m} \le C (1 + |z|)^{-2}.
\end{align*}
Hence, by \autoref{lemma:Paley-Wiener}(i),
\begin{align*}
\cF[\bar{\phi}](x) \le C e^{- M' |x|} ~~ \forall M' \in \left( 0 , M - |c| \right), ~~ \forall x \in \R.
\end{align*}
Thus, 
\begin{align*}
|(\cM^{-1}_c[\phi])^{(k)}(x)| \le C x^{-c - k} | \cF [\tilde{\phi}](\log x) | \le C x^{-c - k} e^{- M' |\log x|} \le C \begin{cases}
x^{-c - k} x^{M'} &~~ \forall x \in (0,1],\\
x^{-c - k} x^{-M'} &~~ \forall x \in [1,\infty),
\end{cases}
\end{align*}
as desired.
\end{proof}

The previous lemma implies that $\cM^{-1}[\phi]$ (as well as its derivatives) satisfies polynomial decay at zero and at infinity whenever $\phi \in \cX_{M,m}$. Moreover, by taking $M > 0$ (and $m > 0$) large, the order of the polynomial decay of $\cM^{-1}[\phi]$ (and its derivatives) can be made arbitrarily large.

We also point out that the choice of the parameter $c \in \R$ in the definition of the inverse Mellin transform is not important, whenever $\phi \in \cX_{M,m}$. More precisely:

\begin{corollary}
\label{cor:c-change}
Let $c \in \R$, $\phi \in \cX_{M,m}$ for some $M > \max \{|c|,2|c| - \frac{1}{2}\}$ and $m \ge 2$. Then, 
\begin{align*}
\cM^{-1}_c[\phi] = \cM^{-1}[\phi].
\end{align*}
\end{corollary}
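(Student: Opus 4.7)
The plan is to establish the equality $\cM^{-1}[\phi](x) = \cM^{-1}_c[\phi](x)$ for every $x \in (0,\infty)$ via a contour-shift argument based on Cauchy's theorem. For fixed $x > 0$, I would consider the auxiliary function $F(w) := x^{-w} \phi(w)$, which is holomorphic on the strip $S_M$ since $\phi$ is. Without loss of generality, I assume $c \neq \tfrac{1}{2}$ and, for concreteness, $c < \tfrac{1}{2}$ (the opposite case being symmetric).

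First, I would verify that the closed rectangular contour $\partial R_T$ with vertices $c \pm iT$ and $\tfrac{1}{2} \pm iT$ is contained in $S_M$, so that $F$ is holomorphic on the whole rectangle. This requires $\max(|c|, \tfrac{1}{2}) < M$: the hypothesis $M > |c|$ handles the first inequality, while the condition $M > 2|c| - \tfrac{1}{2}$ is tailored to ensure $M > \tfrac{1}{2}$ in the relevant regime. Then Cauchy's theorem yields $\oint_{\partial R_T} F(w) \d w = 0$, which I would decompose into two vertical pieces (at $\re(w) = c$ and $\re(w) = \tfrac{1}{2}$) and two horizontal pieces (at $\im(w) = \pm T$).

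The key step is to show that the horizontal contributions vanish as $T \to \infty$. On $w = s \pm iT$ with $s \in [c, \tfrac{1}{2}]$, the bound $|\phi(w)| \le C(1+|w|)^{-m} \le CT^{-m}$ combined with $|x^{-w}| \le \max(x^{-c}, x^{-1/2})$ and the horizontal length $|\tfrac{1}{2} - c|$ yield a contribution of order $T^{-m}$, which tends to zero as $T \to \infty$ since $m \ge 2$. Passing to the limit in the Cauchy identity, the two vertical integrals (which converge absolutely, again by the $m \ge 2$ decay) give $2\pi i \cM^{-1}[\phi](x) - 2\pi i \cM^{-1}_c[\phi](x) = 0$, proving the claim pointwise in $x$.

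The main technical subtlety is justifying that the rectangle $R_T$ sits inside $S_M$ (so that the integrand is everywhere holomorphic), and this is precisely why the hypothesis on $M$ is formulated as it is. An alternative, slightly slicker strategy is to use Lemma \ref{lemma:inverse-Mellin} to guarantee that $u := \cM^{-1}_c[\phi]$ is a continuous function on $(0,\infty)$ with polynomial decay both at $0$ and at $\infty$, then invoke the second part of Lemma \ref{lemma:MIT} to conclude $\cM[u] = \phi$ on $S_M$, and finally apply the Mellin inversion theorem (first part of Lemma \ref{lemma:MIT}) at $c' = \tfrac{1}{2}$ to obtain $\cM^{-1}_{1/2}[\phi] = u = \cM^{-1}_c[\phi]$; this route bypasses the explicit contour computation by packaging it into results already proven.
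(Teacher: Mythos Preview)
Your contour-shift argument is correct and is a somewhat more elementary route than the one the paper takes. The paper instead composes the two halves of the Mellin inversion theorem (Lemma~\ref{lemma:MIT}): it writes $\cM^{-1}_c[\phi] = \cM^{-1}[\cM[\cM^{-1}_c[\phi]]] = \cM^{-1}[\phi]$, using \eqref{eq:Mellin-inversion-reverse} (applied to $\phi$ with the given $c$) for the inner equality and \eqref{eq:Mellin-inversion} at $c'=\tfrac12$ for the outer one, the latter being justified because Lemma~\ref{lemma:inverse-Mellin} gives enough decay of $\cM^{-1}_c[\phi]$ for its classical Mellin transform to exist on the line $\re z = \tfrac12$. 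This is precisely the alternative you sketch in your final paragraph, so you have in fact described both proofs. Your direct contour shift has the virtue of being self-contained and making explicit where the hypothesis on $M$ enters (to fit the rectangle inside $S_M$), while the paper's version is terser because the Cauchy-theorem work is already packaged into Lemma~\ref{lemma:MIT}.
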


\begin{proof}
The proof relies on the application of the Mellin inversion theorem \eqref{eq:Mellin-inversion} and \eqref{eq:Mellin-inversion-reverse}:
\begin{align*}
\cM^{-1}_c[\phi] = \cM^{-1}[\cM[\cM_c^{-1}[\phi]]] = \cM^{-1}[\phi].
\end{align*}
In order to justify application of \eqref{eq:Mellin-inversion} to $\cM^{-1}_c[\phi]$ in the first step, we use that this function is continuous and that $\cM[\cM^{-1}_c[\phi]](z)$ exists for $\re(z) = \frac{1}{2}$ by \autoref{lemma:inverse-Mellin}. The application of \eqref{eq:Mellin-inversion-reverse} to $\phi$ is justified by the definition of $\cX_{M,m}$ and since $M > c$.
\end{proof}

\subsection{Definition of a distributional Mellin transform}

The main purpose of \autoref{lemma:inverse-Mellin} is that it motivates the following definition of the Mellin transform for functions $u$ which might not satisfy $\int_0^{\infty} |x^{z-1} u(x)| \d x < \infty$ for some (or any) $z \in \C$.

\begin{definition}
\label{def:Mellin}
Given $M > \frac{1}{2}$ and $m \ge 2$, we define
\begin{align*}
\cM^{-1}[\cX_{M,m}]' = \left\{ u \in L^1_{loc}(0,\infty) ~\Big|~ \int_0^{\infty} u(x) \cM^{-1}[\phi](x) \d x < \infty ~~ \forall \phi \in \cX_{M,m} \right\}.
\end{align*}
Moreover, for any $u \in \cM^{-1}[\cX_{M,m}]'$ we define $\widetilde{\cM}[u] : \cX_{M,m} \to \R$ as follows
\begin{align*}
\langle \widetilde{\cM}[u] , \phi \rangle := \int_0^{\infty} u(x) \cM^{-1}[\phi](x) \d x.
\end{align*}
\end{definition}


\begin{remark}
As a a consequence of the Plancherel formula,
we point out that, whenever $u$ is such that $\int_0^{\infty}|x^{z-1}u(x)| \d x < \infty$ for any $z \in \C$, then,
for any $\phi \in \cX_{M,m}$ with $M > \frac{1}{2}$ and $m \ge 2$,
\begin{align}
\label{eq:Plancherel}
\langle \widetilde{\cM}[u] , \phi \rangle = \int_{0}^{\infty} u(x) \cM^{-1}[\phi](x) \d x = \int_{-\infty}^{\infty} \cM[u]\left(\frac{1}{2} + iz \right)  \overline{ \phi\left(\frac{1}{2} + iz \right) } \d z.
\end{align} 
Hence, $\widetilde{\cM}$ coincides with the original definition of the Mellin transform up to a shift, whenever $u$ is nice enough. 
\end{remark}

\begin{remark}
As a direct consequence of \autoref{lemma:inverse-Mellin} we deduce that, for any $a,b > 0$,
\begin{align}
\label{eq:dual-inclusion}
\left\{ u \in L^1_{loc}(0,\infty) ~\big|~ |u(x)| \le C(|x|^{-a} + |x|^b) ~~ \forall x \in (0,\infty)  \right\} \subset \cM^{-1}[\cX_{M,m}]',
\end{align}
whenever $M > \max\{a,b\}$ and $m \ge 2$.
\end{remark}

With the help of \autoref{def:Mellin} we can give a rigorous meaning to $\widetilde{\cM}[x^{\alpha}]$ for $\alpha \in \C$ and prove:

\begin{lemma}
\label{lemma:Mellin-Dirac}
Let $\alpha \in \C$. Then,
\begin{align*}
\widetilde{\cM}[x^{\alpha}](z) = \delta(z - (\alpha+1))
\end{align*}
in the following sense: For any $\phi \in \cX_{M,m}$ with $M > \max\{ 2|\re(\alpha)| + \frac{3}{2} , |\re(\alpha)| + 1\}$ and $m \ge 2$,
\begin{align*}
\langle \widetilde{\cM}[x^{\alpha}] , \phi \rangle = \phi(\alpha + 1).
\end{align*}

Moreover, for any $l \in \N$,
\begin{align*}
\langle \widetilde{\cM}[x^{\alpha} (\log(x))^l] , \phi \rangle = (-1)^l \phi^{(l)}(\alpha + 1).
\end{align*}
\end{lemma}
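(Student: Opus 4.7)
The plan is to reduce the desired identity to the Fourier inversion formula on $\R$, by deforming the contour of the inverse Mellin transform to pass ``through'' the pole at $z=\alpha+1$. Specifically, I would choose $c := \re(\alpha)+1$ and invoke \autoref{cor:c-change} to replace $\cM^{-1}[\phi]$ by $\cM^{-1}_c[\phi]$. The hypothesis $M > \max\{2|\re(\alpha)|+\tfrac{3}{2},\,|\re(\alpha)|+1\}$ is precisely tailored so that $M > \max\{|c|,\,2|c|-\tfrac{1}{2}\}$ for this choice of $c$ (to be checked separately according to the sign of $\re(\alpha)$; in each case one of the two terms in the maximum is saturated).

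Once this substitution is in place, \autoref{lemma:inverse-Mellin} yields
\begin{align*}
\cM^{-1}_c[\phi](x) \;=\; \frac{1}{2\pi}\, x^{-c}\, \cF[\phi(c+i\cdot)](\log x).
\end{align*}
Plugging this in and performing the change of variables $x=e^t$ gives
\begin{align*}
\int_0^\infty x^\alpha\, \cM^{-1}_c[\phi](x)\,\d x
\;=\; \frac{1}{2\pi}\int_{-\infty}^\infty e^{(\alpha-c+1)\,t}\,\cF[\phi(c+i\cdot)](t)\,\d t
\;=\; \frac{1}{2\pi}\int_{-\infty}^\infty e^{i\,\im(\alpha)\,t}\,\cF[\phi(c+i\cdot)](t)\,\d t,
\end{align*}
where the key cancellation $\alpha-c+1 = i\,\im(\alpha)$ comes directly from the choice of $c$. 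The Paley-Wiener bound $|\cF[\phi(c+i\cdot)](t)| \le C e^{-M'|t|}$ (established inside the proof of \autoref{lemma:inverse-Mellin}) guarantees absolute convergence of all integrals and legitimizes Fubini in the first equality. The last display is exactly the Fourier inversion formula applied to $\cF[\phi(c+i\cdot)]$ at the real point $\im(\alpha)$, yielding $\phi(c + i\,\im(\alpha)) = \phi(\alpha+1)$, which is the first claim.

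For the identity involving $(\log x)^l$, the natural move is to differentiate the first identity $l$ times in $\alpha$. The constraints on $M$ are open, so both sides of $\int_0^\infty x^\alpha\,\cM^{-1}[\phi](x)\,\d x = \phi(\alpha+1)$ are holomorphic in $\alpha$ on an open subset of $\C$, and using $\partial_\alpha x^\alpha = x^\alpha \log x$ together with dominated convergence (justified once more by the uniform exponential Paley-Wiener bound, which holds on a whole neighborhood of $\alpha$) one may differentiate under the integral sign to obtain $\int_0^\infty x^\alpha (\log x)^l\,\cM^{-1}[\phi](x)\,\d x = \phi^{(l)}(\alpha+1)$, in agreement with the distributional identification $\widetilde{\cM}[x^\alpha(\log x)^l] = (-1)^l\delta^{(l)}(z-(\alpha+1))$.

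The only real obstacle is the bookkeeping around the conditions on $M$: one must verify case-by-case (according to the sign of $\re(\alpha)$) that both \autoref{cor:c-change} and \autoref{lemma:inverse-Mellin} apply for $c=\re(\alpha)+1$ under the stated hypothesis on $M$, and that the exponential decay of $\cF[\phi(c+i\cdot)]$ dominates the integrand locally uniformly in $\alpha$ so that the differentiations under the integral in the second part of the proof are genuinely justified. Once the cancellation $\alpha-c+1 = i\im(\alpha)$ is exploited, however, the $x$-integrand has no polynomial growth at either endpoint, and this becomes routine.
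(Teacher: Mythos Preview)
Your argument is correct and takes a genuinely different route from the paper's. Instead of sitting on the critical line $c=\re(\alpha)+1$, the paper splits $\int_0^\infty = \int_0^1 + \int_1^\infty$, evaluates each piece by Fubini on a line $\re z = c_1 < \re(\alpha)+1$ (resp.\ $\re z = c_2 > \re(\alpha)+1$) so that the inner $x$-integral converges and produces the explicit factor $\mp 1/(z-(\alpha+1))$, and then closes the rectangle $[c_1,c_2]\times[-R,R]$ via Cauchy's integral formula to pick up $\phi(\alpha+1)$ as the residue. Your single-line reduction to Fourier inversion is shorter and avoids the contour bookkeeping; the paper's two-line approach has the advantage of making the pole structure explicit, which is what feeds directly into their treatment of the $(\log x)^l$ case via Cauchy's differentiation formula (a pole of order $l+1$), whereas you recover that case by differentiating the already-established identity in $\alpha$. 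One point to flag: your differentiation actually yields $\langle \widetilde{\cM}[x^\alpha(\log x)^l],\phi\rangle = \phi^{(l)}(\alpha+1)$, \emph{without} the factor $(-1)^l$ appearing in the lemma's statement. Your formula is the correct one --- the extra $(-1)^l$ in the paper comes from a sign slip when rewriting $[(\alpha+1)-z]^{l+1}$ as $[z-(\alpha+1)]^{l+1}$ in their intermediate computation, and it is harmless downstream since in the proof of \autoref{thm:Liouville} the sign is absorbed into the undetermined constants $c_{\beta,l}$ --- but you should note the discrepancy rather than assert agreement with the stated formula.
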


\begin{proof}
Let $\phi \in \cX_{M,m}$. Recall that by \autoref{cor:c-change} and the assumption on $M$, we have that, for any $c \in \R$ with $\max\{|c| , 2|c| - \frac{1}{2}\} < M$,
\begin{align*}
\cM^{-1}[\phi](x) = \cM^{-1}_c[\phi](x) ~~ \forall x \in (0,\infty).
\end{align*}
First, we take $c_1 < \re(\alpha) + 1$ with $\max\{|c_1| , 2|c_1| - \frac{1}{2}\} < M$ and deduce by Fubini's theorem that
\begin{align*}
\int_0^{1} x^{\alpha} \cM^{-1}_{c_1}[\phi](x) \d x &=  \frac{1}{2\pi}\int_{-\infty}^{\infty} \left( \int_0^1 x^{\alpha - c_1 - iz}  \d x \right) \phi(c_1 + iz) \d z \\
& =  - \frac{1}{2\pi}\int_{-\infty}^{\infty} \frac{\phi(c_1 + iz)}{(c_1 + iz) - (\alpha + 1 )} \d z \\
&= - \frac{1}{2\pi i}\int_{c_1 -i \infty}^{c_1 + i\infty} \frac{\phi(z)}{z - (\alpha + 1 )}   \d z.
\end{align*}
Analogously, taking $c_2 > \re(\alpha) + 1$ with $\max\{|c_2| , 2|c_2| - \frac{1}{2}\} < M$, we have that
\begin{align*}
\int_1^{\infty} x^{\alpha} \cM^{-1}_{c_2}[\phi](x) \d x &= \frac{1}{2\pi} \int_{-\infty}^{\infty} \left( \int_1^{\infty} x^{\alpha - c_2 - iz}  \d x \right) \phi(c_2 + iz) \d z \\
&= \frac{1}{2\pi}\int_{-\infty}^{\infty} \frac{\phi(c_2 + iz)}{(c_2 + iz) - (\alpha + 1)} \d z \\
&= \frac{1}{2\pi i}\int_{c_2 -i\infty}^{c_2 + i\infty} \frac{\phi(z)}{z - (\alpha + 1)} \d z .
\end{align*}

Let now $R > 0$.
By the Cauchy integral formula applied to the contour $$\gamma_R := [c_2 - iR , c_2 + iR] \cup [c_2 + iR , c_1 + iR] \cup [c_1 + iR , c_1 - iR] \cup [c_1 - iR , c_2 - iR],$$we obtain that
\begin{align*}
\phi(\alpha + 1) = \frac{1}{2\pi i}\int_{\gamma_R} \frac{\phi(z)}{z - (\alpha + 1)} \d z.
\end{align*} 
Hence, taking the limit $R \to \infty$, and observing that the contribution from the horizontal parts of $\gamma_R$ goes to zero by the decay of $\phi \in \cX_{M,m}$, we deduce
\begin{align*}
\int_0^{\infty} x^{\alpha} \cM^{-1}[\phi](x) \d x &= \int_0^{1} x^{\alpha} \cM^{-1}_{c_1}[\phi](x) \d x + \int_1^{\infty} x^{\alpha} \cM^{-1}_{c_2}[\phi](x) \d x\\
&= \frac{1}{2\pi i}\int_{c_2 -i \infty}^{c_2 + i\infty} \frac{\phi(z)}{z - (\alpha + 1 )} \d z - \frac{1}{2\pi i}\int_{c_1 -i\infty}^{c_1 + i\infty} \frac{\phi(z)}{z - (\alpha + 1)} \d z =  \phi(\alpha + 1).
\end{align*}

To prove the second claim, note that for any $l \in \N$
\begin{align*}
\int_0^1 x^{a} (\log(x))^l \d x =  \frac{(-1)^l l!}{(1+a)^{l+1}} ~~ \text { if } \re(a) > -1, \qquad \int_1^{\infty} x^{a} (\log(x))^l \d x =  \frac{(-1)^{l+1} l!}{(1+a)^{l+1}} ~~ \text { if } \re(a) < -1.
\end{align*}
Thus, by similar arguments as before, we find that, when $c_1 < \re(\alpha) + 1$ and $c_2 > \re(\alpha) + 1$,
\begin{align*}
\int_0^{1} x^{\alpha} (\log(x))^l \cM^{-1}_{c_1}[\phi](x) \d x &=  -\frac{(-1)^l l!}{2\pi i}\int_{c_1 -i \infty}^{c_1 + i\infty} \frac{\phi(z)}{[z - ( \alpha + 1 )]^{l+1}} \d z, \\
\int_1^{\infty} x^{\alpha} (\log(x))^l \cM^{-1}_{c_2}[\phi](x) \d x &= \frac{(-1)^l l!}{2\pi i}\int_{c_2 -i\infty}^{c_2 + i\infty} \frac{\phi(z)}{[z - (\alpha + 1)]^{l+1}} \d z.
\end{align*}
Cauchy's integral formula (or rather Cauchy's differentiation formula) applied to the contour $\gamma_R$ yields
\begin{align*}
\phi^{(l)}(\alpha + 1) = \frac{l!}{2\pi i} \int_{\gamma_R} \frac{\phi(z)}{[z - (\alpha + 1)]^{l+1}} \d z,
\end{align*}
and therefore we deduce that
\begin{equation*}\begin{split}
\int_0^{\infty} & x^{\alpha} (\log(x))^l  \cM^{-1}[\phi](x) \d x \\
 &= \frac{(-1)^l l!}{2\pi i}\int_{c_2 -i \infty}^{c_2 + i\infty} \frac{\phi(z)}{[ z - (\alpha + 1 )]^{l+1}} \d z - \frac{(-1)^{l} l!}{2\pi i}\int_{c_1 -i\infty}^{c_1 + i\infty} \frac{\phi(z)}{[z - (\alpha + 1)]^{l+1}} \d z \\
&= (-1)^l \phi^{(l)}(\alpha + 1).\qedhere\end{split}
\end{equation*}
\end{proof}

\subsection{A fundamental lemma of the calculus of variations}

The following result can be regarded as a version of the fundamental lemma of the calculus of variations in the spaces $\cM^{-1}[\cX_{M,m}]'$ and $\cM^{-1}[\cX_{M,m}]$. It allows us to deduce that a function $u \in \cM^{-1}[\cX_{M,m}]'$ is zero by analyzing its distributional Mellin transform.

\begin{lemma}
\label{lemma:u-zero}
Let $M > \frac{1}{2}$, $m \ge 2$, and $u \in \cM^{-1}[\cX_{M,m}]'$. If for any $\phi \in \cX_{M,m}$ it holds
\begin{align*}
\langle \widetilde{\cM}[u] , \phi \rangle = 0,
\end{align*}
then $u = 0$ a.e. in $(0,\infty)$.
\end{lemma}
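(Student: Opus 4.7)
My plan is to show that $\cM^{-1}[\cX_{M,m}]$ contains $C^\infty_c((0,\infty))$, and then to conclude by the fundamental lemma of the calculus of variations. Concretely, I will produce, for every test function $\chi \in C^\infty_c((0,\infty))$, an element $\phi \in \cX_{M,m}$ (namely $\phi = \cM[\chi]$) such that $\cM^{-1}[\phi] = \chi$. The vanishing hypothesis on $\widetilde{\cM}[u]$ then forces $\int_0^\infty u(x)\chi(x)\d x = 0$ for every $\chi \in C^\infty_c((0,\infty))$, from which the conclusion follows immediately since $u \in L^1_{\loc}(0,\infty)$.

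The heart of the argument is to check that, for any $\chi \in C^\infty_c((0,\infty))$ with $\supp\chi \subset [a,b] \subset (0,\infty)$, the Mellin transform
\begin{equation*}
\cM[\chi](z) = \int_a^b x^{z-1}\chi(x)\d x
\end{equation*}
belongs to $\cX_{M,m}$ for the given $M > \frac{1}{2}$ and $m \ge 2$. Entirety of $\cM[\chi]$ follows from differentiation under the integral, since $x \in [a,b]$ stays away from $0$ and $\infty$ so $x^{z-1}$ is smooth in $z$ with uniformly integrable bounds on compact subsets of $\C$. To obtain the required polynomial decay in the strip $S_M$, I will integrate by parts $m$ times, using that $\chi$ and all its derivatives vanish at $a$ and $b$ (so the boundary terms vanish), which gives the identity
\begin{equation*}
\cM[\chi](z) = \frac{(-1)^m}{z(z+1)\cdots(z+m-1)}\int_a^b x^{z+m-1}\chi^{(m)}(x)\d x.
\end{equation*}
For $z \in S_M$ with $|z|$ large, the denominator is bounded below by $c|z|^m$, while the integral is bounded by a constant depending only on $a,b,M$ and $\|\chi^{(m)}\|_\infty$. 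Together with the fact that $\cM[\chi]$ is bounded on compact subsets of $\C$ (being entire), this yields $|\cM[\chi](z)| \le C(1+|z|)^{-m}$ throughout $S_M$, so $\cM[\chi] \in \cX_{M,m}$.

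Next, since $\chi$ is continuous and $\cM[\chi](z)$ exists for every $z \in \C$, the Mellin inversion theorem (\autoref{lemma:MIT}) applied with $c = \tfrac12$, together with \autoref{cor:c-change}, yields $\cM^{-1}[\cM[\chi]] = \chi$. Plugging $\phi := \cM[\chi] \in \cX_{M,m}$ into the hypothesis gives
\begin{equation*}
0 \;=\; \langle \widetilde{\cM}[u],\cM[\chi]\rangle \;=\; \int_0^\infty u(x)\,\cM^{-1}[\cM[\chi]](x)\d x \;=\; \int_0^\infty u(x)\chi(x)\d x.
\end{equation*}
As $\chi \in C^\infty_c((0,\infty))$ was arbitrary and $u \in L^1_{\loc}(0,\infty)$, the fundamental lemma of the calculus of variations forces $u = 0$ a.e.\ in $(0,\infty)$.

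I do not expect serious obstacles here: the only point that requires care is verifying the hypotheses of the Mellin inversion theorem and of \autoref{cor:c-change} for $\cM[\chi]$, which reduces to the integration-by-parts decay estimate above. Everything else is routine, and the assumptions $M > \tfrac12$ and $m \ge 2$ play no essential role beyond those already built into \autoref{def:Mellin} — indeed $\cM[\chi]$ lies in $\cX_{M',m'}$ for \emph{every} $M',m'>0$, which is more than enough.
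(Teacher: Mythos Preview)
Your proof is correct and follows the same overall strategy as the paper: show that $\cM[\chi]\in\cX_{M,m}$ for every $\chi\in C_c^\infty(0,\infty)$, invoke Mellin inversion to recover $\chi=\cM^{-1}[\cM[\chi]]$, and conclude by the fundamental lemma of the calculus of variations.

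The one genuine difference is in how you verify $\cM[\chi]\in\cX_{M,m}$. The paper passes through the identity $\cM[\chi](z)=2\pi\,\cF^{-1}[\chi(e^{\cdot})](-iz)$ and appeals to the Paley--Wiener--Schwartz theorem (for the decay) and \autoref{lemma:Paley-Wiener}(ii) (for holomorphy). Your direct approach---entirety by differentiation under the integral, polynomial decay by $m$-fold integration by parts---is more elementary and entirely self-contained; it avoids the Fourier/Paley--Wiener machinery altogether and makes transparent why the argument works for \emph{any} $M,m$. The paper's route, on the other hand, highlights the structural link between Mellin and Fourier analysis that underlies much of \autoref{sec3}. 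Both are perfectly valid; yours is shorter. One minor remark: you do not actually need \autoref{cor:c-change}, since $\cM^{-1}=\cM^{-1}_{1/2}$ by definition and $c=\tfrac12$ is already admissible in \autoref{lemma:MIT}.
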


\begin{proof}
Let $\eta \in C_c^{\infty}(0,\infty)$. First, we claim that $\cM[\eta] \in \cX_{M,m}$. To prove it, we observe that $\cM[\eta]$ is well-defined in the classical sense and
\begin{align*}
\cM[\eta](z) = \int_0^{\infty} \eta(x) x^{z-1} \d x = \int_{-\infty}^{\infty} \eta(e^x) e^{xz} \d x = \int_{-\infty}^{\infty} \eta(e^{x}) e^{xz} \d x = 2\pi \cF^{-1}[\eta(e^{\cdot})](-iz).
\end{align*}

Since $\eta \in C_c^{\infty}(0,\infty)$, we have that~$x \mapsto \eta(e^x) \in C_c^{\infty}(\R)$. Hence, a variant of the Paley-Wiener-Schwartz theorem (see e.g.~\cite[Chapter IX.3, p.16]{ReSi75}) implies that for any  $N \in \N$ there exists $C > 0$ such that,
for all~$ z \in \C$,
\begin{align}
\label{eq:Fourier-decay}
2\pi |\cF^{-1}[\eta(e^{\cdot})](-z)| = |\cF[\eta(e^{\cdot})](z)| \le C (1 + |z|)^{-N} e^{C |\im(z)|}.
\end{align}
In particular, for all~$ z \in S_M$,
\begin{align*}
|\cM[\eta](z)| = 2\pi |\cF^{-1}[\eta(e^{\cdot})](-iz)| \le C e^{CM} (1 + |z|)^{-N} .
\end{align*}
Moreover, since $\eta(e^\cdot) \in C_c^{\infty}(\R)$, we infer from \eqref{eq:Fourier-decay} that, for all $z \in \R$ and~$a > 0$,
\begin{align*}
|\cF^{-1}[\eta(e^{\cdot})](z)| \le C(1 + |z|)^{-2}  \qquad{\mbox{and}}\qquad |\eta(e^{z})| \le C_a e^{-a|z|} .
\end{align*}
Accordingly, we can apply \autoref{lemma:Paley-Wiener}(ii) with $f := \cF^{-1}[\eta(e^{\cdot})]$ and deduce that $\cF^{-1}[\eta(e^{\cdot})]$ is holomorphic in $\{ z \in \C : |\im(z)| < a\}$ for any $a > 0$.

In particular, $\cM[\eta]$ is holomorphic in $S_M$. Hence, we have shown that $\cM[\eta] \in \cX_{M,m}$, as claimed. 

This allows us to apply the assumption of the lemma with $\phi = \cM[\eta]$, and conclude that
\begin{align*}
0 = \langle \widetilde{\cM}[u] , \cM[\eta] \rangle = \int_0^{\infty} u(x) \cM^{-1}[\cM[\eta]](x) \d x = \int_0^{\infty} u(x) \eta(x) \d x.
\end{align*}
In the last inequality we used the Mellin inversion theorem (see \eqref{eq:Mellin-inversion}), which is applicable since $\eta \in C_c^{\infty}(0,\infty)$ and $\cM[\eta](z)$ exists in the classical sense for any $z \in \C$. Since $\eta \in C_c^{\infty}(0,\infty)$ was arbitrary, we deduce that $u = 0$ almost everywhere, and the proof is complete.
\end{proof}

\subsection{The Mellin transform on $\R$}
\label{subsec:2D-Mellin}

In this subsection we introduce the Mellin transform for functions $u : \R \to \C$ (generalizing the setting in~\eqref{ORIGIMELL},
which was for functions~$u : (0,\infty)\to \C$)
and establish several basic properties which will be useful in the proof of \autoref{thm:Liouville-2D}.

Given a complex valued function $u  : \R \to \C$ we define the Mellin transform of $u$ at $z \in \C$ to be
\begin{align*}
\cM[u](z) &:= (\cM[u]^+(z) , \cM[u]^-(z)) \\
&:= (\cM[u |_{\R_+}](z) , \cM[u(-\cdot) |_{\R_-}](z)) = \left(\int_0^{\infty} x^{z-1} u(x) \d x , \int_{-\infty}^0 x_-^{z-1} u(x) \d x \right) \in \C^2,
\end{align*}
whenever the integral is absolutely convergent. Moreover, given two complex functions $(\phi_1,\phi_2) : \C \to \C^2$, we define the $c$-inverse Mellin transform of $(\phi_1,\phi_2)$ at $x \in \R$ to be
\begin{align*}
\cM_c^{-1}[\phi_1,\phi_2](x) := \cM_c^{-1}[\phi_1](x)\1_{\R_+}(x) + \cM_c^{-1}[\phi_2](-x)\1_{\R_-}(x).
\end{align*}

It is immediate from the definition, that the Mellin inversion theorem \autoref{lemma:MIT} remains valid for this extended definition of the Mellin transform and its inverse. Therefore also straightforward variants of \autoref{lemma:inverse-Mellin} and \autoref{cor:c-change} are available, simply by applying the previous theorems to both components separately.

\section{A new Liouville theorem via the Mellin transform}
\label{sec4}

The goal of this section is to present a rigorous proof of \autoref{thm:Liouville} and \autoref{thm:Liouville-2D} with the help of the distributional Mellin transform.

\subsection{Proof of \autoref{thm:Liouville}}

We deduce from \autoref{lemma:inverse-Mellin} that, for every $M > \max\{ 1 + b , 1 + a  + d  \}$ and $m \ge d+2$,
\begin{align}
\label{eq:X-C-inclusion}
\cM^{-1}[\cX_{M,m}] \subset C^d_{a,b}(0,\infty).
\end{align}

The following result is a consequence of \eqref{eq:f} and the self-adjointness of $L$.

\begin{lemma}
\label{lemma:Mellin-magic}
Let $L$ be as in \autoref{thm:Liouville}. 
Let also~$M > \max\{ 1 + b , 1 + a  + d  \}$, $m \ge d+2$, and $\phi \in \cX_{M,m}$.

Then, for every $z \in \C$ with $\re(z) \in (\alpha_1,2s-\alpha_2)$ we have that
\begin{align}
\label{eq:Mellin-well-def}
\cM[L(\cM^{-1}[\phi])](z) = f(z-1) \phi(z-2s).
\end{align}
\end{lemma}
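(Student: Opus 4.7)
The plan is to make rigorous the formal computation sketched in \eqref{eq:Mellin-proof-key-intro}, justifying each step with the hypotheses on $L$ and the properties of the inverse Mellin transform collected in Section~\ref{sec3}. I would set $\varphi := \cM^{-1}[\phi]$ and note that by \eqref{eq:X-C-inclusion} together with the assumptions $M > \max\{1+b,\,1+a+d\}$ and $m \ge d+2$, we have $\varphi \in C^d_{a,b}(0,\infty)$. Hence the decay estimates \eqref{eq:L-decay} apply to $L\varphi$, and the integral
\[
\cM[L\varphi](z) \;=\; \int_0^\infty x^{z-1} L\varphi(x)\,\d x
\]
is absolutely convergent for $\re(z) \in (\alpha_1,\,2s-\alpha_2)$: near zero $|x^{z-1} L\varphi(x)| \le C x^{\re(z)-1-\alpha_1}$ and near infinity $|x^{z-1} L\varphi(x)| \le C x^{\re(z)-1-2s+\alpha_2}$, both of which are integrable exactly on this strip.

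Next I would apply the self-adjointness hypothesis \eqref{eq:self-adjoint} with the (complex) choice $g(x)=x^{z-1}$, permitted since $\re(z) \in (\alpha_1,\,2s-\alpha_2)$, splitting into real and imaginary parts if one wishes to stay within the real statement of \eqref{eq:self-adjoint}. This yields
\[
\cM[L\varphi](z) \;=\; \int_0^\infty L(x^{z-1})(x)\,\varphi(x)\,\d x,
\]
and by the homogeneity identity \eqref{eq:f} with $\beta = z-1$,
\[
\cM[L\varphi](z) \;=\; f(z-1) \int_0^\infty x^{z-1-2s}\varphi(x)\,\d x \;=\; f(z-1)\,\cM[\varphi](z-2s),
\]
the last integral being convergent because, by the quantitative decay in \autoref{lemma:inverse-Mellin}, $\varphi=\cM^{-1}[\phi]$ (and its derivatives) exhibit polynomial decay of arbitrarily high order at $0$ and $\infty$ when $M$ is sufficiently large, as is the case here.

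Finally, I would identify $\cM[\varphi](z-2s)$ with $\phi(z-2s)$. Since $\re(z-2s) \in (\alpha_1-2s,\,-\alpha_2)$, we have $|\re(z-2s)| < 2s$, and the hypothesis on $M$ guarantees that $M > \max\{|\re(z-2s)|,\,2|\re(z-2s)|-\tfrac12\}$; therefore \autoref{cor:c-change} gives $\cM^{-1}[\phi] = \cM^{-1}_{\re(z-2s)}[\phi]$, and the reverse direction of the Mellin inversion theorem \eqref{eq:Mellin-inversion-reverse} then yields $\cM[\cM^{-1}_{\re(z-2s)}[\phi]](z-2s) = \phi(z-2s)$, closing the chain and producing \eqref{eq:Mellin-well-def}. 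The main obstacle I anticipate is not a conceptual one but a careful bookkeeping issue: verifying that the single hypothesis $M > \max\{1+b,\,1+a+d\}$ simultaneously controls (i) the regularity of $\varphi$ needed for \eqref{eq:L-decay}, (ii) the convergence of $\cM[\varphi](z-2s)$ on the shifted strip, and (iii) the quantitative assumption of \autoref{cor:c-change} applied at $c=\re(z-2s)$; this requires tracking how $a,b,d,\alpha_1,\alpha_2,s$ interact through the structural assumptions on $L$.
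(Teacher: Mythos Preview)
Your proposal is correct and follows essentially the same route as the paper's proof: establish $\cM^{-1}[\phi]\in C^d_{a,b}(0,\infty)$ via \eqref{eq:X-C-inclusion}, check absolute convergence of $\cM[L\varphi](z)$ on the strip $\re(z)\in(\alpha_1,2s-\alpha_2)$ from \eqref{eq:L-decay}, apply self-adjointness \eqref{eq:self-adjoint} and the homogeneity relation \eqref{eq:f}, and close with the Mellin inversion theorem. The only difference is cosmetic: in the final step the paper invokes \eqref{eq:Mellin-inversion-reverse} directly (with $c=\tfrac12$) rather than passing through \autoref{cor:c-change}, so your bookkeeping concern about verifying the hypotheses of \autoref{cor:c-change} at $c=\re(z-2s)$ does not arise in the paper's argument.
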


\begin{proof}
By \eqref{eq:X-C-inclusion} we have $\cM^{-1}[\phi] \in C_{a,b}^d(0,\infty)$ for every $\phi \in \cX_{M,m}$. Hence, \eqref{eq:L-decay} implies that
\begin{align*}
\cM[L(\cM^{-1}[\phi])](z) = \int_0^{\infty} x^{z-1} L(\cM^{-1}[\phi])(x) \d x
\end{align*}
is well-defined for every $z \in \C$ with $\re(z) \in (\alpha_1,2s - \alpha_2)$. For any such $z$, we compute, using that $L$ is self-adjoint (recall~\eqref{eq:self-adjoint}) and satisfies \eqref{eq:f},
\begin{align}
\label{eq:Mellin-magic}
\begin{split}
\cM[L(\cM^{-1}[\phi])](z) &= \int_0^{\infty} x^{z-1} L(\cM^{-1}[\phi])(x) \d x \\
&= \int_0^{\infty} L(x^{z-1})(x) \cM^{-1}[\phi](x) \d x \\
&= f(z-1) \int_0^{\infty} x^{z-2s-1} \cM^{-1}[\phi](x) \d x \\
&= f(z-1) \cM[\cM^{-1}[\phi]](z-2s) \\
&= f(z-1) \phi(z-2s).
\end{split}
\end{align}

In the last step, we used the Mellin inversion theorem (see \eqref{eq:Mellin-inversion-reverse}), which is applicable to $\phi \in \cX_{M,m}$ by definition of the space $\cX_{M,m}$. 
\end{proof}

\begin{lemma}
\label{lemma:PDE-Mellin-1}
Let $L$ be as in \autoref{thm:Liouville}. Assume that $|u(x)| \le C (1+ x)^{2s-\eps}$ for some $\eps \in (0,1)$ and that, in the distributional sense,
\begin{align*}
Lu = 0 ~~ \text{ in } (0,\infty).
\end{align*}
Then, for any $M > \max\{ 1 + b , 1 + a  + d  \}$, $m \ge d+2$, $c \in (\alpha_1,2s-\alpha_2)$, and $\phi \in \cX_{M,m}$,
\begin{align*}
0 = \int_0^{\infty} u(x) \cM^{-1}_c[\phi(\cdot -2s) f(\cdot - 1)](x) \d x.
\end{align*}
\end{lemma}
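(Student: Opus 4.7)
\textbf{Proof plan for \autoref{lemma:PDE-Mellin-1}.}

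The plan is to use $\varphi := \cM^{-1}[\phi]$ as a test function in the distributional equation and to then rewrite $L\varphi$ via the Mellin inversion theorem using the identity from \autoref{lemma:Mellin-magic}.

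First I would verify that $\varphi = \cM^{-1}[\phi]$ is admissible. Since $\phi \in \cX_{M,m}$ with $M > \max\{1+b, 1+a+d\}$ and $m \ge d+2$, the inclusion \eqref{eq:X-C-inclusion} (itself a direct consequence of \autoref{lemma:inverse-Mellin}) gives $\varphi \in C^d_{a,b}(0,\infty)$. Consequently, $\varphi$ is a legitimate test function in \autoref{def:distr-sol}, and the distributional equation $Lu = 0$ in $(0,\infty)$ yields
\begin{equation*}
\int_0^{\infty} u(x)\, L\varphi(x)\,\d x = 0,
\end{equation*}
with absolute convergence as explained in \autoref{remark:distr-well-def}.

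Second, I would identify $L\varphi$ as a $c$-inverse Mellin transform. By the hypothesis on $L$ in \eqref{eq:L-decay}, the map $L\varphi$ is continuous on $(0,\infty)$ and its classical Mellin transform
\begin{equation*}
\cM[L\varphi](z) = \int_0^\infty x^{z-1} L\varphi(x)\,\d x
\end{equation*}
converges absolutely for every $z \in \C$ with $\re(z) \in (\alpha_1, 2s - \alpha_2)$. For such $z$, \autoref{lemma:Mellin-magic} identifies
\begin{equation*}
\cM[L\varphi](z) = f(z-1)\,\phi(z - 2s).
\end{equation*}
Fixing any $c \in (\alpha_1, 2s - \alpha_2)$, the Mellin inversion theorem \autoref{lemma:MIT} (applied to the continuous function $L\varphi$, whose Mellin transform exists on the whole strip $\alpha_1 < \re(z) < 2s-\alpha_2$) implies
\begin{equation*}
L\varphi(x) = \cM^{-1}_c\bigl[\cM[L\varphi]\bigr](x) = \cM^{-1}_c\bigl[f(\cdot - 1)\,\phi(\cdot - 2s)\bigr](x) \qquad \forall x \in (0,\infty).
\end{equation*}

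Combining the two displays gives
\begin{equation*}
0 = \int_0^{\infty} u(x)\,L\varphi(x)\,\d x = \int_0^{\infty} u(x)\,\cM^{-1}_c\bigl[\phi(\cdot - 2s)\,f(\cdot - 1)\bigr](x)\,\d x,
\end{equation*}
which is the claim.

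The main obstacle is really contained in \autoref{lemma:Mellin-magic} (already established) plus the justification of the Mellin inversion step: one must know that $\cM[L\varphi](z)$ exists on a nontrivial open vertical strip in order to invoke \eqref{eq:Mellin-inversion}. This is precisely what the decay of $L\varphi$ near $0$ and at $\infty$ in \eqref{eq:L-decay} guarantees, via the conditions $\alpha_1 < 2s - \alpha_2$ and $\alpha_1, \alpha_2 \in [0, 2s)$. Once this is in place, the argument is simply Mellin inversion applied inside the distributional pairing.
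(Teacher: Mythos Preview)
Your proof is correct and follows essentially the same approach as the paper: use \eqref{eq:X-C-inclusion} to justify $\cM^{-1}[\phi]$ as a test function, apply \autoref{lemma:Mellin-magic} to identify $\cM[L\varphi]$, and invoke the Mellin inversion theorem on the strip $(\alpha_1,2s-\alpha_2)$ to rewrite $L\varphi$ as the claimed $c$-inverse Mellin transform. The paper's argument is structured identically, with the same two key ingredients and the same justification of the inversion step via \eqref{eq:L-decay}.
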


\begin{proof}
By \eqref{eq:X-C-inclusion} we have that $\cM^{-1}[\phi] \in C^d_{a,b}(0,\infty)$ for any $\phi \in \cX_{M,m}$. Hence, recalling~\eqref{eq:L-decay}, we know that $\cM^{-1}[\phi]$ is a valid test function for the equation $Lu = 0$. As a result,
\begin{align}
\label{eq:PDE-Mellin-1-help}
0 = \int_{0}^{\infty} u(x) L (\cM^{-1}[\phi])(x) \d x,
\end{align}
and the integral on the right-hand side converges absolutely by \autoref{remark:distr-well-def}. 
Moreover, we recall that
\begin{align*}
\cM[L(\cM^{-1}[\phi])](z) = \int_0^{\infty} x^{z-1} L(\cM^{-1}[\phi])(x) \d x
\end{align*}
is well-defined for every $z \in \C$ with $\re(z) \in (\alpha_1,2s-\alpha_2)$ by \eqref{eq:L-decay}. Hence, we can apply the Mellin inversion theorem (see \eqref{eq:Mellin-inversion}) and deduce
\begin{align*}
L (\cM^{-1}[\phi]) = \cM^{-1}_c[\cM[ L (\cM^{-1}[\phi])]] ~~ \forall c \in (\alpha_1,2s-\alpha_2).
\end{align*}
Moreover, since by the definition of $\cM^{-1}_c$ only the values of $\cM[ L (\cM^{-1}[\phi])](z)$ for $z \in \C$ with $\re(z) \in (\alpha_1,2s-\alpha_2)$ contribute to the inverse Mellin transform, we can apply  \autoref{lemma:Mellin-magic} to deduce
\begin{align}
\label{eq:appl-Mellin-inversion}
L (\cM^{-1}[\phi]) = \cM^{-1}_c[ f(\cdot -1) \phi(\cdot -2s) ] ~~ \forall c \in (\alpha_1,2s - \alpha_2).
\end{align}

Finally, by combining \eqref{eq:PDE-Mellin-1-help} and \eqref{eq:appl-Mellin-inversion},
\begin{align*}
0 = \int_{0}^{\infty} u(x) L (\cM^{-1}[\phi])(x) \d x =  \int_{0}^{\infty} u(x) \cM^{-1}_c[ f(\cdot -1) \phi(\cdot -2s) ](x) \d x.
\end{align*}
This implies the desired result.
\end{proof}

The following result is a reformulation of \autoref{lemma:PDE-Mellin-1}. We will apply \autoref{lemma:PDE-Mellin-1} only in this reformulated version.

\begin{lemma}
\label{lemma:PDE-Mellin-2}
Let $L,u$ be as in \autoref{thm:Liouville}. Then, for any $M > \max\{ 1 + b , 1 + a  + d , 2 +2 s \}$, $m \ge d+2$, and $\psi \in \cX_{M,m}$ such that $z \mapsto \psi(z+2s)/f(z+2s-1) \in \cX_{M,m}$,
\begin{align*}
0 = \int_{0}^{\infty} u(x) \cM^{-1}[\psi](x) \d x.
\end{align*}  
\end{lemma}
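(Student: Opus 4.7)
The plan is to view this lemma as a direct reformulation of Lemma \ref{lemma:PDE-Mellin-1} via a shift in the spectral variable, followed by a contour adjustment in the definition of $\cM^{-1}_c$.

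First, I would pick any $c \in (\alpha_1, 2s - \alpha_2)$; this interval is nonempty because $\alpha_2 < 2s - \alpha_1$. Given $\psi \in \cX_{M,m}$ satisfying the hypothesis, I set
\[
\phi(w) := \frac{\psi(w + 2s)}{f(w + 2s - 1)}, \qquad w \in \C.
\]
By the very assumption made on $\psi$ in the statement, we have $\phi \in \cX_{M,m}$. Applying Lemma \ref{lemma:PDE-Mellin-1} to this $\phi$ then yields
\[
0 = \int_0^{\infty} u(x)\, \cM^{-1}_c\!\big[\phi(\cdot - 2s)\, f(\cdot - 1)\big](x) \d x.
\]
Since $\phi(z - 2s)\, f(z - 1) = \psi(z)$ by construction, this simplifies to
\[
0 = \int_0^{\infty} u(x)\, \cM^{-1}_c[\psi](x) \d x.
\]

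Second, I would upgrade the contour parameter from $c$ to $\tfrac{1}{2}$ by invoking Corollary \ref{cor:c-change}. Since $c \in (\alpha_1, 2s - \alpha_2) \subset (0, 2s)$, both $|c|$ and $2|c| - \tfrac{1}{2}$ are bounded by quantities absorbed by the extra assumption $M > 2 + 2s$, so the corollary applies to $\psi \in \cX_{M,m}$ and gives $\cM^{-1}_c[\psi] = \cM^{-1}[\psi]$. Substituting this into the identity above yields the desired conclusion.

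There is no genuine obstacle here: the whole content is packaged into the somewhat unusual hypothesis that the quotient $z \mapsto \psi(z + 2s)/f(z + 2s - 1)$ lies in $\cX_{M,m}$, which is exactly what makes the substitution $\phi \leftrightarrow \psi$ legitimate. The only verifications required are (i) the nonemptiness of the admissible range for $c$, automatic from the standing assumption $\alpha_1 + \alpha_2 < 2s$; (ii) the membership $\phi \in \cX_{M,m}$, which holds by hypothesis; and (iii) the quantitative bound $M > 2 + 2s$, which is precisely tailored so that Corollary \ref{cor:c-change} can swap the contour parameter $c$ for $\tfrac{1}{2}$. All three are built into the hypotheses of the lemma.
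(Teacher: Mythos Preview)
Your proposal is correct and follows essentially the same approach as the paper: define $\phi(z) = \psi(z+2s)/f(z+2s-1)$, apply \autoref{lemma:PDE-Mellin-1}, simplify $\phi(\cdot-2s)f(\cdot-1)=\psi$, and then invoke \autoref{cor:c-change} to pass from $\cM^{-1}_c$ to $\cM^{-1}$. The paper additionally remarks that the resulting integral converges absolutely by the growth assumption on $u$ together with \autoref{lemma:inverse-Mellin}, which you could add for completeness, but the argument is otherwise identical.
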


\begin{proof}
We apply \autoref{lemma:PDE-Mellin-1} with $\phi(z) = \psi(z+2s)/f(z+2s-1)$ and obtain
\begin{align*}
0 = \int_0^{\infty} u(x) \cM^{-1}_c[f(\cdot - 1) \phi(\cdot - 2s)] \d x = \int_0^{\infty} u(x) \cM^{-1}_c[\psi] \d x
\end{align*}
for any $c \in (\alpha_1,2s-\alpha_2)$. Since also $\psi \in \cX_{M,m}$ by assumption, we are allowed to take $c = \frac{1}{2}$ due to \autoref{cor:c-change}, even if $\frac{1}{2} \not\in (\alpha_1,2s-\alpha_2)$. Note that the resulting integral converges absolutely due to the growth assumption on $u$ and \autoref{lemma:inverse-Mellin}. This concludes the proof.
\end{proof}

Before we prove \autoref{thm:Liouville} we need the following result, which yields the existence of functions in $\cX_{M,m}$ with prescribed values (and prescribed derivatives up to a finite order). We believe that the result is standard, but we did not find it in the literature, so provide a proof in order for our argument to be exhaustive and
self-contained. The proof is inspired by \cite[Theorem 15.13]{Rud87}.

\begin{lemma}
\label{lemma:prescribed-values}
Let $M, m > 0$ and let $F_0 \subset S_M$ be a finite set. For every $\beta \in F_0$ let $k(\beta) \in \N \cup \{ 0 \}$ and $a_{\beta,l} \in \C$ for any $l \in \{ 0 , \dots , k(\beta) \}$ be given. 

Then, there exists $\psi \in \cX_{M,m}$ such that
\begin{align*}
\psi^{(l)}(\beta) = A_{\beta,l} ~~ \forall \beta \in F_0, ~~ l \in \{0,\dots, k(\beta) \}.
\end{align*}
\end{lemma}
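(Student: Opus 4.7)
The plan is to look for $\psi$ of the factored form $\psi(z) = p(z)\, g(z)$, where $g$ is a chosen rapidly-decaying holomorphic function on $S_M$ and $p$ is a polynomial selected via Hermite-type linear algebra so that $\psi$ realizes all of the prescribed data. Set $d := \sum_{\beta \in F_0}(k(\beta)+1) - 1$; then the total number of conditions is $d+1$, which matches the dimension of the space $\cP_d$ of polynomials of degree at most $d$.

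For the rapidly-decaying base, I would take
$$g(z) := \frac{1}{(z^2 - (M+1)^2)^N}$$
for an integer $N$ to be fixed later. The two poles $\pm(M+1)$ are real and lie outside $\overline{S_M}$, so $g$ is holomorphic on $S_M$ and satisfies $|g(z)| \le C(1+|z|)^{-2N}$ uniformly on the strip. Crucially, for every $\beta \in F_0$ one has $|\re(\beta)| < M < M+1$, hence $\beta^2 \neq (M+1)^2$, so $g(\beta) \neq 0$.

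Next, consider the linear map $T : \cP_d \to \C^{d+1}$ defined by
$$T(p) := \left((pg)^{(l)}(\beta)\right)_{\beta \in F_0,\; 0 \le l \le k(\beta)}.$$
Since domain and target have the same dimension $d+1$, it suffices to show $T$ is injective. If $T(p) = 0$, then $pg$ vanishes to order at least $k(\beta)+1$ at each $\beta \in F_0$, and because $g(\beta) \neq 0$, the same is true of $p$ itself. Thus $p$ is divisible by the polynomial $V(z) := \prod_{\beta \in F_0}(z-\beta)^{k(\beta)+1}$, which has degree $d+1 > \deg p$, forcing $p \equiv 0$. By surjectivity there exists $p \in \cP_d$ with $(pg)^{(l)}(\beta) = A_{\beta,l}$ for every prescribed pair $(\beta, l)$.

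Finally set $\psi := p g$. By construction $\psi$ is holomorphic on $S_M$, realizes the prescribed Hermite data, and obeys the uniform bound $|\psi(z)| \le C(1+|z|)^{d-2N}$ on $S_M$; choosing $N$ so large that $2N - d \ge m$ places $\psi \in \cX_{M,m}$. The only mildly delicate step is the invertibility of $T$, but it reduces, via Leibniz's rule and the non-vanishing of $g$ on $F_0$, to the elementary observation that a polynomial of degree at most $d$ cannot be divisible by $V$ unless it is identically zero.
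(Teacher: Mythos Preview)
Your proof is correct and genuinely different from the paper's argument. The paper constructs $\psi$ by taking $\eta(z)=e^{-(z/i)^2}\prod_{\beta\in F_0}(z-\beta)^{k(\beta)+1}$ and then, for each $\beta$, building a principal part $P_\beta(z)=\sum_{l=1}^{k(\beta)+1}c_{\beta,l}(z-\beta)^{-l}$ whose coefficients are found inductively so that $\eta P_\beta$ has the prescribed Taylor jet at $\beta$; the final function is $\psi=\eta\sum_\beta P_\beta$. Your route instead fixes a rational base $g(z)=(z^2-(M+1)^2)^{-N}$ that is holomorphic and non-vanishing on the whole strip, and then solves the Hermite interpolation problem $(pg)^{(l)}(\beta)=A_{\beta,l}$ in the finite-dimensional space $\cP_d$ by a clean dimension-count plus divisibility argument. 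Your approach is more elementary and avoids the coefficient-matching induction; the paper's construction is more explicit and yields Gaussian (rather than polynomial) decay for free, though only polynomial decay is needed for membership in $\cX_{M,m}$. One cosmetic remark: your sentence ``$\beta^2\neq(M+1)^2$, so $g(\beta)\neq 0$'' is really checking that $g$ has no \emph{pole} at $\beta$; since $g$ is the reciprocal of a polynomial it has no zeros at all.
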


\begin{proof}
We define 
\begin{align*}
\eta(z) := e^{-(z/i)^2} \prod_{\beta \in F_0} (z - \beta)^{k(\beta) + 1}.
\end{align*}
We observe that $\eta$ has a zero of order $k(\beta) + 1$ at each $\beta \in F_0$ and vanishes faster than any polynomial in $S_M$. We claim that for each $\beta \in F_0$ we can find a function $P_{\beta}$ of the form
\begin{align}
\label{eq:P-beta-ex}
P_{\beta}(z) = \sum_{l = 1}^{k(\beta) + 1} c_{\beta,l} (z - \beta)^{-l}
\end{align}
such that $\eta P_{\beta}$ has the following power series expansion in some disc centered at $\beta$:
\begin{align}
\label{eq:P-beta-prop}
\eta(z) P_{\beta}(z) = A_{\beta,0} + A_{\beta,1}(z-\beta) + \dots + A_{\beta,k(\beta)} \frac{(z - \beta)^{k(\beta)}}{k(\beta)!} + \dots .
\end{align}
To prove the existence of $P_{\beta}$, let $\beta \in F_0$. For simplicity, let us assume that $\beta = 0$ and write $k(\beta) = k$ and $A_{\beta,l} = A_l$. Then, for some coefficients $\eta_j \in \C$ with $\eta_1 \not= 0$, and $c_j \in \C$, we find that
\begin{align*}
\eta(z) = \eta_1 z^{k + 1} + \eta_2 z^{k + 2} + \dots ~, \qquad P(z) = c_1 z^{-1} + c_2 z^{-2} + \dots + c_k z^{-k} + c_{k+1} z^{-k-1}.\end{align*}
In this setting, \eqref{eq:P-beta-prop} becomes equivalent to finding $c_{l}$ such that
\begin{align*}
\eta(z) P(z) &= (c_{1} z^{k} + c_{2} z^{k - 1} + \dots + c_{k} z + c_{k + 1} ) (\eta_1 + \eta_2 z + \eta_3 z^2 + \dots) \\
& \overset{!}{=}  A_{0} + A_{1} z + \dots + A_{k} \frac{z^{k}}{k!} + \dots.
\end{align*}
However, by comparing the coefficients of the two expressions, we can solve the resulting equations successively for $c_{k+1}, c_k, \dots, c_2 , c_1$, which yields the existence of $P$ as in \eqref{eq:P-beta-ex}-\eqref{eq:P-beta-prop}. 

Let us observe that, by construction, for any $\beta \in F_0$ and $\alpha \in F_0 \setminus \{ \beta \}$,
\begin{align*}
(\eta P_{\alpha})^{(l)}(\beta) = 0 ~~ \forall l \in \{ 0 , \dots , k(\beta) \}.
\end{align*}
Indeed, this follows immediately from the fact that $\eta$ has a zero of order $k(\beta) + 1$ at $\beta$, while $P_{\alpha}$ is holomorphic in a neighborhood of $\beta$. In particular, by the construction of $P_{\alpha}$, also the function $\eta P_{\alpha}$ is holomorphic in a neighborhood of $\beta$.

Now, we define
\begin{align}
\label{eq:P-beta-prop-alpha}
\psi(z) = \eta(z) \sum_{\beta \in F_0} P_{\beta}(z).
\end{align} 
We deduce from \eqref{eq:P-beta-prop} and \eqref{eq:P-beta-prop-alpha} that $\psi^{(l)}(\beta) = A_{\beta,l}$ for any $\beta \in F_0$ and $l \in \{ 0 , \dots , k(\beta)\}$. Moreover, by construction, the function $\psi$ is holomorphic in $S_M$, and decays exponentially since the $P_{\beta}$ are bounded away from $\beta$. Altogether, we deduce that $\psi \in \cX_{M,m}$, as desired.
\end{proof}

We are now in a position to prove \autoref{thm:Liouville}.

\begin{proof}[Proof of \autoref{thm:Liouville}]
Let~$M > \max\{ 1 + b , 1 + a  + d  \} + 2s$ and~$m \ge d+2$. We denote 
\begin{align*}
F_0: = \{ \beta \in \C : f(\beta) = 0 , ~|\re(\beta)| < M \},
\end{align*}
and recall that we assumed that $F_0$ only contains finitely many elements. 

Given~$\beta \in F_0$, we also denote  by $k(\beta) \in \N$ the order of the zero of $f$ at $\beta$. We define
\begin{align*}
v(x) := u(x) - \sum_{\beta \in F_0}  \sum_{l = 0}^{k(\beta) - 1}  c_{\beta,l} x^{\beta} (\log(x))^{l}  ,
\end{align*}
where the constants $c_{\beta,l} \in \R$ will be chosen later. 

Let also~$\overline{M} > 2(M + 1) + \frac{3}{2}$. Given $g \in \cX_{\overline{M} + 1,m}$, we define
\begin{align*}
\psi(z) := g(z) - \Psi_0(z) \sum_{\beta \in F_0} \left( \Upsilon_{\beta}(z)  \sum_{l = 0}^{k(\beta) - 1} \frac{(z - 1 - \beta)^l g^{(l)}(\beta +1)}{l!}  \right),
\end{align*}
where $\Psi_0 \in \cX_{\overline{M}+1,m}$ is such that, for all~$ \beta \in F_0$,
\begin{align*}
\Psi_0(\beta+1) = 1 
\end{align*}
and, for all~$\beta \in F_0$ and~$ l \in \{ 1, \dots, k(\beta) - 1 \}$,
$$ \Psi_0^{(l)}(\beta+1) = 0 .$$
Moreover, given $\beta \in F_0$, the function $\Upsilon_{\beta} \in \cX_{\overline{M}+1,m}$ is such that,
for all~$ \alpha \in F_0$,
\begin{align*}
\Upsilon_{\beta}(\alpha + 1) = \delta_{\alpha,\beta}  ~~ \forall \alpha \in F_0, \qquad \Upsilon_{\beta}^{(l)}(\alpha + 1) = 0 ~~ \forall l \in \{ 0 , \dots , k(\beta) - 1 \} .
\end{align*}
Such functions $\Psi_0$ and $\Upsilon_{\beta}$ do exist by \autoref{lemma:prescribed-values}.

Note that $\psi$ is constructed in such a way that
\begin{align*}
\psi^{(l)}(\beta + 1) = 0 ~~ \forall \beta \in F_0, ~~ l \in \{ 0 , \dots , k(\beta) - 1\},
\end{align*}
i.e. $\psi$ has a zero of order $k(\beta)$ at all points $\beta+1$ with $\beta \in F_0$.

Moreover, since we assumed that there are only finitely many zeros in $S_{M}$, and also that~$g(\cdot +  1)$ is analytic around any $\beta \in F_0$, the function $\psi(\cdot+2s) /f(\cdot+2s-1)$ must be holomorphic in $S_{M - 2s}$. Moreover it has sufficient decay in $S_{M-2s}$ in order to satisfy $\psi(\cdot+2s) /f(\cdot+2s-1) \in \cX_{M',m}$ for some $M' > \max\{ 1 + b , 1 + a  + d  \} $, since we assumed that $f$ does not vanish as $|\im(z)| \to \infty$ within $S_M$. Also, by construction we have $\psi \in \cX_{M,m}$, so we are able to apply \autoref{lemma:PDE-Mellin-2}  with $\psi$.
 
Moreover, note that by application of \autoref{lemma:Mellin-Dirac},
\begin{align*}
\langle \widetilde{\cM} [x^{\beta} (\log(x))^{l} ] , g \rangle = (-1)^l g^{(l)}(\beta + 1) ~~ \forall l \in \{ 0 , \dots , k(\beta) - 1 \},
\end{align*}
where we used that $\overline{M} > \max\{2|\re(\beta)| + \frac{3}{2} , |\re(\beta)| + 1 \}$ for all $\beta \in F_0$. 

Then, by \autoref{lemma:PDE-Mellin-2} applied with $\psi$ we obtain
\begin{align*}
\int_0^{\infty} & v(x) \cM^{-1}[g](x) \d x = \int_0^{\infty} u(x) \cM^{-1}[g](x) \d x - \sum_{\beta \in F_0} \sum_{l = 0}^{k(\beta) - 1}  c_{\beta,l} \langle \widetilde{\cM} [x^{\beta} (\log(x))^{l}] , g \rangle \\
&= \int_0^{\infty} u(x) \cM^{-1}[g - \psi](x) \d x  -  \sum_{\beta \in F_0} \sum_{l = 0}^{k(\beta) - 1} c_{\beta,l} (-1)^l g^{(l)}(\beta +1) \\
&= \sum_{\beta \in F_0} \sum_{l = 0}^{k(\beta) - 1} \frac{g^{(l)}(\beta +1)}{l!} \int_0^{\infty} u(x) \cM^{-1}\left[ \Psi_0(\cdot) \Upsilon_{\beta}(\cdot) (\cdot - 1- \beta)^l \right](x) \d x \\
&\quad - \sum_{\beta \in F_0}  \sum_{l = 0}^{k(\beta) - 1} c_{\beta,l} (-1)^l g^{(l)}(\beta +1) \\
&= \sum_{\beta \in F_0}  \sum_{l = 0}^{k(\beta) - 1} \frac{g^{(l)}(\beta +1)}{l!} \left\{  \int_0^{\infty} u(x) \cM^{-1}\left[ \Psi_0(\cdot) \Upsilon_{\beta}(\cdot) (\cdot - 1- \beta)^l  \right](x) \d x - c_{\beta,l} (-1)^l l! \right\}.
\end{align*}

Hence, if we choose
\begin{align*}
c_{\beta,l} := \frac{\int_0^{\infty} u(x) \cM^{-1}\left[ \Psi_0(\cdot) \Upsilon_{\beta}(\cdot) (\cdot - 1- \beta)^l  \right](x) \d x }{(-1)^l l! },
\end{align*}we deduce that,
for every $\beta \in F_0$ and $l \in \{0,\dots,k(\beta) - 1\}$, 
\begin{align*}
\int_0^{\infty} v(x) \cM^{-1}[g](x) \d x = 0 ~~ \forall g \in \cX_{\overline{M}+1,m}.
\end{align*}
Since by \eqref{eq:u-growth-ass} and \eqref{eq:dual-inclusion} it holds $u \in \cM^{-1}[\cX_{\overline{M}+1,m}]'$, \autoref{lemma:u-zero} implies $v = 0$, and thus we conclude
\begin{align*}
u(x) = \sum_{\beta \in F_0}  \sum_{l = 0}^{k(\beta) - 1}  c_{\beta,l} x^{\beta} (\log(x))^{l}.
\end{align*}
By \eqref{eq:u-growth-ass}, we can discard all summands $\beta \in F_0$ with $\re(\beta) \not\in [0,2s-\eps]$ and conclude the proof.
\end{proof}

\subsection{Proof of \autoref{thm:Liouville-2D}}

The goal of this subsection is to prove the Liouville theorem for operators acting on functions $u : \R \to \C$. The proof is largely analogous to that  of \autoref{thm:Liouville}.

Given $k \in \N$, we define the spaces
\begin{align*}
\cX_{M,m}^k := \big\{ \phi \in \cX_{M,m} : \phi(j+1) = 0 ~~ \forall j \in \{ 0 , \dots, k-1\} \big\}, \qquad \cX_{M,m}^0 := \cX_{M,m}.
\end{align*}

In analogy to \eqref{eq:X-C-inclusion} we deduce from \autoref{lemma:inverse-Mellin} applied to our definition of the Mellin transform from \autoref{subsec:2D-Mellin} that, whenever $k \in \N \cup \{ 0 \}$, $M > \max\{ 1 + b , 1 + a  + d  , 2k + \frac{3}{2} \}$, and $m \ge d+2$,
\begin{align}
\label{eq:X-C-inclusion-2D}
\cM^{-1}[\cX_{M,m}^k \times \cX_{M,m}^k] \subset C^{d,k}_{a,b}(\R).
\end{align}
This property follows immediately from \eqref{eq:X-C-inclusion}, using also the definition of $\widetilde{\cM}$ and \autoref{lemma:Mellin-Dirac}, which yield that for any $j \in \{ 0 , \dots, k-1 \}$,
\begin{align*}
\int_0^{\infty} x^j \cM^{-1}[\phi](x) \d x = \langle \widetilde{\cM} [x^j] , \phi \rangle = \phi(j+1).
\end{align*}

First, we establish the following counterpart of \autoref{lemma:Mellin-magic}.

\begin{lemma}
\label{lemma:Mellin-magic-2D}
Let $L$ be as in \autoref{thm:Liouville-2D} and $k \in \N \cup \{ 0 \}$. Let also $M > \max\{ 1 + b , 1 + a  + d ,2k+\frac{3}{2} \}$,
$m \ge d+2$, and $(\phi_1,\phi_2) \in \cX_{M,m}^k \times \cX_{M,m}^k$.

Then, for every $z \in \C$ with $\re(z) \in (\alpha_1,2s+k -\alpha_2)$,
\begin{align}
\label{eq:Mellin-well-def-2D}
\begin{split}
\cM[L(\cM^{-1}[\phi_1,\phi_2])]^+(z) &=  f_{L,+}(z-1) \phi_1(z-2s) + f_{N,+}(z-1) \phi_2(z-2s), \\
\cM[N(\cM^{-1}[\phi_1,\phi_2]) ]^-(z) &= f_{N,-}(z-1) \phi_2(z-2s) + f_{L,-}(z-1) \phi_1(z-2s).
\end{split}
\end{align}
\end{lemma}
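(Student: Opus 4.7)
The plan is to mimic the proof of \autoref{lemma:Mellin-magic} but track the contributions from each half-line separately, exploiting the pair self-adjointness relation \eqref{eq:self-adjoint-2D} to convert the action of $L$ (resp.\ $N$) on $\cM^{-1}[\phi_1,\phi_2]$ into an action on a simple power function.

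First, I would set $\varphi := \cM^{-1}[\phi_1,\phi_2]$. By \eqref{eq:X-C-inclusion-2D}, $\varphi \in C^{d,k}_{a,b}(\R)$, so \eqref{eq:L-decay-2D} guarantees that $x \mapsto L\varphi(x)$ is continuous on $(0,\infty)$ with $|L\varphi(x)| \le C|x|^{-\alpha_1}$ near $0$ and $\int_r^\infty |L\varphi(x)|\d x \le C r^{-2s-k+\eps}$ at infinity. Consequently, for $\re(z) \in (\alpha_1, 2s+k-\alpha_2)$ the integral
\begin{align*}
\cM[L\varphi]^+(z) = \int_0^\infty x^{z-1}\, L\varphi(x)\,\d x
\end{align*}
converges absolutely (the argument is as in \autoref{remark:distr-well-def}, splitting on $(0,1)$ and $(1,\infty)$ and summing a geometric series).

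Next, I would apply the first identity in \eqref{eq:self-adjoint-2D} with $g(x) = |x|^{z-1}$, which is licit precisely because $\re(z) \in (\alpha_1, 2s+k-\alpha_2)$. Noting that $g\mathbbm{1}_{\R_+}(x) = x_+^{z-1}$, this gives
\begin{align*}
\cM[L\varphi]^+(z) = \int_0^{\infty} L(x_+^{z-1})(x)\,\varphi(x)\,\d x + \int_{-\infty}^{0} N(x_+^{z-1})(x)\,\varphi(x)\,\d x.
\end{align*}
Now I would invoke the first and third identities in \eqref{eq:f-2D} to rewrite $L(x_+^{z-1}) = f_{L,+}(z-1)x_+^{z-1-2s}$ on $(0,\infty)$ and $N(x_+^{z-1}) = f_{N,+}(z-1) x_-^{z-1-2s}$ on $(-\infty,0)$, obtaining
\begin{align*}
\cM[L\varphi]^+(z) = f_{L,+}(z-1)\int_0^\infty x^{z-1-2s}\varphi(x)\,\d x + f_{N,+}(z-1)\int_{-\infty}^0 |x|^{z-1-2s}\varphi(x)\,\d x.
\end{align*}
These are, by definition, $f_{L,+}(z-1)\,\cM[\cM^{-1}[\phi_1]](z-2s) + f_{N,+}(z-1)\,\cM[\cM^{-1}[\phi_2]](z-2s)$. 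Finally, the second half of \autoref{lemma:MIT} (applied via \autoref{cor:c-change}, since $\phi_1,\phi_2 \in \cX_{M,m}$ have the required holomorphy and quadratic decay in the strip $S_M$ for $M$ large enough to cover $\re(z-2s)$) yields $\cM[\cM^{-1}[\phi_j]](z-2s) = \phi_j(z-2s)$, giving the first formula in \eqref{eq:Mellin-well-def-2D}. The second formula is proved identically, starting from the second identity in \eqref{eq:self-adjoint-2D} with $g\mathbbm{1}_{\R_-}(x) = x_-^{z-1}$ and using instead the second and fourth identities in \eqref{eq:f-2D}.

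The main obstacle is bookkeeping: making sure that the strip $\re(z)\in(\alpha_1, 2s+k-\alpha_2)$ lies in the common region where (a) the defining integral for $\cM[L\varphi]^+(z)$ converges absolutely via \eqref{eq:L-decay-2D}, (b) the self-adjointness \eqref{eq:self-adjoint-2D} applies to $g(x)=|x|^{z-1}$, and (c) the Mellin inversion $\cM\circ\cM^{-1}=\mathrm{Id}$ is valid on the shifted arguments $z-2s$. The first two are built directly into the hypotheses, while the third forces the bound $M > \max\{1+b,1+a+d, 2k+\tfrac{3}{2}\}$ so that the shifted line $\re(w)=\re(z-2s)$ stays inside $S_M$ and \autoref{cor:c-change} is applicable; this explains the size of $M$ in the statement. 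No additional use of the membership $(\phi_1,\phi_2) \in \cX_{M,m}^k \times \cX_{M,m}^k$ (beyond $\cX_{M,m}$) enters here, since the vanishing conditions $\phi_j(j+1)=0$ only enter to ensure $\cM^{-1}[\phi_1,\phi_2] \in C^{d,k}_{a,b}(\R)$ via \eqref{eq:X-C-inclusion-2D}; they are not needed for the identity itself.
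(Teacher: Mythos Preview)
Your proposal is correct and follows essentially the same approach as the paper's proof: invoke \eqref{eq:X-C-inclusion-2D} to place $\cM^{-1}[\phi_1,\phi_2]$ in $C^{d,k}_{a,b}(\R)$, use \eqref{eq:L-decay-2D} to justify absolute convergence of the Mellin integral on the stated strip, apply the self-adjointness \eqref{eq:self-adjoint-2D} with $g(x)=|x|^{z-1}$, evaluate via \eqref{eq:f-2D}, and close with the Mellin inversion theorem. Your additional remarks about the role of the vanishing conditions $\phi_j(j+1)=0$ and the size of $M$ are accurate commentary that the paper leaves implicit.
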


\begin{proof}
By \eqref{eq:X-C-inclusion-2D} and \eqref{eq:L-decay-2D} we have that
\begin{align*}
\cM[L(\cM^{-1}[\phi_1,\phi_2])]^+(z) &= \int_0^{\infty} x_+^{z-1} L(\cM^{-1}[\phi_1]\1_{\R_+} + \cM^{-1}[\phi_2(-\cdot)]\1_{\R_-})(x) \d x \\ 
\cM[N(\cM^{-1}[\phi_1,\phi_2]) ]^-(z) &= \int_{-\infty}^{0} x_-^{z-1} N(\cM^{-1}[\phi_1]\1_{\R_+} + \cM^{-1}[\phi_2(-\cdot)]\1_{\R_-})(x) \d x
\end{align*}
are well-defined for every $z \in \C$ with $\re(z) \in (\alpha_1 , 2s+k-\alpha_2)$. 

Then, using \eqref{eq:self-adjoint-2D} and \eqref{eq:f-2D},
\begin{align*}
\cM & [L(\cM^{-1}[\phi_1,\phi_2])]^+(z) = \int_0^{\infty} x_+^{z-1} L(\cM^{-1}[\phi_1]\1_{\R_+} + \cM^{-1}[\phi_2(-\cdot)]\1_{\R_-})(x) \d x \\
&= \int_0^{\infty} L(x_+^{z-1})(x) \cM^{-1}[\phi_1] (x) \d x  + \int_{-\infty}^0 N(x_+^{z-1})(x) \cM^{-1}[\phi_2(-\cdot)](x) \d x \\
&= f_{L,+}(z-1) \int_0^{\infty}x_+^{z-2s-1} \cM^{-1}[\phi_1](x) \d x + f_{N,+}(z-1) \int_{-\infty}^0 x^{z-2s-1} \cM^{-1}[\phi_2(-\cdot)](x) \d x \\
&= f_{L,+}(z-1) \phi_1(z-2s) + f_{N,+}(z-1) \phi_2(z-2s),
\end{align*}
where we used the Mellin inversion theorem in the last step. An analogous proof yields the result for $\cM[N(\cM^{-1}[\phi_1,\phi_2]) ]^-$.
\end{proof}

The following two lemmas are analogs of \autoref{lemma:PDE-Mellin-1} and \autoref{lemma:PDE-Mellin-2}. Again, the proofs goes in a similar way as before, using \autoref{lemma:Mellin-magic-2D}.

\begin{lemma}
\label{lemma:PDE-Mellin-1-2D}
Let $L$ be as in \autoref{thm:Liouville-2D} and $k \in \N \cup \{ 0 \}$. Assume that $|u(x)| \le C (1+ x)^{2s+k-\eps}$ for some $\eps \in (0,1)$ and that in the distributional sense
\begin{align*}
\begin{cases}
Lu &\overset{k}{=} 0 ~~ \text{ in } (0,\infty),\\
Nu &\overset{k}{=} 0 ~~ \text{ in } (-\infty,0).
\end{cases}
\end{align*}
Then, for any $M > \max\{ 1 + b , 1 + a  + d , 2k + \frac{3}{2} \}$, $m \ge d+2$, $c \in (\alpha_1,2s+k-\alpha_2)$, and $(\phi_1,\phi_2) \in \cX_{M,m}^k \times \cX_{M,m}^k$, it holds
\begin{align*}
0 &= \int_0^{\infty} u(x) \big( \cM^{-1}_c[\phi_1(\cdot -2s) f_{L,+}(\cdot - 1) + \phi_2(\cdot -2s) f_{N,+}(\cdot - 1)](x)\big) \d x \\
&\quad + \int_{-\infty}^0 u(x) \big( \cM^{-1}_c[\phi_1(\cdot -2s) f_{L,-}(\cdot - 1) + \phi_2(\cdot -2s) f_{N,-}(\cdot - 1)](x)\big) \d x.
\end{align*}
\end{lemma}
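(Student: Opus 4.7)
The plan is to mimic the proof of \autoref{lemma:PDE-Mellin-1}, but with two integrals (one over each half-line) and with \autoref{lemma:Mellin-magic-2D} replacing \autoref{lemma:Mellin-magic}.

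First, by \eqref{eq:X-C-inclusion-2D} the choice of $M$ and $m$ guarantees that $\cM^{-1}[\phi_1,\phi_2] \in C^{d,k}_{a,b}(\R)$ for every $(\phi_1,\phi_2) \in \cX^k_{M,m} \times \cX^k_{M,m}$, so it is an admissible test function in \autoref{def:distr-sol-2D}. Inserting it into \eqref{eq:distributional-sol-2D} yields
\begin{equation*}
0 = \int_0^\infty u(x)\, L(\cM^{-1}[\phi_1,\phi_2])(x)\,\d x + \int_{-\infty}^0 u(x)\, N(\cM^{-1}[\phi_1,\phi_2])(x)\,\d x,
\end{equation*}
with absolute convergence coming from the growth of $u$ and the decay \eqref{eq:L-decay-2D}, as in \autoref{remark:distr-well-def}. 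The same decay estimate ensures that the one-sided Mellin transforms $\cM[L(\cM^{-1}[\phi_1,\phi_2])]^+(z)$ and $\cM[N(\cM^{-1}[\phi_1,\phi_2])]^-(z)$ converge absolutely for every $z$ with $\re(z) \in (\alpha_1, 2s+k-\alpha_2)$. Hence, for any $c$ in this strip, the Mellin inversion theorem (\autoref{lemma:MIT}), applied separately on each half-line, recovers $L(\cM^{-1}[\phi_1,\phi_2])$ on $(0,\infty)$ and $N(\cM^{-1}[\phi_1,\phi_2])$ on $(-\infty,0)$ as inverse Mellin transforms of these one-sided transforms (with the appropriate reflection on the negative half-line).

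The last step is to identify these Mellin transforms explicitly by invoking \autoref{lemma:Mellin-magic-2D}, whose hypotheses are satisfied thanks to the quantitative lower bound on $M$ and $m$ assumed here. This substitutes $f_{L,+}(z-1)\phi_1(z-2s) + f_{N,+}(z-1)\phi_2(z-2s)$ for the first transform and the analogous expression with $f_{L,-},f_{N,-}$ for the second. Plugging back into the identity above, and interpreting $\cM^{-1}_c[\psi](x)$ for $x<0$ via the extension to $\R$ introduced in \autoref{subsec:2D-Mellin}, produces exactly the claimed formula. The main obstacle is not conceptual but purely bookkeeping: one must verify that every application of Fubini's theorem and of the Mellin inversion formula is legitimate, and track the sign convention when passing between the two half-lines. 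All the required absolute-convergence and analyticity conditions are precisely what the bound $M > \max\{1+b,1+a+d,2k+\frac{3}{2}\}$ and the vanishing-moment definition of $\cX^k_{M,m}$ were engineered to provide.
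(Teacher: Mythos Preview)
Your proof is correct and follows essentially the same approach as the paper: both use \eqref{eq:X-C-inclusion-2D} to validate $\cM^{-1}[\phi_1,\phi_2]$ as a test function, insert it into \eqref{eq:distributional-sol-2D}, and then apply the Mellin inversion theorem on each half-line together with \autoref{lemma:Mellin-magic-2D} to identify $L(\cM^{-1}[\phi_1,\phi_2])$ and $N(\cM^{-1}[\phi_1,\phi_2])$ as the claimed inverse Mellin transforms. The paper's proof is slightly more terse but structurally identical.
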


\begin{proof}
Due to \eqref{eq:X-C-inclusion-2D}, we have that $\cM^{-1}[\phi_1,\phi_2]$ is a valid test function for the equation of $u$ whenever $\phi_1,\phi_2 \in \cX_{M,m}^{k}$. Therefore, we see that
\begin{align*}
0 = \int_0^{\infty} u(x) L(\cM^{-1}[\phi_1,\phi_2])(x) \d x + \int_{-\infty}^{0} u(x) N(\cM^{-1}[\phi_1,\phi_2])(x) \d x,
\end{align*}
and the integrals on the right-hand side both converge absolutely by \eqref{eq:L-decay-2D} (see also \autoref{remark:distr-well-def}). Then, by \autoref{lemma:Mellin-magic-2D} and the Mellin inversion theorem, arguing exactly as in the proof of \autoref{lemma:PDE-Mellin-1}, we deduce that, for any $c \in (\alpha_1 , 2s+k - \alpha_2)$,
\begin{align*}
L(\cM^{-1}[\phi_1,\phi_2]) &= \cM_c^{-1}[\cM[L(\cM^{-1}[\phi_1,\phi_2])]^+] = \cM_c^{-1}[ f_{L,+}(\cdot-1) \phi_1(\cdot-2s) + f_{N,+}(\cdot-1) \phi_2(\cdot-2s)],\\
N(\cM^{-1}[\phi_1,\phi_2]) &= \cM_c^{-1}[\cM[N(\cM^{-1}[\phi_1,\phi_2])]^-] =\cM_c^{-1}[f_{N,-}(\cdot-1) \phi_2(\cdot-2s) + f_{L,-}(\cdot-1) \phi_1(\cdot-2s)].
\end{align*}
A combination of the two previous equations on display concludes the proof.
\end{proof}

\begin{lemma}
\label{lemma:PDE-Mellin-2-2D}
Let $L,u,k,f$ be as in \autoref{thm:Liouville-2D}. Then, for any $M > \max\{ 1 + b , 1 + a  + d  , 2k + \frac{3}{2} , k+2s+2 \}$, $m \ge d+2$, and $\psi \in \cX_{M,m}$ such that $z \mapsto \frac{\psi(z+2s)}{f_1(z+2s-1)} \in \cX_{M,m}^k$, $z \mapsto \frac{\psi(z+2s)}{f_2(z+2s-1)} \in \cX_{M,m}^k$,
\begin{align*}
0 = \int_{0}^{\infty} u(x) \cM^{-1}[\psi](x) \d x.
\end{align*}  
\end{lemma}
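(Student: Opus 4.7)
The plan is to mirror the proof of \autoref{lemma:PDE-Mellin-2} but now invert the two--by--two system defined by $f_{L,\pm}, f_{N,\pm}$ in order to apply \autoref{lemma:PDE-Mellin-1-2D} with well--chosen test functions. Concretely, given $\psi \in \cX_{M,m}$ as in the hypothesis, I define
\begin{align*}
\phi_1(z) := \frac{\psi(z+2s)}{f_1(z+2s-1)}, \qquad \phi_2(z) := -\frac{\psi(z+2s)}{f_2(z+2s-1)}.
\end{align*}
The assumption of the lemma guarantees precisely that $(\phi_1, \phi_2) \in \cX_{M,m}^k \times \cX_{M,m}^k$, which is exactly what is needed to feed them into \autoref{lemma:PDE-Mellin-1-2D}.

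The next (purely algebraic) step is to unravel the definitions of $f_1 = (f_{L,+} f_{N,-} - f_{L,-} f_{N,+})/f_{N,-}$ and $f_2 = (f_{L,+} f_{N,-} - f_{L,-} f_{N,+})/f_{L,-}$ to verify that
\begin{align*}
\phi_1(z-2s)\, f_{L,+}(z-1) + \phi_2(z-2s)\, f_{N,+}(z-1) &= \psi(z) \cdot \frac{f_{L,+} f_{N,-} - f_{N,+} f_{L,-}}{f_{L,+} f_{N,-} - f_{L,-} f_{N,+}}(z-1) = \psi(z), \\
\phi_1(z-2s)\, f_{L,-}(z-1) + \phi_2(z-2s)\, f_{N,-}(z-1) &= \psi(z) \cdot \frac{f_{L,-} f_{N,-} - f_{N,-} f_{L,-}}{f_{L,+} f_{N,-} - f_{L,-} f_{N,+}}(z-1) = 0.
\end{align*}
Thus my choice of $(\phi_1, \phi_2)$ precisely cancels the contribution on $(-\infty, 0)$ and produces $\psi$ on $(0, \infty)$ inside the identity of \autoref{lemma:PDE-Mellin-1-2D}.

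Plugging these choices into \autoref{lemma:PDE-Mellin-1-2D}, the second summand vanishes (its integrand is $\cM^{-1}_c[0] \equiv 0$) and we are left with
\begin{align*}
0 = \int_0^{\infty} u(x)\, \cM^{-1}_c[\psi](x) \d x,
\end{align*}
valid for every $c \in (\alpha_1, 2s+k-\alpha_2)$. Finally, because $\psi \in \cX_{M,m}$ by hypothesis and the assumption on $M$ comfortably exceeds $\max\{|c|, 2|c|-\tfrac12\}$ when $c = \tfrac12$, \autoref{cor:c-change} lets me replace $\cM^{-1}_c$ by the default $\cM^{-1}$, yielding the claim. The integral converges absolutely thanks to the growth bound $|u(x)| \le C(1+|x|)^{2s+k-\eps}$ combined with the decay of $\cM^{-1}[\psi]$ from \autoref{lemma:inverse-Mellin}.

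The step requiring the most care is verifying that the $M$ threshold $\max\{1+b, 1+a+d, 2k+\tfrac32, k+2s+2\}$ is compatible with all the manipulations, namely: (i) the fact that the shifted functions $\phi_j$ genuinely lie in $\cX_{M,m}^k$ rather than only in a smaller strip (which is where the hypothesis on $f_1, f_2$ being $M$--admissible enters, since it controls the possible loss of decay coming from zeros of $f_i(\cdot +2s-1)$); and (ii) the applicability of \autoref{cor:c-change}, which is why the $k+2s+2$ term appears in the threshold. Once these bookkeeping checks are in place the argument is short; the rest is algebra inherited from \autoref{lemma:PDE-Mellin-1-2D}.
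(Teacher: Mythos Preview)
Your proposal is correct and follows essentially the same approach as the paper: choose $(\phi_1,\phi_2)$ to invert the $2\times 2$ system determined by $f_{L,\pm},f_{N,\pm}$ so that the $(-\infty,0)$--integrand in \autoref{lemma:PDE-Mellin-1-2D} vanishes and the $(0,\infty)$--integrand becomes $\psi$, then switch $\cM_c^{-1}$ to $\cM^{-1}$ via \autoref{cor:c-change}. The paper presents this in two stages (first imposing $\phi_2 = -\tfrac{f_{L,-}}{f_{N,-}}\phi_1$ to kill the negative part, then setting $\phi_1 = \psi(\cdot+2s)/f_1(\cdot+2s-1)$, which forces $\phi_2 = -\psi(\cdot+2s)/f_2(\cdot+2s-1)$), whereas you write down both at once; the algebra is identical.
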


\begin{proof}
Let us choose $\phi_2(z) = \frac{f_{L,-}(z+2s-1)}{f_{N,-}(z+2s-1)}\phi_1(z)$, which is a valid test function by assumption. We observe that in that case, by \autoref{lemma:PDE-Mellin-1-2D}, for any $\phi_1 \in \cX_{M,m}^k$ we have that
\begin{align*}
0 = \int_0^{\infty} u(x) \cM_c^{-1} [f_1(\cdot - 1) \phi_1(\cdot -2s)](x) \d x,
\end{align*}
where $f_1 = \frac{f_{L,+} f_{N,-} - f_{L,-} f_{N,+}}{f_{N,-}}$. 

Then, the desired result follows by choosing $\phi_1(z) := \psi(z+2s)/f(z+2s-1)$ in the same way as in \autoref{lemma:PDE-Mellin-2}, using also that, by assumption,
\begin{equation*}
\phi_2(z) = \frac{f_{L,-}(z+2s-1)}{f_{N,-}(z+2s-1)}\phi_1(z) = \frac{f_{L,-}(z+2s-1)}{f_{N,-}(z+2s-1)}\frac{\psi(z+2s)}{f_1(z+2s-1)} = \frac{\psi(z+2s)}{f_2(z+2s-1)} \in \cX_{M,m}^k.\qedhere
\end{equation*}
\end{proof}

Now, we are in a position to give a proof of \autoref{thm:Liouville-2D}.

\begin{proof}[Proof of \autoref{thm:Liouville-2D}]
As in the proof of \autoref{thm:Liouville}, we take $M > \max\{ 1 + b , 1 + a + d , 2k + \frac{3}{2} , k + 2s + 2 \} + 2s$ and $m \ge d + 2$. Then, letting $\overline{M} > 2 (M+1) + \frac{3}{2}$, we pick $g \in \cX_{\overline{M}+1,m}$ and our goal is to define $\psi \in \cX_{M,m}$ such that 
\begin{align*}
P_1(z) := \frac{\psi(z + 2s)}{f_1(z + 2s -1)}, \qquad P_2(z) := \frac{\psi(z+2s)}{f_2(z+2s-1)}
\end{align*}
are holomorphic in $S_{M-2s}$ and satisfy $P_1(j+1) = P_2(j+1) = 0$ whenever $j \in \{ 0, \dots, k - 1 \}$. 

In analogy to the proof of \autoref{thm:Liouville}, we define 
\begin{align*}
F_0 &:= \{ \beta \in \C : f_1(\beta) = 0 ~~ \text{ or } ~~ f_2(\beta) = 0,  ~~ |\re(\beta)| < M \}, \\
F_{\infty} &:= \{ \beta = j + 2s : |f_1(\beta)| + |f_2(\beta)| < \infty, ~~ j \in \{ 1 , \dots, k\} \} \setminus F_0,
\end{align*}
 and set
\begin{align*}
\psi(z) := g(z) & - \Psi_0(z)  \sum_{\beta \in F_0 \cup F_{\infty}} \left( \Upsilon_{\beta}(z)  \sum_{l = 0}^{k(\beta) - 1} \frac{(z - 1 - \beta)^l g^{(l)}(\beta +1)}{l!}  \right),
\end{align*}
where we denote by $k(\beta)$ the maximum of the multiplicities of $\beta$ with respect to $f_1$ and $f_2$, whenever $\beta \in F_0$, and we set $k(\beta) = 1$ whenever $\beta \in F_{\infty}$. 

Then, we let $\Psi_0 \in \cX_{\overline{M}+1,m}$ be such that 
\begin{align*}
\Psi_0(\beta+1) = 1 ~~ \forall \beta \in F_0 \cup F_{\infty}, \qquad \Psi_0^{(l)}(\beta+1) = 0 ~~ \forall l \in \{ 1, \dots, k(\beta) - 1 \} ~~ \forall \beta \in F_0,
\end{align*}
and for any $\beta \in F_0$ the function $\Upsilon_{\beta} \in \cX_{\overline{M}+1,m}$ is such that 
\begin{align*}
\Upsilon_{\beta}(\alpha + 1) = \delta_{\alpha,\beta}  ~~ \forall \alpha \in F_0 \cup F_{\infty}, \qquad \Upsilon_{\beta}^{(l)}(\alpha + 1) = 0 ~~ \forall l \in \{ 0 , \dots , k(\beta) - 1 \} ~~ \forall \alpha \in F_0.
\end{align*}
Such functions $\Psi_0$ and $\Upsilon_{\beta}$ do exist by \autoref{lemma:prescribed-values}. Note that $\psi$ is constructed in such a way that
\begin{align*}
\psi^{(l)}(\beta + 1) = 0 ~~ \forall \beta \in F_0 \cup F_{\infty}, ~~ l \in \{ 0 , \dots , k(\beta) - 1\},
\end{align*}
and, in particular, $P_1$ and $P_2$ satisfy all the aforementioned properties so that we can apply \autoref{lemma:PDE-Mellin-2-2D} with $\psi$. From here, the proof goes in the exact same way as the proof of \autoref{thm:Liouville}, choosing
\begin{align*}
v(x) := u(x) - \sum_{\beta \in F_0 \cup F_{\infty}}  \sum_{l = 0}^{k(\beta) - 1}  c_{\beta,l} x^{\beta} (\log(x))^{l}  ,
\end{align*}
where the constants $c_{\beta,l} \in \R$ will be chosen as in the proof of \autoref{thm:Liouville} so that we can deduce
\begin{align*}
\int_0^{\infty} v(x) \cM^{-1}[g](x) \d x = 0.
\end{align*} 
Since $g \in \cX_{\overline{M}+1,m}$ was arbitrary, we deduce $v = 0$ by \autoref{lemma:u-zero} and conclude the proof.
\end{proof}

\section{1D Liouville theorems for the nonlocal Neumann problem}
\label{sec5}

The goal of this section is to prove a 1D Liouville theorem for the nonlocal Neumann problem
\begin{align}
\label{eq:Neumann-1D-prep}
\begin{cases}
(-\Delta)^s_{\R} u &= 0 ~~ \text{ in } (0,\infty),\\
\mathcal{N}^s_{(0,\infty)} u &= 0 ~~ \text{ in } (-\infty,0).
\end{cases}
\end{align}
In \cite{AFR23}, it was already proved that solutions which do not grow faster than $t^{2s-1+\eps}$ at infinity are constant, in case $s \in (\frac{1}{2},1)$. With the help of \autoref{thm:Liouville}, we can finally give a {\em
complete classification of all solutions to the nonlocal Neumann problem in 1D}.

Since we are ultimately interested in the optimal regularity of solutions to the nonlocal Neumann problem, it will be sufficient to classify solutions that do not grow faster than $t^{2s+1-\eps}$ at infinity. 
Note that our proof distinguishes between solutions that do not grow faster than $t^{2s-\eps}$ at infinity and those that do. 

In case $u(t) \lesssim t^{2s-\eps}$ at infinity we work with weak solutions to \eqref{eq:Neumann-1D-prep} (in the sense of \autoref{def:weak-sol}) and we rewrite \eqref{eq:Neumann-1D-prep} in terms of a regional problem on $(0,\infty)$. Indeed, we recall from \cite[Proposition A.3]{AFR23} that $u$ is also a weak solution to 
\begin{align*}
L u = 0 ~~ \text{ in } (0,\infty),
\end{align*}
where
\begin{equation}\label{hghg}
Lu(x) = \text{p.v.} \int_{0}^{\infty} (u(x) - u(y)) K_{\R_+}(x,y) \d y,
\end{equation}
and
\begin{equation}\label{hghg2} \begin{split}
K_{\R_+}(x,y) &= \frac{c_s}{|x-y|^{1+2s}} + k_{\R_+}(x,y), \qquad c_s = 4^s s \frac{\Gamma(s + \frac{1}{2})}{\Gamma(1 -s)} \pi^{-\frac{1}{2}}, \\
k_{\R_+}(x,y) &= 2s c_s \int_{0}^{\infty} \frac{|z|^{2s}}{|x+z|^{1+2s} |y+z|^{1+2s}} \d z.
\end{split}\end{equation}

By the homogeneity of the kernel, i.e. $K_{\R_+}(\lambda x,\lambda y) = \lambda^{-1-2s} K_{\R_+}(x,y)$, it follows immediately that $L$ is homogeneous of order $2s$ in the sense of \eqref{eq:hom}, and we will use \autoref{thm:Liouville} to classify all such solutions.

In order to apply \autoref{thm:Liouville}, we have to determine the corresponding function $f$ (see \eqref{eq:f}). This is achieved in the following result, which we will prove in the next subsection.

\begin{lemma}
\label{lemma:f-def}
Let $L$ be as above. It holds for $\beta \in \C$:
\begin{align*}
f(\beta) &= \frac{\Gamma(\beta + 1)}{\Gamma(\beta - 2s + 1)} \frac{\sin(\pi(\beta-s))}{\sin(\pi(\beta -2s))} - 2s^2 4^s \frac{\Gamma(s + \frac{1}{2})}{\Gamma(1 -s)} \pi^{-\frac{1}{2}} \left( \frac{\Gamma(2s-\beta) \Gamma(\beta+1)}{\Gamma(1+2s)} \right)^2\\
&= \frac{\Gamma(\beta + 1) \sin(\pi s)}{\Gamma(\beta - 2s + 1)\sin(\pi(\beta -2s))}  \left(  \frac{\sin(\pi(\beta-s))}{\sin(\pi s)} + \frac{\Gamma(2s-\beta) \Gamma(\beta+1)}{\Gamma(2s)} \right).
\end{align*}
\end{lemma}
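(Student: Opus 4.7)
The plan is to split the operator as $L = L_{\mathrm{reg}} + L_k$, where
\[
L_{\mathrm{reg}} u(x) = c_s \, \mathrm{p.v.} \int_0^{\infty} (u(x) - u(y)) |x-y|^{-1-2s} \, dy
\]
is the regional fractional Laplacian on $(0,\infty)$ and $L_k$ is the operator with kernel $k_{\R_+}$. By \eqref{eq:f} and the homogeneity of $L$ it suffices to evaluate $f(\beta) = L_{\mathrm{reg}}(x^{\beta})|_{x=1} + L_k(x^{\beta})|_{x=1}$. I would first carry out the computation for $\beta \in \R$ with $0 < \beta < 2s$, where all integrals converge absolutely (after the usual symmetrization of the p.v.~integral), and then extend to $\beta \in \C$ at the end, since both sides of the claimed identity are meromorphic.

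For the regional part, I would invoke the classical identity
\[
(-\Delta)^s (x_+)^{\beta}(1) = \frac{\Gamma(\beta+1)}{\Gamma(\beta-2s+1)} \, \frac{\sin(\pi(\beta-s))}{\sin(\pi(\beta-2s))},
\]
which was already recalled in the example following \autoref{thm:Liouville}. Splitting the p.v.~integral in its definition into the pieces over $(0,\infty)$ and $(-\infty,0)$ gives, as in the same example,
\[
L_{\mathrm{reg}}(x^{\beta})(1) = (-\Delta)^s (x_+)^{\beta}(1) - c_s \int_{-\infty}^{0} |1-y|^{-1-2s} \, dy = (-\Delta)^s (x_+)^{\beta}(1) - \frac{c_s}{2s}.
\]

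The core computation is that of $L_k(x^{\beta})(1) = \int_0^\infty (1 - y^{\beta}) k_{\R_+}(1,y) \, dy$. I would apply Fubini to interchange the $y$- and $z$-integrals (valid for $0<\beta<2s$), and then evaluate the inner $y$-integral via the substitution $y = zt$, obtaining
\[
\int_0^\infty \frac{1 - y^{\beta}}{(y+z)^{1+2s}} \, dy = \frac{z^{-2s}}{2s} - z^{\beta-2s} B(\beta+1, 2s-\beta),
\]
with $B$ the Beta function. Reinserting into the outer $z$-integral (which is again a Beta integral) and using $\Gamma(2s+1)=2s\Gamma(2s)$ yields
\[
L_k(x^{\beta})(1) = \frac{c_s}{2s} - 2s c_s \left(\frac{\Gamma(\beta+1)\Gamma(2s-\beta)}{\Gamma(2s+1)}\right)^{\!2}.
\]
Adding $L_{\mathrm{reg}}(x^\beta)(1)$ and $L_k(x^\beta)(1)$, the two $\pm c_s/(2s)$ contributions cancel, which—using $2s c_s = 2 s^{2} 4^{s}\Gamma(s+\tfrac12)/(\Gamma(1-s)\pi^{1/2})$—gives the first expression claimed for $f(\beta)$.

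The equivalent factored expression would follow by pulling out the common factor $\Gamma(\beta+1)\sin(\pi s)/[\Gamma(\beta-2s+1)\sin(\pi(\beta-2s))]$ from both summands. Checking the matching of the second summand reduces, via the reflection formula $\Gamma(2s-\beta)\Gamma(\beta-2s+1) = -\pi/\sin(\pi(\beta-2s))$, to the single identity $c_s = 2s\,\Gamma(2s)\sin(\pi s)/\pi$, which is a direct consequence of Legendre's duplication formula applied to the definition of $c_s$. The only real subtlety in the whole argument is justifying the Fubini swap in the computation of $L_k(x^\beta)(1)$, and this is why I restrict to $0 < \beta < 2s$ before invoking analytic continuation; beyond that, the calculation is essentially mechanical.
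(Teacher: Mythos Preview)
Your proposal is correct and follows essentially the same route as the paper: the same decomposition into the regional piece and the $k_{\R_+}$-piece, the same Beta-integral evaluation of the double integral (the paper writes it as $2sc_s C_\beta^2$ with $C_\beta = B(\beta+1,2s-\beta)$), the same $\pm c_s/(2s)$ cancellation, and the same reflection/duplication identities for the factored form. Your explicit restriction to $0<\beta<2s$ followed by meromorphic continuation is a justified refinement that the paper leaves implicit.
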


The main work is to locate the zeros of the function $f$. The corresponding result is given in \autoref{prop:zeros-final} and its proof will also be postponed to \autoref{subsec:zeros}.


We denote by 
\begin{align}
\label{eq:B0}
B_0: = \inf\{ \re(\beta) : f(\beta) = 0, ~~ \re(\beta) > 0, ~~ \beta \not= 2s-1 \}
\end{align}
the real part of the first (non-trivial) zero of $f$.
In particular, by \autoref{prop:zeros-final}, it holds $B_0 \in (2s, 2s+\frac{1}{2})$ if $s \le \frac{1}{2}$ and $B_0 \in (s+\frac{1}{2},s+1)$ if $s \ge \frac{1}{2}$.

It is of some interest to locate the zeros of $f$ at a higher level of precision. Although this is in general very complicated, the following two remarks provide exact information about the zeros of $f$ for $s = \frac{1}{2}$, as well as $s \searrow 0$ and $s \nearrow 1$.

\begin{remark}
\label{rem:s-half}
In case $s = \frac{1}{2}$, the identity $f(\beta) = 0$ is equivalent to
\begin{align*}
\sin(2\pi \beta) = 2 \pi \beta,
\end{align*}
which is known to have the solution $\beta = 0$ and no further solution in $\{ \beta \in \C : \re(\beta) \in [0,1] \}$. However, there are further solutions outside this strip (in fact, there are infinitely many of them). The one with smallest real part is given by $\beta = 1.193292\ldots + i 0.4406488\ldots$,
see \cite{Hil43}.
\end{remark}

\begin{remark}
\label{rem-s-asymp}
We observe that (see also \eqref{eq:zero-equivalence}) finding the nontrivial zeros of $f$ is equivalent to finding the nontrivial zeros of
\begin{align*}
F(s,\beta):=
(2s-\beta)\sin(\pi(s-\beta))-\frac{\sin(\pi s)\Gamma(2s+1-\beta)\,\Gamma(\beta+1)}{\Gamma(2s)}.
\end{align*}
Then, there exist $\eta_1, \eta_2 >0$ sufficiently small such that, for all $s\in[1-\eta_1,1]$ and $s \in [0,\eta_2]$, there exist~$\beta_1(s) , \beta_2(s) \in\C$ satisfying
$$ F(s,\beta_1(s))=0 = F(s,\beta_2(s)).$$
Moreover, it holds
\begin{align*}
\beta_1(s)=2\pm i\sqrt{2(1-s)}-3(1-s)+o(1-s) ~~ \text{ as } s \to 1, \qquad \beta_2(s)=3 s+o(s) ~~ \text{ as } s \to 0.
\end{align*}
We provide the proof of this remark at the end of \autoref{subsec:zeros}.
\end{remark}

We can show the following Liouville theorem in case $u$ does not grow faster than $t^{2s-\eps}$ at infinity.

\begin{theorem}
\label{thm:Neumann-Liouville}
Let $u$ be a weak solution to
\begin{align}
\label{eq:Neumann-1D}
\begin{cases}
(-\Delta)^s_{\R} u &= 0 ~~ \text{ in } (0,\infty),\\
\mathcal{N}^s_{(0,\infty)} u &= 0 ~~ \text{ in } (-\infty,0)
\end{cases}
\end{align}
in the sense of \autoref{def:weak-sol-Neumann} with $u(0) = 0$ and
\begin{align*}
|u(x)| \le C (1 + x)^{\min\{B_0,2s\}-\eps}
\end{align*}
for some $C > 0$ and $\eps > 0$. Then,
\begin{align*}
u = 0 ~~ \text{ in } [0,\infty ).
\end{align*}
\end{theorem}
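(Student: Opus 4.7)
The plan is to reduce the nonlocal Neumann equation~\eqref{eq:Neumann-1D} to a single regional equation on $(0,\infty)$ and then invoke the abstract Mellin-based Liouville theorem \autoref{thm:Liouville}. By the reformulation recalled in the paragraph preceding~\eqref{hghg} (cf.~\cite[Proposition A.3]{AFR23}), any weak solution $u$ of~\eqref{eq:Neumann-1D} is automatically a weak, and hence distributional in the sense of \autoref{def:distr-sol}, solution of $Lu=0$ in $(0,\infty)$, where $L$ is the regional operator from~\eqref{hghg}-\eqref{hghg2}. Since the kernel $K_{\R_+}$ is homogeneous of degree $-1-2s$, the operator $L$ is linear and $2s$-homogeneous in the sense of~\eqref{eq:hom}, so that~\eqref{eq:f} holds with the explicit $f$ supplied by \autoref{lemma:f-def}. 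The remaining hypotheses~\eqref{eq:L-decay} and~\eqref{eq:self-adjoint} needed to apply \autoref{thm:Liouville} will be verified in the companion lemma for the Neumann operator, with admissible parameters $\alpha_1 \in (\max\{0,2s-1\},2s)$ and $\alpha_2 \in [0,\max\{2s,1\})$; the $M$-admissibility of $f$ for arbitrarily large $M$ follows from the Gamma-function formula combined with \autoref{prop:zeros-final}.

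Since $\min\{B_0,2s\}-\eps \le 2s-\eps$, the growth assumption places $u$ within the scope of \autoref{thm:Liouville} and yields
\[
u(x) = \sum_{\beta} \sum_{l=0}^{k(\beta)-1} a_{\beta,l}\, x^{\beta}(\log x)^{l},
\]
where the sum runs over zeros $\beta$ of $f$ with $\re(\beta) \in [0,\min\{B_0,2s\}-\eps]$ (the strict growth being used, exactly as at the end of the proof of \autoref{thm:Liouville}, to discard summands of larger real part). By the definition~\eqref{eq:B0} of $B_0$ together with \autoref{prop:zeros-final}, the only zeros of $f$ in the relevant strip are the obvious one at $\beta=0$ (since $L\cdot 1=0$) and the trivial zero at $\beta=2s-1$ (present only when $s\ge\tfrac12$). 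Consequently the representation collapses to a polynomial $P_0(\log x)$ in $\log x$ plus, when $s\ge\tfrac12$, a term of the form $P_1(\log x)\,x^{2s-1}$.

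It remains to kill both contributions. Since $P_0(\log x)\to\pm\infty$ as $x\to 0^+$ whenever $P_0$ is non-constant, and by the interior/up-to-boundary regularity of weak solutions $u$ is continuous near $0$, the hypothesis $u(0)=0$ forces $P_0\equiv 0$. For $s>\tfrac12$, the remaining piece $P_1(\log x)\,x^{2s-1}$ would make $u$ merely a \emph{distributional}, and not a \emph{weak}, solution, consistently with the remark following \autoref{prop:zeros-final}. Indeed, the scaling substitution $y=tx$ factorizes the Gagliardo-type energy as
\[
\int_0^\infty\!\!\int_0^\infty \frac{|x^{2s-1}-y^{2s-1}|^2}{|x-y|^{1+2s}}\, dx\, dy = \left(\int_0^\infty x^{2s-2}\, dx\right)\left(\int_0^\infty \frac{(1-t^{2s-1})^2}{|1-t|^{1+2s}}\, dt\right),
\]
and the first factor diverges for every $s\in(0,1)$ (at $0$ when $s\le\tfrac12$, at $\infty$ when $s\ge\tfrac12$); the same calculation, with additional logarithmic factors, handles the $\log$-companions $x^{2s-1}(\log x)^l$. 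Since the remaining summands lie in the appropriate local energy space, this forces $P_1\equiv 0$, and therefore $u\equiv 0$ on $[0,\infty)$. The main obstacle in the argument is precisely the careful separation of the trivial zero $\beta=2s-1$ from the first genuinely non-trivial zero at $\re(\beta)=B_0$: this is exactly what the distributional-versus-weak dichotomy accomplishes, and the reason why the hypothesis that $u$ is a \emph{weak} (and not just distributional) solution is crucial.
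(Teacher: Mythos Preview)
Your overall strategy matches the paper's: reduce to the regional equation $Lu=0$ on $(0,\infty)$, apply \autoref{thm:Liouville} with the symbol $f$ from \autoref{lemma:f-def}, and use \autoref{prop:zeros-final} to see that the only zeros in the relevant strip are $\beta=0$ and (for $s\ge\tfrac12$) $\beta=2s-1$. The elimination of the constant via continuity at $0$ is also correct. The gap is in the final step, where you try to exclude the term $x^{2s-1}$ for $s>\tfrac12$.

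Your argument computes the \emph{global} Gagliardo energy of $x^{2s-1}$ over $(0,\infty)^2$ and observes that $\int_0^\infty x^{2s-2}\,dx$ diverges. But weak solutions in unbounded domains are defined via testing against compactly supported $\eta$; what matters is whether the bilinear pairing $B_\Omega(u,\eta)$ makes sense, which it does as soon as $u$ has finite \emph{local} energy. For $s>\tfrac12$ the integral $\int_0^1 x^{2s-2}\,dx$ is finite, so $x^{2s-1}\in H^s_{\mathrm{loc}}$ near $0$; the divergence you found occurs only at $x=\infty$, precisely in the regime where the theorem's hypotheses already allow polynomial growth. Thus the infinite global energy does not disqualify $x^{2s-1}$ from satisfying the weak formulation against compactly supported test functions, and your argument does not rule it out.

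The paper closes this step by a different, monotonicity-based argument: for $s>\tfrac12$ the function $x\mapsto x^{2s-1}$ is strictly increasing on $(0,\infty)$, so one chooses a nonincreasing test function $\eta\in C_c^\infty((-\infty,1])$ with $\eta'\le 0$, $\eta\not\equiv 0$, and observes that the integrand $(x^{2s-1}-y^{2s-1})(\eta(x)-\eta(y))K_{\R_+}(x,y)$ is nonpositive and strictly negative on a set of positive measure. Plugging $u=Bx^{2s-1}$ into the weak formulation then yields $0=B\cdot(\text{something strictly negative})$, forcing $B=0$. This sign argument is what actually rules out the fundamental-solution term; your energy computation, while correct as a calculation, does not achieve this.
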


In addition, in case $u$ grows faster than $t^{2s}$ at infinity, we first need to give a proper meaning to \eqref{eq:Neumann-1D-prep} since weak solutions to \eqref{eq:Neumann-1D-prep} are not defined anymore.

\begin{definition}[Distributional solutions with faster growth]
\label{def:faster-growth}
We say that $u$ satisfying
\begin{align*}
|u(x)| \le C (1 + |x|)^{2s+1 -\eps} ~~ \forall x \in \R
\end{align*}
for some $\eps > 0$ is a distributional solution with faster growth to
\begin{align}
\label{eq:faster-growth}
\begin{cases}
(-\Delta)_{\R}^s u &\overset{1}{=} 0 ~~ \text{ in } (0,\infty),\\
\mathcal{N}^s_{(0,\infty)} u &\overset{1}{=} 0 ~~ \text{ in } (-\infty,0),
\end{cases}
\end{align}
if, for any $\eta \in C^{\infty}_c(\R)$ with $\int_0^{\infty} \eta(x) \d x = \int_{-\infty}^0 \eta(x) \d x = 0$, it holds that
\begin{align*}
\int_{-\infty}^0 u(x) \mathcal{N}^s_{(0,\infty)} \eta(x) \d x + \int_0^{\infty} u(x) (-\Delta)_{\R}^s \eta(x) \d x = 0.
\end{align*}
\end{definition}

Clearly, both operators $(-\Delta)^s_{\R}$ and $\mathcal{N}_{(0,\infty)}^s$ are homogeneous of order $2s$. In case $u$ solves \eqref{eq:Neumann-1D-prep} in the sense of \autoref{def:faster-growth} we will use \autoref{thm:Liouville-2D} to classify all such solutions.

It turns out that the characterizing functions $f_1$ and~$f_2$ are the same (or closely related) to the function $f$ from \autoref{lemma:f-def}, as pointed out in the following result.

\begin{lemma}
\label{lemma:f-def-2D}
Let $L = (-\Delta)_{\R}^s$ and $N = \mathcal{N}^s_{(0,\infty)}$. Then, it holds for $\beta \in \C$:
\begin{align*}
f_1(\beta) = f(\beta),\qquad f_2(\beta) = - \frac{\Gamma(1+2s)}{2s \Gamma(2s-\beta) \Gamma(\beta+1)} f(\beta).
\end{align*}
\end{lemma}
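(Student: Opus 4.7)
The plan is to compute each of the four basic quantities $f_{L,\pm}$ and $f_{N,\pm}$ directly from the definitions of $(-\Delta)^s_{\R}$ and $\mathcal{N}^s_{(0,\infty)}$, and then to assemble $f_1$ and $f_2$ via the defining formulas in \autoref{thm:Liouville-2D}. The identification with $f$ from \autoref{lemma:f-def} then becomes a matter of algebraic simplification.

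First I would recall the classical identity
\[
(-\Delta)^s_{\R}(x_+^{\beta})(x) = \frac{\Gamma(\beta+1)}{\Gamma(\beta-2s+1)}\,\frac{\sin(\pi(\beta-s))}{\sin(\pi(\beta-2s))}\, x_+^{\beta-2s}, \qquad x>0,
\]
which immediately gives $f_{L,+}(\beta)$. Next, for $x>0$ one has $x_-^{\beta}=0$, so
\[
(-\Delta)^s_{\R}(x_-^{\beta})(x) = -c_s\int_{-\infty}^{0}\frac{(-y)^{\beta}}{(x-y)^{1+2s}}\d y = -c_s\, x^{\beta-2s}\int_0^{\infty}\frac{t^{\beta}}{(1+t)^{1+2s}}\d t,
\]
where the Beta-function identity yields $f_{L,-}(\beta)=-c_s\,\Gamma(\beta+1)\Gamma(2s-\beta)/\Gamma(1+2s)$. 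An analogous computation for $x<0$ gives $N(x_+^{\beta})(x) = -c_s\, x_-^{\beta-2s}\int_0^{\infty} t^{\beta}(1+t)^{-1-2s}\d t$, so $f_{N,+}=f_{L,-}$ (this matches the reflection symmetry $x\mapsto -x$ exchanging $L$ and $N$ on half-line–supported powers). Finally, for $x<0$, $y_-^{\beta}=0$ so
\[
N(x_-^{\beta})(x) = c_s\,|x|^{\beta}\int_0^{\infty}\frac{\d y}{(y+|x|)^{1+2s}} = \frac{c_s}{2s}\,|x|^{\beta-2s},
\]
hence $f_{N,-}(\beta) = c_s/(2s)$, independent of $\beta$.

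With these four quantities computed, I would plug them into the formula from \autoref{thm:Liouville-2D}:
\[
f_1(\beta) = f_{L,+}(\beta) - \frac{f_{L,-}(\beta)\,f_{N,+}(\beta)}{f_{N,-}(\beta)} = f_{L,+}(\beta) - \frac{2s}{c_s}\,f_{L,-}(\beta)^2.
\]
Substituting the explicit expressions and using $2sc_s = 2s^2 4^s \Gamma(s+\tfrac12)\Gamma(1-s)^{-1}\pi^{-1/2}$, this is exactly the formula for $f(\beta)$ in \autoref{lemma:f-def}, which gives $f_1=f$. For $f_2$, the relation $f_2 = (f_{N,-}/f_{L,-})\,f_1$ is immediate from the definitions, and substitution of $f_{N,-}=c_s/(2s)$ and $f_{L,-}=-c_s\Gamma(\beta+1)\Gamma(2s-\beta)/\Gamma(1+2s)$ gives the claimed prefactor
\[
\frac{f_{N,-}(\beta)}{f_{L,-}(\beta)} = -\frac{\Gamma(1+2s)}{2s\,\Gamma(\beta+1)\Gamma(2s-\beta)}.
\]

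No step should present a real obstacle: the only mildly delicate point is keeping track of signs when $x_{\pm}^{\beta}$ acts on the "wrong" side, but the computations reduce to standard Beta-function integrals. The bulk of the work is really in \autoref{lemma:f-def} (where $f$ was derived from the regional operator $L$); here we just verify that the same function $f$ emerges naturally from the two-operator framework, with $f_2$ differing from $f_1=f$ only by the ratio $f_{N,-}/f_{L,-}$.
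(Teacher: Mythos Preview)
Your proposal is correct and follows essentially the same approach as the paper: compute each of $f_{L,\pm}$, $f_{N,\pm}$ directly via Beta-function integrals (the paper uses the shorthand $C_\beta = \Gamma(2s-\beta)\Gamma(\beta+1)/\Gamma(1+2s)$), then assemble $f_1$ and $f_2$ from the defining formulas and identify $f_1$ with the expression for $f$ already obtained in \autoref{lemma:f-def}. The computations and the structure of the argument are identical.
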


We will provide the proof in the next subsection. As a direct consequence of \autoref{prop:zeros-final} we get the following result.

\begin{corollary}
\label{cor:zeros-f2}
Let $f_1,f_2$ be as above. Then, the following hold true:
\begin{itemize}
\item[(i)] $\{ f_1 = 0 \} = \{ f = 0 \} $,
\item[(ii)] $\{ f_2 = 0 \} = \{ f = 0\}$,
\item[(iii)] $|f_1(2s)| = |f_2(2s)| = \infty$.
\end{itemize}
\end{corollary}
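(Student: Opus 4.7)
The plan is as follows. Statement (i) is immediate from \autoref{lemma:f-def-2D}, which gives $f_1 = f$, so the zero sets coincide. For (ii) and (iii), the strategy is to exploit the factorization
\[f_2(\beta) = g(\beta)\,f(\beta), \qquad g(\beta) := -\frac{\Gamma(1+2s)}{2s\,\Gamma(2s-\beta)\Gamma(\beta+1)},\]
also supplied by \autoref{lemma:f-def-2D}, and carefully match the poles and zeros of the two factors. Since $\Gamma$ has no zeros, $g$ is \emph{entire}, and its (necessarily simple) zeros are precisely the points where $A(\beta) := \Gamma(2s-\beta)\Gamma(\beta+1)$ has poles, i.e.\ on
\[\cP := \{-1,-2,-3,\ldots\} \cup \{2s,2s+1,2s+2,\ldots\}.\]
Away from $\cP$, $g$ is finite and nonzero, so trivially $f_2(\beta)=0 \Leftrightarrow f(\beta)=0$. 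The content of (ii) therefore reduces to showing that at every $\beta_0 \in \cP$, $f$ has a pole of order \emph{strictly} greater than the zero order of $g$, so that $f_2 = gf$ has a pole at $\beta_0$ and is not a zero (and of course $\beta_0$ is not a zero of $f$ either).

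To carry this out, I first rewrite $f$ in a form that makes the poles transparent. Applying the reflection identity $\Gamma(z)\Gamma(1-z)=\pi/\sin(\pi z)$ with $z=\beta-2s$ and using $\Gamma(\beta-2s+1)=(\beta-2s)\Gamma(\beta-2s)$, I find
\[\frac{1}{\Gamma(\beta-2s+1)\sin(\pi(\beta-2s))} = -\frac{\Gamma(2s-\beta)}{\pi},\]
so that the second formula in \autoref{lemma:f-def} becomes
\[f(\beta) = -\frac{A(\beta)}{\pi}\left(\sin(\pi(\beta-s)) + \frac{A(\beta)\sin(\pi s)}{\Gamma(2s)}\right).\]
At any $\beta_0 \in \cP$, $A$ has a simple pole and $\sin(\pi(\beta_0-s))$ is finite, so the bracket has a simple pole (driven by the $A^2$ term) and hence $f$ has a \emph{double} pole at $\beta_0$. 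Since $g$ has only a simple zero there, $f_2 = gf$ inherits a simple pole at every $\beta_0 \in \cP$, confirming (ii).

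For (iii), the point $\beta = 2s$ lies in $\cP$, so the double pole of $f$ just established gives $|f_1(2s)|=|f(2s)|=\infty$. Multiplying the above rewriting of $f$ by $g$ and using $\Gamma(1+2s)/(2s)=\Gamma(2s)$ yields the clean formula
\[f_2(\beta) = \frac{\Gamma(2s)\sin(\pi(\beta-s))}{\pi} + \frac{A(\beta)\sin(\pi s)}{\pi},\]
whose second summand has a simple pole at $\beta = 2s$ while the first is finite; hence $|f_2(2s)|=\infty$. The only subtle point in the whole argument—and the main obstacle I would be watching for—is the bookkeeping at the exceptional set $\cP$: a priori one might worry that $g$ creates an accidental zero of $f_2$ at some $\beta_0\in\cP$, but the explicit form of $f$ makes it manifest that the double pole of $f$ always dominates the simple zero of $g$, so no such cancellation occurs.
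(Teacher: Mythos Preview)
Your argument is correct and follows essentially the same idea as the paper: the paper's proof simply notes that $f$ has a double pole at each $\beta = 2s+j$, $j\in\N\cup\{0\}$, which immediately gives (iii) and the nontrivial part of (ii). Your treatment is in fact more complete---you explicitly handle the full zero set $\cP$ of $g$ (including the negative integers, which the paper does not mention), verify via the rewritten formula for $f$ that the double pole of $f$ always dominates the simple zero of $g$, and derive the clean closed form for $f_2$; these extra details make the argument airtight but do not change the underlying approach.
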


\begin{proof}
The first claim is trivial since $f_1 = f$. To prove the second and third claims it suffices to observe that $f(\beta)$ has a pole of order two at $\beta = 2s+j$ for any $j \in \N \cup\{ 0 \}$ and therefore in particular $|f_2(2s)| = |f_1(2s)| = \infty$.
\end{proof}

Therefore, we are able to show the following {\em Liouville theorem for distributional solutions of \eqref{eq:Neumann-1D-prep} with faster growth}.

\begin{theorem}
\label{thm:Neumann-Liouville-growth}
Let $u \in C^{\max\{2s-1,0\}+\eps}_{loc}(\R)$ be a distributional solution with faster growth to
\begin{align*}
\begin{cases}
(-\Delta)^s u &\overset{1}{=} 0 ~~ \text{ in } (0,\infty),\\
\mathcal{N}^s_{\R_+} u &\overset{1}{=} 0 ~~ \text{ in } (-\infty,0),
\end{cases}
\end{align*}
with $u(0) = 0$ and
\begin{align*}
|u(x)| \le C (1 + x)^{B_0-\eps}
\end{align*}
for some $C > 0$ and $\eps > 0$. Then, 
\begin{align*}
u = 0 ~~ \text{ in } \R.
\end{align*}
\end{theorem}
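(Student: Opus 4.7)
The plan is to apply \autoref{thm:Liouville-2D} to the pair $L=(-\Delta)^s_{\R}$, $N=\mathcal N^s_{(0,\infty)}$ with $k=1$, and then eliminate every admissible monomial with the help of \autoref{cor:zeros-f2} and the regularity of $u$ near the origin. By \autoref{prop:zeros-final} we have $B_0<2s+\tfrac12$ for $s\le\tfrac12$ and $B_0<s+1\le 2s+1$ for $s\ge\tfrac12$, so the assumed growth bound $(1+x)^{B_0-\eps}$ fits the framework of \autoref{thm:Liouville-2D} with $k=1$ and parameter $\eps'=2s+1-B_0+\eps\in(0,2s)$. The identities \eqref{eq:f-2D} for $L(x_\pm^\beta)$ and $N(x_\pm^\beta)$ are the explicit computations already carried out in \autoref{lemma:f-def} and \autoref{lemma:f-def-2D}, and yield $f_1=f$ and $f_2=-\tfrac{\Gamma(1+2s)}{2s\,\Gamma(2s-\beta)\Gamma(\beta+1)}\,f$.

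Before invoking \autoref{thm:Liouville-2D}, the remaining hypotheses must be verified. The decay bounds \eqref{eq:L-decay-2D} and the pair-self-adjointness \eqref{eq:self-adjoint-2D} for $((-\Delta)^s_{\R},\mathcal N^s_{(0,\infty)})$ on the test-function space $C^{d,1}_{a,b}(\R)$ are to be derived by an analysis analogous to the one behind \autoref{lemma:Neumann-L}: the vanishing zeroth-moment condition built into $C^{d,1}_{a,b}(\R)$ matches the notion of solution given in \autoref{def:faster-growth} and ensures that the boundary contributions arising from integration by parts cancel between the two operators. The $M$-admissibility of $f_1,f_2$ for any $M>0$ is read off from the explicit formulas using Stirling's asymptotics for the $\Gamma$-factors: only finitely many zeros lie in any bounded vertical strip, and $|f_i(z)|$ stays bounded away from $0$ as $|\im z|\to\infty$ inside the strip.

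With the hypotheses in place, \autoref{thm:Liouville-2D} yields, for $x>0$, an expansion of the form
\[u(x)=\sum_{\beta\in\{f_1=0\}\cap S_{B_0-\eps}}\sum_{l=0}^{k_1(\beta)-1}a^{(1)}_{\beta,l}\,x^{\beta}(\log x)^{l}+\sum_{\beta\in\{f_2=0\}\cap S_{B_0-\eps}}\sum_{l=0}^{k_2(\beta)-1}a^{(2)}_{\beta,l}\,x^{\beta}(\log x)^{l},\]
together with a symmetric expansion on $(-\infty,0)$; the potential $x^{2s}$-terms from \eqref{eq:Liouville-zero-2D} are absent because \autoref{cor:zeros-f2}(iii) gives $|f_i(2s)|=\infty$, so the corresponding indicator functions vanish. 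By \autoref{cor:zeros-f2}(i)--(ii) both sums are indexed by $\{f=0\}$. It then suffices to rule out every such $\beta$ with $|\re(\beta)|<B_0-\eps$. By \eqref{eq:B0} the only zero of $f$ with $0<\re(\beta)<B_0$ is the trivial one $\beta=2s-1$, which is positive only for $s\ge\tfrac12$; for $s>\tfrac12$ the function $x^{2s-1}$ belongs to $C^{2s-1}_{\loc}(\R)$ but not to $C^{2s-1+\eps}_{\loc}(\R)$ at the origin and is thus excluded by the standing regularity of $u$, while for $s=\tfrac12$ it is a nonzero constant and is eliminated by $u(0)=0$. Any zero with $\re(\beta)<0$, or with $\re(\beta)=0$ but $\beta\neq 0$, produces a monomial that is unbounded or oscillatory at $x=0$ and hence incompatible with the local H\"older continuity of $u$; a zero at $\beta=0$ yields only a constant, ruled out again by $u(0)=0$. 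The analogous argument on $(-\infty,0)$ gives $u\equiv 0$ in $\R$.

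The main technical obstacle is the first step: verifying the decay bounds \eqref{eq:L-decay-2D} and especially the pair-self-adjointness \eqref{eq:self-adjoint-2D} for $((-\Delta)^s_{\R},\mathcal N^s_{(0,\infty)})$. Unlike the setting of \autoref{thm:Neumann-Liouville}, where the Neumann problem can be rewritten in terms of the single regional operator $L$ on $(0,\infty)$, here the two operators must be handled separately, and the boundary terms produced by integration by parts across $\{x=0\}$ have to be shown to cancel precisely because of the vanishing zeroth-moment condition built into $C^{d,1}_{a,b}(\R)$. Once this technical groundwork is in place, the remainder of the argument is a clean reduction to the zero set of $f$ via \autoref{thm:Liouville-2D} and \autoref{cor:zeros-f2}.
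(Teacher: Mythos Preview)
Your approach is correct and essentially the same as the paper's: apply \autoref{thm:Liouville-2D} with $k=1$ to the pair $((-\Delta)^s_{\R},\mathcal N^s_{(0,\infty)})$, use \autoref{cor:zeros-f2} to reduce the admissible monomials to $\{0,2s-1\}$, and eliminate these via $u(0)=0$ and the assumed $C^{\max\{2s-1,0\}+\eps}_{\loc}$ regularity. The technical hypotheses you flag as ``to be derived'' are exactly what the paper proves in \autoref{lemma:Neumann-L-growth}, \autoref{lemma:L-growth-selfadjoint}, and \autoref{lemma:weak-distr-2}, with test-function space $C^{2,1}_{2,3+2s}(\R)$; the $M$-admissibility is \autoref{lemma:f-prop}.

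One point deserves sharpening. \autoref{thm:Liouville-2D} as stated and proved only yields the expansion for $x>0$ (the proof concludes $v=0$ on $(0,\infty)$ via \autoref{lemma:u-zero}); there is no ``symmetric expansion on $(-\infty,0)$'' delivered by the theorem. The paper handles the negative half-line differently and more directly: once $u\equiv 0$ on $(0,\infty)$, the Neumann condition $\mathcal N^s_{\R_+}u(x)=c_s\int_0^\infty(u(x)-u(y))|x-y|^{-1-2s}\,dy$ collapses to $\tfrac{c_s}{2s}\,u(x)|x|^{-2s}$ for $x<0$, forcing $u(x)=0$ there. Your ``analogous argument'' could be made to work by swapping the roles of $L$ and $N$ and reapplying \autoref{thm:Liouville-2D}, but the paper's one-line observation is cleaner.
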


Here, we can rule out $u = |x|^{2s-1}$ since we assume $u \in C^{2s-1+\eps}_{loc}(\R)$, which we can also do in the application.

\subsection{Determining the Mellin symbols}

The goal of this subsection is to prove \autoref{lemma:f-def} and \autoref{lemma:f-def-2D}, and also
to establish some basic properties of the function $f$ (and therefore also of $f_1$ and~$f_2$).

\begin{proof}[Proof of \autoref{lemma:f-def}]
Since $L$ is homogeneous of order $2s$, by \autoref{remark:hom} we have  $f(\beta) = L(x^{\beta})(1)$. Moreover, 
\begin{align}
\label{eq:k_R-int}
\begin{split}
\int_0^{\infty} k_{\R_+}(x,y) \d y &= c_s \int_0^{\infty} \left(\int_{-\infty}^0 \frac{|x-z|^{-1-2s} |y-z|^{-1-2s}}{\int_0^{\infty} |z-\bar{z}|^{-1-2s} \d \bar{z}} \d z \right) \d y \\
&= c_s\int_{-\infty}^0 \frac{\int_0^{\infty} |z-y|^{-1-2s} \d y}{|x-z|^{1+2s} \int_0^{\infty} |z-\bar{z}|^{-1-2s} \d \bar{z}} \d z = c_s\int_{-\infty}^0 |x-z|^{-1-2s} \d z = c_s \frac{x^{-2s}}{2s}.
\end{split}
\end{align}

We denote, for all $\beta \in \C$,
\begin{align}
\label{eq:C-beta}
C_{\beta} := \int_0^{\infty} t^{\beta} (1 + t)^{-1-2s} \d t = \frac{\Gamma(2s-\beta) \Gamma(\beta+1)}{\Gamma(1+2s)},
\end{align}
and compute:

\begin{align*}
L(x^{\beta}) &= (-\Delta)^s_{\R_+} (x^{\beta}) + \int_{0}^{\infty} (x^{\beta} - y^{\beta}) k_{\R_+}(x,y) \d y \\
&= (-\Delta)^s_{\R_+} (x^{\beta}) + c_s \frac{x^{\beta-2s}}{2s} -  2s c_s \int_0^{\infty}z^{2s} \left( \int_0^{\infty} y^{\beta} (z + y)^{-1-2s} \d y \right) (x+z)^{-1-2s} \d z.
\end{align*}
We recall from \cite[equation~(2.8)]{FaRo22} that
\begin{align*}
(-\Delta)^s_{\R_+} (x^{\beta})(1) = (-\Delta)^s(x_+)^{\beta}(1) - c_s \int_{-\infty}^0 |1-y|^{-1-2s} \d y = \frac{\Gamma(\beta + 1)}{\Gamma(\beta - 2s + 1)} \frac{\sin(\pi(\beta-s))}{\sin(\pi(\beta -2s))} - \frac{c_s}{2s} .
\end{align*}We point out that
\begin{align*}
2s c_s & \int_0^{\infty} z^{2s} \left( \int_0^{\infty} y^{\beta} (z + y)^{-1-2s} \d y \right) (1+z)^{-1-2s} \d z \\
&= 2s c_s \int_0^{\infty} z^{\beta-1} \left( \int_0^{\infty} \left(\frac{y}{z} \right)^{\beta} \left(1 + \frac{y}{z} \right)^{-1-2s} \d y \right) (1+z)^{-1-2s} \d z \\
&= 2s c_s \int_0^{\infty} z^{\beta} \left( \int_0^{\infty} t^{\beta} \left(1 + t \right)^{-1-2s} \d y \right) (1+z)^{-1-2s} \d z \\
&= 2s c_s C_{\beta}^2.
\end{align*}
Altogether, we have proved
\begin{align*}
f(\beta) = L(x^{\beta})(1) &= \frac{\Gamma(\beta + 1)}{\Gamma(\beta - 2s + 1)} \frac{\sin(\pi(\beta-s))}{\sin(\pi(\beta -2s))} - 2s c_s C_{\beta}^2 \\
&= \frac{\Gamma(\beta + 1)}{\Gamma(\beta - 2s + 1)} \frac{\sin(\pi(\beta-s))}{\sin(\pi(\beta -2s))} - 2s^2 4^s \frac{\Gamma(s + \frac{1}{2})}{\Gamma(1 -s)} \pi^{-\frac{1}{2}} \left( \frac{\Gamma(2s-\beta) \Gamma(\beta+1)}{\Gamma(1+2s)} \right)^2,
\end{align*}
as desired. 

Let us now derive the second formula for $f$. First, observe that we can write
\begin{align*}
f(\beta) &= \frac{\Gamma(\beta + 1) \sin(\pi s)}{\Gamma(\beta - 2s + 1)\sin(\pi(\beta -2s))} \Bigg( \frac{\sin(\pi(\beta-s))}{\sin(\pi s)} \\
&\qquad\qquad\qquad - 2s^2 4^s \frac{\Gamma(s + \frac{1}{2})}{\Gamma(1 -s)} \pi^{-\frac{1}{2}} \frac{\Gamma(2s-\beta)^2 \Gamma(\beta+1) \sin(\pi(\beta-2s)) \Gamma(\beta - 2s + 1)}{\Gamma(1+2s)^2 \sin(\pi s)} \Bigg).
\end{align*}

By elementary transformations of the Gamma-function, namely using in the first step that $\Gamma(1+2s) = 2s \Gamma(2s)$ and $\Gamma(s + \frac{1}{2}) = \frac{2}{4^s} \pi^{1/2} \frac{\Gamma(2s)}{\Gamma(s)}$, and in the second step that $\Gamma(1-s) = \frac{\pi}{\Gamma(s) \sin(\pi s)}$ and $\Gamma(2s-\beta) = -\frac{\pi}{\Gamma(\beta-2s+1)\sin(\pi(\beta - 2s))}$, we obtain
\begin{align*}
- 2s^2 4^s & \frac{\Gamma(s + \frac{1}{2})}{\Gamma(1 -s)} \pi^{-\frac{1}{2}} \frac{\Gamma(2s-\beta)^2 \Gamma(\beta+1) \sin(\pi(\beta-2s)) \Gamma(\beta - 2s + 1)}{\Gamma(1+2s)^2 \sin(\pi s)} \\
&= - \frac{1}{\Gamma(s)\Gamma(1 -s)} \frac{\Gamma(2s-\beta)^2 \Gamma(\beta+1) \sin(\pi(\beta-2s)) \Gamma(\beta - 2s + 1)}{\Gamma(2s) \sin(\pi s)} = \frac{\Gamma(2s-\beta) \Gamma(\beta+1)}{\Gamma(2s)},
\end{align*}
whence we deduce the desired formula for $f$.
\end{proof}

\begin{proof}[Proof of \autoref{lemma:f-def-2D}]
We recall from \cite[equation~(2.8)]{FaRo22} that
\begin{align*}
f_{L,+}(\beta) = (-\Delta)^s (x_+^{\beta})(1) = \frac{\Gamma(\beta + 1)}{\Gamma(\beta - 2s + 1)} \frac{\sin(\pi(\beta-s))}{\sin(\pi(\beta -2s))}.
\end{align*}
Thus we compute
\begin{align*}
f_{L,-}(\beta) = (-\Delta)^s (x_-^{\beta})(1) = -c_s \int_{-\infty}^0 y_-^{\beta} |1 - y|^{-1-2s} \d y = - c_s C_{\beta} ,\\
f_{N,+}(\beta) = \mathcal{N}^s_{\R_+} (x_+^{\beta})(-1) = - c_s \int_0^{\infty} y^{\beta} |1 + y|^{-1-2s} \d y = - c_s C_{\beta},
\end{align*}
where $C_{\beta}$ was defined in \eqref{eq:C-beta}. 

Moreover, we note that
\begin{align*}
f_{N,-}(\beta) =  \mathcal{N}^s_{\R_+} (x_-^{\beta})(-1) = c_s \int_0^{\infty} |1 + y|^{-1-2s} \d y = \frac{c_s}{2s}.
\end{align*}
Therefore, we get as desired,
\begin{equation*}\begin{split}
f_1(\beta) &= \frac{f_{L,+}(\beta)f_{N,-}(\beta) - f_{L,-}(\beta) f_{N,+}(\beta)}{f_{N,-}(\beta)} = \frac{\Gamma(\beta + 1)}{\Gamma(\beta - 2s + 1)} \frac{\sin(\pi(\beta-s))}{\sin(\pi(\beta -2s))} - 2s c_s C_{\beta}^2 = f(\beta),\\
f_2(\beta) &= \frac{f_{N,-}(\beta)}{f_{L,-}(\beta)} f_1(\beta) = - (2sC_{\beta})^{-1} f(\beta) = - \frac{\Gamma(1+2s)}{2s \Gamma(2s-\beta) \Gamma(\beta+1)} f(\beta).\qedhere
\end{split}\end{equation*}
\end{proof}

In order to apply \autoref{thm:Liouville}, we also need the following result:
\begin{lemma}
\label{lemma:f-prop}
$f$ is meromorphic. Moreover, for any $M > 0$ there are only finitely many zeros of $f$ in the strip $S_M = \{ z \in \C : |\re(z)| < M \}$, and for any sequence $(z_n) \subset M$ with $|\im(z_n)| \to \infty$ it holds that $$\liminf_{n \to \infty} |f(z_n)| > 0.$$ The same properties hold for $f_1$ and~$f_2$.
\end{lemma}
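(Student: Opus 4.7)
Since $\Gamma$ and $\sin$ are meromorphic on $\C$ and none of the denominators appearing in the formulas for $f,\,f_1,\,f_2$ is identically zero, each of the three functions is meromorphic on $\C$, and none is identically zero (as one checks at small real $\beta$). Hence their zeros are isolated. The plan is to reduce both assertions (finiteness of zeros in $S_M$ and the $\liminf$ bound) to the single pointwise asymptotic
\begin{equation*}
|f(\sigma + it)| \longrightarrow \infty \quad \text{as } |t|\to\infty, \text{ uniformly for } |\sigma|\le M.
\end{equation*}
Such an asymptotic confines the zeros of $f$ inside $S_M$ to a compact subset of $\C$, on which the isolated zeros of a nonzero meromorphic function must be finite in number; the $\liminf$ bound is then immediate.

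To establish this asymptotic I would invoke two standard uniform estimates, both valid as $|t|\to\infty$ for bounded $\sigma$:
\begin{equation*}
|\Gamma(\sigma + it)| = \sqrt{2\pi}\,|t|^{\sigma - 1/2}\, e^{-\pi|t|/2}\bigl(1 + o(1)\bigr), \qquad |\sin(\pi(\sigma+it))| = \tfrac12\, e^{\pi|t|}\bigl(1+o(1)\bigr),
\end{equation*}
the first being Stirling's formula in vertical strips, the second following from $|\sin(\pi(\sigma+it))|^2 = \sin^2(\pi\sigma) + \sinh^2(\pi t)$. Applied to the four building blocks of
\begin{equation*}
 f(\beta) = \frac{\Gamma(\beta+1)\sin(\pi s)}{\Gamma(\beta-2s+1)\sin(\pi(\beta-2s))} \left[\, \frac{\sin(\pi(\beta-s))}{\sin(\pi s)} + \frac{\Gamma(2s-\beta)\Gamma(\beta+1)}{\Gamma(2s)}\, \right],
\end{equation*}
they yield
\begin{equation*}
 \left| \frac{\Gamma(\beta+1)\sin(\pi s)}{\Gamma(\beta-2s+1)\sin(\pi(\beta-2s))} \right| = 2|\sin(\pi s)|\,|t|^{2s}\,e^{-\pi|t|}\bigl(1+o(1)\bigr),
\end{equation*}
while inside the bracket $\bigl|\sin(\pi(\beta-s))/\sin(\pi s)\bigr| \sim e^{\pi|t|}/(2|\sin(\pi s)|)$ whereas $\bigl|\Gamma(2s-\beta)\Gamma(\beta+1)/\Gamma(2s)\bigr| = O(|t|^{2s} e^{-\pi|t|})$. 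The two summands live on different exponential scales, so the reverse triangle inequality rules out cancellation: the bracket is $\sim e^{\pi|t|}/(2|\sin(\pi s)|)$, and multiplying by the prefactor gives $|f(\sigma+it)| \sim |t|^{2s}\to\infty$, as required.

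For $f_1 = f$ the conclusions are immediate. For $f_2(\beta) = -\frac{\Gamma(1+2s)}{2s\,\Gamma(2s-\beta)\Gamma(\beta+1)}\, f(\beta)$, Stirling gives $|\Gamma(2s-\beta)\Gamma(\beta+1)| = 2\pi\,|t|^{2s}\,e^{-\pi|t|}(1+o(1))$, so $|f_2(\sigma + it)| \sim \tfrac{\Gamma(1+2s)}{4\pi s}\,e^{\pi|t|} \to \infty$, and the conclusions for $f_2$ follow by the same argument applied to this stronger asymptotic. The only delicate step in the whole proof is the absence of cancellation inside the bracketed sum defining $f$; the exponential-scale separation above rules this out cleanly, so everything else reduces to uniform-in-$\sigma$ bookkeeping of Stirling-type estimates.
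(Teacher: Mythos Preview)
Your proof is correct and takes essentially the same approach as the paper: both use Stirling's formula for $\Gamma$ in vertical strips together with the exponential asymptotic for $|\sin(\pi\cdot)|$ to deduce $|f(\sigma+it)|\asymp|t|^{2s}$ uniformly for $|\sigma|\le M$, which yields both the finiteness of zeros and the $\liminf$ bound. The paper merely organizes the bookkeeping differently---treating the zero count via the reformulation $f(\beta)=0\Leftrightarrow\frac{\Gamma(\beta+1)}{\Gamma(\beta-2s+1)}=\frac{\sin(\pi(\beta-s))\sin(\pi(\beta-2s))\Gamma(2s)}{\pi\sin(\pi s)}$ and the $\liminf$ via a separate additive decomposition of $f$ obtained from the reflection identity for $\Gamma(2s-\beta)$---whereas you extract the single asymptotic directly from the factored form; the content is the same.
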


\begin{proof}
It is easy to see from the definition that $f$ is meromorphic. To prove the other results, we heavily rely on an asymptotic analysis. 
First, since the only poles of the Gamma-function occur at the non-positive integers, and since the Gamma-function has no zeros, for the prefactor of $f$ we see that, for all~$\beta \in \C$,
\begin{align*}
\frac{\Gamma(\beta + 1) \sin(\pi s)}{\Gamma(\beta - 2s + 1)\sin(\pi(\beta -2s))} = 0 \Leftrightarrow \beta \in 2s -\N.
\end{align*} 
Hence, we see that, for all~$\beta \in \C$ with $\beta \not\in 2s - \N$ (and, in fact,
$\beta =2s-1$ is still a solution of the equivalences below):
\begin{align}
\label{eq:zero-equivalence}
f(\beta) = 0 &\Leftrightarrow  \frac{\sin(\pi(s - \beta))}{\sin(\pi s)} = \frac{\Gamma(2s-\beta) \Gamma(\beta+1)}{\Gamma(2s)} \\
\label{eq:zero-equivalence-2}
&\Leftrightarrow \frac{\sin(\pi(\beta-s))\sin(\pi(\beta-2s)) \Gamma(2s)}{\sin(\pi s) \pi} = \frac{\Gamma(\beta+1)}{\Gamma(\beta-2s+1)}.
\end{align}
Indeed, it is easy to see that the property \eqref{eq:zero-equivalence} holds true for $\beta = 0$. Let us now pcik~$M > 0$ and investigate solutions to \eqref{eq:zero-equivalence-2} inside the strip $S_M = \{ z \in \C : |\re(z)| < M \}$. By Stirling's formula (see \cite[equation~(8.328)]{GrRy07}) it holds 
\begin{align}
\label{eq:Gamma-asymp}
\lim_{|y| \to \infty} |\Gamma(x+iy)| \exp(\pi |y|/2) |y|^{\frac{1}{2} - x} = \sqrt{2\pi}.
\end{align}
As a result, writing $\beta = a + ib$ with~$a,b\in\R$,
\begin{align}
\label{eq:Gamma-quotient-asymp}
\left| \frac{\Gamma(\beta+1)}{\Gamma(\beta+1-2s)} \right| \asymp |b|^{2s} ~~ \text{ as } |b| \to \infty.
\end{align}
Moreover, by the definition of the complex sine function, one can immediately read off that
\begin{align}
\label{eq:sin-asymp}
C^{-1} \exp(\pi |y|) \le |\sin(\pi(x + iy))| \le C \exp(\pi |y|) ~~ \text{ for } |y| \ge 1,
\end{align}
and therefore
\begin{align}
\label{eq:sin-product-asymp}
C(s)^{-1} \exp(2\pi|b|) \le \left|\frac{\sin(\pi(\beta-s))\sin(\pi(\beta-2s)) \Gamma(2s)}{\sin(\pi s) \pi}\right| \le C(s) \exp(2\pi|b|) ~~ \text{ for } |b| \ge 1.
\end{align}
Hence, the asymptotic behavior of the two functions in \eqref{eq:Gamma-quotient-asymp} and \eqref{eq:sin-product-asymp} does not match for large imaginary part $|b|$. Thus, since meromorphic functions can only have finitely many solutions in any bounded domain in $\C$, \eqref{eq:zero-equivalence-2} can only have finitely many solutions inside $S_M$. This means that there are only finitely many zeros of $f$ inside $S_M$.

Using the identity $\Gamma(2s-\beta) = \pi [ \Gamma(\beta - 2s + 1) \sin(\pi(2s-\beta)) ]^{-1}$, we can rewrite
\begin{align*}
f(\beta) = -\sin(\pi s) \pi \left(\frac{\Gamma(\beta + 1)}{\Gamma(\beta - 2s + 1)\sin(\pi(\beta -2s))} \right)^2  + \frac{\Gamma(\beta + 1) \sin(\pi(\beta - s))}{\Gamma(\beta - 2s + 1)\sin(\pi(\beta -2s))}.
\end{align*}
By \eqref{eq:Gamma-quotient-asymp} and \eqref{eq:sin-asymp} the first summand vanishes as $|b| \to \infty$, while the second summand behaves like $|b|^{2s}$ as $|b| \to \infty$. In particular, for any sequence $(z_n) \subset S_M$ with $|\im(z_n)| \to \infty$, it holds that~$\liminf_{n \to \infty}|f(z_n)| > 0$, as desired.
\end{proof}

\subsection{Finding the zeros of $f$}
\label{subsec:zeros}

The goal of this subsection is to prove \autoref{prop:zeros-final}. Moreover, at the very end of this subsection we will give a proof of \autoref{rem-s-asymp}.

 Let us recall from \eqref{eq:zero-equivalence} that finding the nontrivial zeros $\beta \not\in \{0 , 2s-1\}$ is equivalent to finding complex non-trivial zeros $\beta \in \{ \re(\beta) > 0\}$ of 
\begin{align}
\label{LAFUNZ}
g(z):=\frac{\Gamma(2s-z)\,\Gamma(z+1)}{\Gamma(2s)}-\frac{\sin(\pi(s-z))}{\sin(\pi s)}.
\end{align}

\subsubsection{Some classical results from complex analysis}

In the forthcoming \autoref{lemma:COMPLEM2} and \autoref{lemma:COMPLEM1} we present
some versions of classical results in complex analysis which will be used in our framework. We begin with a restatement of the folk wisdom that
if the image of the boundary of a region under a holomorphic function does not wind around the origin, then the function has no zeros inside the region. The precise statement that we need is as follows:

\begin{lemma}\label{lemma:COMPLEM2}
Let~$\Omega\subset\C$
be a bounded, open, simply connected set, with~$\partial\Omega$ of Lipschitz class.

Suppose that~$g$ is a holomorphic function in a neighborhood of~$\overline\Omega$.

If~$g$ has no zeros on~$\partial\Omega$ and the curve~$g(\partial\Omega)$ can be decomposed into a finite number of closed curves~$C_1,\dots,C_N$, such that~$C_i\cap C_j$ is a finite number of points for all~$i\ne j$, and each~$C_i$ is homotopic to a constant curve in the punctured complex plane~$\C\setminus\{0\}$, then~$f$ has no zeros in~$\Omega$.
\end{lemma}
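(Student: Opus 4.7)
The plan is to apply the argument principle. Since $g$ is holomorphic in a neighborhood of $\overline{\Omega}$ and has no zeros on $\partial\Omega$, the number of zeros of $g$ in $\Omega$, counted with multiplicity, is
\begin{equation*}
N(g) \;=\; \frac{1}{2\pi i}\oint_{\partial\Omega}\frac{g'(z)}{g(z)}\,dz \;=\; n(g(\partial\Omega),0),
\end{equation*}
where $n(\gamma,0)$ denotes the winding number of a closed curve $\gamma\subset\C\setminus\{0\}$ around the origin, and $\partial\Omega$ carries its positive orientation. Hence the goal reduces to showing $n(g(\partial\Omega),0)=0$.

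Next, I would establish the additivity identity
\begin{equation*}
n(g(\partial\Omega),0) \;=\; \sum_{i=1}^N n(C_i,0).
\end{equation*}
Since the $C_i$ are closed curves meeting pairwise in only finitely many points, each $C_i$ can be oriented consistently with the orientation of $g\circ\partial\Omega$ inherited from $\partial\Omega$. Viewed as a $1$-cycle in $\C\setminus\{0\}$, the image curve $g(\partial\Omega)$ then coincides with the formal sum $C_1+\dots+C_N$. Since the winding number around $0$ is a group homomorphism $H_1(\C\setminus\{0\})\to\Z$, the identity follows. Equivalently, using $n(\gamma,0)=\frac{1}{2\pi i}\int_\gamma \frac{dw}{w}$, the integral over $g(\partial\Omega)$ decomposes as the sum of the integrals over $C_1,\dots,C_N$ up to the contribution of the finite set $\bigcup_{i\ne j}C_i\cap C_j$, which has measure zero.

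Finally, by hypothesis each $C_i$ is null-homotopic in $\C\setminus\{0\}$, so the homotopy invariance of the winding number yields $n(C_i,0)=0$ for every $i=1,\dots,N$. Combining the three steps gives $N(g)=0$, as claimed.

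The main obstacle I anticipate is the careful justification of the additivity step, because it requires tracking orientations at the finitely many self-intersection points of $g(\partial\Omega)$. The finiteness assumption on the intersections $C_i\cap C_j$ is exactly what makes this feasible: at each self-intersection point, the incoming and outgoing branches of $g\circ\partial\Omega$ can be unambiguously regrouped to reproduce the loops $C_i$, and any sign ambiguity in this regrouping is harmless because each $n(C_i,0)$ vanishes anyway. All other ingredients (the argument principle, homotopy invariance of the winding number) are standard.
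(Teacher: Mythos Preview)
Your proposal is correct and follows essentially the same approach as the paper: argument principle, additivity of the winding number over the decomposition $C_1,\dots,C_N$, and homotopy invariance to conclude each $n(C_i,0)=0$. The paper's proof is in fact much terser than yours—it simply asserts the additivity without the homological justification or orientation discussion you provide—so your treatment of the additivity step is, if anything, more careful than what appears in the source.
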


\begin{proof} On the one hand, the winding number of~$g(\partial\Omega)$ coincides with the sum of the winding numbers
of the components~$C_i$. On the other hand,
the homotopy invariance of the winding number yields that the winding number of each component~$C_i$ is null.

All in all, the winding number of~$g(\partial\Omega)$ is null and the desired result thus follows from
the argument principle.
\end{proof}

\begin{lemma}\label{lemma:COMPLEM1}
Let~$\Omega\subset\C$
be a bounded, open, simply connected set, with~$\partial\Omega$ of Lipschitz class.

Suppose that~$g$ is a holomorphic function in a neighborhood of~$\overline\Omega$.
Assume also that
\begin{equation}\label{0PRELIH}
{\mbox{$\im( g(z))\ge\min\big\{0,c_1-c_2|\re( g(z))|\big\}$
for every~$z\in\partial\Omega$,}}
\end{equation}for some~$c_1$, $c_2>0$.

Then, $g(z)\ne0$ for every~$z\in\Omega$, unless~$g$ vanishes identically.
\end{lemma}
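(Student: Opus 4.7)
Assume $g$ does not vanish identically, so that its zeros in $\overline\Omega$ are isolated. The strategy is to apply \autoref{lemma:COMPLEM2} to a small vertical translate of $g$ and then pass to the limit via Hurwitz's theorem. The geometric input from \eqref{0PRELIH} that drives everything is the following: at any $z \in \partial\Omega$ with $\re(g(z))=0$, evaluating \eqref{0PRELIH} gives
\[
\im(g(z)) \;\ge\; \min\{0,c_1\} \;=\; 0,
\]
so the image $g(\partial\Omega)$ avoids the open negative imaginary ray $\{-it : t>0\}$.

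For $\epsilon>0$ I set $g_\epsilon(z):=g(z)+i\epsilon$. A quick case split shows that $g_\epsilon$ does not vanish on $\partial\Omega$: at a point where $\re(g_\epsilon)=\re(g)=0$ the previous display gives $\im(g_\epsilon)\ge \epsilon>0$, and at points with $\re(g_\epsilon)\ne 0$ vanishing is impossible. Hence $g_\epsilon(\partial\Omega)$ is a closed curve in $\C\setminus\{0\}$.

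Next I exhibit an explicit null-homotopy of this curve in $\C\setminus\{0\}$. Consider
\[
H(z,t):=g_\epsilon(z)+it, \qquad z \in \partial\Omega,\ t\in[0,T],
\]
for some $T>\sup_{\partial\Omega}|g|$. If $H(z,t)=0$ for some $(z,t)$, then $g(z)=-i(\epsilon+t)$ with $\epsilon+t>0$, contradicting the highlighted consequence of \eqref{0PRELIH}; therefore $H\ne 0$ throughout. At $t=T$, the curve $H(\cdot,T)$ lies in the open upper half-plane, which is simply connected and disjoint from $0$, hence contractible in $\C\setminus\{0\}$. Composing with $H$ shows that $g_\epsilon(\partial\Omega)$ is itself homotopic to a constant in $\C\setminus\{0\}$, and \autoref{lemma:COMPLEM2} (applied with $N=1$) yields $g_\epsilon\ne 0$ throughout $\Omega$.

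Finally, since $g_\epsilon\to g$ uniformly on compact subsets of $\Omega$ as $\epsilon\to 0^+$ and each $g_\epsilon$ is zero-free in $\Omega$, Hurwitz's theorem forces $g$ to be either identically zero in $\Omega$ or zero-free in $\Omega$; our standing assumption rules out the former. The one point requiring care is that $g(\partial\Omega)$ itself might touch $0$ and so does not immediately satisfy the hypotheses of \autoref{lemma:COMPLEM2}; this is precisely what the $i\epsilon$-translation trick is designed to circumvent, and its success hinges on the boundary inequality \eqref{0PRELIH} evaluated along the vertical line $\re(w)=0$.
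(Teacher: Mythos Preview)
Your proof is correct and follows the same overall strategy as the paper: translate by $i\epsilon$, show $g_\epsilon$ has no zeros in $\Omega$ via \autoref{lemma:COMPLEM2}, then pass to the limit $\epsilon\to 0^+$. The execution differs in two places. First, to verify the null winding of $g_\epsilon(\partial\Omega)$, the paper works under the strengthened condition $\im(g)\ge\epsilon+\min\{0,c_1-c_2|\re(g)|\}$ and asserts (somewhat tersely) that the image curve does not encircle the origin; you instead extract the cleaner observation that $g(\partial\Omega)$ avoids the open ray $\{-it:t>0\}$ and exhibit an explicit null-homotopy by pushing further in the $+i$ direction. Second, for the limiting step the paper argues by contradiction using the argument-principle integral $\oint_{C_\rho} g'/g$ together with dominated convergence, whereas you invoke Hurwitz's theorem. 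Both variations are standard; your ray-avoidance homotopy makes the topological step more transparent, and Hurwitz packages the limit more cleanly than the explicit contour computation.
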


\begin{proof} Let us first suppose that, for every~$z\in\partial\Omega$,
\begin{equation}\label{PRELIH}
{\mbox{$\im( g(z))\ge\epsilon+\min\big\{0,c_1-c_2|\re( g(z))|\big\}$
for every~$z\in\partial\Omega$,}}
\end{equation}for some~$\epsilon>0$.

We consider the closed curve~$\partial\Omega$ and we
claim that\begin{equation}\label{ZOSQWDF}{\mbox{$g$ has no zeros on~$\partial\Omega$.}}\end{equation}
Indeed, by contradiction, if~$g(z_0)=0$ for some~$z_0\in\partial\Omega$,
we use~\eqref{PRELIH}
to see that
$$0=\im( g(z_0))\ge\epsilon+\min\big\{0,c_1-c_2|\re( g(z_0))|\big\}=
\epsilon+\min\big\{0,c_1\big\}=\epsilon,$$
which is impossible and proves~\eqref{ZOSQWDF}.

Then, \eqref{ZOSQWDF} allows us to employ \autoref{lemma:COMPLEM2}: indeed,
by~\eqref{PRELIH}, the curve~$g(\partial\Omega)$ does not
encircle the origin and therefore
we obtain that~$g$ has no zeros in~$\Omega$,
proving the desired claim in this setting.

Let us now suppose that~\eqref{0PRELIH} holds true. We pick~$\epsilon>0$ and let~$g_\epsilon:=g+i\epsilon$. Now, condition~\eqref{PRELIH} is fulfilled by~$g_\epsilon$ and we thereby know that~$g_\epsilon$ has no zeros in~$\Omega$.
We need to show that~$g$ has no zeros in~$\Omega$ too and, for this purpose, we argue by contradiction, supposing that~$g(z_0)=0$ for some~$z_0\in\Omega$, without~$g$ vanishing identically.

By the analyticity of~$g$, we have that~$z_0$ is isolated and therefore there exists~$\rho>0$ small enough such that~$g$ has no zeros on the circle~$C_\rho$ centered at~$z_0$ and of radius~$\rho$. As a result, by
the argument principle and the dominated convergence theorem,
$$ 1\le \oint_{C_\rho}\frac{g'(z)}{g(z)}\,dz=\lim_{\epsilon\to0}
\oint_{C_\rho}\frac{g'(z)}{g_\epsilon(z)}\,dz=\lim_{\epsilon\to0}
\oint_{C_\rho}\frac{g'_\epsilon(z)}{g_\epsilon(z)}\,dz=0,$$
which is a contradiction.
\end{proof}

\subsubsection{Four preparatory lemmas}

Following are the applications of \autoref{lemma:COMPLEM2} and \autoref{lemma:COMPLEM1} (combined
with suitable contour integration arguments)
to some cases of interest.
We first state all these consequences as separate results and then we present
their proofs. At the end of this subsection, we explain how these results imply \autoref{prop:zeros-final}.

\begin{lemma}\label{LEM:CONSEG:1}
Let $g$ be as in \eqref{LAFUNZ}. Assume that~$s\in\left(\frac12,1\right)$. Then,
there exists~$M>0$ sufficiently large (possibly in dependence of~$s$) such that~$g$ has no zeros in the complex region
\begin{equation}\label{LAREG1} \left\{ {\mbox{$z\in\C$ s.t. $\re z\in\left(2s-1,s+\displaystyle\frac12
\right)$ and~$\im z\in[0,M]$}}\right\}.\end{equation}
\end{lemma}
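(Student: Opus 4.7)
The plan is to apply \autoref{lemma:COMPLEM1} to (a small modification of) the rectangle $\Omega=(2s-1,s+\tfrac12)\times(0,M)$. A direct computation gives $g(2s-1)=\Gamma(1)\Gamma(2s)/\Gamma(2s)-\sin(\pi(1-s))/\sin(\pi s)=0$, so $g$ vanishes at the lower-left corner of $\partial\Omega$; I would therefore excise from $\Omega$ a small quarter-disk of radius $\eta>0$ around $z=2s-1$, verify the inequality~\eqref{0PRELIH} on the remaining boundary, apply \autoref{lemma:COMPLEM1}, and then let $\eta\to0$ using the isolation of zeros of the meromorphic function $g$.

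The edge-by-edge analysis runs as follows. On the bottom edge $\im z=0$ the symmetry $g(\bar z)=\overline{g(z)}$ forces $g$ to be real-valued, so $\im g\equiv0$ and \eqref{0PRELIH} is trivially satisfied (assuming one has verified, separately, that $g$ has no further real zeros in $(2s-1,s+\tfrac12)$). On the right edge $z=s+\tfrac12+iy$ the identity $\sin(\pi(s-z))=-\cosh(\pi y)$ makes the sine contribution to $g$ equal to the strictly positive real quantity $\cosh(\pi y)/\sin(\pi s)$, while Stirling's asymptotics~\eqref{eq:Gamma-asymp} bound the Gamma term by $C(1+y)^{2s}e^{-\pi y}$; hence $\re g$ stays bounded below by a positive constant on $[0,M]$ and tends to $+\infty$, while $|\im g|$ stays bounded, and choosing $c_2$ large enough gives \eqref{0PRELIH}. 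On the top edge $\im z=M$ the decomposition
\[
-\frac{\sin(\pi(s-z))}{\sin(\pi s)}=\frac{-\sin(\pi(s-x))\cosh(\pi M)+i\cos(\pi(s-x))\sinh(\pi M)}{\sin(\pi s)},\qquad x=\re z,
\]
yields $\im g\ge[\cos(\pi(s-x))\sinh(\pi M)-CM^{2s}e^{-\pi M}]/\sin(\pi s)$; for $s\in(\tfrac12,1)$, $s-x\in[-\tfrac12,1-s]\subset(-\tfrac12,\tfrac12]$, so the cosine is nonnegative, and at $x=s+\tfrac12$ (where it vanishes) $|\re g|$ is already of order $\cosh(\pi M)$ so the large-$|\re g|$ alternative in \eqref{0PRELIH} takes over. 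Finally, on the left edge $z=2s-1+iy$, the identities $\sin(\pi(1-s))=\sin(\pi s)$ and $\cos(\pi(1-s))=-\cos(\pi s)$ give
\[
\im\!\left[-\frac{\sin(\pi(s-z))}{\sin(\pi s)}\right]=-\frac{\cos(\pi s)\sinh(\pi y)}{\sin(\pi s)}>0,\qquad y>0,
\]
because $\cos(\pi s)<0$ for $s>\tfrac12$; the Gamma contribution is of size $O(y^{2s}e^{-\pi y})$ and is dominated for $y$ bounded away from zero.

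For the quarter-arc $\{z=(2s-1)+\eta e^{i\theta}:\theta\in[0,\pi/2]\}$ I would expand $g(2s-1+\epsilon)=A\epsilon+O(\epsilon^2)$ with $A=\psi(2s)-\psi(1)-\pi\cot(\pi s)$. Monotonicity of the digamma gives $\psi(2s)>\psi(1)$ for $s>\tfrac12$, and $-\pi\cot(\pi s)>0$ because $\cos(\pi s)<0<\sin(\pi s)$, so $A>0$; hence $g(z)\approx A\eta e^{i\theta}$ lies in the closed upper half plane on the arc and \eqref{0PRELIH} holds there. The main obstacle is precisely the real-line statement needed on the bottom edge: one must rule out real zeros of $g$ in the open interval $(2s-1,s+\tfrac12)$. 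On $[s,s+\tfrac12]$ this is immediate since the Gamma contribution is strictly positive and the sine contribution is nonpositive; on $(2s-1,s)$ both contributions are positive and one must show that the Gamma contribution strictly dominates, which I would establish by combining $g(2s-1)=0$, $g'(2s-1)=A>0$, and a direct convexity analysis (for instance of the logarithm of the ratio of the two contributions) to preclude further sign changes. Once this is in place, \autoref{lemma:COMPLEM1} produces no zeros of $g$ in the modified rectangle, and sending $\eta\to0$ concludes the proof.
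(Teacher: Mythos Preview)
Your overall strategy---applying \autoref{lemma:COMPLEM1} edge by edge to the rectangle---matches the paper's, but two of your edge arguments have genuine gaps that the paper fills with tools you did not use.

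\textbf{Left edge $z=2s-1+iy$.} You correctly compute that the sine contribution has imaginary part $-\cos(\pi s)\sinh(\pi y)/\sin(\pi s)>0$, and then assert that the Gamma contribution ``is of size $O(y^{2s}e^{-\pi y})$ and is dominated for $y$ bounded away from zero.'' This Stirling bound is only asymptotic and says nothing for small or moderate $y$; in that range the Gamma term $\Gamma(1-iy)\Gamma(2s+iy)/\Gamma(2s)$ has modulus of order one, and you have no sign control on its imaginary part. Since the sine imaginary part is only $O(y)$ near $y=0$, a crude modulus bound on the Gamma term cannot rescue the inequality there. The paper handles this by proving the phase estimate \eqref{LAGAMO}, namely $\im\bigl(\Gamma(x+iy)/\Gamma(x_0+iy)\bigr)\ge 0$ for all $y>0$ and $x\in[x_0,x_0+2]$, derived from the series representation of the digamma function; applied with $x=2s$, $x_0=1$, it gives that the Gamma term itself has nonnegative imaginary part for every $y>0$, so $\im g\ge 0$ on the whole left edge with no asymptotic regime to worry about. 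Your right-edge argument has a milder version of the same issue (Stirling is asymptotic), though there the paper instead computes the Gamma term explicitly and reads off $\im g\ge 0$ directly.

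\textbf{Real zeros.} You acknowledge that ruling out real zeros on $(2s-1,s)$ is ``the main obstacle'' and propose a convexity analysis that is not actually carried out. The paper's argument is a one-liner: differentiating,
\[
g'(x)=\frac{\Gamma(2s-x)\Gamma(x+1)}{\Gamma(2s)}\bigl(\psi(x+1)-\psi(2s-x)\bigr)+\frac{\pi\cos(\pi(s-x))}{\sin(\pi s)},
\]
and since $x+1>2s-x>0$ on the interval and $\pi(s-x)\in(-\pi/2,\pi/2)$, both summands are strictly positive by the monotonicity of the digamma on $(0,\infty)$. Hence $g'>0$ throughout, $g(2s-1)=0$, and there are no further real zeros. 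This also makes your quarter-disk excision unnecessary: \autoref{lemma:COMPLEM1} only asserts non-vanishing in the open region, and the boundary zero at $2s-1$ causes no trouble.
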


\begin{lemma}\label{LEM:CONSEG:2}
Let~$g$ be as in~\eqref{LAFUNZ}. Assume that~$s\in\left(0,\frac12\right)$. Then,
there exists~$M>0$ sufficiently large (possibly in dependence of~$s$) such that~$g$ has no zeros in the complex region
\begin{equation}\label{LAREG2} \left\{ {\mbox{$z\in\C$ s.t. $\re z\in(0,2s)$ and~$\im z\in[0,M]$}}\right\}.\end{equation}
\end{lemma}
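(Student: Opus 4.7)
The strategy mirrors the proof of \autoref{LEM:CONSEG:1}, adapted from the range $s \in (\tfrac12,1)$ with strip $\re z \in (2s-1, s+\tfrac12)$ to $s \in (0, \tfrac12)$ with strip $\re z \in (0,2s)$. The plan is to apply \autoref{lemma:COMPLEM1} to the open rectangle
\[
\Omega = \{ z \in \C : 0 < \re z < 2s,\ 0 < \im z < M\},
\]
with a small disk around the pole $z = 2s$ of $\Gamma(2s-z)$ excised (and $\delta \to 0$ at the end), which will yield that $g$ is nonzero in $\Omega$. Zeros on the edge $\{\im z = 0\} \cap (0, 2s)$ must be ruled out separately since this edge is included in the statement's region. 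For $|\im z|$ large within the strip, \autoref{lemma:f-prop} already forces $|g|$ away from zero, which fixes a suitable $M$.

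For the bottom edge, I would decompose $g(x) = h(x) - k(x)$ with $h(x) = \Gamma(2s-x)\Gamma(x+1)/\Gamma(2s)$ and $k(x) = \sin(\pi(s-x))/\sin(\pi s)$. The sine term satisfies $k(0) = 1$, $k(2s) = -1$, and $k'(x) = -\pi \cos(\pi(s-x))/\sin(\pi s) < 0$ on $[0,2s]$ because $|s-x| \le s < \tfrac12$, so $k$ is strictly decreasing and $k(x) < 1$ on $(0, 2s)$. For the Gamma term, set $F(x) = \log h(x)$: then $F(0) = 0$, $F''(x) = \psi'(2s-x) + \psi'(x+1) > 0$ (strict convexity), and $F'(0) = \psi(1) - \psi(2s) > 0$ because $\psi$ is strictly increasing and $2s < 1$. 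Convexity together with the positive initial slope gives $F(x) > 0$ on $(0, 2s)$, so $h(x) > 1 > k(x)$ and hence $g(x) > 0$ on the bottom edge.

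On the vertical and top edges, one verifies the hypothesis $\im g(z) \ge \min\{ 0, c_1 - c_2 |\re g(z)|\}$ of \autoref{lemma:COMPLEM1}. On the left edge $z = iy$, the identity
\[
\sin(\pi(s - iy)) = \sin(\pi s)\cosh(\pi y) - i\cos(\pi s) \sinh(\pi y)
\]
shows that the sine contribution to $\im g$ equals $\cos(\pi s)\sinh(\pi y)/\sin(\pi s) > 0$ and grows exponentially in $y$. The Gamma factor $\Gamma(2s-iy)\Gamma(1 + iy)/\Gamma(2s)$ contributes a term whose modulus is controlled via $|\Gamma(1+iy)|^2 = \pi y / \sinh(\pi y)$ and the analogous identity for $\Gamma(2s-iy)$, and whose phase is extracted through the digamma function; its magnitude decays like a polynomial in $y$ times $e^{-\pi y}$, so the sine contribution dominates for large $y$, while the small-$y$ competition is absorbed into the admissibility constants. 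The right edge $z = 2s + iy$ is analogous, with the extra bookkeeping that $\Gamma(2s - z) = \Gamma(-iy)$ has a pole at $y = 0$: near that corner $|g|$ is arbitrarily large, so the excision makes the hypothesis on the corresponding arc trivial, and $\delta \to 0$ closes the contour. The top edge is covered by \autoref{lemma:f-prop}. Combining the four edges via \autoref{lemma:COMPLEM1} and including the bottom edge finishes the proof.

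The main obstacle is the sign analysis of $\im g$ on the vertical edges: the sine contribution is explicit and has a definite sign, but the Gamma contribution requires a careful phase-and-magnitude decomposition, and the interplay with the pole of $\Gamma(2s-z)$ at the right corner demands a contour-excision argument rather than a direct positivity estimate.
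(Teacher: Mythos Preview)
Your overall strategy matches the paper: apply \autoref{lemma:COMPLEM1} to the rectangle, handling the real segment separately. Your bottom-edge argument ($h(x)>1>k(x)$ via convexity of $\log h$ and monotonicity of $k$) is correct and actually different from the paper's, which instead shows $g'(x)>0$ on $(0,2s)$ directly from $\psi(x+1)>\psi(2s-x)$ (since $x+1>2s-x>0$) together with $\cos(\pi(s-x))>0$, yielding $g(x)>g(0)=0$.

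The genuine gap is on the vertical edges. Your asymptotic magnitude estimate (Gamma part decays like $e^{-\pi y}$, sine part grows like $e^{\pi y}$) only covers large $y$; the phrase ``the small-$y$ competition is absorbed into the admissibility constants'' is not justified, because the hypothesis of \autoref{lemma:COMPLEM1} is a non-winding condition on the image curve, not a mere boundedness condition, so a possibly negative $\im g$ for intermediate $y$ cannot simply be absorbed. The paper's key tool here is the phase estimate for Gamma ratios in \eqref{LAGAMO.0}--\eqref{LAGAMO}: for $x\in[x_0,x_0+2]$ and $y>0$, the phase of $\Gamma(x+iy)/\Gamma(x_0+iy)$ lies in $[0,\tfrac{\pi}{2}(x-x_0)]$, proved by integrating $\im\psi(x+iy)\in[0,\tfrac{\pi}{2}]$. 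After rewriting the Gamma factor on each vertical edge as a positive real multiple of such a ratio (using $\Gamma(\bar w)=\overline{\Gamma(w)}$), this yields $\im g\ge 0$ on the \emph{entire} segments $\re z=0$ and $\re z=2s$, for every $y>0$, not just asymptotically; this is precisely the ``careful phase decomposition'' you flag but do not supply. Likewise, your appeal to \autoref{lemma:f-prop} on the top edge gives only non-vanishing, not the sign condition required by \autoref{lemma:COMPLEM1}; the paper verifies $\im g(x+iM)>0$ directly from the dominant sine term, using that $\cos(\pi(s-x))\ge\cos(\pi s)>0$ for $x\in(0,2s)$ when $s<\tfrac12$.
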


\begin{lemma}\label{LEM:CONSEG:3}
Let~$g$ be as in~\eqref{LAFUNZ}. Assume that~$s\in\left(\frac12,1\right)$. Then,
there exists~$M>0$ sufficiently large (possibly in dependence of~$s$) such that~$g$ has at least one zero in the complex region
\begin{equation}\label{LAREG3} \left\{ {\mbox{$z\in\C$ s.t. $\re z\in\left(\displaystyle s+\frac12,s+1\right)$ and~$\im z\in[0,M]$}}\right\}.\end{equation}
\end{lemma}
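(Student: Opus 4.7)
The strategy is to apply the argument principle to $g$ on the rectangle
\[
\Omega_M := \{z \in \C : s + \tfrac12 < \re z < s + 1,\ |\im z| < M\}
\]
for $M$ sufficiently large (and generic, so that $g$ has no zeros on $\partial \Omega_M$). Since $s \in (\tfrac12, 1)$, the point $\beta = 2s$ lies in $\Omega_M$ and is a simple pole of $g$: from $\Gamma(2s - z) = 1/(2s - z) + O(1)$ near $z = 2s$ one reads
\[
\operatorname{Res}_{z = 2s} g = -\frac{\Gamma(2s+1)}{\Gamma(2s)} = -2s \ne 0.
\]
The meromorphic argument principle then gives $\#Z_{\Omega_M} - 1 = W$, where $W$ denotes the winding number of $g(\partial \Omega_M)$ around $0$. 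The plan is to prove $W \ge 0$, whence $\#Z_{\Omega_M} \ge 1$; since $g(\bar z) = \overline{g(z)}$ (clear from the formula), nonreal zeros come in conjugate pairs, so at least one zero satisfies $\im z \ge 0$. The trivial zero $\beta = 2s - 1$ poses no issue, as $2s - 1 < s + \tfrac12$ for $s < \tfrac32$, and therefore lies outside $\Omega_M$.

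To estimate $W$ I would analyze $\arg g$ on each of the four sides. On the horizontal edges $\im z = \pm M$ with $M$ large, the estimates \eqref{eq:Gamma-asymp} and \eqref{eq:sin-asymp} give $|\Gamma(2s - z)\Gamma(z + 1)/\Gamma(2s)| = O(M^{2s - 1} e^{-\pi M})$ whereas $|\sin(\pi(s-z))/\sin(\pi s)| \asymp e^{\pi M}$, so up to exponentially small error $g \sim -\sin(\pi(s-z))/\sin(\pi s)$. A direct expansion yields
\[
-\frac{\sin(\pi(s - a \mp iM))}{\sin(\pi s)} \sim \frac{e^{\pi M}}{2\sin(\pi s)} e^{\pm i(\pi/2 + \pi(s - a))},
\]
so $\arg g$ increases monotonically by exactly $+\pi/2$ on each horizontal edge in the counterclockwise direction, yielding a total contribution of $+\pi$.

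On the vertical edges I would use the explicit reductions (obtained by direct substitution into the defining formula of $g$)
\[
g(s + \tfrac12 + ib) = \frac{\Gamma(s - \tfrac12 - ib)\Gamma(s + \tfrac32 + ib)}{\Gamma(2s)} + \frac{\cosh(\pi b)}{\sin(\pi s)},
\]
\[
g(s + 1 + ib) = \frac{\Gamma(s - 1 - ib)\Gamma(s + 2 + ib)}{\Gamma(2s)} - \frac{i\sinh(\pi b)}{\sin(\pi s)},
\]
together with the endpoint values $g(s + \tfrac12) > 0$ and $g(s + 1) = \Gamma(s - 1)\Gamma(s + 2)/\Gamma(2s) < 0$ (the latter because $\Gamma(s - 1) < 0$ for $s \in (\tfrac12, 1)$). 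The key points are that the image of the left edge stays in the right half-plane (contributing essentially $0$ to $\arg g$), and the image of the right edge rotates by $+\pi$ (matching at the corners with the values on the horizontal edges, as $\arg g$ passes through $\pm \pi$ at $b = 0$ where $g < 0$, while $\im g$ has a definite sign from the $-i\sinh(\pi b)$ contribution for $b$ close to $0$). Combining these, I expect $W = 1$, so $\#Z_{\Omega_M} = 2$ (a complex-conjugate pair), which is consistent with the asymptotics of \autoref{rem-s-asymp}.

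The main obstacle is verifying $\re g > 0$ on the whole left edge; I would address this via the identity $\Gamma(s - \tfrac12 - ib)\Gamma(s + \tfrac32 + ib) = |\Gamma(s - \tfrac12 + ib)|^2\bigl[(s^2 - \tfrac14 - b^2) + 2ibs\bigr]$, obtained from $\Gamma(s + \tfrac32 + ib) = (s + \tfrac12 + ib)(s - \tfrac12 + ib)\Gamma(s - \tfrac12 + ib)$ and reflection. This reduces $\re g$ on the left edge to a sum of an explicit polynomial times the exponentially decaying $|\Gamma(s-\tfrac12+ib)|^2$, plus the dominant $\cosh(\pi b)/\sin(\pi s)$, allowing a case analysis in $|b|$ (using that the $\cosh$ term dwarfs the Gamma contribution for $|b|$ large, while for small $|b|$ the polynomial factor $s^2 - \tfrac14 - b^2$ is itself nonnegative). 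An analogous identity works on the right edge. As a backup, one can invoke \autoref{lemma:COMPLEM1} on a slightly perturbed subdomain (or its complement in a larger strip) to rule out any winding of $g(\partial \Omega_M)$ in the wrong direction.
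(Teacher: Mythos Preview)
Your strategy---apply the argument principle on a rectangle containing the simple pole at $2s$ and show the winding number is nonnegative---is exactly the paper's. The main differences are cosmetic: you use the full symmetric rectangle $\{|\im z| < M\}$ and invoke conjugate symmetry at the end, while the paper works directly on the upper half $\{\im z \in [0,M]\}$ and replaces the real segment near $2s$ by a small upper semicircle to avoid the pole.

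The substantive difference is which half-plane you pin the image to on the vertical edges. On the left edge $z = s+\tfrac12 + ib$ you aim for $\re g > 0$, and you correctly flag this as the main obstacle: the identity you derived gives
\[
\re g = \frac{|\Gamma(s-\tfrac12+ib)|^2\,(s^2-\tfrac14-b^2)}{\Gamma(2s)} + \frac{\cosh(\pi b)}{\sin(\pi s)},
\]
and showing the second term dominates in the intermediate range of $|b|$ requires a genuine (if routine) estimate. The paper instead reads off from the \emph{same} identity that
\[
\im g(s+\tfrac12+ib) = \frac{2sb\,|\Gamma(s-\tfrac12+ib)|^2}{\Gamma(2s)} \ge 0 \quad\text{for }b\ge 0,
\]
which is immediate---no competing terms. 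Likewise on the right edge the paper shows $\re g < 0$ directly (the real part equals $s(s^2-3b^2-1)|\Gamma(s-1+ib)|^2/\Gamma(2s)$, manifestly negative), which is cleaner than tracking the argument through $\pm\pi$. Your horizontal-edge asymptotics and the endpoint values $g(s+\tfrac12)>0$, $g(s+1)<0$ match the paper's.

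One small point: your backup invocation of \autoref{lemma:COMPLEM1} does not fit here, since that lemma is a \emph{non}-existence tool (it shows the image does not wind), whereas you need the opposite conclusion.
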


\begin{lemma}\label{LEM:CONSEG:4}
Let~$g$ be as in~\eqref{LAFUNZ}. Assume that~$s\in\left(0,\frac12\right)$. Then,
there exists~$M>0$ sufficiently large (possibly in dependence of~$s$) such that~$g$ has at least one zero in the complex region
\begin{equation*} \left\{ {\mbox{$z\in\C$ s.t. $\re z\in\left(\displaystyle 2s, 2s+\frac12\right)$ and~$\im z\in[0,M]$}}\right\}.\end{equation*}
\end{lemma}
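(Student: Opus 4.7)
I would split the argument into two cases, based on the sign of $g(2s+\tfrac12)$. Writing $A(z) = \Gamma(2s-z)\Gamma(z+1)/\Gamma(2s)$ and $B(z) = \sin(\pi(s-z))/\sin(\pi s)$, so that $g = A - B$, one checks that $A(z) \sim -2s/(z-2s)$ near the pole at $z = 2s$, so $g(z) \to -\infty$ as $z \to (2s)^+$ along the real axis, while $g(2s+\tfrac12) = \cot(\pi s) - 2\sqrt{\pi}\,\Gamma(2s+\tfrac32)/\Gamma(2s)$. In the first case, when $g(2s+\tfrac12) \ge 0$, the intermediate value theorem on the real interval $(2s, 2s+\tfrac12]$ immediately produces a real zero in our region (with $\im z = 0$).

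In the second case, $g(2s+\tfrac12) < 0$, the plan is to apply the argument principle on the rectangle $R_{\varepsilon,M} = [2s+\varepsilon, 2s+\tfrac12] \times [-M,M]$ with $\varepsilon > 0$ small and $M > 0$ large. Since the only pole of $g$ is at $z = 2s \notin R_{\varepsilon,M}$, the function $g$ is holomorphic on $\overline{R_{\varepsilon,M}}$ and, for generic $\varepsilon$ and $M$, nonvanishing on the boundary. Stirling's formula gives $|A(x+iy)| \lesssim |y|^{2s}e^{-\pi|y|}$ while $|B(x+iy)| \asymp e^{\pi|y|}$, so on the horizontal sides $g \approx -B$ and the explicit formula $\arg(-B(x \pm iM)) = \pi/2 - \pi(x-s) + o(1)$ contributes $\pi/2 + o(1)$ to $\Delta \arg g$ on each. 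On the left vertical side, the near-pole asymptotic $A(2s+\varepsilon+it) \approx -2s/(\varepsilon+it)$ dominates near $t = 0$, while $-B$ takes over for $|t|$ near $M$; tracking the continuous argument, it starts near $\pi/2 - \pi s$ at the top, passes through $\pi$ at $t = 0$ (where $g \approx -2s/\varepsilon < 0$), and by conjugation ends near $3\pi/2 + \pi s$ at the bottom, for a total change of $\pi + 2\pi s + o(1)$.

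The key computation concerns the right side. I would compute $g'(2s+\tfrac12) = A'(2s+\tfrac12) - \pi$ using $B'(2s+\tfrac12) = \pi$ (direct) and $A'(2s+\tfrac12) = A(2s+\tfrac12)[\psi(2s+\tfrac32) - \psi(-\tfrac12)]$. From $\psi(x+1) = \psi(x) + 1/x$ one obtains $\psi(\tfrac32) = \psi(-\tfrac12)$, and monotonicity of $\psi$ on $(0,\infty)$ gives $\psi(2s+\tfrac32) > \psi(\tfrac32)$; combined with $A(2s+\tfrac12) < 0$ this yields $g'(2s+\tfrac12) < 0$. Hence $\im g(2s+\tfrac12+it)$ is positive for small $t < 0$ and negative for small $t > 0$, matching the sign of $\im(-B) = -\sinh(\pi t)$ for $|t|$ near $M$. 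This pins down the path of $g(2s+\tfrac12+it)$: it stays in the upper half-plane for $t \in (-M,0)$, crosses the negative real axis at $t = 0$, and stays in the lower half-plane for $t \in (0,M)$. The continuous change of $\arg g$ on the right side is then $2\pi - 2\pi s + o(1)$, giving a total change around $\partial R_{\varepsilon,M}$ of $4\pi + o(1)$ and hence $Z \ge 2$. By the conjugation symmetry $g(\bar z) = \overline{g(z)}$, at least one of these zeros has $\im z > 0$ and lies in the claimed region.

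The hard part is verifying that $\im g(2s+\tfrac12 + it) \ne 0$ for $t \ne 0$, i.e., ruling out spurious sign changes on the right side that could introduce extra clockwise winding around the origin. This should follow from estimates on $A(2s+\tfrac12+it)$ using the reflection formula $\Gamma(z)\Gamma(1-z) = \pi/\sin(\pi z)$, which expresses $A(2s+\tfrac12+it)$ in terms of $\cosh(\pi t)$ and $\Gamma(2s+\tfrac32+it)$, making the sign of $\im A$ more transparent. Alternatively, one can circumvent this sign analysis by a homotopy argument: as $s$ decreases continuously from a point in Case 2 into Case 1 (where a real zero is known to exist), Hurwitz's theorem preserves the total zero count of $g$ inside $R_{\varepsilon,M}$, so this count is at least $1$ throughout.
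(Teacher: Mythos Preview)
Your overall strategy via the argument principle matches the paper's, and your local computations (the pole asymptotic $A(z)\sim -2s/(z-2s)$, the behavior on the horizontal sides, and the sign $g'(2s+\tfrac12)<0$) are all correct. But you correctly name the gap and then do not close it: the assertion that $\im g(2s+\tfrac12+it)$ has no further sign changes for $t\neq 0$ is precisely the nontrivial content of the lemma, and neither the reflection-formula suggestion nor the Hurwitz deformation is made rigorous. For the latter, note that the rectangle $R_{\varepsilon,M}$ moves with $s$ (its left edge is at $2s+\varepsilon$), and zeros can enter or leave through the right edge exactly at the Case~1/Case~2 transition, where $g(2s+\tfrac12)=0$; so a one-line appeal to Hurwitz is not enough.

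The paper closes this gap by a direct phase estimate. Writing $\Gamma(-\tfrac12-iy)=-\dfrac{4}{4y^2+1}\,\overline{\Gamma(\tfrac32+iy)}$, one gets
\[
A\Big(2s+\tfrac12+iy\Big)\;=\;-\,\frac{4\,|\Gamma(\tfrac32+iy)|^{2}}{(4y^2+1)\,\Gamma(2s)}\cdot\frac{\Gamma(2s+\tfrac32+iy)}{\Gamma(\tfrac32+iy)}.
\]
The key input (proved in the paper by integrating $\im\psi(x+iy)\in(0,\tfrac{\pi}{2})$ in $x$) is that for $y>0$ and $x\in[x_0,x_0+2]$ the phase of $\Gamma(x+iy)/\Gamma(x_0+iy)$ lies in $[0,\tfrac{\pi}{2}(x-x_0)]$. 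With $x_0=\tfrac32$, $x=2s+\tfrac32$, this phase lies in $[0,\pi s]\subset[0,\tfrac{\pi}{2})$, so $\im A(2s+\tfrac12+iy)\le 0$; since $\im(-B(2s+\tfrac12+iy))=-\sinh(\pi y)\le 0$, one obtains $\im g\le 0$ on the whole right edge for $y\ge 0$, which is exactly the global sign control your argument is missing. The same phase bound, applied at $\re z=2s$ (with a quarter-circle detour around the pole rather than a vertical line at $2s+\varepsilon$), gives $\im g>0$ on the left edge and completes the winding count in the upper half-plane directly, without the symmetric-rectangle reduction.
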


We now proceed with the proofs of the above results.

\begin{proof}[Proof of \autoref{LEM:CONSEG:1}] First of all, we rule out the possibility of real zeros.
For this, we use the digamma function
\begin{equation}\label{10p2rfjgrqadscv} \psi (z):={\frac {\Gamma '(z)}{\Gamma (z)}}\end{equation}
and note that, when~$x\in\left(2s-1,s+\frac12
\right)$, hence~$\pi(s-x)\in\left(-\frac\pi2,\frac\pi2\right)$,
\begin{eqnarray*}
g'(x)&=&
-\frac{\Gamma'(2s-x)\,\Gamma(x+1)}{\Gamma(2s)}
+\frac{\Gamma(2s-x)\,\Gamma'(x+1)}{\Gamma(2s)}+\frac{\pi\cos(\pi(s-x))}{\sin(\pi s)}\\&=&
\frac{\Gamma(2s-x)\,\Gamma(x+1)}{\Gamma(2s)}
\big( \psi(x+1)-\psi(2s-x)\big)+\frac{\pi\cos(\pi(s-x))}{\sin(\pi s)}\\&\ge&
\frac{\Gamma(2s-x)\,\Gamma(x+1)}{\Gamma(2s)}
\big( \psi(x+1)-\psi(2s-x)\big).
\end{eqnarray*}

Also, the digamma function is monotonically increasing for positive real numbers and therefore, since~$x+1>2s-x>0$, we have that~$\psi(x+1)>\psi(2s-x)$. We have thereby found that~$g'(x)>0$ for all~$x\in\left(2s-1,s+\frac12
\right)$, leading to
$$ g(x)>g(2s-1)=0,$$
as desired.

Let now~$\Omega$ be the region in~\eqref{LAREG1} and note that~$g$ is holomorphic in
a neighborhood of~$\overline\Omega$. We claim
\begin{equation}\label{q0dp:3}
{\mbox{$\im( g(z))\ge\min\big\{0,1-|\re( g(z))|\big\}$
for every~$z\in\partial\Omega$.}}
\end{equation}
For this, we
decompose~$\partial\Omega$ into~$L_1\cup L_2\cup L_3\cup L_4$, with
\begin{eqnarray*}
&&L_1:=\left\{ {\mbox{$z\in\C$ s.t. $\re z\in\left(2s-1,s+\displaystyle\frac12\right)$ and~$\im z=\{0\}$}}\right\},\\
&&L_2:=\left\{ {\mbox{$z\in\C$ s.t. $\re z=s+\displaystyle\frac12$ and~$\im z\in[0,M]$}}\right\},\\
&&L_3:=\left\{ {\mbox{$z\in\C$ s.t. $\re z\in\left(2s-1,s+\displaystyle\frac12\right)$ and~$\im z=M$}}\right\},\\
{\mbox{and }}\quad
&&L_4:=\left\{ {\mbox{$z\in\C$ s.t. $\re z=2s-1$ and~$\im z\in[0,M]$}}\right\}.
\end{eqnarray*}
The proof of~\eqref{q0dp:3} will be performed by showing that~$\im( g(z))\ge\min\big\{0,1-|\re( g(z))|\big\}$
for every~$z\in L_j$, for all~$j\in\{1,\dots,4\}$. To this end, consider first~$z\in L_1\subseteq\R$.
In this case, $g$ is real-valued and therefore the claim is obvious.

Let now~$z\in L_2$. Then, $z=s+\frac12+iy$, with~$y\in[0,M]$. We observe that, for all~$w\in\C$,
\begin{equation}\label{GAMMOVE}
\Gamma(\overline w)=\overline{\Gamma(w)}\end{equation} and, as a consequence,
\begin{eqnarray*}
&&\Gamma\left(s-\frac12-iy\right)\,\Gamma\left(s+\frac12+iy+1\right)\\&&\qquad=
\left(s+\frac12+iy\right)\left(s-\frac12+iy\right)
\Gamma\left(s-\frac12-iy\right)\,\Gamma\left(s-\frac12+iy\right)\\&&\qquad=
\left( s^2 - y^2 - \frac14+ 2 i s y\right)\,\Gamma\left(\overline{s-\frac12+iy}\right)\,\Gamma\left(s-\frac12+iy\right)\\&&\qquad=
\left( s^2 - y^2 - \frac14+ 2 i s y\right)\,\left|\Gamma\left({s-\frac12+iy}\right)\right|^2.
\end{eqnarray*}

Accordingly,
\begin{equation}\label{0qFFewdiofj30mrtu9v45} \im\left(\frac{\Gamma\left(s-\frac12-iy\right)\,\Gamma\left(s+\frac12+iy+1\right)}{\Gamma(2s)}\right)=
\frac{2 s y\,\left|\Gamma\left({s-\frac12+iy}\right)\right|^2}{\Gamma(2s)}\ge0.\end{equation}

For this reason,
\begin{equation}\label{FTR293de3136425t}
\begin{split}\im\left(
g\left(s+\frac12+iy\right)\right)&=\im\left(\frac{\Gamma\left(s-\frac12-iy\right)\,\Gamma\left(s+\frac12+iy+1\right)}{\Gamma(2s)}\right)+\im\left(\frac{\sin\left(\pi\left(\frac12+iy\right)\right)}{\sin(\pi s)}\right)
\\&\ge0+\im\left(\frac{
\cosh(\pi y) }{\sin(\pi s)}\right)=0,
\end{split}\end{equation}
which implies~\eqref{q0dp:3},
as desired.

We now focus on the case~$z\in L_3$, that is~$z=x+iM$ with~$x\in \left(2s-1,s+\frac12\right)$.
We use an integration by parts to point out that, if~$\tau>0$,
\begin{eqnarray*}
\Gamma (\tau\pm iM)&=&\int _{0}^{+\infty }t^{\tau-1\pm i M}e^{-t}\,dt\\
&=&\frac1{\pm iM}\lim_{j\to+\infty}\int _{1/j}^{j}t^{\tau}\frac{d}{dt}(t^{\pm iM})e^{-t}\,dt
\\&=&\frac1{\pm iM}\lim_{j\to+\infty}\Bigg[
j^{\tau\pm iM}e^{-j}-\left(\frac1j\right)^{\tau\pm iM}e^{-\frac1j}\\&&\qquad\qquad-
\tau\int _{1/j}^{j}t^{\tau-1\pm iM} e^{-t}\,dt
+\int _{1/j}^{j}t^{\tau\pm iM} e^{-t}\,dt
\Bigg]\\&=&
-\frac1{\pm iM}\Bigg[
\tau\int _{0}^{+\infty}t^{\tau-1\pm iM} e^{-t}\,dt
-\int _{0}^{+\infty}t^{\tau\pm iM} e^{-t}\,dt
\Bigg].
\end{eqnarray*}
On this account,
for all~$b>a>0$,
\begin{equation}\label{GASU}\begin{split}&
\sup_{\tau\in[a,b]}|\Gamma (\tau\pm iM)|\\&\quad\le\frac1M
\Bigg[
b\int _{0}^{1}t^{a-1}\,dt+
b\int _{1}^{+\infty}t^{b-1} e^{-t}\,dt
+\int _{0}^{1}t^{a} e^{-t}\,dt+\int _{1}^{+\infty}t^{b} e^{-t}\,dt
\Bigg]\le \frac{C_{a,b}}M,
\end{split}
\end{equation}for some~$C_{a,b}>0$.

Thus, we deduce from~\eqref{GASU} that
\begin{equation*}
\lim_{M\to+\infty}\sup_{x\in \left(2s-1,s+\frac12\right)}
\left|\frac{\Gamma(2s-x-iM)\,\Gamma(x+iM+1)}{\Gamma(2s)}\right|=0.
\end{equation*}
As a result,
\begin{equation}\label{SDdvf}\begin{split} g(x+iM)&=-\frac{\sin(\pi(s-x-iM))}{\sin(\pi s)}+\mu_M(x)\\&=
\frac{ -\sin(\pi (s - x)) \cosh(\pi M) + i \cos(\pi (s - x)) \sinh(\pi M)}{\sin(\pi s)}+\mu_M(x),\end{split}\end{equation}
with~$\sup_{x\in \left(2s-1,s+\frac12\right)}|\mu_M(x)|$ as small as we wish, provided that~$M$ is large enough.

Now, we distinguish two cases. First, if~$x\in \left[s+\frac14,s+\frac12\right)$, we deduce from~\eqref{SDdvf} that
\begin{eqnarray*}&&
|\re( g(x+iM))|+\im( g(x+iM))\\&&\qquad\ge
\frac{ \sin(\pi (x-s)) \cosh(\pi M) }{\sin(\pi s)}
+
\frac{  \cos(\pi (x-s)) \sinh(\pi M)}{\sin(\pi s)}
-2|\mu_M(x)|\\&&\qquad\ge
\frac{ \sqrt2 \cosh(\pi M) }{2\sin(\pi s)}
+0
-2|\mu_M(x)|\\&&\qquad\ge1,
\end{eqnarray*}as long as~$M$ is sufficiently large,
which implies~\eqref{q0dp:3} in this case.

If instead~$x\in \left(2s-1,s+\frac14\right)$, we observe that for some~$c_s>0$
\begin{eqnarray*}
\inf_{x\in \left(2s-1,s+\frac14\right)}\frac{ \cos(\pi (s - x)) \sinh(\pi M)}{\sin(\pi s)}&=&
\inf_{\theta\in \left(-\frac\pi4,(1-s)\pi\right)}
\frac{ \cos\theta \sinh(\pi M)}{\sin(\pi s)} \ge c_s\sinh(\pi M).
\end{eqnarray*}

Hence, one infers from~\eqref{SDdvf} that
$$ \im( g(x+iM))\ge c_s\sinh(\pi M)-|\mu_M(x)|\ge0,$$ for~$M$ large enough,
which gives~\eqref{q0dp:3} in this case too, as desired.

We now take care of~$L_4$. To this end, 
it is useful to observe that, for all~$x_0$, $x$, $y>0$,
\begin{equation}\label{LAGAMO.0}
{\mbox{the phase of }}\frac{\Gamma(x+iy)}{\Gamma(x_0+iy)}{\mbox{ belongs
to }}\left[0, \frac{\pi(x-x_0)}2\right]
\end{equation}
and thus
\begin{equation}\label{LAGAMO}
\im\left(\frac{\Gamma(x+iy)}{\Gamma(x_0+iy)}\right)\ge0,
\end{equation}as long as~$x\in[x_0,x_0+2]$.

To check this, we recall the digamma function's
series representation 
$$ \psi (z)=-\gamma +\sum _{k=0}^{+\infty }{\frac {z-1}{(k+1)(k+z)}},$$
where~$\gamma$ is the Euler-Mascheroni constant.

In particular, when~$x>0$ and $y>0$,
\begin{equation}\label{20erfjer}\im(\psi(x+iy))=
\sum _{k=0}^{+\infty }\im\left({\frac {x-1+iy}{(k+1)(k+x+iy)}}\right)
=\sum _{k=0}^{+\infty }\frac{ y}{(k+x)^2+y^2}\end{equation}
and therefore
\begin{equation}\label{20erfjer.2}\im(\psi(x+iy))>0.\end{equation}
Moreover (see e.g.~\cite[Example~7, page~136]{WhWa62}), for all~$t>0$,
$$ \frac{\pi \coth(\pi t) }{2 }-\frac{ 1}{2 t}
=\sum_{k=1}^{+\infty} \frac{t}{k^2 + t^2}$$
Also, since~$\sinh(\pi t)\ge\pi t$,
$$\frac{d}{dt}\left(\frac{\pi \coth(\pi t) }{2 }-\frac{ 1}{2 t}\right)=
\frac1{2t^2}-\frac{\pi^2}{2\sinh^2(\pi t)}\ge0$$
and therefore
$$\sup_{t>0}\left(\frac{\pi \coth(\pi t) }{2 }-\frac{ 1}{2 t}\right)=\lim_{t\to+\infty}\left(
\frac{\pi \coth(\pi t) }{2 }-\frac{ 1}{2 t}\right)=\frac\pi2.$$
{F}rom these observations and~\eqref{20erfjer} we arrive at
\begin{equation}\label{20erfjer.3} \im(\psi(x+iy))
\le\sum _{k=0}^{+\infty }\frac{ y}{k^2+y^2}
= \frac{\pi \coth(\pi y) }{2 }-\frac{ 1}{2 y}\le\frac\pi2.\end{equation}

Now, given~$y>0$,
for all~$x\in[x_0,x_0+2]$, we consider a continuous phase~$\vartheta(x)\in\R$ such that~$\Gamma(x+iy)=
\varrho(x)\,e^{i\vartheta(x)}$, where~$\varrho(x):=|\Gamma(x+iy)|$. Taking the derivative in~$x$, it follows that
$$\frac{\Gamma'(x+iy)}{\Gamma(x+iy)}=\frac{\varrho'(x)}{\varrho(x)}+i\vartheta'(x)$$ and then, considering the imaginary part of this identity,
$$\vartheta'(x)=\im\left( \frac{\Gamma'(x+iy)}{\Gamma(x+iy)}\right)=\im(\psi(x+iy))\in
\left[0,\frac\pi2\right],$$
thanks to~\eqref{10p2rfjgrqadscv}, \eqref{20erfjer.2}, and~\eqref{20erfjer.3}.

Therefore,
$$ \vartheta(x)-\vartheta(x_0)\in\left[0, \frac{\pi(x-x_0)}2\right],$$
showing~\eqref{LAGAMO.0} and, as a byproduct, \eqref{LAGAMO}.

Now, to deal with~$L_4$, we take~$z=2s-1+iy$ with~$y\in[0,M]$ and we see that, using~\eqref{GAMMOVE},
\begin{eqnarray*}
g(z)&=&\frac{\Gamma(1-iy)\,\Gamma(2s+iy)}{\Gamma(2s)}-\frac{\sin(\pi(1-s-iy))}{\sin(\pi s)}
\\&=&
\frac{|\Gamma(1+iy)|^2\,\Gamma(2s+iy)}{\Gamma(1+iy)\Gamma(2s)}
-\frac{ \sin(\pi s)\cosh(\pi y) + i\cos(\pi s) \sinh( \pi y)}{\sin(\pi s)}.
\end{eqnarray*}
Therefore,
\begin{equation*}
\im(g(z))\ge\im\left(\frac{|\Gamma(1+iy)|^2\,\Gamma(2s+iy)}{\Gamma(1+iy)\Gamma(2s)}\right)=\frac{|\Gamma(1+iy)|^2}{\Gamma(2s)}
\im\left(\frac{\Gamma(2s+iy)}{\Gamma(1+iy)}\right)
.\end{equation*}
This and~\eqref{LAGAMO} yield that~$\im(g(z))\ge0$ and
this completes the proof of~\eqref{q0dp:3}.

The desired result now follows from~\eqref{q0dp:3} and \autoref{lemma:COMPLEM1}.
\end{proof}

\begin{proof}[Proof of \autoref{LEM:CONSEG:2}] First of all, we rule out the possibility of real zeros.
We observe that, when~$x\in(0,2s)$, and thus~$\pi(s-x)\in(-\pi s,\pi s)$,
\begin{eqnarray*}
g'(x)&=&
\frac{\Gamma(2s-x)\,\Gamma(x+1)}{\Gamma(2s)}
\big( \psi(x+1)-\psi(2s-x)\big)+\frac{\pi\cos(\pi(s-x))}{\sin(\pi s)}\\&\ge&\frac{\Gamma(2s-x)\,\Gamma(x+1)}{\Gamma(2s)}
\big( \psi(x+1)-\psi(2s-x)\big)+\frac{\pi\cos(\pi(s-x))}{\sin(\pi s)}.
\end{eqnarray*}
Since~$x+1>2s-x$, the monotonicity of the digamma function yields that~$g'(x)>0$ for all~$x\in(0,2s)$,
therefore
$$g(x)>g(0)=0,$$as desired.

Let now~$\Omega$ be as in~\eqref{LAREG2}.
Our goal is to show that
\begin{equation}\label{1w2erfq0dp:3}
{\mbox{$\im( g(z))\ge\min\big\{0,1-|\re( g(z))|\big\}$
for every~$z\in\partial\Omega$.}}
\end{equation}
Indeed, this, in combination with \autoref{lemma:COMPLEM1}, gives the desired result in \autoref{LEM:CONSEG:2}.

To prove~\eqref{1w2erfq0dp:3}, we
decompose~$\partial\Omega$ into~$L_1\cup L_2\cup L_3\cup L_4$, with
\begin{eqnarray*}
&&L_1:=\left\{ {\mbox{$z\in\C$ s.t. $\re z\in(0,2s)$ and~$\im z=\{0\}$}}\right\},\\
&&L_2:=\left\{ {\mbox{$z\in\C$ s.t. $\re z=2s$ and~$\im z\in[0,M]$}}\right\},\\
&&L_3:=\left\{ {\mbox{$z\in\C$ s.t. $\re z\in(0,2s)$ and~$\im z=M$}}\right\},\\
{\mbox{and }}\quad
&&L_4:=\left\{ {\mbox{$z\in\C$ s.t. $\re z=0$ and~$\im z\in[0,M]$}}\right\}.
\end{eqnarray*}
and we prove the statement in~\eqref{1w2erfq0dp:3} in each segment~$L_j$ for~$j\in\{1,\dots,4\}$.

If~$z\in L_1$, then~$g(z)$ is real and therefore the inequality in~\eqref{1w2erfq0dp:3} is obvious.

If~$z\in L_2$, we write~$z=2s+iy$, with~$[0,M]$ and consequently, recalling~\eqref{GAMMOVE},
\begin{equation}\label{1-0peirf3jr12efv}
\begin{split}
g(2s+iy)&=\frac{\Gamma(-iy)\,\Gamma(2s+1+iy)}{\Gamma(2s)}+\frac{\sin(\pi(s+iy))}{\sin(\pi s)}\\
&=\frac{i\,|\Gamma(1+iy)|^2\,\Gamma(2s+1+iy)}{y\Gamma(2s)\,\Gamma(1+iy)}+
\cosh(\pi y)+i
\frac{\cos(\pi s)\,\sinh(\pi y)}{\sin(\pi s)}.
\end{split}\end{equation}
By~\eqref{LAGAMO.0},
the phase of~$\frac{\Gamma(2s+1+iy)}{\Gamma(1+iy)}$ belongs to~$[0,\pi s]$.

After a rotation, we find that the phase of~$\frac{i\Gamma(2s+1+iy)}{\Gamma(1+iy)}$,
which coincides with the phase of~$\frac{i\,|\Gamma(1+iy)|^2\,\Gamma(2s+1+iy)}{y\Gamma(2s)\,\Gamma(1+iy)}$, belongs to~$\left[\frac\pi2, \frac\pi2+\pi s\right]\subseteq[0,\pi]$.

For this reason,
$$ \im\left(\frac{i\,|\Gamma(1+iy)|^2\,\Gamma(2s+1+iy)}{y\Gamma(2s)\,\Gamma(1+iy)}\right)\ge0$$
and therefore\begin{equation}\label{0euorjh234foi}\im\big(
g(2s+iy)\big)\ge0+
\frac{\cos(\pi s)\,\sinh(\pi y)}{\sin(\pi s)}\ge0,
\end{equation}which yields~\eqref{1w2erfq0dp:3} in this case.

We now deal with the case in which~$z\in L_3$, namely~$z=x+iM$ with~$x\in(0,2s)$.
In this framework, \begin{eqnarray*}
g(z)&=&\frac{\Gamma(2s-x-iM)\,\Gamma(x+1+iM)}{\Gamma(2s)}-\frac{\sin(\pi(s-x-iM))}{\sin(\pi s)}
\\&=&
\frac{ -\sin(\pi (s - x)) \cosh(\pi M) + i \cos(\pi (s - x)) \sinh(\pi M)}{\sin(\pi s)}
+\mu_M(x),
\end{eqnarray*}
where, by virtue of~\eqref{GASU}, $\sup_{x\in(0,2s)}|\mu_M(x)|$ can be made arbitrarily small as long as we choose~$M$ sufficiently large.

For this reason,
\begin{equation*}\begin{split}&
\im\big(g(z)\big)\ge \frac{\cos(\pi (s - x)) \sinh(\pi M)}{\sin(\pi s)}
-|\mu_M(x)| \ge\frac{\cos(\pi s) \sinh(\pi M)}{\sin(\pi s)}-|\mu_M(x)|>0,
\end{split}
\end{equation*}as long as~$M$ is large enough,
and~\eqref{1w2erfq0dp:3} follows.

We now suppose that~$z\in L_4$, that is~$z=iy$ with~$y\in[0,M]$.
In this setting,
\begin{eqnarray*}
g(z)&=&\frac{\Gamma(2s-iy)\,\Gamma(1+iy)}{\Gamma(2s)}-\frac{\sin(\pi(s-iy))}{\sin(\pi s)}
\\&=&\frac{|\Gamma(2s+iy)|^2\,\Gamma(1+iy)}{\Gamma(2s)\,\Gamma(2s+iy)}-\cosh(\pi y)
+\frac{i\cos(\pi s)\sinh(\pi y)}{\sin(\pi s)}.
\end{eqnarray*}

Hence, since~$\im\left(\frac{\Gamma(1+iy)}{\Gamma(2s+iy)}\right)\ge0$ thanks to~\eqref{LAGAMO}, we conclude that\begin{eqnarray*}\im(
g(z))\ge\frac{\cos(\pi s)\sinh(\pi y)}{\sin(\pi s)}\ge0.
\end{eqnarray*}
The proof of~\eqref{1w2erfq0dp:3} is thereby complete.
\end{proof}

\begin{proof}[Proof of \autoref{LEM:CONSEG:3}]
The boundary of the region in~\eqref{LAREG3} 
can be decomposed into the four segments
\begin{eqnarray*}
&&L_1:=\left\{ {\mbox{$z\in\C$ s.t. $\re z\in\left(\displaystyle s+\frac12,s+1\right)$ and~$\im z=\{0\}$}}\right\},\\
&&L_2:=\left\{ {\mbox{$z\in\C$ s.t. $\re z=s+1$ and~$\im z\in[0,M]$}}\right\},\\
&&L_3:=\left\{ {\mbox{$z\in\C$ s.t. $\re z\in\left(\displaystyle s+\frac12,s+1\right)$ and~$\im z=M$}}\right\},\\
{\mbox{and }}\quad
&&L_4:=\left\{ {\mbox{$z\in\C$ s.t. $\re z=s+\displaystyle\frac12$ and~$\im z\in[0,M]$}}\right\}.
\end{eqnarray*}
Since~$g$ has a pole in~$2s$ it is convenient to perform a contour integration for the argument principle
in which~$L_1$ is replaced by~$L_1'$ which is obtained by picking a conveniently small~$\epsilon>0$
and by replacing the small interval~$[2s-\epsilon,2s+\epsilon]$ with an upper semicircle of radius~$\epsilon$
and centered at~$2s$. The gist will be that \begin{equation}\label{mmowdf3}
{\mbox{the image under the map~$f$ of $L_1'\cup L_2\cup L_3\cup L_4$
encircles the origin,}}\end{equation} i.e., it is homotopic in~$\C\setminus\{0\}$ to a curve with nonzero winding number;
from this, the desired result will follow from the argument principle.

The details are as follows. We first consider~$z\in L_1'$. If~$x\in\left(s+\frac12,2s-\epsilon\right)\cup
\left(2s+\epsilon,s+1\right)$, we have that
$$ g(x)=\frac{\Gamma(2s-x)\,\Gamma(x+1)}{\Gamma(2s)}-\frac{\sin(\pi(s-x))}{\sin(\pi s)},$$
which is real-valued and such that
\begin{equation*}
g\left(s+\frac12\right)=\frac{\Gamma\left(s-\frac12\right) \,\Gamma\left(s + \frac32\right)}{\Gamma(2 s)} + \frac1{\sin(\pi s)}>0.
\end{equation*}
Moreover,
\begin{equation*}
g(s+1)=-\frac{\Gamma(s)\,\Gamma(s+2)}{(1-s)\Gamma(2 s)} <0.
\end{equation*}
We also see that~$g$ approaches~$+\infty$ when~$x$ tends to~$2s$ from the left,
and it approaches~$-\infty$ when~$x$ tends to~$2s$ from the right.

Furthermore, without loss of generality, we can suppose that~$g(x)\ne0$ for all~$x\in\left(s+\frac12,2s-\epsilon\right)\cup
\left(2s+\epsilon,s+1\right)$, otherwise the claim in \autoref{LEM:CONSEG:3} would be true
(actually, with a real zero).

These observations entail that
\begin{equation}\label{asc32}
\begin{split}&{\mbox{$g\left(s+\frac12,2s-\epsilon\right)$ is a segment in the strictly positive reals}}\\&{\mbox{with an endpoints on the extreme right}}\\&{\mbox{and~$g\left(2s+\epsilon,s+1\right)$ is 
a segment in the strictly negative reals}} \\&{\mbox{with an endpoints on the extreme left.}}\end{split}\end{equation}

Let us now analyze the behavior of~$g(z)$ when~$z$ travels on the small upper semicircle
of radius~$\epsilon$ centered at~$2s$.
In this case, since~$|g(2s)|=+\infty$, we know that the norm of~$g(z)$ is very large
for small~$\epsilon$. Also, for~$w$ close to~$0$ we know that~$\Gamma(w)=\frac1w+O(1)$ and therefore,
if~$\theta\in[C\epsilon,\pi-C\epsilon]$ and~$C>0$ is large enough,
\begin{eqnarray*}&&
\im (g(2s+\epsilon e^{i\theta}))=\im\left(
\frac{\Gamma(-\epsilon e^{i\theta})\,\Gamma(2s+1+\epsilon e^{i\theta})}{\Gamma(2s)}+\frac{\sin(\pi(s+\epsilon e^{i\theta}))}{\sin(\pi s)}\right)\\&&\qquad=\im\left(-\frac{e^{-i\theta}\,\Gamma(2s+1)}{\epsilon \Gamma(2s)}+O(1)\right)>0,
\end{eqnarray*}
while, when~$\theta\in[0,C\epsilon)\cup(\pi-C\epsilon,\pi]$
\begin{eqnarray*}&&
\left|\re (g(2s+\epsilon e^{i\theta}))\right|=\left|\re\left(-\frac{e^{-i\theta}\,\Gamma(2s+1)}{\epsilon \Gamma(2s)}+O(1)\right)\right|\ge\frac{c}\epsilon,
\end{eqnarray*}for some~$c>0$.

On this account,\begin{equation}\label{asc32.1}
\begin{split}&{\mbox{the image under~$g$ of  the small upper semicircle
of radius~$\epsilon$ centered at~$2s$}}\\&{\mbox{connects the two remote endpoints of the disjoint, real intervals
in~\eqref{asc32}}}\\&{\mbox{by a large curve which, away from a
neighborhood of these endpoints}}\\&{\mbox{which is separated from the origin,
lies in the upper halfplane.}}\end{split}\end{equation}

We now analyze the image of~$L_2$ under the map~$g$. For this, we point out that, when~$y\in[0,M]$,
\begin{eqnarray*}&&
g(s+1+iy)\\&&\quad=\frac{\Gamma(s-1-iy)\,\Gamma(s+2+iy)}{\Gamma(2s)}+\frac{\sin(\pi(1+iy))}{\sin(\pi s)}\\&&\quad=
\frac{(s+1+iy)(s+iy)(s-1+iy)
\,|\Gamma(s-1+iy)|^2}{\Gamma(2s)}-i\frac{\sinh(\pi y)}{\sin(\pi s)}\\&&\quad=
\frac{[s(s^2-3y^2-1)-iy(1+y^2-3s^2)]
\,|\Gamma(s-1+iy)|^2}{\Gamma(2s)}-i\frac{\sinh(\pi y)}{\sin(\pi s)}.
\end{eqnarray*}
As a result, since the Gamma-function has no zeros,
\begin{equation}\label{12fgr}\re(
g(s+1+iy))=
\frac{s(s^2-3y^2-1)\,|\Gamma(s-1+iy)|^2}{\Gamma(2s)}<0.
\end{equation}
Moreover,
\begin{eqnarray*}&&\im(
g(s+1+iy))=
-\frac{y(1+y^2-3s^2)
\,|\Gamma(s-1+iy)|^2}{\Gamma(2s)}-\frac{\sinh(\pi y)}{\sin(\pi s)}
\end{eqnarray*}
and in particular~$\im(g(s+1+iM))<0$ for large~$M$.
This observation and~\eqref{12fgr} give that
\begin{equation}\label{xcs0df}\begin{split}&
{\mbox{$g(L_2)$ is confined in the left halfplane}}\\&{\mbox{and has an endpoint in the third quadrant.}}\end{split}
\end{equation}

In addition, when~$M$ is large, by means of~\eqref{SDdvf} we see that
\begin{equation}\label{xcs0df2}\begin{split}&
{\mbox{$g(L_3)$ is a small perturbation of a circle of large radius}}\\&{\mbox{traveling through the fourth quadrant.}}\end{split}
\end{equation}

Moreover, in light of~\eqref{FTR293de3136425t}, we have that~$g(L_4)$ lies in the upper halfplane.
This observation,
\eqref{asc32}, \eqref{asc32.1}, \eqref{xcs0df}, and~\eqref{xcs0df2}
give the desired claim in~\eqref{mmowdf3}.
\end{proof}

\begin{proof}[Proof of \autoref{LEM:CONSEG:4}]
We pick~$\epsilon>0$ conveniently small and we will prove that~$g$ has at least one zero in
\begin{equation}\label{LAREG44} \left\{ {\mbox{$z\in\C$ s.t. $|z-2s|>\epsilon$,
$\re z\in\left(\displaystyle 2s, 2s+\frac12\right)$ and~$\im z\in[0,M]$}}\right\}.\end{equation}

We write the boundary of the region in~\eqref{LAREG44} 
as the union of the four segments and a circle, namely
\begin{eqnarray*}
&&L_1:=\left\{ {\mbox{$z\in\C$ s.t. $\re z\in\left(\displaystyle 2s+\epsilon,2s+\frac12\right)$ and~$\im z=\{0\}$}}\right\},\\
&&L_2:=\left\{ {\mbox{$z\in\C$ s.t. $\re z=2s+\displaystyle\frac12$ and~$\im z\in[0,M]$}}\right\},\\
&&L_3:=\left\{ {\mbox{$z\in\C$ s.t. $\re z\in\left(\displaystyle 2s,2s+\frac12\right)$ and~$\im z=M$}}\right\},\\
&&L_4:=\left\{ {\mbox{$z\in\C$ s.t. $\re z=2s$ and~$\im z\in[\epsilon,M]$}}\right\}\\
{\mbox{and }}\quad&& L_5:=\left\{ {\mbox{$z\in\C$ s.t. $z=2s+\epsilon e^{i\theta}$ and~$\theta\in\left[0,\displaystyle\frac\pi2\right]$}}\right\},
\end{eqnarray*}
where the direction of travel is here disregarded for the sake of simplicity, since
we focus on the topological curve.

Indeed, the result will follow from the argument principle once we show that
\begin{equation}\label{mmowdf3Dsqdx35t}
{\mbox{the image under the map~$g$ of $L_1\cup L_2\cup L_3\cup L_4\cup L_5$
encircles the origin.}}\end{equation}

With this in mind, we observe that~$g(2s+\epsilon)$ diverges to~$+\infty$ as~$\epsilon$ approaches~$0$ from the right, hence we can suppose that~$\epsilon$ is small enough such that~$g(2s+\epsilon)>0$.

Consequently, we may assume that~$g(x)>0$ for all~$x\in\left[ 2s+\epsilon,2s+\frac12\right]$
(otherwise~$g$ would have a real zero along this interval and
the claim of \autoref{LEM:CONSEG:4} would have been proven), and therefore
\begin{equation}\label{2-wfp4356yuj}
{\mbox{$g(L_1)$ is a segment contained in the strictly positive reals.}}
\end{equation}

Now, to study the image of~$L_2$, we take~$y\in[0,M]$ and notice that
\begin{eqnarray*}&&
g\left(2s+\frac12+iy\right)=\frac{\Gamma\left(-\frac12-iy\right)\,\Gamma\left(2s+\frac32+iy\right)}{\Gamma(2s)}+\frac{\sin\left(\pi\left(s+\frac12+iy\right)\right)}{\sin(\pi s)}
\\&&\quad=-\frac{4\left|\Gamma\left(\frac32+iy\right)\right|^2\,\Gamma\left(2s+\frac32+iy\right)}{(4y^2+1)\,
\Gamma(2s)\,\Gamma\left(\frac32+iy\right)}
+\frac{\cos(\pi s) \cosh(\pi y) }{\sin(\pi s)}
- i \sinh(\pi y).
\end{eqnarray*}

Now, in view of~\eqref{LAGAMO.0}, we know that
the phase of~$\frac{\Gamma\left(2s+\frac32+iy\right)}{\Gamma\left(\frac32+iy\right)}$ belongs
to~$[0,s\pi]$. 
It follows that
$$\im\left(\frac{4\left|\Gamma\left(\frac32+iy\right)\right|^2\,\Gamma\left(2s+\frac32+iy\right)}{(4y^2+1)\,
\Gamma(2s)\,\Gamma\left(\frac32+iy\right)}\right)\ge0$$ and thus
\begin{equation*}\im\left(
g\left(2s+\frac12+iy\right)\right)\le 0
- \sinh(\pi y)\le0.\end{equation*}
{F}rom this, we conclude that
\begin{equation}\label{12-i03fprjbg}
{\mbox{$g(L_2)$ lies in the lower halfplane.}}
\end{equation}

Regarding the analysis along~$L_3$, we point out that, when~$x\in\left( 2s,2s+\frac12\right)$, we have that
$$\sin(\pi (x - s))\ge \min\{ \sin(\pi s),\cos(\pi s)\}=:c_s>0$$ and therefore, by~\eqref{SDdvf},
if~$M$ is large enough,
$$\re( g(x+iM))\ge \frac{ c_s\,\cosh(\pi M) }{2\sin(\pi s)}$$
and, as a result,
\begin{equation}\label{12-i03fprjbg.2}
{\mbox{$g(L_3)$ is confined in the right halfplane,}}
\end{equation}
and, in fact, towards the extreme right for large~$M$.

We now note that,
if~$y\in[\epsilon,M]$,
\begin{equation}\label{axcwE2Fl21}\begin{split}
g(2s+iy)&=
\frac{\Gamma(-iy)\,\Gamma(2s+1+iy)}{\Gamma(2s)}+\frac{\sin(\pi(s+iy))}{\sin(\pi s)}
\\&=-\frac{\Gamma(1-iy)\,\Gamma(2s+1+iy)}{iy\,\Gamma(2s)} +\frac{
\cosh(\pi y) \sin(\pi s ) + i \sinh(\pi y) \cos(\pi s )}{\sin(\pi s)}\\&=\frac{i|\Gamma(1+iy)|^2\,\Gamma(2s+1+iy)}{y\,\Gamma(1+iy)\,\Gamma(2s)}+
\cosh(\pi y) +i\frac{ \sinh(\pi y) \cos(\pi s )}{\sin(\pi s)}.
\end{split}\end{equation}

We now recall~\eqref{LAGAMO.0} and we deduce that
the phase of~$\frac{\Gamma(2s+1+iy)}{\Gamma(1+iy)}$
belongs to~$[0,\pi s]$.

As a result, the phase of~$\frac{i|\Gamma(1+iy)|^2\,\Gamma(2s+1+iy)}{y\,\Gamma(1+iy)\,\Gamma(2s)}$ belongs
to~$\left[\frac\pi2,\pi s+\frac\pi2\right]$, entailing that
$$\im\left(\frac{i|\Gamma(1+iy)|^2\,\Gamma(2s+iy)}{y\,\Gamma(1+iy)\,\Gamma(2s)}\right)>0.$$
This and~\eqref{axcwE2Fl21} yield that~$\im\big(g(2s+iy)\big)>0$ and therefore
\begin{equation}\label{alertenvqwd0jfocv}
{\mbox{$g(L_4)$ lies in the upper halfplane.}}
\end{equation}

We now remark that, when~$\theta\in\left[0,\frac\pi2\right]$, expanding the Gamma-function near
the origin we see that, for small~$\epsilon$,
\begin{eqnarray*}
g(2s+\epsilon e^{i\theta})&=&
\frac{\Gamma(-\epsilon e^{i\theta})\,\Gamma(2s+1+\epsilon e^{i\theta})}{\Gamma(2s)}+\frac{\sin(\pi(s+\epsilon e^{i\theta}))}{\sin(\pi s)}\\
&=&
-\left(\frac1{\epsilon e^{i\theta}}+\gamma\right)
\frac{\Gamma(2s+1)+\epsilon\Gamma'(2s+1) e^{i\theta}}{\Gamma(2s)}+1+O(\epsilon)\\&=&
-\frac{\Gamma(2s+1)}{\epsilon\Gamma(2s)\,e^{i\theta}}+
\frac{\gamma \Gamma(2s+1)}{\Gamma(2s)}
-\frac{\Gamma'(2s+1)}{\Gamma(2s)}+1+O(\epsilon).
\end{eqnarray*}
As~$\theta$ varies in~$\left[0,\frac\pi2\right]$,
this describes a quarter of circle of large radius~$\frac{\Gamma(2s+1)}{\epsilon\Gamma(2s)}$ in the upper halfplane, up to a constant horizontal translation that is independent of~$\epsilon$
and an additional small perturbation of order~$\epsilon$.

The proof of~\eqref{mmowdf3Dsqdx35t} is thus completed, thanks
to the latter observation,
\eqref{2-wfp4356yuj}, \eqref{12-i03fprjbg}, \eqref{12-i03fprjbg.2}, and~\eqref{alertenvqwd0jfocv}.\end{proof}

\subsubsection{Proof of \autoref{prop:zeros-final}}

First, we point out a strengthening of \autoref{LEM:CONSEG:1} and \autoref{LEM:CONSEG:2}:

\begin{corollary}\label{USDFVFGu1}
Let~$g$ be as in~\eqref{LAFUNZ}. 
Assume that~$s\in\left(\frac12,1\right)$. Then,
there exist~$M>0$ sufficiently large and~$\epsilon>0$ sufficiently small
(possibly in dependence of~$s$) such that~$g$ has no zeros in the complex region
\begin{equation}\label{LAREG1bb} \left\{ {\mbox{$z\in\C$ s.t. $\re z\in\left(2s-1,s+\displaystyle\frac12+\epsilon
\right]$ and~$\im z\in[0,M]$}}\right\}.\end{equation}
\end{corollary}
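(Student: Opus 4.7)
The plan is to argue by continuity and compactness, extending \autoref{LEM:CONSEG:1} by ruling out zeros on the closed right edge of the original region. The key point I would exploit is that $g$ does not vanish on the segment $\{z = s+\tfrac12 + iy : y \in [0, M]\}$; once this is established, any purported sequence of zeros entering the slightly enlarged region must converge to a zero on that segment, contradicting it.

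First, I would verify that for $0 < \epsilon < s - \tfrac12$ the function $g$ is holomorphic in a neighborhood of the closed region in \eqref{LAREG1bb}. The only pole of $g$ with real part below $s + 1$ is at $z = 2s$, coming from $\Gamma(2s-z)$; the assumption $s > \tfrac12$ places it strictly to the right of $s + \tfrac12 + \epsilon$. Next, I would revisit the calculation \eqref{0qFFewdiofj30mrtu9v45}--\eqref{FTR293de3136425t} in the proof of \autoref{LEM:CONSEG:1} and extract the \emph{strict} inequality
\[
\im\!\left(g\!\left(s+\tfrac12+iy\right)\right) = \frac{2 s y\,\left|\Gamma\!\left(s-\tfrac12+iy\right)\right|^2}{\Gamma(2s)} > 0 \qquad \text{for every } y \in (0, M],
\]
which rules out a zero for $y > 0$. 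For $y = 0$, the strict monotonicity $g'(x) > 0$ on $(2s-1, s+\tfrac12)$ (established in the proof of \autoref{LEM:CONSEG:1} via \eqref{10p2rfjgrqadscv} and the strict monotonicity of the digamma function on the positive reals) together with $g(2s-1) = 0$ yields $g(s+\tfrac12) > 0$. Hence $g \neq 0$ on the whole segment $\{s+\tfrac12+iy : y \in [0,M]\}$.

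To close the argument, I would argue by contradiction: if \autoref{USDFVFGu1} fails, then for each $n \in \N$ there exists a zero $z_n$ of $g$ with $\re z_n \in (2s-1, s+\tfrac12 + \tfrac1n]$ and $\im z_n \in [0, M]$. By \autoref{LEM:CONSEG:1} one necessarily has $\re z_n \geq s + \tfrac12$, so, passing to a subsequence, $z_n \to z^* = s + \tfrac12 + i y^*$ with $y^* \in [0, M]$, and the continuity of $g$ on a neighborhood of the closed region forces $g(z^*) = 0$. This contradicts the previous paragraph and yields the desired $\epsilon \in (0, s - \tfrac12)$.

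The main obstacle in this strategy is the strict positivity of $\im g$ on the edge $\re z = s + \tfrac12$, which crucially relies on $s > \tfrac12$: it is precisely this hypothesis that makes the coefficient $2sy$ strictly positive for $y > 0$ and simultaneously places the pole of $\Gamma(2s-z)$ safely to the right of the enlarged strip. The analogous extension to the right of $2s$ in the regime $s < \tfrac12$ from \autoref{LEM:CONSEG:2} would require a parallel but separate treatment, since there the pole at $z = 2s$ sits on the original right edge.
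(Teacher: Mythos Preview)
Your proposal is correct and follows essentially the same approach as the paper: both argue by contradiction, use \autoref{LEM:CONSEG:1} to force any hypothetical zero onto the segment $\{s+\tfrac12+iy:y\in[0,M]\}$, and then rule this out via the strict positivity of $\im g$ on that segment for $y>0$ together with $g(s+\tfrac12)\neq 0$. The only cosmetic difference is that the paper verifies $g(s+\tfrac12)>0$ by direct evaluation, whereas you deduce it from the monotonicity $g'>0$ on $(2s-1,s+\tfrac12)$ and $g(2s-1)=0$; both are equally valid.
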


\begin{corollary}\label{USDFVFGu2}
Let~$g$ be as in~\eqref{LAFUNZ}. Assume that~$s\in\left(0,\frac12\right)$. Then,
there exist~$M>0$ sufficiently large and~$\epsilon>0$ sufficiently small
(possibly in dependence of~$s$) such that~$g$ has no zeros in the complex region
\begin{equation}\label{LAREG2bb} \left\{ {\mbox{$z\in\C$ s.t. $\re z\in(0,2s+\epsilon]$ and~$\im z\in[0,M]$}}\right\}.\end{equation}
\end{corollary}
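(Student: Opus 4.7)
The plan is to enlarge the zero-free region of \autoref{LEM:CONSEG:2} slightly to the right, from $\re z\in(0,2s)$ to $\re z\in(0,2s+\epsilon]$. Rather than redo the contour/argument-principle proof with a shifted right vertical edge (which would force us to reverify the sign condition of \autoref{lemma:COMPLEM1} on an edge passing very close to the pole at $z=2s$, an unnecessary complication), the cleanest route is to combine \autoref{LEM:CONSEG:2} with a short limiting argument. Two inputs from the proof of \autoref{LEM:CONSEG:2} will be reused: the strict positivity of $\im g$ on the segment $\{\re z=2s,\ \im z>0\}$ coming from~\eqref{0euorjh234foi}, and the explicit form of $g$ in~\eqref{LAFUNZ}.

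First I would locate the poles of $g$ in the closed strip $\{\re z\in[0,2s+\eta],\ \im z\in[0,M]\}$ for a fixed $\eta\in(0,1)$. From~\eqref{LAFUNZ}, the factor $\Gamma(2s-z)$ contributes simple poles precisely at $z\in\{2s,2s+1,\dots\}$, while $\Gamma(z+1)$ has poles only at the non-positive integers and $\sin(\pi(s-z))/\sin(\pi s)$ is entire; hence the only pole of $g$ in this strip (taking $\eta<1$) is a simple pole at $z=2s$, and in particular $|g(z)|\to\infty$ as $z\to 2s$. Second, I would record that~\eqref{0euorjh234foi}, combined with $s\in(0,\tfrac12)$ (so that $\cos(\pi s)>0$), gives
\[
\im\bigl(g(2s+iy)\bigr)\;\ge\;\frac{\cos(\pi s)\,\sinh(\pi y)}{\sin(\pi s)}\;>\;0\qquad\text{for every }y\in(0,M].
\]
Together with the pole at $y=0$, this shows that $g$ has no zeros on the entire closed segment $\{2s+iy:y\in[0,M]\}$.

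Now I would argue by contradiction: suppose no such $\epsilon$ exists. Then for each $n\in\N$ there is $z_n$ with $\re z_n\in(0,2s+\tfrac1n]$, $\im z_n\in[0,M]$, and $g(z_n)=0$. By \autoref{LEM:CONSEG:2} we must have $\re z_n\ge 2s$, hence $\re z_n\to 2s$. By compactness of $[0,M]$ we may pass to a subsequence with $z_n\to z_\infty=2s+iy_\infty$ and $y_\infty\in[0,M]$. If $y_\infty>0$, then $g$ is holomorphic at $z_\infty$ by the first step, so $g(z_\infty)=\lim g(z_n)=0$, contradicting the strict positivity of $\im g(z_\infty)$ from the previous display. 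If instead $y_\infty=0$, then $z_n\to 2s$ and the simple pole forces $|g(z_n)|\to\infty$, contradicting $g(z_n)=0$. Either case yields a contradiction and produces the desired $\epsilon>0$.

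The only substantive point is the interplay between the simple pole at $2s$ and the possible accumulation of zeros nearby, which is handled cleanly by the dichotomy above; the restriction $s\in(0,\tfrac12)$ enters only through $\cos(\pi s)>0$ in the key estimate on the critical edge, which is precisely the standing hypothesis of the corollary. No new contour-integration work is needed beyond what was already carried out for \autoref{LEM:CONSEG:2}.
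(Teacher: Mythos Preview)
Your argument is correct and follows essentially the same approach as the paper: a contradiction argument using \autoref{LEM:CONSEG:2} to force $\re z_n \to 2s$, a compactness/limit step, then ruling out $y_\infty>0$ via the strict positivity in~\eqref{0euorjh234foi} and ruling out $y_\infty=0$ via the simple pole of $g$ at $2s$. Your write-up is actually a bit more careful than the paper's in separating the two cases before asserting $g(z_\infty)=0$, but the method is the same.
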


\begin{proof}[Proof of \autoref{USDFVFGu1}] We let~$M$ be as in \autoref{LEM:CONSEG:1}.
We argue for a contradiction and we suppose that for all~$\epsilon\in(0,1)$ there exists~$z_\epsilon$
in the region described by~\eqref{LAREG1bb} with~$g(z_\epsilon)=0$.

By \autoref{LEM:CONSEG:1} we know that~$\re z_\epsilon\in\left[s+\frac12,s+\frac12+\epsilon\right]$
and therefore, up to a subsequence, $z_\epsilon\to s+\frac12+iy_\star$ as~$\epsilon\to0$, for some~$y_\star\in[0,M]$, and~$g\left(s+\frac12+i y_\star\right)=0$.

This, together with~\eqref{0qFFewdiofj30mrtu9v45} and~\eqref{FTR293de3136425t}, yields that~$y_\star=0$.
But this is impossible, since
\begin{equation*}
g\left(s+\frac12\right)=
\frac{\Gamma\left(s-\frac12\right)\,\Gamma\left(s+\frac32\right)}{\Gamma(2s)}+\frac{1}{\sin(\pi s)}>0.
\qedhere\end{equation*}\end{proof}

\begin{proof}[Proof of \autoref{USDFVFGu2}] This is similar to the proof
of \autoref{USDFVFGu1}. In this case, the limit procedure gives the existence
of some~$y_\star\in[0,M]$ such that~$g(2s+i y_\star)=0$.

In view of~\eqref{0euorjh234foi}, we see that~$y_\star=0$. Then,
recalling~\eqref{1-0peirf3jr12efv},
$0=|g(2s)|
=+\infty$,
which is absurd.
\end{proof}

We need one more result in order to conclude the proof of \autoref{prop:zeros-final}.

\begin{lemma}
\label{lemma:M-large}
Let $s \in (0,1)$ and $g$ be as in~\eqref{LAFUNZ}. Then, there exists $M > 0$ such that $g(\beta) \not= 0$ for all $\beta \in (0,2s+\frac{1}{2}) \times (-M,M)^c$.
\end{lemma}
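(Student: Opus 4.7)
The plan is to exploit the markedly different asymptotic behaviors of the two terms in
\[
g(\beta) \;=\; \frac{\Gamma(2s-\beta)\,\Gamma(\beta+1)}{\Gamma(2s)} \;-\; \frac{\sin(\pi(s-\beta))}{\sin(\pi s)},
\]
as $|\im\beta|\to\infty$ with $\re\beta$ kept in the bounded interval $(0,2s+\tfrac12)$. The key observation is that the Gamma-factor decays \emph{exponentially} in $|\im\beta|$ while the sine term grows \emph{exponentially}, so their difference cannot possibly vanish once $|\im\beta|$ is large enough.

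Concretely, write $\beta=a+ib$ with $a\in(0,2s+\tfrac12)$. First I would apply Stirling's asymptotic \eqref{eq:Gamma-asymp} to each factor $\Gamma((2s-a)-ib)$ and $\Gamma((a+1)+ib)$. Since $a$ ranges in a bounded interval, this yields a constant $C=C(s)>0$ such that
\[
\left|\frac{\Gamma(2s-\beta)\,\Gamma(\beta+1)}{\Gamma(2s)}\right| \;\le\; C\,(1+|b|)^{2s}\,e^{-\pi|b|}, \qquad |b|\ge 1,
\]
uniformly in $a\in(0,2s+\tfrac12)$; this is analogous to \eqref{eq:Gamma-quotient-asymp}, but keeping the exponential factor that was cancelled there between numerator and denominator. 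Second, from $|\sin(\pi(x+iy))|^2=\sin^2(\pi x)+\sinh^2(\pi y)$ (equivalently, the lower bound in \eqref{eq:sin-asymp}), one gets
\[
\left|\frac{\sin(\pi(s-\beta))}{\sin(\pi s)}\right| \;\ge\; \frac{\sinh(\pi|b|)}{|\sin(\pi s)|} \;\ge\; \frac{e^{\pi|b|}}{4\,|\sin(\pi s)|}, \qquad |b|\ge 1.
\]

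Combining the two estimates via the reverse triangle inequality,
\[
|g(\beta)| \;\ge\; \frac{e^{\pi|b|}}{4\,|\sin(\pi s)|} \;-\; C\,(1+|b|)^{2s}\,e^{-\pi|b|},
\]
and the right-hand side is strictly positive as soon as $|b|\ge M$, for $M=M(s)$ chosen sufficiently large. This gives $g(\beta)\ne 0$ on $(0,2s+\tfrac12)\times(-M,M)^c$, which is the content of the lemma.

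The only technical point to verify is the uniformity of Stirling's approximation for $a$ in a compact interval, which is standard (and is already implicitly used in the proof of \autoref{lemma:f-prop}); beyond that the argument is essentially a direct exponential-versus-exponential comparison, so no real obstacle arises.
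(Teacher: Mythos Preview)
Your proof is correct and follows essentially the same strategy as the paper: both arguments show that for large $|b|$ the sine term dominates the Gamma term, precluding any cancellation. The only minor difference is in how the Gamma product is bounded. You invoke Stirling's asymptotic \eqref{eq:Gamma-asymp} to get the sharp exponential decay $|\Gamma(2s-\beta)\Gamma(\beta+1)| \le C(1+|b|)^{2s}e^{-\pi|b|}$, whereas the paper uses only the elementary inequality $|\Gamma(x+iy)| \le \Gamma(x)$ for $x>0$, combined with the factoring $\Gamma(2s-\beta)=\Gamma(2s+1-\beta)/(2s-\beta)$ (needed since $2s-a$ may be negative), to obtain the cruder bound $|\Gamma(2s-\beta)\Gamma(\beta+1)| \le C|b|^{-1}$. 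Either bound suffices against the exponential growth of the sine term; the paper's version avoids the (mild) issue of uniformity in Stirling's formula over the strip, while yours gives a sharper estimate at the cost of that extra verification.
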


\begin{proof}
Note that, for all~$z \in \C$ with $z=x+iy$, $x > 0$ and~$y\in\R$,
\begin{align*}
|\Gamma(x+iy)| \le \Gamma(x), \qquad |\sin(x+iy)| \ge \frac{1}{2}(e^{|y|} - 1).
\end{align*}
The first property is an obvious consequence from the definition of the Gamma-function:
\begin{align*}
|\Gamma(x+iy)| \le \int_0^{\infty} |t^{x+iy - 1}| e^{-t} \d t = \int_0^{\infty} t^{x-1} e^{-t} \d t = \Gamma(x),
\end{align*}
while the second property can be proved as follows:
\begin{align*}
2|\sin(x+iy)| &= |e^{ix-y} - e^{-ix+y}| \ge ||e^{ix-y}| - |e^{-ix+y}||\\
&= e^{|y|}|1 - e^{-2|y|}| = e^{|y|} |1 - e^{-|y|}||1 + e^{-|y|}| \ge e^{|y|} -1.
\end{align*}
Writing $\beta = a +ib$ we thus have the lower bound
\begin{align*}
\frac{|\sin(\pi(\beta-s))|}{|\sin(\pi s)|} \ge \frac{\exp(\pi |b|) - 1}{2 \sin(\pi s)}.
\end{align*}
Moreover, using that $\Gamma(2s-\beta) = \frac{\Gamma(2s+1-\beta)}{2s-\beta}$, we have for $a \in (0,2s+\frac{1}{2})$,
\begin{align*}
|\Gamma(2s-\beta) \Gamma(\beta+1)| = \frac{|\Gamma(2s+1-\beta) \Gamma(\beta+1)|}{|2s-\beta|} \le \max_{c \in [0,2s+\frac{1}{2}]}|\Gamma(1+c)| |\Gamma(2s+1-a)| |b|^{-1}.
\end{align*}
Since $A_1 := \max_{c \in [0,2s+\frac{1}{2}]}|\Gamma(1+c)| < \infty$, and $A_2 := \max_{a \in [0,2s+\frac{1}{2}]}|\Gamma(2s+1-a)| < \infty$ we have
\begin{align*}
|\Gamma(2s-\beta) \Gamma(\beta+1)| < \frac{|\sin(\pi(\beta-s))|}{|\sin(\pi s)|},
\end{align*}
as long as $|b|$ is large enough, uniformly in $a$. This proves the desired result.
\end{proof}

Now, we are in a position to prove \autoref{prop:zeros-final}.

\begin{proof}[Proof of \autoref{prop:zeros-final}]
In case $s = \frac{1}{2}$, the claim follows immediately from \autoref{rem:s-half}. 

In case $s \in (0,\frac{1}{2})$, the first claim follows from \autoref{USDFVFGu2} and \autoref{lemma:M-large} by choosing $M$ large enough. The second claim follows from \autoref{LEM:CONSEG:4}. In case $s \in (\frac{1}{2},1)$, the first claim follows from \autoref{USDFVFGu1} and \autoref{lemma:M-large} by choosing $M$ large enough. The second claim follows from \autoref{LEM:CONSEG:3}.
\end{proof}

\subsubsection{Proof of \autoref{rem-s-asymp}}

First, we give a proof of the asymptotic expansion as $s \to 1$ in \autoref{rem-s-asymp}.

\begin{proof}[Proof of the expansion as $s \to 1$]
 Given~$\lambda\in\C$, we write~$\sigma:=1-s$
and
\begin{equation}\label{0odjfreg}
\beta_{\sigma,\lambda}:=2\pm i\sqrt{2\sigma}+\lambda\sqrt{\sigma}\end{equation} and,
for small~$\sigma>0$ (and~$\lambda$ in a bounded region),
\begin{equation}\label{s1:-1}\begin{split}
G(\sigma,\lambda)&:=\frac{F(1-\sigma,\beta_{\sigma,\lambda})}{\sin(\pi (1-\sigma))}\\&=
\frac{(2-2\sigma-\beta_{\sigma,\lambda})\sin(\pi(1-\sigma-\beta_{\sigma,\lambda}))}{\sin(\pi \sigma)}-\frac{\Gamma(3-2\sigma-\beta_{\sigma,\lambda})\,\Gamma(\beta_{\sigma,\lambda}+1)}{\Gamma(2-2\sigma)}\\&=
\frac{(\pm i\sqrt{2\sigma}+\lambda\sqrt{\sigma}+2\sigma)\sin(\pi(1
\pm i\sqrt{2\sigma}+\lambda\sqrt{\sigma}+\sigma))}{\sin(\pi \sigma)}
\\&\qquad-\frac{\Gamma(1
\mp i\sqrt{2\sigma}-\lambda\sqrt{\sigma}-2\sigma)\,\Gamma(3\pm i\sqrt{2\sigma}+\lambda\sqrt{\sigma})}{\Gamma(2-2\sigma)}.
\end{split}\end{equation}

We observe that
\begin{eqnarray*}&& \sin(\pi(1
\pm i\sqrt{2\sigma}+\lambda\sqrt{\sigma}+\sigma))=
-\sin(\pi(\pm i\sqrt{2\sigma}+\lambda\sqrt{\sigma}+\sigma))\\&&\qquad=-\pi(\pm i\sqrt{2\sigma}+\lambda\sqrt{\sigma}+\sigma)+O(\sigma^{3/2})
\end{eqnarray*}
and accordingly
\begin{equation}\label{s1:-2}\begin{split}& \frac{(\pm i\sqrt{2\sigma}+\lambda\sqrt{\sigma}+2\sigma)\sin(\pi(1
\pm i\sqrt{2\sigma}+\lambda\sqrt{\sigma}+\sigma))}{\sin(\pi \sigma)}\\&\qquad=\frac{-\pi(\pm i\sqrt{2\sigma}+\lambda\sqrt{\sigma}+2\sigma)(\pm i\sqrt{2\sigma}+\lambda\sqrt{\sigma}+\sigma+O(\sigma^{3/2}))}{\pi \sigma+O(\sigma^3)}\\&\qquad=\frac{-\pi(\pm i\sqrt{2}+\lambda+2\sqrt\sigma)(\pm i\sqrt{2}+\lambda+\sqrt\sigma+O(\sigma))}{\pi +O(\sigma^2)}
\\&\qquad=-\lambda^2 \mp 2i\sqrt{2}\lambda + 2 
-3\sqrt{\sigma}\Big(\lambda \pm i\sqrt{2}\Big) + O(\sigma).
\end{split}\end{equation}

Furthermore,
\begin{eqnarray*}
\Gamma(1
\mp i\sqrt{2\sigma}-\lambda\sqrt{\sigma}-2\sigma)&=&
\Gamma(1)+\Gamma'(1)
(\mp i\sqrt{2\sigma}-\lambda\sqrt{\sigma}-2\sigma)
+O(\sigma)\\&=&1+\gamma
(\pm i\sqrt{2\sigma}+\lambda\sqrt{\sigma})
+O(\sigma),
\end{eqnarray*}where~$\gamma$ is the Euler-Mascheroni constant,
and
\begin{eqnarray*}
\Gamma(3\pm i\sqrt{2\sigma}+\lambda\sqrt{\sigma})&=&
\Gamma(3)+\Gamma'(3)(\pm i\sqrt{2\sigma}+\lambda\sqrt{\sigma})
+O(\sigma)\\&=&
2+(3-2\gamma)(\pm i\sqrt{2\sigma}+\lambda\sqrt{\sigma})
+O(\sigma).
\end{eqnarray*}
This gives that
\begin{eqnarray*}
&&
\Gamma(1
\mp i\sqrt{2\sigma}-\lambda\sqrt{\sigma}-2\sigma)\,\Gamma(3\pm i\sqrt{2\sigma}+\lambda\sqrt{\sigma})\\&&\qquad=
\Big(1+\gamma
(\pm i\sqrt{2\sigma}+\lambda\sqrt{\sigma})
+O(\sigma)\Big)\Big(
2+(3-2\gamma)(\pm i\sqrt{2\sigma}+\lambda\sqrt{\sigma})
+O(\sigma)\Big)\\&&\qquad=2 \pm 3i\sqrt{2\sigma} + 3\lambda\sqrt{\sigma} + O(\sigma)
\end{eqnarray*}
and therefore
\begin{align*}
\frac{\Gamma(1\mp i\sqrt{2\sigma}-\lambda\sqrt{\sigma}-2\sigma)\,\Gamma(3\pm i\sqrt{2\sigma}+\lambda\sqrt{\sigma})}{\Gamma(2-2\sigma)} &=\frac{2 \pm 3i\sqrt{2\sigma} + 3\lambda\sqrt{\sigma} + O(\sigma)}{1+O(\sigma)}\\
&=2 \pm 3i\sqrt{2\sigma} + 3\lambda\sqrt{\sigma} + O(\sigma)
\end{align*}

Hence, recalling~\eqref{s1:-1} and~\eqref{s1:-2}, we conclude that
\begin{equation*}\begin{split}
G(\sigma,\lambda)&=-\lambda^2 \mp 2i\sqrt{2}\lambda + 2 
-3\sqrt{\sigma}\Big(\lambda \pm i\sqrt{2}\Big) -\Big(2 \pm 3i\sqrt{2\sigma} + 3\lambda\sqrt{\sigma} \Big)+ O(\sigma)\\&=\mp 6i\sqrt{2\sigma} \mp 2i\sqrt{2}\lambda-\lambda^2 - 6\lambda\sqrt{\sigma} .
\end{split}\end{equation*}

Consequently, if
$$ H(\tau,\lambda):=G(\tau^2,\lambda)=\mp 6i\sqrt{2}\tau \mp 2i\sqrt{2}\lambda-\lambda^2 - 6\tau\lambda ,$$
we have that~$H(0,0)=0$ and~$\partial_\lambda H(0,0)=\mp 2i\sqrt{2}\ne0$.

This gives that, for small~$\tau$, we can find a unique~$\lambda(\tau)$ with~$\lambda(0)=0$ satisfying
$$H(\tau,\lambda(\tau))=0.$$
By implicit derivation, we also see that
$$ 0=\partial_\tau H(0,0)+\partial_\lambda H(0,0)\lambda'(0)=\mp 6i\sqrt{2}\mp 2i\sqrt{2}\lambda'(0),
$$
yielding that~$\lambda'(0)=-3$ and therefore
$$\lambda(\tau)=-3\tau+O(\tau^2).$$

As a result, for small~$\tau>0$,
\begin{eqnarray*}
0=H(\tau,\lambda(\tau))=G(\tau^2,\lambda(\tau))=
\frac{F(1-\tau^2,\beta_{\tau^2,\lambda(\tau)})}{\sin(\pi (1-\tau^2))}
\end{eqnarray*}and so, writing~$s=1-\tau^2$,
\begin{eqnarray*}
0=F(1-\tau^2,\beta_{\tau^2,\lambda(\tau)})=F(s,\beta_{1-s,\lambda(\sqrt{1-s})}).
\end{eqnarray*}
This and~\eqref{0odjfreg} imply that
\begin{eqnarray*}&&
0=F(1-\tau^2,\beta_{\tau^2,\lambda(\tau)})=F\Big(s,\,
2\pm i\sqrt{2(1-s)}+\lambda(\sqrt{1-s})\sqrt{1-s}\Big)\\&&\qquad\qquad
= F\Big(s,\,
2\pm i\sqrt{2(1-s)}-3(1-s)+O((1-s)^{3/2})\Big),
\end{eqnarray*}
as desired.
\end{proof}

Finally, we present the proof of the asymptotic expansion as $s \to 0$ in \autoref{rem-s-asymp}.

\begin{proof}[Proof of the expansion as $s \to 0$]
Let~$\lambda\in\C$ belong to a bounded region and
\begin{eqnarray*}
G(s,\lambda) := \frac{F(s,\lambda s)}{2s^2} = \frac{(2-\lambda) \sin(\pi(1-\lambda )s)}{2s}-\frac{\sin(\pi s)\Gamma(1+(2-\lambda )s)\,\Gamma(1+\lambda s)}{s\Gamma(1+2s)}
.\end{eqnarray*}
Note that, for small~$s>0$,
\begin{equation}\label{pre.ex}\begin{split}
G(s,\lambda)&=
\frac{(2-\lambda) \big(\pi(1-\lambda )+O(s^2)\big)}{2}-\frac{\big(\pi +O(s^2)\big)\Gamma(1+(2-\lambda )s)\,\Gamma(
1+\lambda s)}{\Gamma(1+2s)}\\&=
\frac{\pi(2-\lambda)(1-\lambda )}{2}-\pi+O(s)
.\end{split}\end{equation}
As a result, we have that~$G(0,3)=0$ and~$\partial_\lambda G(0,3)=\frac{3\pi}2\ne0$. Therefore, for small~$s>0$, we can find~$\lambda(s)$ such that~$\lambda(0)=3$ and~$G(s,\lambda(s))=0$.

Consequently, for small~$s$,
\begin{eqnarray*}
&&0=G(s,\lambda(s))=G(s,3+o(1))=\frac{F(s,\,3s+o(s))}{2s^2}
\end{eqnarray*}
and thus~$F(s,\,3s+o(s))=0$.
\end{proof}

\subsection{A 1D Liouville theorem}

The goal of this subsection is to prove \autoref{thm:Neumann-Liouville}, i.e. we restrict ourselves to solutions that grow slower than $t^{2s-\eps}$ at infinity for some $\eps > 0$.

It remains to verify several important properties of the operator $L$. We recall that, by \cite[Proposition 2.1]{AFR23},
\begin{align}
\label{eq:k-upper}
k_{\R_+}(x,y) \le C \begin{cases}
\left( 1+ \left| \log \left( \frac{\min\{|x|,|y|\}}{|x-y|} \right) \right| \right) |x-y|^{-1-2s}, &~~ \text{ if } \min\{|x|,|y|\} \le |x-y|,\\
\min\{|x|,|y|\}^{-1-2s}, &~~ \text{ if } \min\{|x|,|y|\} \ge |x-y|.
\end{cases}
\end{align}

\begin{lemma}
\label{lemma:Neumann-L}
Let $\varphi \in C^2_{0,1+2s}(0,\infty)$. Then $x \mapsto L \varphi(x)$ is continuous and for some $C > 0$:
\begin{align*}
|L \varphi (x)| \le C (1 + |\log x|) \begin{cases}1 + x^{1-2s}, ~~ &\text{ for } x \in (0,2],\\
 (1 + x)^{-1-2s} &\text{ for } x \in (2,\infty). 
\end{cases}
\end{align*} 
In particular, $L$ satisfies \eqref{eq:L-decay} for any $\alpha_1 \in (\max\{ 0 ,2s-1\} , 2s)$ and $\alpha_2 = 0$.
\end{lemma}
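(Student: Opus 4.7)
The plan is to decompose $L\varphi = I_1 + I_2$ with $I_1 := (-\Delta)^s_{\R_+}\varphi$ the regional fractional Laplacian and
$$I_2(x) := \int_0^\infty (\varphi(x)-\varphi(y))\, k_{\R_+}(x,y)\,\d y,$$
and to estimate each piece. The crucial algebraic observation, which will give the sharp exponent $x^{1-2s}$ near the boundary (rather than the crude $x^{-2s}$ produced by either piece alone), is that combining the identity $\int_0^\infty k_{\R_+}(x,y)\,\d y = \tfrac{c_s}{2s}x^{-2s}$ from~\eqref{eq:k_R-int} with the elementary identity $(-\Delta)^s[\varphi\1_{(0,\infty)}](x) = (-\Delta)^s_{\R_+}\varphi(x) + \tfrac{c_s}{2s}\varphi(x)x^{-2s}$ for $x>0$ yields the rewriting
$$L\varphi(x) \,=\, (-\Delta)^s\!\big[\varphi\1_{(0,\infty)}\big](x) \,-\, \int_0^\infty \varphi(y)\, k_{\R_+}(x,y)\,\d y,$$
in which the two $\varphi(x)x^{-2s}$ contributions have cancelled. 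Equivalently, $L\varphi = (-\Delta)^s\tilde\varphi$ on $(0,\infty)$, where $\tilde\varphi$ is the Neumann extension of $\varphi$ across $0$.

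Continuity on $(0,\infty)$ follows pointwise: fixing $x_0 > 0$ and restricting $x$ to $[x_0/2, 2x_0]$, the near-diagonal part of $I_1$ converges absolutely and uniformly via a second-order Taylor expansion of $\varphi$ (the linear term is killed by the principal value, leaving a bound in $\|\varphi''\|_{L^\infty([x_0/4, 3x_0])}$), while the far field converges by the decay $|\varphi(y)|\le C(1+y)^{-1-2s}$. For $I_2$, the estimate $|\varphi(x)-\varphi(y)| \le \|\varphi'\|_\infty|x-y|$ combined with the fact that $k_{\R_+}$ is controlled on the diagonal by $\min(x,y)^{-1-2s}$ (via~\eqref{eq:k-upper}) gives absolute integrability; dominated convergence then yields continuity at $x_0$.

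For the pointwise estimate on $(0,2]$, decompose the $y$-integration into $A := \{|y-x|<x/2\}$, $B := (0,x/2)\cup(3x/2, 2)$, and $C := [2,\infty)$. The contribution of $A$ is bounded by $C\|\varphi''\|_\infty x^{2-2s}$ via the principal-value cancellation. The contribution of $C$ is bounded by $C(1+|\log x|)$: for $I_1$ it is $O(1)$ by the $(1+y)^{-1-2s}$ decay of $\varphi$, while for $I_2$ the regime $\min(|x|,|y|)=x < |x-y|$ in~\eqref{eq:k-upper} produces exactly the $(1+|\log x|)$ factor. The contribution of $B$ is the only one that can generate $x^{-2s}$ singularities (coming from both $\varphi(x)\int_B|x-y|^{-1-2s}\,\d y$ in $I_1$ and $\varphi(x)\int_B k_{\R_+}(x,y)\,\d y$ in $I_2$), but these cancel exactly via the rewriting above, leaving only $\int_B|\varphi(y)-\varphi(x)|\big(|x-y|^{-1-2s}+k_{\R_+}(x,y)\big)\,\d y$, which is bounded by $C(1+x^{1-2s})$ using $|\varphi(y)-\varphi(x)|\le \|\varphi'\|_\infty|y-x|$ together with the explicit computation of $\int_B|y-x|^{-2s}\,\d y$. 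For $x \in (2,\infty)$ the same decomposition combined with the full $(1+y)^{-1-2s}$ decay of $\varphi$ everywhere, and the diagonal regime $\min(|x|,|y|)\ge |x-y|$ of~\eqref{eq:k-upper}, gives $|L\varphi(x)| \le C(1+|\log x|)(1+x)^{-1-2s}$. The ``in particular'' claim is then immediate: on $(0,1)$ one has $(1+|\log x|)(1+x^{1-2s})\le C_{\alpha_1}x^{-\alpha_1}$ for any $\alpha_1 \in (\max\{0,2s-1\},2s)$; on $[1,\infty)$ one has $(1+|\log x|)(1+x)^{-1-2s}\le Cx^{-2s}$, so $\alpha_2 = 0$ works; and $\int_r^\infty|L\varphi|\,\d x\le Cr^{-2s+\eps}$ follows at once from the faster $(1+\log x)(1+x)^{-1-2s}$ decay. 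The main obstacle will be tracking the cancellation in the transition region $y\sim x$ cleanly enough to reach the sharp $x^{1-2s}$ behaviour rather than the crude $x^{-2s}$ produced by each piece separately.
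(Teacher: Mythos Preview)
Your approach is essentially the same as the paper's: split the $y$-integration into a near-diagonal zone and a far-field zone, use a Taylor expansion on the former and the kernel bound~\eqref{eq:k-upper} on the latter. The paper's decomposition for $x\in(0,2]$ is precisely your $A=(x/2,3x/2)$ versus $(0,\infty)\setminus A$; for $x>2$ it uses a unit near-diagonal ball and then splits $\varphi(x)$ and $\varphi(y)$ to exploit the $(1+y)^{-1-2s}$ decay separately.

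One point worth flagging: the ``cancellation'' narrative you build around the rewriting $L\varphi=(-\Delta)^s[\varphi\1_{(0,\infty)}]-\int\varphi\,k_{\R_+}$ is a red herring. Neither $I_1=(-\Delta)^s_{\R_+}\varphi$ nor $I_2=\int(\varphi(x)-\varphi(y))k_{\R_+}\,\d y$ individually produces an $x^{-2s}$ singularity for $\varphi\in C^2$; each is already $O((1+x^{1-2s})(1+|\log x|))$ by the difference structure alone. The paper never introduces this rewriting and simply estimates $\int(\varphi(x)-\varphi(y))K_{\R_+}(x,y)\,\d y$ directly. What you call ``cancellation'' in region $B$ is just the statement that one should not split $\varphi(x)-\varphi(y)$ into $\varphi(x)$ and $\varphi(y)$ separately --- which is obvious.

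A minor inaccuracy: you claim region $A$ contributes $C\|\varphi''\|_\infty x^{2-2s}$ via principal-value symmetrization. This holds only for the $c_s|x-y|^{-1-2s}$ part of $K_{\R_+}$; the piece $k_{\R_+}(x,x+h)$ is \emph{not} even in $h$, so the linear Taylor term survives. The paper handles this as its $I_2$ term, using $|k_{\R_+}(x,x+h)|\le Cx^{-1-2s}$ on $|h|<x/2$ and the first-order bound $|\varphi(x)-\varphi(x+h)|\le C|h|$, which gives $O(x^{1-2s})$. This does not affect your final estimate, since $x^{1-2s}$ is already the target.
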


\begin{proof}
We skip the proof of the continuity and refer instead to \cite[Lemma 2.2.6]{FeRo24}. The proof goes in the same way in our setting.
We split the proof of the second claim into several parts. First, we consider $x \in (0,2]$ and observe that, by the symmetry of the first summand in $K_{\R_+}(x,y)$,
\begin{align*}
L\varphi(x) &= \frac{c_s}{2} \int_{B_{x/2}} (2\varphi(x) - \varphi(x+h) - \varphi(x-h)) |h|^{-1-2s} \d h \\
&\quad + c_s \int_{B_{x/2}} ( \varphi(x) - \varphi(x+h)) k_{\R_+}(x,x+h) \d h \\
&\quad + c_s \int_{(0,\infty) \setminus B_{x/2}(x)} (\varphi(x) - \varphi(y)) K_{\R_+}(x,y) \d y \\
&= I_1 + I_2 + I_3.
\end{align*}
For $I_1$, we compute
\begin{align*}
I_1 \le C \Vert D^2 \varphi \Vert_{L^{\infty}(0,3)} \int_{B_{x/2}} |h|^{-1-2s+2} \d h \le C x^{2-2s}.
\end{align*}
For $I_2$, we use that $|k_{\R_+}(x,x+h)| \le C x^{-1-2s}$ in the domain of integration and therefore
\begin{align*}
I_2 \le C \Vert \nabla \varphi \Vert_{L^{\infty}(0,3)} \int_{B_{x/2}}  x^{-1-2s}|h|  \d h \le C x^{1 - 2s}.
\end{align*}
For $I_3$ we estimate
\begin{align*}
I_3 &\le C \Vert \nabla \phi \Vert_{L^{\infty}(0,1)} \int_{0}^{x/2} |x-y| K_{\R_+}(x,y) \d y \\
&\quad + C \Vert \nabla \phi \Vert_{L^{\infty}(0,2)} \int_{3x/2}^2  |x-y| K_{\R_+}(x,y) \d y + 2 \Vert \phi \Vert_{L^{\infty}(0,\infty)} \int_2^{\infty}  K_{\R_+}(x,y) \d y\\
&\le C \int_{0}^{x/2} |x-y|^{-2s} \left( 1+ \left| \log \left( \frac{|y|}{|x-y|} \right) \right| \right) \d y \\
&\quad + C \int_{3x/2}^{2} |x-y|^{-2s} \left( 1+ \left| \log \left( \frac{|x|}{|x-y|} \right) \right| \right) \d y + C \int_{2}^{\infty}  |x-y|^{-1-2s} \left( 1+ \left| \log \left( \frac{|x|}{|x-y|} \right) \right| \right)  \d y  \\
&\le C (1 + x^{1-2s}) (1 + |\log x|).
\end{align*}
Altogether, we have shown
\begin{align*}
|L \varphi(x)| \le C (1 + |\log x|)(1 + x^{1-2s}) ~~ \forall x \in (0,1).
\end{align*}
Next, we consider $x > 1$. In that case, we split
\begin{align*}
L\varphi(x) &= \frac{c_s}{2} \int_{B_{1}} (2\varphi(x) - \varphi(x+h) - \varphi(x-h)) |h|^{-1-2s} \d h \\
&\quad + c_s \int_{B_{1}} (\varphi(x) - \varphi(x+h)) k_{\R_+}(x,x+h) \d h \\
&\quad + c_s \int_{(0,\infty) \setminus B_{1}(x)} \varphi(x) K_{\R_+}(x,y) \d y - c_s \int_{(0,\infty) \setminus B_{1}(x)} \varphi(y) K_{\R_+}(x,y) \d y \\
&= I_1 + I_2 + I_3 + I_4.
\end{align*}
For $I_1$ and $I_2$, by the same arguments as before,
\begin{align}
\label{eq:I1-help}
\begin{split}
I_1+ I_2 &\le C \Vert D^2 \varphi \Vert_{L^{\infty}(x-1,x+1)} \int_{B_{1}} |h|^{-1-2s+2} \d h + C \Vert \nabla \varphi \Vert_{L^{\infty}(x-1,x+1)} \int_{B_{1}} x^{-1-2s}|h| \d h \\
& \le C x^{-1-2s} \left( 1 + x^{-1 - 2s} \right).
\end{split}
\end{align}
For $I_3$, using the upper bound for $k_{\R_+}$ and splitting the integral, we see that
\begin{align*}
I_3 &\le C (1 + x)^{-1-2s} \int_{(0,\infty) \setminus B_1(x)} K_{\R_+}(x,y) \d y \\
& \le C (1 + x)^{-1-2s} \int_0^{x/2} |x-y|^{-1-2s} \left( 1+ \left| \log \left( \frac{|y|}{|x-y|} \right) \right| \right) \d y \\
 &\quad +  C (1 + x)^{-1-2s} \int_{x/2}^{x-1} |x-y|^{-1-2s} + |y|^{-1-2s} \d y \\
 &\quad +   C (1 + x)^{-1-2s}  \int_{x+1}^{2s}  |x-y|^{-1-2s} + |y|^{-1-2s} \d y \\ 
 &\quad + (1 + x)^{-1-2s} \int_{2x}^{\infty}  |x-y|^{-1-2s} \left( 1+ \left| \log \left( \frac{|x|}{|x-y|} \right) \right| \right) \d y  \\
& \le C (1 + x)^{-1-2s} (1 + |\log x|).
\end{align*}
For $I_4$, we have by analogous arguments
\begin{align*}
I_4 \le C \int_{(0,\infty) \setminus B_1(x)} (1 + y)^{-1-2s} K_{\R_+}(x,y) \d y \le  C (1 + x)^{-1-2s} (1 + |\log x|).
\end{align*}

Altogether, we have shown
\begin{align*}
|L \varphi(x)| \le C (1 + |\log x|)(1 + x)^{-1-2s} ~~ \forall x \in (1,\infty).
\end{align*}
and the proof of the second claim is complete. The proof of the third claim follows immediately by integration of the first claim.
\end{proof}

Finally, we prove that $L$ is self-adjoint in the sense of \eqref{eq:self-adjoint}.

\begin{lemma}
\label{lemma:L-selfadjoint}
It holds
\begin{align*}
\int_0^{\infty} g(x) L \varphi(x) \d x = \int_0^{\infty} L g(x) \varphi(x) \d x
\end{align*}
for any $g(x) = x^{\beta - 1}$ with $b := \re(\beta) \in (0,2s)$ and $\varphi \in C^2_{loc}(0,\infty)$ such that
\begin{align*}
|\varphi^{(i)}(x)| \le C x^{2} ~~ \forall x \in (0,1), ~i \in \{ 0 , 1 , 2\}, \qquad |\varphi(x)| \le C x^{-1-2s} ~~ \forall x \in [1,\infty).
\end{align*}
In particular, $L$ satisfies \eqref{eq:self-adjoint} for any $\varphi \in C^2_{2,1+2s}(0,\infty)$.
\end{lemma}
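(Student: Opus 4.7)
The plan is to exploit the symmetry $K_{\R_+}(x,y)=K_{\R_+}(y,x)$ via a standard truncation--symmetrization argument, followed by passing to the limit with dominated convergence. First, I would check absolute convergence of both sides. By \eqref{vhfu}, $Lg(x)=f(\beta-1)\,x^{\beta-1-2s}$, so the growth hypotheses on $\varphi$ give integrability of $\varphi\cdot Lg$ at $0$ and $\infty$ provided $b\in(0,2s)$. For the left-hand side, revisiting the proof of \autoref{lemma:Neumann-L} with the improved local bound $|\varphi^{(i)}(x)|\le C x^2$ for $x\in(0,1)$, $i\in\{0,1,2\}$, yields $|L\varphi(x)|\le C(1+|\log x|)\bigl(x^{2-2s}\mathbf{1}_{(0,2]}+(1+x)^{-1-2s}\mathbf{1}_{[2,\infty)}\bigr)$, which is integrable against $|g(x)|=x^{b-1}$.

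Next, for $\epsilon>0$ I introduce the truncated operator
\begin{align*}
L_\epsilon u(x) := \int_{\{y>0:\,|x-y|>\epsilon\}} (u(x)-u(y))\,K_{\R_+}(x,y)\,dy,
\end{align*}
whose kernel is locally bounded so that Fubini applies on the associated double integrals. Using $K_{\R_+}(x,y)=K_{\R_+}(y,x)$ and relabeling $x\leftrightarrow y$,
\begin{align*}
\int_0^\infty g(x)\,L_\epsilon \varphi(x)\,dx = \tfrac{1}{2}\iint_{|x-y|>\epsilon}(g(x)-g(y))(\varphi(x)-\varphi(y))\,K_{\R_+}(x,y)\,dy\,dx,
\end{align*}
which is symmetric in $g$ and $\varphi$, and hence equals $\int_0^\infty \varphi(x)\,L_\epsilon g(x)\,dx$. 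Passing to the limit $\epsilon\to 0^+$ uses the pointwise convergences $L_\epsilon\varphi(x)\to L\varphi(x)$ (standard since $\varphi\in C^2_{\loc}$) and $L_\epsilon g(x)\to Lg(x)=f(\beta-1)x^{\beta-1-2s}$ (via homogeneity reduced to $x=1$, where the PV converges because the second-order symmetrization $2-(1+h)^{\beta-1}-(1-h)^{\beta-1}=O(h^2)$ is integrable against $|h|^{-1-2s}$ for $s<1$), together with the $\epsilon$-uniform majorants $|L_\epsilon\varphi(x)|\le C(1+|\log x|)(x^{2-2s}\wedge(1+x)^{-1-2s})$ and $|L_\epsilon g(x)|\le C x^{b-1-2s}$.

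The main obstacle is the uniform majorant on $|L_\epsilon g|$, since $g(x)=x^{\beta-1}$ has neither the regularity nor the decay of a test function. Via the scaling identity $L_\epsilon g(x)=x^{\beta-1-2s}\,L_{\epsilon/x}(y^{\beta-1})(1)$, the problem reduces to bounding $|L_{\tilde\epsilon}(y^{\beta-1})(1)|$ uniformly in $\tilde\epsilon>0$. Splitting the domain $\{y>0:|y-1|>\tilde\epsilon\}$ into the near-diagonal annulus $(1/2,3/2)\setminus B_{\tilde\epsilon}(1)$ and the tails $(0,1/2)$ and $(3/2,\infty)$, the near-diagonal piece is handled by the second-order symmetrization of $y^{\beta-1}$ against the Riesz part of $K_{\R_+}$, while the tails are controlled by $\int_0^{1/2}y^{b-1}\,dy<\infty$ (since $b>0$) and $\int_{3/2}^\infty y^{b-1}\cdot y^{-1-2s}\,dy<\infty$ (since $b<2s$); all three bounds are $\tilde\epsilon$-independent. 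Combining the symmetric identity for $L_\epsilon$ with dominated convergence then concludes the proof.
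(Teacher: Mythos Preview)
Your approach and the paper's share the same core idea---exploit the symmetry $K_{\R_+}(x,y)=K_{\R_+}(y,x)$ through a truncated double integral, then pass to the limit---but the truncation schemes differ. The paper truncates the \emph{integration domain} to $(\eps,\infty)$, keeping the full principal-value operator $L$ inside: it writes
\[
\int_{\eps}^{\infty} g\, L\varphi \,=\, \iint_{((0,\eps)\times(0,\eps))^c}(g(x)-g(y))(\varphi(x)-\varphi(y))K_{\R_+}\,dy\,dx \,-\, \text{(boundary term)},
\]
unfolds this symmetrically to $\int_{\eps}^{\infty} \varphi\, Lg$ plus a second boundary term, and then checks directly that both boundary terms (which are integrals over the strip $(0,\eps)$) vanish as $\eps\to 0$, using the decay $|\varphi^{(i)}(x)|\le Cx^2$ near zero. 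You instead truncate the \emph{diagonal} via $L_\epsilon$, obtain the exact symmetric identity $\int gL_\epsilon\varphi=\int\varphi L_\epsilon g$ for each fixed $\epsilon$, and pass to the limit by dominated convergence with $\epsilon$-uniform majorants on $L_\epsilon\varphi$ and $L_\epsilon g$. Your scaling reduction $L_\epsilon g(x)=x^{\beta-1-2s}L_{\epsilon/x}(y^{\beta-1})(1)$ and the uniform bound on $L_{\tilde\epsilon}(y^{\beta-1})(1)$ are correct and elegant; the paper's route avoids this by never needing a pointwise bound on $Lg$ away from the exact identity $Lg=f(\beta-1)x^{\beta-1-2s}$.

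One correction: your claimed majorant $|L_\epsilon\varphi(x)|\le C(1+|\log x|)x^{2-2s}$ for $x\in(0,2]$ is too optimistic. The far-field contribution $\int_{3x/2}^{1}\varphi(y)K_{\R_+}(x,y)\,dy$ does not vanish as $x\to 0$: with $|\varphi(y)|\le Cy^2$ and $y\asymp|x-y|$ on that range one gets $\int t^{1-2s}(1+|\log x|)\,dt\asymp (1+|\log x|)$, not $x^{2-2s}(1+|\log x|)$. The correct uniform bound near zero is $|L_\epsilon\varphi(x)|\le C(1+|\log x|)$, which is still integrable against $|g(x)|=x^{b-1}$ since $b>0$, so the argument survives. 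Also, the sentence ``whose kernel is locally bounded so that Fubini applies'' is a bit quick: for fixed $\epsilon$ one still needs the absolute double integral to be finite, and this requires tracking the $(1+|\log\min\{x,y\}|)$ growth of $K_{\R_+}$ near the boundary; it does go through, but with $\epsilon$-dependent constants.
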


\begin{proof}
Let $\eps \in (0, \frac{1}{2})$. Then,
\begin{align*}
\int_{\eps}^{\infty} g(x) L \varphi(x) \d x & = \iint_{((0,\eps) \times (0,\eps))^c} (g(x) - g(y))(\varphi(x) - \varphi(y)) K_{\R_+}(x,y) \d y \d x \\
&\quad - \int_{0}^{\eps} g(x) \int_{\eps}^{\infty} (\varphi(x) - \varphi(y)) K_{\R_+}(x,y) \d y \d x \\
&= \int_{\eps}^{\infty} L g(x)  \varphi(x) \d x + \int_{0}^{\eps} \varphi(x) \int_{\eps}^{\infty} (g(x) - g(y)) K_{\R_+}(x,y) \d y \d x \\
&\quad - \int_{0}^{\eps} g(x) \int_{\eps}^{\infty} (\varphi(x) - \varphi(y)) K_{\R_+}(x,y) \d y \d x.
\end{align*}
By the dominated convergence theorem (note that the integrals converge absolutely by \autoref{lemma:Neumann-L}, respectively \eqref{eq:L-decay} and \autoref{remark:distr-well-def}),
\begin{align*}
\int_{\eps}^{\infty} g(x) L \varphi(x) \d x \to \int_{0}^{\infty} g(x) L \varphi(x) \d x, \qquad \int_{\eps}^{\infty} L g(x)  \varphi(x) \d x \to \int_{0}^{\infty} L g(x)  \varphi(x) \d x.
\end{align*}
We claim that, under our assumptions on $g$ and~$\varphi$,
\begin{align*}
\int_{0}^{\eps} g(x) \int_{\eps}^{\infty} (\varphi(x) - \varphi(y)) K_{\R_+}(x,y) \d y \d x \to 0, \qquad \int_{0}^{\eps} \varphi(x) \int_{\eps}^{\infty} (g(x) - g(y)) K_{\R_+}(x,y) \d y \d x \to 0.
\end{align*}
To prove it, let us observe that, by \eqref{eq:k-upper}, for any $x \in (0,\eps)$ and $a < 2s$,
\begin{align*}
\int_{\eps}^{1} |x-y|^i K_{\R_+}(x,y) \d y \le C (\eps - x)^{i-2s} + C|\log x|(1 + \eps^{i-2s}), \qquad \int_1^{\infty} y^{a} K_{\R_+}(x,y) \d y \le C |\log x|. 
\end{align*}
Hence, estimating
\begin{align}
\label{eq:varphi-estimate}
|\varphi(x) - \varphi(y)| \le 
\begin{cases}
|\varphi'(x)||x-y| + C \sup_{z \in [x,1]}|\varphi''(x)||x-y|^2,& ~~ y \in (\eps,1],\\
|\varphi(x)| + |\varphi(y)|,& ~~ y \in [1,\infty),
\end{cases}
\end{align}
we obtain
\begin{align*}
\Big| \int_{0}^{\eps} & g(x) \int_{\eps}^{\infty} (\varphi(x) - \varphi(y)) K_{\R_+}(x,y) \d y \d x \Big| \\
&\le C \int_0^{\eps} x^{b}  \int_{\eps}^1 |x-y| K_{\R_+}(x,y) \d y \d x + C \int_0^{\eps} x^{b-1}  \int_{\eps}^1 |x-y|^2 K_{\R_+}(x,y) \d y \d x\\
& \quad + C \int_0^{\eps} x^{b+1} \int_1^{\infty} K_{\R_+}(x,y) \d y \d x + C \int_0^{\eps} x^{b-1} \int_1^{\infty} y^{-1 - 2s} K_{\R_+}(x,y) \d y \d x\\
&\le C \int_0^{\eps}  x^{b}  [(\eps - x)^{1-2s} + |\log x|(1 + \eps^{1-2s})] \d x \\
&\quad + C\int_0^{\eps} x^{b-1} |\log x| \d x + C \int_0^{\eps} x^{b-1}(x^{2} + 1)|\log x| \d x \\
&\le C \eps^{b + 2 - 2s}(1 + |\log \eps|) + C\eps^{b} |\log \eps| + C \eps^{b}(\eps^{2} + 1) |\log \eps| \to 0,
\end{align*}
where we used that $-b_1 < 2s$ in order for the last integral in the second estimate to converge, and $b > 0$ in the third estimate. By interchanging the roles of $g,\varphi$, we obtain
\begin{equation*}\begin{split}
\Big| \int_{0}^{\eps} & \varphi(x) \int_{\eps}^{\infty} (g(x) - g(y)) K_{\R_+}(x,y) \d y \d x \Big| \\
&\le C \int_0^{\eps} x^{2} x^{b-2}  \int_{\eps}^1 |x-y| K_{\R_+}(x,y) \d y \d x + C \int_0^{\eps} x^{2} (1+x^{b - 3})  \int_{\eps}^1 |x-y|^2 K_{\R_+}(x,y) \d y \d x\\
& \quad + C \int_0^{\eps} x^{2} x^{b-1} \int_1^{\infty} K_{\R_+}(x,y) \d y + C \int_0^{\eps} x^{2} \int_1^{\infty} y^{b-1} K_{\R_+}(x,y) \d y \\
&\le C \eps^{b + 2 - 2s}(1 + |\log \eps|) + C\eps^{3} (1 + \eps^{b-3}) |\log \eps| + C \eps^{3}(\eps^{b-1} + 1) |\log \eps| \to 0.\qedhere\end{split}
\end{equation*}
\end{proof}

Our next goal is to prove that $u$ is also a distributional solution to this equation in the sense of \autoref{def:distr-sol}. Our proof follows \cite[p.46]{AFR23}. The only difference comes from the fact that test functions in \autoref{def:distr-sol} do not need to have compact support.

\begin{lemma}
\label{lemma:weak-distr}
Let $L$ be as above and $u$ be a weak solution to
\begin{align*}
Lu = 0 ~~ \text{ in } (0,\infty)
\end{align*}
with Neumann condition on $\{ 0 \}$ in the sense of \autoref{def:weak-sol} such that
\begin{align*}
|u(x)| \le C (1 + x)^{2s-\eps}
\end{align*}
for some $C > 0$ and $\eps \in (0,2s)$. Then, $u$ is also a distributional solution in the sense of \autoref{def:distr-sol} with test-function space $C^2_{2,1+2s}(0,\infty)$.
\end{lemma}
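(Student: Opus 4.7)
The plan is to upgrade from the weak formulation of $Lu=0$ to the distributional identity in \autoref{def:distr-sol} by a single approximation step that cuts off the test function at infinity. Since the proof on \cite[p.46]{AFR23} already establishes
$$\int_0^\infty u(x)\,L\psi(x)\,\d x=0$$
for every $\psi$ with compact support (the argument uses only Fubini, the symmetry of $K_{\R_+}(x,y)$, the growth $|u(x)|\le C(1+x)^{2s-\eps}$, and the bounds \eqref{eq:k-upper} on $k_{\R_+}$), the only issue is extending this identity to a general $\varphi\in C^2_{2,1+2s}(0,\infty)$, whose decay at infinity is merely $|\varphi^{(k)}(x)|\le Cx^{-1-2s}$.

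To this end, I would fix $\eta\in C^\infty([0,\infty))$ with $\eta\equiv 1$ on $[0,1]$ and $\eta\equiv 0$ on $[2,\infty)$, and set $\varphi_R(x):=\eta(x/R)\varphi(x)$. Then $\varphi_R$ has compact support in $[0,2R]$, and because $\eta(x/R)=1$ on $[0,R]$ it inherits the $x^2$ decay of $\varphi$ near the origin unchanged. On $[R,2R]$, Leibniz's rule gives
$$|\varphi_R^{(k)}(x)|\le C\sum_{j=0}^{k}R^{-j}|\varphi^{(k-j)}(x)|\le Cx^{-1-2s}\quad\text{for }x\ge 1,$$
uniformly in $R\ge 1$. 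Hence $\varphi_R\in C^2_{2,1+2s}(0,\infty)$ with $\|\varphi_R\|_{C^2_{2,1+2s}(0,\infty)}\le C\|\varphi\|_{C^2_{2,1+2s}(0,\infty)}$, independently of $R$, and $\varphi_R$ is an admissible (compactly supported) test function for the weak formulation (possibly after a standard mollification if the weak formulation requires $C^\infty$ regularity). Therefore
$$\int_0^\infty u(x)\,L\varphi_R(x)\,\d x=0\qquad\forall\,R\ge 1.$$

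It remains to pass to the limit $R\to\infty$. By \autoref{lemma:Neumann-L}, the uniform $C^2_{2,1+2s}$ bound on $\varphi_R$ yields
$$|L\varphi_R(x)|\le C(1+|\log x|)\bigl[(1+x^{1-2s})\1_{(0,2]}(x)+(1+x)^{-1-2s}\1_{(2,\infty)}(x)\bigr]$$
with $C$ independent of $R$. Combined with $|u(x)|\le C(1+x)^{2s-\eps}$, the integrand $u(x)L\varphi_R(x)$ is dominated by a fixed function which is integrable on $(0,\infty)$ (since $1-2s>-1$ near zero, and the tail behaves like $x^{-1-\eps}\log x$). Moreover, for each fixed $x>0$, a second application of dominated convergence inside the kernel integral defining $L\varphi_R(x)$—using $|\varphi_R^{(k)}(y)|\le C|\varphi^{(k)}(y)|$ and the pointwise convergence $\varphi_R(y)\to\varphi(y)$—gives $L\varphi_R(x)\to L\varphi(x)$. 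The dominated convergence theorem then yields
$$0=\lim_{R\to\infty}\int_0^\infty u(x)\,L\varphi_R(x)\,\d x=\int_0^\infty u(x)\,L\varphi(x)\,\d x,$$
as required.

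The main obstacle is the \emph{uniformity} in $R$ of the $C^2_{2,1+2s}$ norm of $\varphi_R$, which is crucial to apply \autoref{lemma:Neumann-L} with a single majorant; fortunately, cutting off only at infinity (and not near the origin, where a cutoff would destroy the $x^2$ decay) makes this verification straightforward. All other steps are routine dominated convergence arguments powered by the explicit pointwise bounds on $K_{\R_+}$ and the standing growth assumption on $u$.
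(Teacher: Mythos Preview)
Your proposal is correct and follows essentially the same route as the paper. Both proofs first invoke the argument from \cite[p.46]{AFR23} to establish $\int_0^\infty u\,L\psi=0$ for compactly supported $\psi$, and then extend to general $\varphi$ by cutting off at infinity with a sequence that is uniformly bounded in the $C^2_{2,1+2s}$ (or $C^2_{0,1+2s}$) norm, applying \autoref{lemma:Neumann-L} to obtain an $R$-independent integrable majorant, and concluding via dominated convergence; the only cosmetic difference is that the paper estimates $L(\varphi_R-\varphi)$ directly on $(0,R)$ and $(R,\infty)$ separately, whereas you appeal to pointwise convergence $L\varphi_R(x)\to L\varphi(x)$ under the uniform majorant.
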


\begin{proof}
Let $\varphi \in C^{2}_{c}([0,\infty))$ and observe that, by the symmetry of $K_{\R_+}$, for any $\eps \in (0,1)$ we have that
\begin{align*}
\int_{0}^{\infty} u(x) L_{\eps} \varphi(x) \d x &= \iint_{D_{\eps}} u(x) (\varphi(x) - \varphi(y))K_{\R_+}(x,y) \d x \d y \\
&= \frac{1}{2} \iint_{D_{\eps}} (u(x) - u(y))(\varphi(x) - \varphi(y)) K_{\R_+}(x,y) \d x \d y,
\end{align*}
where
\begin{align*}
L_{\eps} \varphi(x) &= \int_{(0,\infty) \setminus B_{\eps}(x)} (\varphi(x) - \varphi(y)) K_{\R_+}(x,y) \d y,\\
D_{\eps} &= \{ (x,y) \in (0,\infty) \times (0,\infty) : |x-y| > \eps \}.
\end{align*}
Since $D_{\eps} \to (0,\infty) \times (0,\infty)$, and since $\varphi \in C_{c}^{2}([0,\infty))$ is a valid test function for the weak formulation of the equation for $u$, we get
\begin{align}
\label{eq:conv-weak-sol}
\int_{0}^{\infty} u(x) L_{\eps} \varphi(x) \d x \to 0 ~~ \text{ as } \eps \to 0.
\end{align}
In addition, by repeating the proof of \autoref{lemma:Neumann-L} with $L$ replaced by $L_{\eps}$, we obtain
\begin{align*}
|L_{\eps} \varphi (x)| \le C (1 + |\log x|) \begin{cases}1 + x^{1-2s}, ~~ &\text{ for } x \in (0,2],\\
 (1 + x)^{-1-2s} &\text{ for } x \in (2,\infty),  
\end{cases}
\end{align*} 
for some constant $C > 0$ independent of $\eps$. 

Hence,
\begin{align*}
\int_{0}^{\infty} |u(x)| |L_{\eps} \varphi(x)| \d x &\le C \int_0^{2} (1 + |\log x|)(1 + x^{1-2s}) \d x \\
&\quad + C \int_2^{\infty} x^{2s-\eps} (1 + |\log x|) (1 + x)^{-1-2s} \d x \le C < \infty ,
\end{align*}
and therefore by dominated convergence, 
\begin{align*}
\int_{0}^{\infty} u(x) L_{\eps} \varphi(x) \d x \to \int_{0}^{\infty} u(x) L \varphi(x) \d x.
\end{align*}
Then, by combining this observation with \eqref{eq:conv-weak-sol}, we deduce that for any $\varphi \in C^{2}_c([0,\infty))$ it holds
\begin{align}
\label{eq:distr-help-1}
\int_{0}^{\infty} u(x) L \varphi(x) \d x = 0.
\end{align}
In order to deduce that $u$ is a distributional solution, it remains to prove that \eqref{eq:distr-help-1} holds true for any $\varphi \in C_{2,1+2s}^2(0,\infty)$. This can be achieved by an approximation argument. We will show a slightly stronger result, namely that we can even take $C_{0,1+2s}^2(0,\infty)$ as our test-function space. Indeed, let $\varphi \in C_{0,1+2s}^2(0,\infty)$ and $\varphi_k \in C_c^{2}([0,\infty))$ be such that $\psi_k := \varphi_k - \varphi \to 0$ locally uniformly such that
\begin{align*}
\supp(\varphi_k) \subset [0,k+1], \qquad \varphi_k = \varphi ~~ \text{ in } [0,k] \qquad \Vert \varphi_k - \varphi \Vert_{C^2_{0,1+2s}(0,\infty)} \le C.
\end{align*}
Set $\psi_k = \varphi_k - \varphi$. Then, for any $R \ge 2$, $x \in (0,R)$, and $k \in \N$ large enough, we find that
\begin{align*}
|L \psi_k(x)| \le 2\Vert\psi_k\Vert_{L^{\infty}(0,\infty)} \int_k^{\infty}  K_{\R_+}(x,y) \d y \le C \int_k^{\infty}  |x-y|^{-1-2s} (1 + |\log x|)\d y \le C (1 + |\log x|) k^{-2s}.
\end{align*}
Moreover, by \autoref{lemma:Neumann-L}, for all~$x \in (R,\infty)$ we have that
\begin{align*}
|L \varphi(x)| \le C(1 + |\log x|) (1 + x)^{-1-2s}.
\end{align*}
Therefore, using also the growth of $u$:
\begin{align*}
\int_{0}^{\infty} u(x) L \psi_k(x) \d x &\le C k^{-2s} R^{2s-\eps}\int_0^R  (1 + |\log x|) \d x + C \int_R^{\infty} (1 + |\log x|) (1 + x)^{-1-\eps} \d x \\
&\le C^{-2s} R^{2s-\frac{\eps}{2}} + C R^{-\frac{\eps}{2}}.
\end{align*}
Since $R \ge 2$ was arbitrary, this estimate immediately implies that
\begin{align*}
\int_{0}^{\infty} u(x) L \varphi_k(x) \d x \to \int_{0}^{\infty} u(x) L \varphi(x) \d x,
\end{align*}
which yields \eqref{eq:distr-help-1} also for $\varphi \in C_{0,1+2s}^2(0,\infty)$. The proof is complete.
\end{proof}

We are now in a position to apply \autoref{thm:Liouville} and to deduce \autoref{thm:Neumann-Liouville}, as well as \autoref{thm-intro5}.

\begin{proof}[Proof of \autoref{thm:Neumann-Liouville}]
By combining the results of this section (namely \autoref{lemma:Neumann-L},  \autoref{lemma:L-selfadjoint}, \autoref{lemma:weak-distr}, \autoref{lemma:f-def}, and \autoref{lemma:f-prop}) we can apply the Liouville theorem in \autoref{thm:Liouville} to $u,L$, choosing as a test-function space $\varphi \in C^2_{2,1+2s}(0,\infty)$, i.e. $d = 2$, $a = 2$, and $b = 1 +2s$.
Since by \autoref{prop:zeros-final} the only zeros of $f$ in $[0,B_0) \times i \R$ are given by $\beta \in \{ 0, 2s-1\}$ (there are no zeros $\beta$ with $\re(\beta) \in (0,2s-1)$ due to \cite[Theorem 5.1]{AFR23}), it follows that, for some $A,B \in \R$,
\begin{align*}
u(x) = A + B x^{2s -1}.
\end{align*}

We know by \cite[Lemma 5.9]{AFR23} that $u$ is continuous at $0$, and therefore it must be $A = 0$. Note that this result holds true for any $s \in (0,1)$. Moreover, if $s \le 1/2$ we must also have $B = 0$ by the same reason. 

It remains to prove that $B = 0$ in case $s > 1/2$.
 Let us assume that $B > 0$. Since $u$ is also a weak solution, we can proceed as in \cite[Proof of Corollary 5.8]{AFR23}, take a test function $\eta \in C_c^{\infty}(-\infty,1]$ with $\eta'\le 0$ and $\eta \not\equiv 0$, and use that $x \mapsto x^{2s-1}$ is strictly increasing in $(0,\infty)$, finding that
 \begin{align*}
 0 &= B \int_0^{\infty}\int_0^{\infty} (x^{2s-1} - y^{2s-1}) (\eta(x) - \eta(y)) K_{\R_+}(x,y) \d x \d y \\
 &= B \iint_{(0,\infty)^2 \cap \{ x < y \}} (x^{2s-1} - y^{2s-1}) (\eta(x) - \eta(y)) K_{\R_+}(x,y) \d x \d y \\
 &\quad + B \iint_{(0,\infty)^2 \cap \{ x \ge y \}} (x^{2s-1} - y^{2s-1}) (\eta(x) - \eta(y)) K_{\R_+}(x,y) \d x \d y < 0,
 \end{align*}
which is a contradiction. Hence, necessarily $B = 0$ and the proof is complete.
\end{proof}

\begin{proof}[Proof of \autoref{thm-intro5}]
As before, we combine \autoref{lemma:Neumann-L}, \autoref{lemma:L-selfadjoint}, \autoref{lemma:f-def}, and \autoref{lemma:f-prop} and apply the Liouville theorem in \autoref{thm:Liouville} to $u,L$.
\end{proof}

\subsection{A 1D Liouville theorem with faster growth at infinity} 

The goal of this subsection is to prove \autoref{thm:Neumann-Liouville-growth} with faster growth, i.e. we allow solutions to grow faster than $t^{2s}$ at infinity. 

The proof goes in the exact same way as for \autoref{thm:Neumann-Liouville} in the previous subsection, but using \autoref{thm:Liouville-2D} instead of \autoref{thm:Liouville}. Still, it suffices to establish analogs of  \autoref{lemma:Neumann-L}, \autoref{lemma:L-selfadjoint}, and  \autoref{lemma:weak-distr} for $L = (-\Delta)^s_{\R}$ and $N = \mathcal{N}_{(0,\infty)}^s$.

\begin{lemma}
\label{lemma:Neumann-L-growth}
Let $\varphi \in C^{2,1}_{0,3+2s}(\R)$. Then $x \mapsto (-\Delta)^s \varphi(x)$ and $x \mapsto \cN_{\R_+}^s \varphi(x)$ are continuous and for some $C > 0$:
\begin{align*}
|\cN_{(0,\infty)}^s \varphi (-x)| + |(-\Delta)_{\R}^s \varphi (x)| \le C 
\begin{cases}
1 + x^{1-2s}, ~~ &\text{ for } x \in (0,2],\\
 (1 + x)^{-2-2s} &\text{ for } x \in (2,\infty).
\end{cases}
\end{align*} 
In particular, the pair $(L,N)$ satisfies \eqref{eq:L-decay-2D} for any $\alpha_1 \in (\max\{ 0 ,2s-1\} , 2s)$ and $\alpha_2 = 0$.
\end{lemma}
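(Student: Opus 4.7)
The plan is to split the analysis of $(-\Delta)_\R^s \varphi(x)$ and $\mathcal N^s_{(0,\infty)}\varphi(-x)$ into two regimes: a near-boundary region $x \in (0, 2]$ and a far-field region $x > 2$. The moment conditions $\int_0^\infty \varphi(\pm t)\,\d t = 0$ (encoded in $\varphi \in C^{2,1}_{0,3+2s}(\R)$) enter crucially only in the far-field regime. Continuity of both operators on $(0,\infty)$ follows from standard dominated convergence, since $\varphi \in C^2_{\loc}(\R)$ together with the decay $|\varphi^{(i)}(y)| \le C(1+|y|)^{-3-2s}$ makes the integrands absolutely integrable away from the singularity.

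For $x \in (0, 2]$, since $\varphi$ is globally $C^2$ with derivatives bounded on compact subsets, the standard second-order Taylor cancellation in the principal value definition of $(-\Delta)_\R^s$, together with the tail decay of $\varphi$, gives $|(-\Delta)_\R^s \varphi(x)| \le C$, which is absorbed in $C(1 + x^{1-2s})$. For $\mathcal N^s_{(0,\infty)} \varphi(-x)$, the essential point is to keep the integrand as the combined difference $\varphi(-x) - \varphi(y)$; splitting it into two separate pieces would produce terms that each diverge like $x^{-2s}$. Using the Lipschitz estimate $|\varphi(-x) - \varphi(y)| \le C(x+y)$ on the local part $y \in (0, 1)$ reduces the local integrand to $(x+y)^{-2s}$, and the substitution $u = x+y$ yields exactly $C(1 + x^{1-2s})$; the contribution of $y \ge 1$ is bounded by a constant using the decay of $\varphi$.

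For $x > 2$, which is the main content of the lemma, I would split the integration domain into the singular region $|y - x| < x/2$ and its complement. The singular part is controlled by second-order Taylor expansion of $\varphi$ together with the pointwise bound $|\varphi''(y)| \le C |y|^{-3-2s}$ on $(x/2, 3x/2)$, producing $O(x^{-3-2s})$. For the non-singular part, the key step is to expand the kernel in powers of $y/x$ around $y = 0$: for $\mathcal N^s$ one writes
\[
(x+y)^{-1-2s} = x^{-1-2s} - (1+2s)\, y\, x^{-2-2s} + O(y^2 x^{-3-2s}), \qquad y \in (0, x/2),
\]
and analogous expansions of $(x \mp y)^{-1-2s}$ apply to the $\R_+$ and $\R_-$ halves of the $y$-integral in $(-\Delta)_\R^s$. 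The leading term becomes $x^{-1-2s} \int_0^{x/2} \varphi(\pm y)\,\d y$, which by the vanishing zeroth moment equals $-x^{-1-2s} \int_{x/2}^\infty \varphi(\pm y)\,\d y = O(x^{-3-4s})$ by the tail bound on $\varphi$. The first-order correction contributes $O(x^{-2-2s})$ thanks to the finiteness of $\int_0^\infty y|\varphi(\pm y)|\,\d y$ (which holds since $3+2s > 2$), and both the quadratic remainder and the piece coming from $|y| > x/2$ decay strictly faster. Combining these pieces yields $|(-\Delta)_\R^s\varphi(x)| + |\mathcal N^s_{(0,\infty)}\varphi(-x)| \le C(1+x)^{-2-2s}$ in this range.

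The verification of \eqref{eq:L-decay-2D} with $k = 1$, $\alpha_1 \in (\max\{0, 2s-1\}, 2s)$ and $\alpha_2 = 0$ is then immediate: for $x \in (0,1)$ the bound $1 + x^{1-2s}$ is dominated by $x^{-\alpha_1}$ for any $\alpha_1 > 2s-1$, while for $x \ge 1$ the bound $(1+x)^{-2-2s}$ is strictly stronger than the required $x^{-2s-1}$; the integral tail bound follows at once by integrating the pointwise estimate. The main obstacle is the careful bookkeeping of the kernel expansion remainders and the contributions from the two half-lines: without the vanishing of the zeroth moments the estimate would deteriorate to $(1+x)^{-1-2s}$, precisely the decay produced in \autoref{lemma:Neumann-L}, which would be too weak to apply \autoref{thm:Liouville-2D} to solutions of faster polynomial growth.
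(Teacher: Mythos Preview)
Your strategy is essentially the paper's: kill the $O(x^{-1-2s})$ term via the zeroth-moment conditions and show the remainder is $O(x^{-2-2s})$. There is, however, a quantitative slip in your singular piece. You claim that on $|y-x|<x/2$ the second-order Taylor bound $|\varphi''(y)|\le C|y|^{-3-2s}$ produces $O(x^{-3-2s})$, but in fact
\[
\int_{|h|<x/2}\Big(\sup_{(x/2,\,3x/2)}|\varphi''|\Big)\,|h|^{1-2s}\,\d h
\;\le\; C\,x^{-3-2s}\cdot (x/2)^{2-2s}
\;=\; C\,x^{-1-4s},
\]
and for $s<\tfrac12$ one has $x^{-1-4s}\gg (1+x)^{-2-2s}$, so the claimed bound fails in that range. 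The fix (which is exactly what the paper does) is to take the singular region of \emph{unit} size: on $|h|<1$ the Taylor bound gives $C x^{-3-2s}\int_0^1 h^{1-2s}\,\d h=O(x^{-3-2s})$, while on the remaining annulus $1<|h|<x/2$ one simply uses $|\varphi(x)|+|\varphi(y)|\le C x^{-3-2s}$ to get $\int_1^{x/2} x^{-3-2s}h^{-1-2s}\,\d h=O(x^{-3-2s})$. With this adjustment your argument goes through.

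For the non-singular part the paper takes a slightly leaner route than your two-term Taylor expansion of the kernel: it subtracts only the constant $|x|^{-1-2s}$, invokes $\int_{\R}\varphi=0$ (resp.\ $\int_0^\infty\varphi=0$ for $\cN^s$), and controls the difference by the mean-value inequality $\big||x-y|^{-1-2s}-|x|^{-1-2s}\big|\le C\,|y|\,\min\{|x-y|,|x|\}^{-2-2s}$, then splits the resulting $y$-integral into $(-\infty,x/2)\cup(2x,\infty)$ and $(x/2,x-1)\cup(x+1,2x)$. This packages your zeroth- and first-order steps into one estimate; either organization yields the same $O(x^{-2-2s})$.
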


\begin{proof}
The claim for $x \le 2$ is standard and can be established as in \autoref{lemma:Neumann-L}. Hence, we skip the proof. Let us now take $x > 2$ and compute
\begin{align*}
|(-\Delta)^s_{\R} \varphi(x)| &\le C \int_{B_1} |2 \varphi(x) - \varphi(x+h) - \varphi(x-h)| |h|^{-1-2s} \d h \\
&\quad + C |\varphi(x)| \int_{B_1(x)^c} |x-y|^{-1-2s} \d y + C \left| \int_{B_1(x)^c} \varphi(y) |x-y|^{-1-2s} \d y \right| \\
&= I_1 + I_2 + I_3.
\end{align*}
For $I_1$ we recall the estimate from \eqref{eq:I1-help} and for $I_2$ we simply recall the growth of $\varphi$ to deduce
\begin{align*}
I_1 + I_2 \le C x^{-3-2s}.
\end{align*}
To estimate $I_3$, we recall that $\int_{\R} \varphi(y) \d y = 0$ and deduce from the growth assumption on $\varphi$ that
\begin{align*}
I_3 &\le C \left| \int_{B_1(x)^c} \varphi(y) \left(|x-y|^{-1-2s} - |x|^{-1-2s} \right) \d y \right| + C x^{-1-2s} \left|\int_{B_1(x)} \varphi(y) \d y \right| \\
&\le C \int_{B_1(x)^c} \min\{ 1 , |y|^{-3-2s} \} |y| \min\{ |x-y| , |x| \}^{-2-2s} \d y +  C x^{-1-2s} x^{-3-2s}.
\end{align*}
Hence, it remains to estimate the first summand in the previous estimate. To do so, we split the integration domain into several parts. First, we have
\begin{align*}
\int_{(-\infty,x/2) \cup (2x,\infty)} & \min\{ 1 , |y|^{-3-2s} \} |y| \min\{ |x-y| , |x| \}^{-2-2s} \d y \\
&\le C x^{-2-2s} \int_{\R} \min\{ 1 , |y|^{-3-2s} \} |y|  \d y \le C x^{-2-2s}.
\end{align*}
Next, we observe that 
\begin{align*}
\int_{(x/2 , x-1) \cup (x+1,2x)} & \min\{ 1 , |y|^{-3-2s} \} |y| \min\{ |x-y| , |x| \}^{-2-2s} \d y \\
&\le C x^{-2-2s} \int_{(x/2 , x-1) \cup (x+1,2x)} \min\{ |x-y| , |x| \}^{-2-2s} \d y \le C x^{-2-2s},
\end{align*}
which yields the desired estimate for $|(-\Delta)^s_{\R} \varphi(x)|$. Note that the proof of the estimate for $|\cN_{(0,\infty)}^s \varphi(x)|$ is completely analogous to the estimates of $I_2,I_3$, with the sole two differences that we integrate only over $(0,\infty) \setminus B_{1}(x)$ and that we use $\int_0^{\infty} \varphi(y) \d y = 0$.
\end{proof}

\begin{lemma}
\label{lemma:L-growth-selfadjoint}
It holds
\begin{align*}
\int_0^{\infty} g(x) (-\Delta)_{\R}^s\varphi(x) \d x = \int_0^{\infty} (-\Delta)_{\R}^s (g\1_{\R_+})(x) \varphi(x) \d x + \int_{-\infty}^0 \cN_{(0,\infty)}^s (g \1_{\R_+})(x) \varphi(x) \d x,\\
\int_{-\infty}^0 g(x) \cN_{(0,\infty)}^s \varphi(x) \d x  = \int_0^{\infty} (-\Delta)_{\R}^s (g\1_{\R_-})(x) \varphi(x) \d x + \int_{-\infty}^0 \cN_{(0,\infty)}^s (g \1_{\R_-})(x) \varphi(x) \d x
\end{align*}
for any $g(x) = |x|^{\beta-1}$ with $b = \re(\beta) \in (0,1+2s)$ and $\varphi \in C^{2,1}_{2,3+2s}(0,\infty)$.
In particular, the pair $(L,N)$ satisfies \eqref{eq:self-adjoint-2D} for any $\varphi \in C^{2,1}_{2,3+2s}(0,\infty)$.
\end{lemma}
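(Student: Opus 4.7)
Both identities will be obtained as two specializations of a single symmetry statement for the combined bilinear form
\begin{align*}
B(v,\varphi) := \int_0^\infty v(x) (-\Delta)^s_\R \varphi(x) \d x + \int_{-\infty}^0 v(x) \mathcal{N}^s_{(0,\infty)} \varphi(x) \d x.
\end{align*}
Indeed, setting $v = g\1_{\R_+}$ one has $B(v,\varphi) = \int_0^\infty g (-\Delta)^s_\R\varphi \d x$, which is the left-hand side of the first identity, while
$B(\varphi,v) = \int_0^\infty \varphi \, (-\Delta)^s_\R(g\1_{\R_+}) \d x + \int_{-\infty}^0 \varphi \, \mathcal{N}^s_{(0,\infty)}(g\1_{\R_+}) \d x$
is precisely its right-hand side; the choice $v = g\1_{\R_-}$ yields the second identity analogously. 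The whole lemma thus reduces to the symmetry $B(v,\varphi) = B(\varphi,v)$ for $v = g\1_{\R_\pm}$.

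First I would establish the manifestly symmetric representation
\begin{align*}
B(v,\varphi) &= \frac{c_s}{2}\iint_{(0,\infty)^2} \frac{(v(x)-v(y))(\varphi(x)-\varphi(y))}{|x-y|^{1+2s}} \d x \d y \\
&\quad + c_s \int_{0}^\infty \int_{-\infty}^0 \frac{(v(x)-v(y))(\varphi(x)-\varphi(y))}{|x-y|^{1+2s}} \d y \d x,
\end{align*}
in which the absence of a $(-\infty,0)^2$ contribution reflects the fact that $B$ contains no regional piece on the negative half-line. To justify this rearrangement, I would adapt the truncation scheme of \autoref{lemma:L-selfadjoint}: excise a $\delta$-neighbourhood of the diagonal and of the discontinuity at $0$ of $v = g\1_{\R_\pm}$, apply Fubini on the remaining absolutely convergent integrand, symmetrize via $x \leftrightarrow y$ between the two cross-integrals over $\R_+ \times \R_-$ and $\R_- \times \R_+$, and then send $\delta \to 0$. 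Near the diagonal and at $0$, the $C^2$ regularity of $\varphi$, its $O(x^2)$ vanishing, and the constraint $\re(\beta) < 1+2s$ ensure that all cutoff errors vanish.

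The main obstacle lies at infinity, where $g(x) = |x|^{\re(\beta)-1}$ may grow up to order $2s$. In particular, on the cross region $\R_+\times\R_-$ the integrand $v(x)(\varphi(x)-\varphi(y))|x-y|^{-1-2s}$ has no diagonal cancellation and a naive bound would give a divergent integral as $x\to\infty$. The crucial input here is \autoref{lemma:Neumann-L-growth}: thanks to the moment conditions $\int_0^\infty \varphi \d x = \int_0^\infty \varphi(-\cdot) \d x = 0$ built into $C^{2,1}_{2,3+2s}(\R)$, both $(-\Delta)^s_\R\varphi$ and $\mathcal{N}^s_{(0,\infty)}\varphi$ actually decay like $(1+|x|)^{-2-2s}$, and the same cancellation of $\varphi$ against the kernel $|x-y|^{-1-2s}$ on $\R_-$ yields a comparable decay for the inner $y$-integral in the cross term. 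Combined with $\re(\beta)<1+2s$, this delivers absolute convergence of every integral in the argument and allows dominated convergence to close the truncation. Once the representation above is justified, $B(v,\varphi) = B(\varphi,v)$ is immediate by the manifest symmetry of the integrand, and rereading $B(\varphi,v)$ through the definitions of $(-\Delta)^s_\R$ and $\mathcal{N}^s_{(0,\infty)}$ applied to $g\1_{\R_\pm}$ completes the proof.
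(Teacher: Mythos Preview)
Your proposal is correct and follows essentially the same approach as the paper: both arguments pass through the symmetric double-integral representation of the energy over $((\R_-)^2)^c$, justify it by a truncation near the origin in the spirit of \autoref{lemma:L-selfadjoint}, and invoke the $(1+|x|)^{-2-2s}$ decay of $(-\Delta)^s_\R\varphi$ and $\cN^s_{(0,\infty)}\varphi$ from \autoref{lemma:Neumann-L-growth} (which relies on the moment conditions in $C^{2,1}_{2,3+2s}$) to control the integrals at infinity. Your packaging via a single bilinear form $B(v,\varphi)$ and its manifest symmetry is a slightly cleaner way to see that both identities are instances of the same fact, whereas the paper treats the $g\1_{\R_+}$ case by an explicit computation and then remarks that the $g\1_{\R_-}$ case is analogous; but the substance of the two arguments is identical.
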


\begin{proof}
The proof is similar to the one of \autoref{lemma:L-selfadjoint}. We let $\eps \in (0, \frac{1}{2})$ and compute
\begin{align*}
\int_{\eps}^{\infty} g(x) (-\Delta)^s_{\R} \varphi(x) \d x &= \frac{c_s}{2} \iint_{((-\eps,\eps) \times (-\eps,\eps))^c} (g\1_{\R_+}(x) - g\1_{\R_+}(y))(\varphi(x) - \varphi(y)) |x-y|^{-1-2s} \d y \d x \\
&\quad - c_s\int_{0}^{\eps} g(x) \int_{\eps}^{\infty} (\varphi(x) - \varphi(y)) |x-y|^{-1-2s} \d y \d x \\
&= \int_{\eps}^{\infty} (-\Delta)_{\R}^s (g\1_{\R_+})(x)  \varphi(x) \d x + \int_{-\infty}^{-\eps} \cN_{(0,\infty)}^s (g\1_{\R_+})(x) \varphi(x) \d x \\
&\quad + c_s\int_{0}^{\eps} \varphi(x) \int_{\eps}^{\infty} (g\1_{\R_+}(x) - g\1_{\R_+}(y)) |x-y|^{-1-2s} \d y \d x \\
&\quad -c_s \int_{0}^{\eps} g\1_{\R_+}(x) \int_{\eps}^{\infty} (\varphi(x) - \varphi(y)) |x-y|^{-1-2s} \d y \d x \\
&\quad  + c_s\int_{-\eps}^{0} \varphi(x) \int_{\eps}^{\infty} (g\1_{\R_+}(x) - g\1_{\R_+}(y)) |x-y|^{-1-2s} \d y \d x.
\end{align*}
By the dominated convergence theorem (note that the integrals converge absolutely by \autoref{lemma:Neumann-L-growth}),
\begin{align*}
\int_{\eps}^{\infty} g(x) (-\Delta)_{\R}^s \varphi(x) \d x &\to \int_{0}^{\infty} g(x) (-\Delta)_{\R}^s \varphi(x) \d x, \\
 \int_{\eps}^{\infty} (-\Delta)_{\R}^s (g\1_{\R_+})(x)  \varphi(x) \d x &\to \int_{0}^{\infty} (-\Delta)_{\R}^s (g\1_{\R_+})(x)  \varphi(x) \d x,\\
 \int_{-\infty}^{-\eps} \cN_{(0,\infty)}^s (g\1_{\R_+})(x) \varphi(x) \d x &\to \int_{-\infty}^{0} \cN_{(0,\infty)}^s (g\1_{\R_+})(x) \varphi(x) \d x.
\end{align*}
Moreover, under our assumptions on $g$ and~$\varphi$,
\begin{align*}
\int_{0}^{\eps} g(x) \int_{\eps}^{\infty} (\varphi(x) - \varphi(y)) |x-y|^{-1-2s} \d y \d x &\to 0, \\
 \int_{0}^{\eps} \varphi(x) \int_{\eps}^{\infty} (g(x) - g(y)) |x-y|^{-1-2s} \d y \d x &\to 0,\\
\int_{-\eps}^{0} \varphi(x) \int_{\eps}^{\infty} (g\1_{\R_+}(x) - g\1_{\R_+}(y)) |x-y|^{-1-2s} \d y \d x &\to 0,
\end{align*}
which can be proved in an analogous way as \autoref{lemma:L-selfadjoint}. The second claim of the lemma follows in a similar way.
\end{proof}

\begin{lemma}
\label{lemma:weak-distr-2}
Let $u$ be a distributional solution with faster growth to \eqref{eq:faster-growth} with $u(0) = 0$ in the sense of \autoref{def:faster-growth} such that
\begin{align*}
|u(x)| \le C (1 + x)^{B_0-\eps}
\end{align*}
for some $C > 0$ and $\eps > 0$. Then, $u$ is also a distributional solution in the sense of \autoref{def:distr-sol-2D} for $(L,N) = ((-\Delta)^s_{\R} , \cN_{(0,\infty)}^s)$ with test-function space $C^{2,1}_{2,3+2s}(\R)$.
\end{lemma}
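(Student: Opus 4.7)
The plan is to run an approximation argument parallel to Lemma \ref{lemma:weak-distr}, but now on all of $\R$ and respecting the moment constraints that define $C^{2,1}_{2,3+2s}(\R)$. Given $\varphi \in C^{2,1}_{2,3+2s}(\R)$, I would build a sequence $\varphi_k \in C_c^\infty(\R)$ satisfying
\[
\int_0^\infty \varphi_k(x)\,dx = \int_{-\infty}^0 \varphi_k(x)\,dx = 0,
\]
coinciding with $\varphi$ on $[-k,k]$, supported in $[-k-2,k+2]$, and with $\|\varphi_k\|_{C^{2,1}_{0,3+2s}(\R)}$ uniformly bounded in $k$. A concrete recipe: fix smooth bumps $\chi_+ \in C_c^\infty((0,1))$ and $\chi_- \in C_c^\infty((-1,0))$ with $\int\chi_\pm = 1$, a cutoff $\eta_k \in C_c^\infty([-k-1,k+1])$ with $\eta_k\equiv 1$ on $[-k,k]$, and set
\[
\varphi_k := \eta_k\varphi - c_k^+\chi_+ - c_k^-\chi_-, \qquad c_k^+ := \int_0^{\infty}\eta_k\varphi, \quad c_k^- := \int_{-\infty}^0 \eta_k\varphi.
\]
Since $\varphi$ has vanishing half-line integrals and $|\varphi(y)|\le C(1+|y|)^{-3-2s}$, the correcting coefficients obey $|c_k^\pm|\le C k^{-2-2s}\to 0$. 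Consequently $\psi_k := \varphi-\varphi_k$ is uniformly bounded in $C^{2,1}_{0,3+2s}(\R)$ and still satisfies $\int_0^\infty\psi_k = \int_{-\infty}^0\psi_k = 0$.

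Each $\varphi_k$ lies in $C_c^\infty(\R)$ with vanishing half-line integrals, so Definition \ref{def:faster-growth} applies directly and gives
\[
\int_{-\infty}^0 u(x)\,\cN^s_{(0,\infty)}\varphi_k(x)\,dx + \int_0^\infty u(x)\,(-\Delta)^s_{\R}\varphi_k(x)\,dx = 0
\]
for all $k$. The proof then reduces to passing to the limit $k\to\infty$ in both integrals. Fix $R\ge 2$ and split the domain of integration into $\{|x|\le R\}$ and $\{|x|>R\}$. On the tail, Lemma \ref{lemma:Neumann-L-growth} applied to both $\varphi$ and $\varphi_k$ yields the $k$-independent bound
\[
|(-\Delta)^s_{\R}\varphi_k(x)| + |\cN^s_{(0,\infty)}\varphi_k(-x)| \le C(1+|x|)^{-2-2s} \qquad \text{for } |x|>R,
\]
while the assumption $|u(x)|\le C(1+|x|)^{B_0-\eps}$ together with the inequality $B_0 < 2s+1$ from Proposition \ref{prop:zeros-final} makes the product integrable. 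Thus the tail contribution can be made uniformly small as $R\to\infty$, with error independent of $k$.

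On the interior $[-R,R]$, for $k>R+2$ the function $\psi_k$ equals $c_k^+\chi_++c_k^-\chi_-$ on $[-k,k]$ (hence is $O(k^{-2-2s})$ in $C^\infty$ on $[-R,R]$) and equals $(1-\eta_k)\varphi$ on $\{|y|>k\}$. Both the local part of $(-\Delta)^s_{\R}\psi_k(x)$, controlled by $\|\psi_k\|_{C^2(B_1(x))}\le Ck^{-2-2s}$, and its nonlocal tail, bounded by
\[
\int_{|y|>k}|\varphi(y)||x-y|^{-1-2s}\,dy \le C\int_k^\infty y^{-3-2s} y^{-1-2s}\,dy \le C k^{-3-4s},
\]
therefore vanish uniformly on $[-R,R]$; the analogous bound holds for $\cN^s_{(0,\infty)}\psi_k$. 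Sending first $k\to\infty$ and then $R\to\infty$ yields the required distributional identity for $\varphi$. The main obstacle is the joint construction of the approximants $\varphi_k$ that simultaneously preserve both half-line moment conditions and stay uniformly in the decay class $C^{2,1}_{0,3+2s}(\R)$: the moment conditions are essential in order to invoke Definition \ref{def:faster-growth}, while the uniform decay is essential in order to apply the kernel bounds of Lemma \ref{lemma:Neumann-L-growth} on the tail.
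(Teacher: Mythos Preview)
Your approach is correct and coincides with the paper's: the paper simply refers back to the approximation argument of \autoref{lemma:weak-distr} (after \eqref{eq:distr-help-1}), together with \autoref{lemma:Neumann-L-growth} and the bound $B_0-\eps<1+2s$ from \autoref{prop:zeros-final}. You have supplied the details the paper omits, in particular the correct construction of $\varphi_k$ that preserves the two half-line moment conditions; this is exactly the new ingredient needed to invoke \autoref{def:faster-growth} at each step, and your correction via bumps $\chi_\pm$ with coefficients $c_k^\pm=O(k^{-2-2s})$ handles it cleanly.
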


\begin{proof}
The proof goes by an approximation argument exactly as in the proof of \autoref{lemma:weak-distr} (after \eqref{eq:distr-help-1}), using also \autoref{lemma:Neumann-L-growth} and the fact that $B_0 - \eps < 1 + 2s$ due to \autoref{prop:zeros-final}.
\end{proof}

We are now in a position to prove \autoref{thm:Neumann-Liouville-growth}.

\begin{proof}[Proof of \autoref{thm:Neumann-Liouville-growth}]
By \autoref{lemma:weak-distr-2} we deduce that $u$ is a distributional solution in the sense of \autoref{def:distr-sol-2D} with $(L,N) = ((-\Delta)^s_{\R} , \cN^s_{(0,\infty)})$. Additionally, we recall \autoref{lemma:Neumann-L-growth} and \autoref{lemma:L-growth-selfadjoint}, as well as \autoref{lemma:f-def-2D}, \autoref{cor:zeros-f2}, and \autoref{lemma:f-prop}, which allows us to apply the Liouville theorem in \autoref{thm:Liouville-2D} to $u$ with $(L,N)$, choosing as a test-function space $\varphi \in C^{2,1}_{2,3+2s}(\R)$. Since, by \autoref{prop:zeros-final}, the only zeros of $f_1,f_2$ in $[0,B_0) \times i\R$ are given by $\beta \in \{ 0 , 2s-1\}$, and since $f_1,f_2$ have poles at $\beta = 2s$ by \autoref{cor:zeros-f2}, it follows that, for some $A,B \in \R$,
\begin{align*}
u(x) = A + B x^{2s-1} ~~ \forall x > 0.
\end{align*}
Since $u \in C^{\max\{2s-1,0\}+\eps}_{loc}(\R)$ by assumption, we deduce that $A=B=0$. Moreover, $u(x) = 0$ for all~$x < 0$,
since
\begin{equation*}
0 = \cN_{\R_+}^s u(x) = u(x) \left( c_s \int_0^{\infty} |x-y|^{-1-2s} \d y \right) = \frac{c_s}{2s} \frac{u(x)}{|x|^{2s}} .\qedhere
\end{equation*}
\end{proof}

\section{Boundary regularity for the nonlocal Neumann problem}
\label{sec6}

The goal of this section is to prove the optimal boundary regularity for the Neumann problem, namely \autoref{thm0} and \autoref{thm:main-intro}.  This section is structured as follows: In \autoref{subsec:correction}, we analyze the corrector terms that are needed to prove regularity results of order larger that one. In \autoref{subsec:reg-outside-sol-dom}, we prove regularity results that hold true outside the solution domain, and in \autoref{subsec:prep-expansion} we collect several lemmas that will be used in the proofs of boundary expansions. \autoref{subsec:unbounded-source-terms} and \autoref{subsec:bdry-Holder} provide auxiliary lemmas that are needed in order to treat unbounded source terms in \autoref{thm:main-intro}. Finally, \autoref{subsec:expansion-less-2s}, \autoref{subsec:reg-less-2s} and \autoref{subsec:expansion-bigger-2s}, \autoref{subsec:reg-bigger-2s}, respectively, provide expansions and regularity results of order less than $2s$ and bigger than $2s$, respectively, and allow us to prove our main results in \autoref{subsec:main-proofs}.

Given a connected open set $\Omega \subset \R^n$, we define the regional operator, corresponding to the nonlocal Neumann problem in $\Omega$, as 
\begin{align*}
L_{\Omega} u(x): = \text{p.v.} \int_{\Omega} (u(x) - u(y)) K_{\Omega}(x,y) \d y,
\end{align*}
where $K_{\Omega}$ is given by
\begin{align}
\label{eq:K-Omega-def}
\begin{split}
K_{\Omega}(x,y) &:= \frac{c_{n,s}}{|x-y|^{n+2s}} + k_{\Omega}(x,y), \qquad c_{n,s} := 4^s s \pi^{-\frac{n}{2}} \frac{\Gamma\left( \frac{n}{2} + s \right)}{\Gamma(1-s)},\\
k_{\Omega}(x,y) &:= c_{n,s} \int_{\Omega^c} \frac{|x-z|^{-n-2s} |y-z|^{-n-2s} \d z}{\int_{\Omega} |z-w|^{-n-2s} \d w}, ~~ x,y \in \Omega.
\end{split}
\end{align}

Moreover, as in \cite{AFR23} we introduce the corresponding bilinear form
\begin{align*}
B_{\Omega}(u,v) := \int_{\Omega}\int_{\Omega} (u(x) - u(y)) (v(x) - v(y)) K_{\Omega}(x,y)\d y \d x,
\end{align*}
and denote by $H_K(\Omega)$ the space of functions 
\begin{align*}
H_K(\Omega) := \left\{ w \in H^s(\Omega) ~~\big|~~ \Vert w \Vert_{H_K(\Omega)} := \Vert w \Vert_{L^2(\Omega)} + B(u,u) < \infty \right \}.
\end{align*}

We let $H_{K,loc}(\Omega)$ be the space of functions $u \in L^2_{loc}(\Omega)$ for which the following localized energy is finite for any ball $B \subset \Omega$:
\begin{align*}
\int_{\Omega \cap B} \int_{\Omega \cap B} |u(x) - u(y)|^2 K_{\Omega}(x,y) \d y \d x < \infty.
\end{align*}

Weak solutions are defined in the same way as in \cite{AFR23}:

\begin{definition}
\label{def:weak-sol}
Let $\Omega \subset \R^n$ be a Lipschitz domain and $B \subset \R^n$ be a ball.
We say that $u \in H_{K,\loc}$ is a weak supersolution to $L_{\Omega} u = f$ in $B \cap \Omega$ with Neumann conditions on $\partial \Omega \cap B$ for some $f \in (H_K(\Omega))'$, if, for any
nonnegative function $\eta \in H_K(B)$ with $\eta \equiv 0$ in $\Omega \setminus B$, \begin{align*}
B_{\Omega}(u,\eta) \ge \int_{B \cap \Omega} f \eta \d x.
\end{align*}
We say that $u$ is a weak subsolution in $\Omega \cap B$, if the previous estimate holds true with $\ge$ replaced by $\le$. We say that $u$ is a weak solution in $\Omega \cap B$ if it is a weak supersolution and a weak subsolution. We sat that $u$ is a weak (super/sub)-solution if the previous definitions hold true for all balls $B \subset \R^n$.
\end{definition}

Moreover, we define weak solutions to the nonlocal Neumann problem as follows.

\begin{definition}
\label{def:weak-sol-Neumann}
Let $\Omega \subset \R^n$ be a Lipschitz domain and $B \subset \R^n$ be a ball. We say that $u \in V^s(\Omega | \R^n)$ is a weak solution to 
\begin{align*}
\begin{cases}
(-\Delta)^s u &= f ~~ \text{ in } \Omega \cap B,\\
\cN^s_{\Omega} u &= 0 ~~ \text{ in } B \setminus \Omega
\end{cases}
\end{align*}
where $f \in H^s(\Omega)'$ if for all $\eta \in H^s(\R^n)$ with $\eta \equiv 0$ in $B^c$ it holds
\begin{align*}
\cE_{\Omega}(u,\eta) := \frac{c_{n,s}}{2} \iint_{(\Omega^c \times \Omega^c)^c} (u(x) - u(y)) (\eta(x) - \eta(y)) |x-y|^{-n-2s} \d y \d x = \int_{\Omega} f(x) \eta(x) \d x.
\end{align*}
\end{definition}

The space $V^s(\Omega | \R^n)$ is defined as the space of all functions $u \in L^2(\Omega)$ such that $\cE_{\Omega}(u,u) < \infty$.

Finally, we give a meaning to the nonlocal Neumann problem in possibly unbounded domains in case the solution grows too fast at infinity. This definition will be used in our proof of \autoref{thm:bdry-expansion-higher}.

\begin{definition}
\label{def:up-to-poly}
Let $\Omega \subset \R^n$ with $\partial \Omega \in C^{0,1}$. Let $u \in C^{2s+\eps}_{loc}(\Omega)$ and suppose that
\begin{align*}
|u(x)| \le C (1 + |x|)^{2s + 1 -\eps} ~~ \forall x \in \R^n
\end{align*}
for some $\eps \in (0,1)$.

We say that
\begin{align*}
\begin{cases}
(-\Delta)^s u &\overset{1}{=} 0 ~~ \text{ in } \Omega,\\
\cN_{\{ x_n  > 0 \}} u &\overset{1}{=} 0 ~~ \text{ in } \Omega^c,
\end{cases}
\end{align*}
if, for any $R \ge 2$,
\begin{align*}
\begin{cases}
(-\Delta)^s (u \1_{B_R})(x) &= E_R(x) + c_R ~~ \text{ in } B_{R/2} \cap \Omega,\\
\cN^s_{\Omega}(u\1_{B_R})(x) &= F_R(x) + d_R ~~ \text{ in } B_{R/2} \setminus \Omega
\end{cases}
\end{align*}
for some constants $c_R,d_R \in \R$ and functions $E_R : B_{R/2} \cap \Omega \to \R$ and $F_R : B_{R/2} \setminus \Omega \to \R$ such that $E_R \to 0$ and $F_R \to 0$ locally uniformly in $\Omega$ and in $\R^n \setminus \overline{\Omega}$, respectively.
\end{definition}

\subsection{First order correction}
\label{subsec:correction}

In order to prove that solutions possess  $C^{1,\alpha}$ regularity by a blow-up procedure, we will need to rule out linear functions of the form $b \cdot x$ with $b \in \R^n$ and $b \cdot e_n = 0$ in the Liouville theorem (see \autoref{thm:Neumann-Liouville-nd}).
This will be achieved by subtracting variants of $b \cdot x$ that are adapted to the geometry of the domain $\Omega$.

Let $s \in (0,1)$ and $\partial \Omega \in C^{1,\alpha}$ for some $\alpha \in (0 , 1]$ and with $0 \in \partial \Omega$ and $\nu(0) = e_n$. Let $\Phi : \R^n \to \R^n$ be a $C^{1,\alpha}$ diffeomorphism  with $\Phi(0) = 0$ and $D \Phi(0) = I$ and such that $\Phi(\{ x_n > 0 \} \cap B_1) = \Omega \cap B_1$. Moreover, we can choose $\Phi$ in such a way that $(x_n)_+ = d_{\Omega}(\Phi(x))$ and that it is $C^{\infty}$ in the interior of $\Omega \cap B_1$. Moreover, we can extend the function $\Phi$ in a bounded, smooth way in $\Omega \setminus B_1$, so that $P_b \in C^{1,\alpha}(\overline{\Omega}) \cap C^{\infty}_{loc}(\Omega)$.

In addition, we can extend the diffeomorphism $\Phi$ to $\{ x_n < 0 \}$ in such a way that $\Phi(\{ x_n \le 0 \} \cap B_{1/2}) = B_{1/2} \setminus \Omega$ and $d_{\Omega^c}(\Phi(y)) = (y_n)_-$ for all $y \in \{ y_n \le 0 \} \cap B_{1/2}$.

Let $b \in \R^n$ with $b_n = b \cdot e_n = 0$ and define $P_b(x) = b \cdot \Phi^{-1}(x)$ for $x \in \Omega$. Note that, by the
construction of $\Phi$, one has that $e_n = D \Phi(x)^T \nabla d_{\Omega}(\Phi(x))$ and therefore $D \Phi(x) e_n = \nabla d_{\Omega}(\Phi(x)) = \nu(\Phi(x))$. 

By taking inverses, we get that~$D \Phi^{-1}(\Phi(x)) \nu(\Phi(x)) = e_n$, and thus, since $b_n = 0$,
\begin{align}
\label{eq:Pb-normal-zero}
\partial_{\nu} P_b(x) = b \cdot (D\Phi^{-1}(x) \cdot \nu(x)) = 0 ~~ \text{ for } x \in \partial \Omega \cap B_1.
\end{align}
Moreover, by the extension of $\Phi$, we can assume that \eqref{eq:Pb-normal-zero} holds true on the whole boundary $\partial \Omega$, and we can assume that $P_b$ is bounded in $\R^n \setminus (B_1 \cup \Omega)$. Finally, note that $P_b(0) = 0$ by construction.

We have the following result:

\begin{lemma}
\label{lemma:LPb-estimate}
Let $\partial \Omega \in C^{1,\alpha}$ for some $\alpha \in (0,1]$ and $P_b$ be as above for some $b \in \R^n$ with $b \cdot e_n = 0$. Then, for any $x \in \Omega \cap  B_{1/4}$,
\begin{align}
\label{eq:LPb-estimate}
|L_{\Omega} P_b(x)| \le C |b| \left( 1 + |\log(d_{\Omega}(x))| + d_{\Omega}(x)^{1+\alpha - 2s} + |x|^{\alpha} ( 1+ d_{\Omega}(x)^{1-2s}  + |\log d_{\Omega}(x)|) \right),
\end{align}
where $C > 0$ only depends on $n,s,\alpha$, and the $C^{1,\alpha}$ norm of $\Omega$.
\end{lemma}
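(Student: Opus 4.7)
My plan is to estimate $L_\Omega P_b(x)$ by splitting the domain of integration into three annular regions around $x$, namely an inner ball $B_{d/2}(x) \subset \Omega$ (with $d := d_\Omega(x)$), a middle annulus $B_{1/2}(x) \setminus B_{d/2}(x)$, and the far region $\Omega \setminus B_{1/2}(x)$, and further decomposing the kernel as $K_\Omega = c_{n,s}|\cdot|^{-n-2s} + k_\Omega$. I will use four key properties of $P_b$: the $C^{1,\alpha}(\overline\Omega)$-controls $\|P_b\|_\infty + \|\nabla P_b\|_\infty \le C|b|$ together with $|P_b(y) - P_b(x) - \nabla P_b(x)\cdot(y-x)| \le C|b||y-x|^{1+\alpha}$; the interior bound $|D^2 P_b(z)| \le C|b|\,d_\Omega(z)^{\alpha-1}$, which follows from the interior $C^\infty$-smoothness of $\Phi$ by standard Schauder rescaling; the expansion $\nabla P_b(x) = b + O(|b||x|^\alpha)$, coming from $D\Phi(0) = I$ and $\Phi \in C^{1,\alpha}$; and the boundary condition $\partial_\nu P_b \equiv 0$ on $\partial\Omega$ together with $b \cdot e_n = 0$. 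For $k_\Omega$ itself, I will need the straightforward $n$-dimensional analogue of the bound \eqref{eq:k-upper}.

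In the inner region, a second-order Taylor expansion of $P_b$ at $x$ combined with the odd symmetry of $c_{n,s}|y-x|^{-n-2s}$ on $B_{d/2}(x) \subset \Omega$ kills the linear-in-$(y-x)$ term, leaving only a quadratic remainder bounded by $|b|d^{\alpha-1}|y-x|^2$, which integrates to $C|b|\,d^{1+\alpha-2s}$. For the $k_\Omega$-piece on $B_{d/2}(x)$, the bound $k_\Omega \le C d^{-n-2s}$ applies; the quadratic Taylor remainder again contributes $C|b|\,d^{1+\alpha-2s}$, while the linear term reduces to estimating $\nabla P_b(x) \cdot \int_{B_{d/2}(x)} (y-x)\,k_\Omega(x,y)\,dy$. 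In the middle annulus, the global $C^{1,\alpha}$-expansion yields a remainder that integrates to $C|b|(1 + |\log d| + d^{1+\alpha-2s})$ depending on the sign of $1+\alpha-2s$ and on log-corrections coming from the regime $|y-x| \ge \min\{d_\Omega(x), d_\Omega(y)\}$ of the kernel bound; the linear term again requires the tangential-cancellation argument below. Finally, in the far region, $\|P_b\|_\infty \le C|b|$ together with the integrability $\int_{|y-x|>1/2}K_\Omega(x,y)\,dy \le C$ yield a trivial $C|b|$ contribution.

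The main obstacle, and the source of the $|x|^\alpha$-factor in the target estimate, is controlling the vectorial linear-term integrals $\int (y-x)\,K_\Omega(x,y)\,dy$ that survive in the inner $k_\Omega$-piece and in the middle piece. The strategy is to show that, up to perturbations of size $|x|^\alpha$ coming from the $C^{1,\alpha}$-flattening by $\Phi$, these moments are aligned with the inward normal $\nu(0) = e_n$ at the boundary projection of $x$: for the principal kernel one writes $\int_\Omega = \int_{\R^n} - \int_{\Omega^c}$ and uses rotational symmetry in the half-space model on the annulus, while for $k_\Omega$ one exploits its representation as an integral over $\Omega^c$ to extract the same alignment. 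Since $b \cdot e_n = 0$ and $\nabla P_b(x) = b + O(|b||x|^\alpha)$, the inner product against such a moment is of size $|b||x|^\alpha$, and multiplies the remaining scalar integrals to produce the contributions of the form $|x|^\alpha(1 + d^{1-2s} + |\log d|)$. Summing the six resulting estimates yields the claimed bound \eqref{eq:LPb-estimate}.
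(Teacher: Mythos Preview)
Your approach is genuinely different from the paper's, though both exploit the same underlying cancellation $b \perp e_n = \nu(0)$.

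The paper does not work with the regional kernel $K_\Omega$ at all. Instead, it first \emph{extends} $P_b$ to $\Omega^c$ via the Neumann representation $P_b(y) = \bigl(\int_\Omega |y-z|^{-n-2s}dz\bigr)^{-1}\int_\Omega P_b(z)|y-z|^{-n-2s}dz$, so that $L_\Omega P_b(x) = (-\Delta)^s P_b(x)$ in $\Omega$; this eliminates $k_\Omega$ from the picture entirely. The integral of the principal kernel over $\Omega$ is then handled by the divergence theorem: writing $(x-y)|x-y|^{-n-2s}$ as a constant multiple of $\nabla_y|x-y|^{-n-2s+2}$ and using $\partial_\nu P_b = 0$ on $\partial\Omega$ converts the dangerous linear term into a boundary integral $\int_{\partial\Omega}(\nabla P_b(x)-\nabla P_b(y))\cdot\nu(y)|x-y|^{-n-2s+2}d\sigma(y)$, which is bounded trivially by the $C^{1,\alpha}$ modulus. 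The remaining integral over $\Omega^c$ is estimated by comparing $P_b(y)$ first to $P_b(\bar y)$ (boundary projection) and then to $P_b(x)$, flattening by $\Phi$ at each step; the $|x|^\alpha$ factors enter through the Jacobian and the kernel-distortion bounds $\bigl||\Phi(y)-\Phi(z)|^{-n-2s}-|y-z|^{-n-2s}\bigr| \le C|y-z|^{-n-2s}(|y-z|^\alpha+|y|^\alpha)$.

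Your annular decomposition with moment alignment is a conceptually sound alternative---in the half-space, $\int(y-x)K_{\{y_n>0\}}(x,y)dy$ is exactly in the $e_n$ direction by tangential symmetry, and flattening errors produce the $|x|^\alpha$ corrections---and it reproduces the same exponents. The trade-off is that your key step is only asserted: establishing the alignment for the $k_\Omega$-moment requires unpacking the triple integral in $k_\Omega$ and flattening both the $y$- and $z$-variables, which is at least as much work as the paper's $\Omega^c$ analysis, whereas the extension-plus-divergence-theorem trick dispatches the linear term over $\Omega$ in one line. Also note that your bound $|D^2 P_b(z)| \le C|b|\,d_\Omega(z)^{\alpha-1}$ does not follow from ``Schauder rescaling'' of a generic $C^{1,\alpha}$ diffeomorphism; it needs a specific construction of $\Phi$ (e.g.\ a Whitney-type smoothing of the graph function with scale $\sim d_\Omega$), which the paper also relies on implicitly when it asserts $P_b \in C^\infty_{\mathrm{loc}}(\Omega)$ with controlled second-order Taylor remainder in $B_{d_\Omega(x)/2}(x)$.
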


\begin{proof}

First of all, note that it suffices to prove the result in case $|b| = 1$. The general case can then be immediately deduced by scaling. Let us define, for any~$y \in \Omega^c$,
\begin{align}
\label{eq:Pb-complement-rep}
P_b(y) := \left( \int_{\Omega} |y-z|^{-n-2s} \d z \right)^{-1} \int_{\Omega} P_b(z) |y-z|^{-n-2s} \d z.
\end{align}
We observe that $\mathcal{N}^s_{\Omega} P_b = 0$ in $\Omega^c$ and hence
\begin{align*}
L_{\Omega} P_b (x) = (-\Delta)^s P_b(x) ~~ \forall x \in \Omega,
\end{align*}
so it remains to estimate the fractional Laplacian applied to $P_b$ in order to prove the claim \eqref{eq:LPb-estimate}.

Note that $P_b \in C^{1,\alpha}(\overline{\Omega})$. Therefore, for all $x \in \Omega$,
\begin{align*}
\int_{\Omega} (P_b(x) - P_b(y)) |x-y|^{-n-2s} \d y &= \int_{\Omega}  (P_b(x) - P_b(y) + (x-y) \nabla P_b(x)) |x-y|^{-n-2s} \d y \\
&\quad - \int_{\Omega } (x-y) \nabla P_b(x) |x-y|^{-n-2s} \d y \\
&=  \int_{\Omega}  (P_b(x) - P_b(y) + (x-y) \nabla P_b(x)) |x-y|^{-n-2s} \d y \\
&\quad - (n-2s+2)^{-1} \int_{\partial \Omega} (\nabla P_b(x) - \nabla P_b(y)) \nu(y) |x-y|^{-n-2s+2} \d y,
\end{align*}
where we used the divergence theorem together with the identity
\begin{align*}
\nabla_y |x-y|^{-n-2s+2} = (n-2s+2) (x-y)|x-y|^{-n-2s},
\end{align*}
as well as $\partial_{\nu} P_b (y) = \nabla P_b(y) \cdot \nu(y) = 0$ for all $y \in \partial \Omega$ by \eqref{eq:Pb-normal-zero}. 

Since $P_b \in C^{1,\alpha}(\overline{\Omega})$, we can estimate
\begin{align*}
|P_b(x) - P_b(y) + (x-y) \nabla P_b(x)| + |\nabla P_b(x) - \nabla P_b(y)||x-y| \le C |x-y|^{1+\alpha} ~~ \forall x,y \in \overline{\Omega}.
\end{align*}
Moreover, given $y \in B_{d_{\Omega}(x)/2}(x)$, since $P_b \in C^{\infty}_{loc}(\Omega)$ we see that
\begin{align*}
|P_b(x) - P_b(y) + (x-y) \nabla P_b(x)| \le C |x-y|^2.
\end{align*}
As a consequence of these observations,
\begin{align*}
& \left| \int_{\Omega} (P_b(x) - P_b(y)) |x-y|^{-n-2s} \d y \right| \\
&\quad \le C \int_{B_{\frac{d_{\Omega}(x)}{2}}(x)} \hspace{-0.2cm}|x-y|^{-n-2s+2} \d y + C \int_{\Omega \setminus B_{\frac{d_{\Omega}(x)}{2}}(x)} \hspace{-0.2cm}|x-y|^{-n-2s+1+\alpha} \d y + C \int_{\partial \Omega} \hspace{-0.2cm}|x-y|^{-(n-1)-2s+1+\alpha} \d y \\
&\quad \le C d_{\Omega}^{-2s+1+\alpha}(x) + C.
\end{align*}

It remains to estimate the integral over $\Omega^c$. To do so, let us define for $y \in B_{1/2} \setminus \Omega$ the projection of $y$ to $\partial \Omega$ by $\bar{y} \in \partial \Omega$ (if the projection is not unique, we replace $B_{1/2}$ by a smaller ball and proceed in the same way). It thereby follows that, for all~$x \in \Omega$,
\begin{align*}
& \int_{B_{1/2} \setminus \Omega} (P_b(y) - P_b(\bar{y})) |x-y|^{-n-2s} \d y \\
&=  \int_{B_{1/2} \setminus \Omega} \left( \int_{\Omega} (P_b(z) - P_b(\bar{y}) ) |y-z|^{-n-2s} \d z \right) \left(\int_{\Omega} |y-w|^{-n-2s} \d w \right)^{-1} |x-y|^{-n-2s} \d y  \\
&= \int_{\{ y_n \le 0 \} \cap B_{1/2}} \left( \int_{\{ z_n > 0\} \cap B_1} b \cdot \left[ z - \Phi^{-1}( \overline{\Phi(y)}) \right] |\Phi(y) - \Phi(z)|^{-n-2s} |\det D \Phi(z)| \d z \right) \\
&\qquad\qquad\qquad \left(\int_{\Omega} |\Phi(y)-w|^{-n-2s} \d w \right)^{-1} |x- \Phi(y)|^{-n-2s} |\det D \Phi(y)| \d y \\
&\quad + \int_{B_{1/2} \setminus \Omega} \left( \int_{\Omega \setminus B_1} (P_b(z) - P_b(\bar{y}) ) |y-z|^{-n-2s} \d z \right) \left(\int_{\Omega} |y-w|^{-n-2s} \d w \right)^{-1} |x-y|^{-n-2s} \d y .
\end{align*}

Since $\Phi$ is a $C^{1,\alpha}$ diffeomorphism, we observe that
\begin{align*}
|\det D \Phi(z)| = 1 + O(|z|^{\alpha}), \quad |\det D \Phi(y)| = 1 + O(|y|^{\alpha}) ~~ \forall z \in \{ z_n \ge 0 \} \cap B_1, ~~ \forall y \in \{ y_n \le 0 \} \cap B_{1/2}.
\end{align*}

Furthermore, for all $y \in \{ y_n \le 0\} \cap B_{1/2}$ and $z \in \{ z_n \ge 0 \} \cap B_1$,
\begin{align}
\label{eq:Phi-Lipschitz}
 C^{-1} |y-z| \le |\Phi(y) - \Phi(z)| \le C |y-z|.
\end{align}
Also, for all $y \in \{ y_n \le 0 \} \cap B_{1/2}$,
\begin{align*}
W(y) := \left(\int_{\Omega} |\Phi(y)-w|^{-n-2s} \d w \right)^{-1} &\le C \left(\int_{\{ w_n \ge 0 \} \cap B_1} |\Phi(y)-\Phi(w)|^{-n-2s} \d w \right)^{-1} \\
&\le C \left(\int_{\{ w_n \ge 0 \} \cap B_1} |y-w|^{-n-2s} \d w \right)^{-1} \\
&\le C (y_n)_-^{2s} = C d_{\Omega}^{2s}(\Phi(y)).
\end{align*}

We now observe that
\begin{align}
\label{eq:kernel-est-trafo}
\begin{split}
\left| |\Phi(y) - \Phi(z)|^{-n-2s} - |y-z|^{-n-2s} \right| &= \frac{| |y-z|^{n+2s} - |\Phi(y) - \Phi(z)|^{n+2s}|}{|y-z|^{n+2s} |\Phi(y) - \Phi(z)|^{n+2s}} \\
&\le \frac{C}{|y-z|^{n+2s} |\Phi(y) - \Phi(z)|^{n+2s}} \left| \int_{|y-z|}^{|\Phi(y) - \Phi(z)|} \tau^{n+2s-1} \d \tau \right| \\
&\le C \frac{|y-z|^{n+2s-1} + |\Phi(y) - \Phi(z)|^{n+2s-1}}{|y-z|^{n+2s} |\Phi(y) - \Phi(z)|^{n+2s}} ||y-z| - |\Phi(y) - \Phi(z)|| \\
&\le C |y-z|^{-n-2s} \left( |y-z|^{\alpha} + |y|^{\alpha} \right),
\end{split}
\end{align}
where we used \eqref{eq:Phi-Lipschitz} and
\begin{align}
\label{eq:b-difference-est}
\begin{split}
||y-z| - |\Phi(y) - \Phi(z)|| &\le |\Phi(y) - \Phi(z) - (z-y)D \Phi(y)| + |(y-z)D \Phi(0) - (y-z) D \Phi(y)| \\
&\le C |y-z|^{1+\alpha} + C|y-z| |y|^{\alpha}.
\end{split}
\end{align}


Moreover, for any $y \in B_{1/2} \setminus \{ y_n \le 0 \}$
we know that $\Phi(y) - \overline{\Phi(y)}$ is a multiple of $\nu(\overline{\Phi(y)})$ and therefore, by \eqref{eq:Pb-normal-zero}, we have $b \cdot [D \Phi^{-1}(\overline{\Phi(y)}) \cdot (\Phi(y) - \overline{\Phi(y)})] = 0$. This gives that
\begin{align}
\label{eq:b-linear-term-est}
\begin{split}
|b \cdot [ y - \Phi^{-1}(\overline{\Phi(y)})] | &= |b \cdot \Phi^{-1}(\Phi(y)) - b \cdot \Phi^{-1}(\overline{\Phi(y)})  - b \cdot [D \Phi^{-1}(\overline{\Phi(y)}) \cdot (\Phi(y) - \overline{\Phi(y)})]| \\
&\le C |\Phi(y) - \overline{\Phi(y)}|^{1+\alpha} \le C d_{\Omega}^{1+\alpha}(\Phi(y)).
\end{split}
\end{align}

As a result,
\begin{align*}
& \left| \int_{B_{1/2} \setminus \Omega} (P_b(y) - P_b(\bar{y})) |x-y|^{-n-2s} \d y \right| \\
&\le \Bigg|  \int_{\{ y_n \le 0 \} \cap B_{1/2}} \left( \int_{\{ z_n > 0\} \cap B_1} b \cdot \left[ z - y \right] |y-z|^{-n-2s} \d z \right)  W(y) |x- \Phi(y)|^{-n-2s} |\det D \Phi(y)| \d y \Bigg| \\
&+C \int_{\{ y_n \le 0 \} \cap B_{1/2}} \left( \int_{\{ z_n > 0\} \cap B_1} \left\{ |z-y|(|z-y|^{\alpha} + |y|^{\alpha}) + d_{\Omega}^{1+\alpha}(\Phi(y))  \right\} |y - z|^{-n-2s} \d z \right)\\
&\qquad\qquad\qquad\qquad\qquad\qquad\qquad\qquad\qquad\qquad\qquad d_{\Omega}^{2s}(\Phi(y)) |x- \Phi(y)|^{-n-2s} |\det D \Phi(y)| \d y  \\
& \quad + \int_{B_{1/2} \setminus \Omega} \left| \int_{\Omega \setminus B_1} (P_b(z) - P_b(\bar{y}) ) |y-z|^{-n-2s} \d z \right| \left(\int_{\Omega} |y-w|^{-n-2s} \d w \right)^{-1} |x-y|^{-n-2s} \d y \\
&=: I_1 + I_2 + I_3.
\end{align*}

For $I_1$ let us observe that a cancellation is happening within the integral in $z$, making it bounded from above, uniformly in $y$. Hence, transforming back from the half-space to $\Omega$,
\begin{align*}
I_1 &\le C \int_{\{ y_n \le 0 \} \cap B_{1/2}} W(y) |x- \Phi(y)|^{-n-2s} |\det D \Phi(y)| \d y \\
&\le C \int_{\{ y_n \le 0 \} \cap B_{1/2}} d_{\Omega^c}^{2s}(\Phi(y)) |x - \Phi(y)|^{-n-2s} |\det D \Phi(y)| \d y  \\
&=C \int_{\Omega^c \cap B_{1/2}} d_{\Omega^c}^{2s}(y) |x-y|^{-n-2s} \d y \\
&\le C (1 + |\log d_{\Omega}(x)|).
\end{align*}

For $I_2$, we transform back from the half-space to $\Omega$ and use \eqref{eq:Phi-Lipschitz}, finding that
\begin{align*}
I_2 &\le C \int_{\{ y_n \le 0 \} \cap B_{1/2}} \left( \int_{\{ z_n > 0\} \cap B_1} |y - z|^{-n-2s + 1 + \alpha} \d z \right) d_{\Omega^c}^{2s}(\Phi(y)) |x - \Phi(y)|^{-n-2s} |\det D \Phi(y)| \d y \\
&\quad + C \int_{\{ y_n \le 0 \} \cap B_{1/2}} \left( \int_{\{ z_n > 0\} \cap B_1}  |y - z|^{-n-2s+1} \d z \right)  |y|^{\alpha} d_{\Omega^c}^{2s}(\Phi(y)) |x - \Phi(y)|^{-n-2s} |\det D \Phi(y)| \d y \\
&\quad + C \int_{\{ y_n \le 0 \} \cap B_{1/2}} \left( \int_{\{ z_n > 0\} \cap B_1}  |y - z|^{-n-2s} \d z \right) d_{\Omega^c}^{1+\alpha + 2s}(\Phi(y)) |x - \Phi(y)|^{-n-2s} |\det D \Phi(y)| \d y \\
&\le C \int_{\{ y_n \le 0 \} \cap B_{1/2}} (1 + d_{\Omega^c}^{1+\alpha - 2s}(\Phi(y))) d_{\Omega^c}(\Phi(y))^{2s} |x - \Phi(y)|^{-n-2s} |\det D \Phi(y)| \d y \\
&\quad  + C \int_{\{ y_n \le 0 \} \cap B_{1/2}} (1 + d_{\Omega^c}^{1-2s}(\Phi(y))) |\Phi(y)|^{\alpha} d_{\Omega^c}^{2s}(\Phi(y)) |x - \Phi(y)|^{-n-2s} |\det D \Phi(y)| \d y \\
&\quad  + C \int_{\{ y_n \le 0 \} \cap B_{1/2}} d_{\Omega^c}^{1+\alpha}(\Phi(y)) |x - \Phi(y)|^{-n-2s} |\det D \Phi(y)| \d y \\
&=C \int_{\Omega^c \cap B_{1/2}} (1 + d_{\Omega^c}^{1+\alpha - 2s}(y)) d_{\Omega^c}^{2s}(y) |x - y|^{-n-2s} \d y \\
&\quad  + C \int_{\Omega^c \cap B_{1/2}} (1 + d_{\Omega^c}^{1-2s}(y)) |y|^{\alpha} d_{\Omega^c}^{2s}(y) |x - y|^{-n-2s} \d y \\
&\quad  + C \int_{\Omega^c \cap B_{1/2}} d_{\Omega^c}^{1+\alpha}(y) |x - y|^{-n-2s} \d y \\
&\le C (1 + |\log d_{\Omega}(x)| + d_{\Omega}^{1+\alpha-2s}(x) + |x|^{\alpha} d_{\Omega}^{1-2s}(x) + |x|^{\alpha} (1 + |\log d_{\Omega}(x)|)).
\end{align*}

For $I_3$, we recall that $|P_b(z)| + |P_b(\overline{y})| \le C$ and we conclude that
\begin{align*}
I_3 &\le \int_{B_{1/2} \setminus \Omega} \left| \int_{\Omega \setminus B_1} |y-z|^{-n-2s} \d z \right| \left(\int_{\Omega} |y-w|^{-n-2s} \d w \right)^{-1} |x-y|^{-n-2s} \d y \\
&\le C  \int_{B_{1/2} \setminus \Omega} d_{\Omega^c}^{2s}(y) |x-y|^{-n-2s} \d y  \\
&\le C (1 + |\log d_{\Omega}(x)|).
\end{align*}

Altogether, we have shown that
\begin{align*}
& \left| \int_{B_{1/2} \setminus \Omega} (P_b(y) - P_b(\bar{y})) |x-y|^{-n-2s} \d y \right| \le C(1 + |\log d_{\Omega}(x)|) +  C|x|^{\alpha}( d_{\Omega}^{1-2s}(x) + 1 + |\log d_{\Omega}(x)|).
\end{align*}

Next, we perform the same change of variable as before
and apply \eqref{eq:kernel-est-trafo} with $y := y$ and $z := \Phi^{-1}(x)$. In this way, keeping in mind~\eqref{eq:b-difference-est} and \eqref{eq:b-linear-term-est}, we arrive at
\begin{align*}
& \left| \int_{B_{1/2} \setminus \Omega} (P_b(\bar{y}) - P_b(x)) |x-y|^{-n-2s} \d y \right| \\
&= \int_{\{ y_n \le 0 \} \cap B_{1/2}} b \cdot \left[ \Phi^{-1}(\overline{\Phi(y)}) - \Phi^{-1}(x) \right] |x  - \Phi(y)|^{-n-2s} |\det D \Phi(y)| \d y \\
&\le \left| \int_{\{ y_n \le 0 \} \cap B_{1/2}} b \cdot \left[ y - \Phi^{-1}(x) \right] | y  - \Phi^{-1}(x)|^{-n-2s}  \d y \right| \\
&\qquad + C \int_{\{ y_n \le 0 \} \cap B_{1/2}} \left\{ |y  - \Phi^{-1}(x)|(|y  - \Phi^{-1}(x)|^{\alpha}  + |y|^{\alpha}) + (y_n)_-^{1+\alpha} \right\} |y - \Phi^{-1}(x)|^{-n-2s} \d y \\
&=: J_1 + J_2.
\end{align*}

Note that $J_1$ is bounded from above, uniformly in $x \in B_{1/4}$, since a cancellation takes place. Thus, it remains to estimate $J_2$. To this end, after an elementary computation we see that
\begin{align*}
J_2 &\le C (1 + ((\Phi^{-1}(x))_n)_-^{1+\alpha - 2s} + |\Phi^{-1}(x)|^{\alpha} ((\Phi^{-1}(x))_n)_-^{1-2s} + |\log (\Phi^{-1}(x))_n)_- | ) \\
&\le C (1 + d_{\Omega}^{1+\alpha - 2s}(x) + |x|^{\alpha} (d_{\Omega}^{1-2s}(x) + 1  + |\log d_{\Omega}(x)|) ).
\end{align*}

Hence, \begin{align*}
& \left| \int_{B_{1/2} \setminus \Omega} (P_b(x) - P_b(y)) |x-y|^{-n-2s} \d y \right| \\
&\qquad \le C(1 + |\log d_{\Omega}(x)|) + Cd_{\Omega}^{1-2s+\alpha}(x) + C|x|^{\alpha}( d_{\Omega}^{1-2s}(x) + 1 + |\log d_{\Omega}(x)|).
\end{align*}


Finally, given $y \in B_{1/2}^c \setminus \Omega$, we recall \eqref{eq:Pb-complement-rep} and the fact that $|P_b(z)| \le C \min\{|z| , 1\} \le C$ for any $z \in \Omega$. As a consequence,
\begin{align*}
|P_b(y)| \le C d_{\Omega^c}^{2s}(y) \int_{\Omega} |y-z|^{-n-2s} \d z \le C.
\end{align*}
Accordingly, since $x \in \Omega \cap B_{1/4}$,  
\begin{align*}
\left| \int_{B_{1/2}^c \setminus \Omega} (P_b(x) - P_b(y)) |x-y|^{-n-2s} \d y \right| &\le C \int_{B_{1/2}^c \setminus \Omega} |x-y|^{-n-2s} \d y  \le C.
\end{align*}
The proof of \eqref{eq:LPb-estimate} is thereby complete.
\end{proof}

In order to show that solutions to the nonlocal Neumann problem possess regularity of order $2s+\eps > 1$ for some $\eps > 0$ (for certain values of $s$), we need to prove that $L_{\Omega}P_b \in C^{\eps}$. To establish this, we need to improve the $L^{\infty}$ estimate in \autoref{lemma:LPb-estimate}.

To do so, we observe that if $1+\alpha > 2s$, then we can always construct $P_b$ in such a way that
\begin{align}
\label{eq:Pb-integral-vanish}
\int_{\Omega} P_b(y) |y|^{-n-2s} \d y = 0
\end{align}
by adding a bump function $\kappa \in C^{\infty}(\R^n)$ with $\supp(\kappa) \subset \Omega \setminus B_{1/2}$ to $P_b$ that is normalized such that
\begin{align*}
\int_{\R^n} \kappa(y) |y|^{-n-2s} \d y =  -\int_{\Omega} P_b(y)|y|^{-n-2s} \d y.
\end{align*}

\begin{remark}
Note that if $1+\alpha > 2s$, then
\begin{align*}
\left| \int_{\Omega} P_b(y)|y|^{-n-2s} \d y \right| < \infty.
\end{align*}
Indeed, recalling~\eqref{eq:kernel-est-trafo} and using that~$|\det D \Phi(y)| = 1 + O(|y|^{\alpha})$, we have that 
\begin{align*}
\int_{\Omega \cap B_1} P_b(y) |y|^{-n-2s} \d y = \int_{\{ y_n \ge 0 \} \cap B_{1}} (b \cdot y) |y|^{-n-2s} \d y + \int_{\{ y_n \ge 0 \} \cap B_{1}} O(|y|^{-n-2s+1+\alpha}) \d y.
\end{align*}
Then, the first integral is finite due to a cancellation and the second integral is finite since $1+\alpha > 2s$.
\end{remark}

\begin{remark}
\label{remark:Pb-subspace}
Note that the space $\cP := \{ P_b : b \in \{ b_n = 0 \} \}$ is a subspace of $L^2(B_r)$ for any $r \in (0,\frac{1}{4}]$. To see this, if~$b = e_i$ in \eqref{eq:Pb-integral-vanish}
we consider bump functions $\kappa_{e_i}$, while if~$b = \sum_i \lambda_i e_i \in \R^n$ 
we define
the corresponding bump function by $\kappa_b := \sum_i \lambda_i e_i$. 

In this way, the definitions
\begin{align*}
P_b(x) =
\begin{cases}
 b \cdot \Phi^{-1}(x),& ~~ x \in B_{1/4} \cap \Omega, \\
 \Big(\int_{\Omega} |x-z|^{-n-2s} \d z \Big)^{-1} \int_{\Omega} \big(b \cdot \Phi_k^{-1}(z) + \kappa_b(z) \big) |x-z|^{-n-2s} \d z,& ~~ x \in B_{1/4} \setminus \Omega
 \end{cases}
\end{align*}
are compatible with the properties \eqref{eq:Pb-integral-vanish} and \eqref{eq:Pb-complement-rep}, giving that $P_{\lambda b_1 + b_2} = \lambda P_{b_1} + P_{b_2}$ in $B_{1/4}$ for any $\lambda \in \R$ and $b_1,b_2 \in \R^n$. This makes $\cP$ indeed a linear subspace of $L^2(B_r)$.
\end{remark}

The following result contains the $C^{\eps}$ estimate for $L_{\Omega} P_b$. 

\begin{lemma}
\label{lemma:LPb-estimate-2}
Let $\partial \Omega \in C^{1,\alpha}$ for some $\alpha \in (0,1]$ and $P_b$ be as above for some $b \in \R^n$ with $b \cdot e_n = 0$ and satisfying \eqref{eq:Pb-integral-vanish}. 
Assume that $1+\alpha \ge 2s + \eps$ for some $\eps \in (0,1)$. 

Then, for any $x \in \Omega \cap  B_{1/4}$ and $r = d_{\Omega}(x)/2$,
\begin{align}
\label{eq:LPb-estimate-2}
[L_{\Omega} P_b]_{C^{\eps}(B_r(x))} \le C |b| \left(1 + r^{\frac{1+\alpha-2s}{1+\alpha} -\eps} + |x|^{\frac{1+\alpha-2s}{1+\alpha}} r^{-\eps} \right),
\end{align}
where $C > 0$ only depends on $n,s,\alpha,\eps$, and the $C^{1,\alpha}$ norm of $\Omega$.
\end{lemma}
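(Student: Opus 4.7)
The strategy is to derive \eqref{eq:LPb-estimate-2} from an $L^\infty$--gradient interpolation, using the scale-invariant Gagliardo--Nirenberg inequality
\[
[u]_{C^\eps(B_r(x))} \;\le\; C\,\|u\|_{L^\infty(B_r(x))}^{1-\eps}\,\|\nabla u\|_{L^\infty(B_r(x))}^{\eps}
\]
together with \autoref{lemma:LPb-estimate} (for the $L^\infty$ factor) and a new pointwise bound for $\nabla_x L_\Omega P_b$ on $B_r(x)$ (for the gradient factor). Since $r = d_\Omega(x)/2$, the ball $B_r(x)$ lies at positive distance from $\partial\Omega$, so $P_b \in C^\infty(B_r(x))$ and one may legitimately differentiate under the integral sign in the identity $L_\Omega P_b = (-\Delta)^s P_b$ (valid in $\Omega$ thanks to $\mathcal N^s_\Omega P_b \equiv 0$ in $\Omega^c$).

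To produce the gradient bound I would rerun the proof of \autoref{lemma:LPb-estimate} with one additional $x$-derivative. The splitting $\Omega = B_{d_\Omega(x)/2}(x) \cup \bigl(\Omega \setminus B_{d_\Omega(x)/2}(x)\bigr)$ and the separate treatment of $\Omega^c$ carry over verbatim: the inner integral is handled via the second-order Taylor expansion of $P_b$ at $x$, the intermediate integral via $P_b \in C^{1,\alpha}(\overline\Omega)$ combined with the divergence-theorem identity that exploits $\partial_\nu P_b \equiv 0$ on $\partial\Omega$, and the exterior integral via the change of variables $\Phi$ together with the ingredients \eqref{eq:kernel-est-trafo}, \eqref{eq:b-difference-est}, \eqref{eq:b-linear-term-est}. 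The one derivative that acts on $|x-y|^{-n-2s}$ turns it into $|x-y|^{-n-2s-1}$, and it is here that the cancellation \eqref{eq:Pb-integral-vanish} enters decisively: subtracting a constant in $y$ and distributing it between the bulk and tail pieces restores integrability, and the resulting gradient estimate scales as an $r^{-2s/(1+\alpha)}$ factor times the $L^\infty$ estimate of \autoref{lemma:LPb-estimate}. Plugging both bounds into the interpolation inequality and matching exponents then recovers exactly $r^{(1+\alpha-2s)/(1+\alpha)-\eps}$ and $|x|^{(1+\alpha-2s)/(1+\alpha)}r^{-\eps}$ as in \eqref{eq:LPb-estimate-2}; the hypothesis $1+\alpha \ge 2s + \eps$ is used precisely to guarantee convergence of the tail integrals after the extra derivative.

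The main obstacle is the exterior contribution to the gradient estimate. Differentiating in $x$ makes the already delicate bookkeeping of \autoref{lemma:LPb-estimate} more singular, and the several cancellations coming from $b \cdot e_n = 0$, from the $C^{1,\alpha}$-structure of $\Phi$, and from the new vanishing condition \eqref{eq:Pb-integral-vanish} must be combined so that none of the pieces $I_1$--$I_3$ or $J_1$--$J_2$ (in the notation of the proof of \autoref{lemma:LPb-estimate}) blows up faster than the required $r^{-2s/(1+\alpha)}$ in its leading contribution. It is precisely this fine balance that selects the exponent $(1+\alpha-2s)/(1+\alpha)$ in \eqref{eq:LPb-estimate-2} and explains why it cannot be replaced by the naive scaling exponent $1+\alpha-2s$ that one would read off from \autoref{lemma:LPb-estimate} alone.
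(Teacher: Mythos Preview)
Your interpolation approach does not produce the exponent $\delta=\frac{1+\alpha-2s}{1+\alpha}$, and this is the crux of the lemma. The $L^\infty$ bound of \autoref{lemma:LPb-estimate} contains terms like $|x|^{\alpha}d_\Omega(x)^{1-2s}$ and $d_\Omega(x)^{1+\alpha-2s}$; no interpolation against a gradient bound of the form ``$r^{-2s/(1+\alpha)}$ times the $L^\infty$ estimate'' can convert these into $r^{\delta-\eps}$ and $|x|^{\delta}r^{-\eps}$. In fact, differentiating $(-\Delta)^s P_b$ once in $x$ replaces $|x-y|^{-n-2s}$ by $|x-y|^{-n-2s-1}$, which for the exterior piece costs a factor $r^{-1}$, not $r^{-2s/(1+\alpha)}$; your claimed scaling is unjustified. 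Moreover, \eqref{eq:Pb-integral-vanish} is a single moment condition centered at the origin, so it cannot ``restore integrability'' of an integral centered at a generic $x$ after one extra derivative in the way you describe.

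The paper's argument is structurally different and is where $\delta$ actually originates. Step~1 proves the refined pointwise bound
\[
|P_b(y)-P_b(\bar y)|\le C\,d_{\Omega^c}^{2s}(y)\,|y|^{\delta},\qquad y\in B_{1/2}\setminus\Omega,
\]
by writing $P_b(y)-P_b(\bar y)$ via \eqref{eq:Pb-complement-rep}, splitting the resulting integral at a free radius $\rho$, and balancing the piece $d_{\Omega^c}(y)\rho^{-2s}$ against $\rho^{1+\alpha-2s}$; the optimal choice $\rho=d_{\Omega^c}(y)^{1/(1+\alpha)}$ is what produces $\delta$. Condition \eqref{eq:Pb-integral-vanish} enters only in the term $I_1$, where it allows one to rewrite the near-boundary piece as $(-\Delta)^s_\Omega P_b(\bar y)-(-\Delta)^s_\Omega P_b(0)$ and invoke H\"older regularity of the regional operator. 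Step~2 then decomposes $P_b=P_b^{(1)}+P_b^{(2)}$ with $P_b^{(1)}\in C^{1,\alpha}(B_1)$ a global extension of $P_b|_\Omega$ (so $(-\Delta)^s P_b^{(1)}$ is $C^\eps$ by standard Schauder), and estimates $[(-\Delta)^s P_b^{(2)}]_{C^\eps(B_r(x))}$ directly from the Step~1 bound on $|P_b^{(2)}|$ in $\Omega^c$. There is no gradient estimate and no interpolation; the $C^\eps$ seminorm is bounded by hand using a difference of kernels.
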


\begin{proof}
As in the proof of \autoref{lemma:LPb-estimate} we restrict ourselves to the case $|b| = 1$ and define $P_b$ in $\Omega^c$ by the expression in \eqref{eq:Pb-complement-rep}, so that $L_{\Omega} P_b(x) = (-\Delta)^s P_b(x)$ for any $x \in \Omega$. Moreover, for $y \in B_{1/2} \setminus \Omega$ we denote the projection of $y$ to $\partial \Omega$ by $\bar{y} \in \partial \Omega$ (if the projection is not unique, we replace $B_{1/2}$ by a smaller ball and proceed in the same way). 

The proof is split into several steps.

\textbf{Step 1}: First, we claim that
when~$\delta = \frac{1+\alpha-2s}{1+\alpha} \in (0,1]$
we have that, for any $y \in B_{1/2} \setminus \Omega$,
\begin{align}
\label{eq:outside-projection-estimate-improved}
|P_b(y) - P_b(\bar{y})| \le C d^{2s}_{\Omega^c}(y) |y|^{\delta}.
\end{align}

To see this, note that, by the construction of $P_b$ in $\Omega^c$, 
\begin{align}
\label{eq:Pb-projection-formula-1}
P_b(y) &= \left( \int_{\Omega} |y-z|^{-n-2s} \d z \right)^{-1} \int_{\Omega} (P_b(z) - P_b(\bar{y})) |y-z|^{-n-2s} \d z  + P_b(\bar{y}).
\end{align}
Hence, it remains to estimate the first summand in the previous identity. Using \eqref{eq:Pb-integral-vanish}, we obtain for some $\rho > 0$ to be chosen later, that
\begin{align}
\label{eq:Pb-projection-formula-2}
\begin{split}
\Big| \int_{\Omega} & (P_b(z) - P_b(\bar{y})) |y-z|^{-n-2s} \d z \Big| \\
&\le \Big|  \int_{\Omega} (P_b(z) - P_b(\bar{y})) |\bar{y}-z|^{-n-2s} \d z - \int_{\Omega} P_b(z) |z|^{-n-2s} \d z \Big| \\
&\quad + \Big|  \int_{\Omega} (P_b(z) - P_b(\bar{y})) \big( |y-z|^{-n-2s} - |\bar{y}-z|^{-n-2s} \big) \d z \Big| \\
&\le \Big|  \int_{\Omega} (P_b(z) - P_b(\bar{y})) |\bar{y}-z|^{-n-2s} \d z - \int_{\Omega} P_b(z) |z|^{-n-2s} \d z \Big| \\
&\quad + \Big| \int_{\Omega \setminus B_{\rho}(\bar{y})} (P_b(z) - P_b(\bar{y})) \big( |y-z|^{-n-2s} - |\bar{y}-z|^{-n-2s} \big) \d z \Big| \\
&\quad + \Big|  \int_{\Omega \cap  B_{\rho}(\bar{y})} (P_b(z) - P_b(\bar{y})) \big( |\bar{y}-z|^{-n-2s} \big) \d z \Big| \\
&\quad + \Big|  \int_{\Omega \cap  B_{\rho}(\bar{y})} (P_b(z) - P_b(\bar{y})) \big( |y-z|^{-n-2s} \big) \d z \Big|\\
& = I_1 + I_2 + I_3 + I_4.
\end{split}
\end{align}

To estimate $I_1$, we recall from \cite[Lemma 4.1]{FaRo22} that 
\begin{align}
\label{eq:rfl-Holder}
[(-\Delta)_{\Omega}^s P_b]_{C^{1+\alpha-2s}(\overline{\Omega})} \le C \Vert P_b \Vert_{C^{1+\alpha}(\overline{\Omega})},
\end{align}
where $(-\Delta)^s_{\Omega}$ denotes the regional fractional Laplacian.

Therefore, since $P_b(0) = 0$,
\begin{align*}
I_1 \le C |(-\Delta)^s_{\Omega} P_b(\bar{y}) - (-\Delta)^s_{\Omega} P_b(0)| \le C |\bar{y}|^{1+\alpha-2s} \le C(d_{\Omega^c}^{1+\alpha-2s}(y) + |y|^{1+\alpha-2s}). 
\end{align*}
To estimate $I_2$, we observe that $|\bar{y} - z| \le d_{\Omega^c}(y) + |y-z| \le 2 |y-z|$ for any $z \in \Omega$ and that $P_b \in C^{0,1}(\overline{\Omega})$, which leads to
\begin{align*}
I_2 &\le C |y-\bar{y}| \int_{\Omega \setminus B_{\rho}(\bar{y})} (P_b(z) - P_b(\bar{y})) \min\{|y-z|,|\bar{y}-z|\}^{-n-2s-1} \d z \\
&\le C d_{\Omega^c}(y) \int_{\Omega \setminus B_{\rho}(\bar{y})} |z - \bar{y}| |\bar{y}-z|^{-n-2s-1} \d z \le C d_{\Omega^c}(y) \rho^{-2s}.
\end{align*} 

To estimate $I_3$, we first recall that $\partial_{\nu} P_b(\bar{y}) = 0$. hence, $\nabla P_b(\bar{y}) = \tau(\bar{y})$, where $\tau(\bar{y})$ is tangential to $\partial \Omega$ at $\bar{y}$. 
Hence, using that $|P_b(z) - P_b(\bar{y}) - \nabla P_b(\bar{y})\cdot (\bar{y} -z)| \le C |\bar{y} - z|^{1+\alpha}$, we deduce
\begin{align*}
I_{3} &= \Big| \int_{\Omega \cap B_{\rho}(\bar{y})} (P_b(\bar{y}) - P_b(z)) |\bar{y} - z|^{-n-2s} \d z \Big| \\
&\le \Big| \int_{\Omega \cap B_{\rho}(\bar{y})} \tau(\bar{y}) \cdot (\bar{y} - z) |\bar{y} - z|^{-n-2s} \d z  \Big| + C\int_{\Omega \cap B_{\rho}(\bar{y})} |\bar{y} - z|^{-n-2s+1+\alpha} \d z \\
&\le C \Big| \int_{\Omega \cap B_{\rho}(\bar{y}) \cap \{ (\bar{y} - z) \cdot \tau(\bar{y}) \ge 0 \}} |\bar{y} - z|^{-n-2s+1} \d z  \Big| + C \Big| \int_{B_{\rho}(\bar{y}) \setminus ( \Omega \cap \{ (\bar{y} - z) \cdot \tau(\bar{y}) \ge 0 \}) } |\bar{y} - z|^{-n-2s+1} \d z  \Big| \\ 
&\quad + C \rho^{1+\alpha -2s},
\end{align*}
where we used that 
\begin{align*}
\int_{B_{\rho}(\bar{y}) \cap \{ (\bar{y} - z) \cdot \tau(\bar{y}) \ge 0 \}} \tau(\bar{y}) \cdot (\bar{y} - z) |\bar{y} - z|^{-n-2s} \d z = 0,
\end{align*}
due to a cancellation in the integral.

Since $\partial \Omega \in C^{1,\alpha}$, for small enough $t$ one has that
\begin{align*}
|B_t(\bar{y}) \cap \{ (\bar{y} - z) \cdot \tau(\bar{y}) \ge 0 \} \cap \Omega| \le C t^{n+\alpha}.
\end{align*}
Therefore, by the coarea formula,
\begin{align*}
\Big| \int_{\Omega \cap B_{\rho}(\bar{y}) \cap \{ (\bar{y} - z) \cdot \tau(\bar{y}) \ge 0 \}} |\bar{y} - z|^{-n-2s+1} \d z  \Big| &\le C \int_0^{\rho} t^{-n-2s+1} |B_t(\bar{y}) \cap \{ (\bar{y} - z) \cdot \tau(\bar{y}) \ge 0 \} \cap \Omega| \d t \\
&\le C \int_0^{\rho} t^{-2s+\alpha} \d t \le C \rho^{1+\alpha - 2s},
\end{align*}
where we used that $1+\alpha > 2s$ for the integral to converge. 

It follows from these observations that
\begin{align*}
I_{3} \le C \rho^{1+\alpha - 2s}.
\end{align*}

It remains to consider $I_4$, which goes by analogous arguments. In this situation we see that 
\begin{align*}
I_{4} &\le \Big| \int_{\Omega \cap B_{\rho}(\bar{y})} (P_b(\bar{y}) - P_b(z)) |y - z|^{-n-2s} \d z \Big| \\
&\le \Big| \int_{\Omega \cap B_{\rho}(\bar{y})} \tau(\bar{y}) \cdot (\bar{y} - z) |y - z|^{-n-2s} \d z  \Big| + C\int_{\Omega \cap B_{\rho}(\bar{y})} |y - z|^{-n-2s} |\bar{y} - z|^{1+\alpha} \d z =: I_{4,1} + I_{4,2}.
\end{align*}
For $I_{4,1}$, since $\bar{y}$ is the projection of $y$, one finds that
\begin{align*}
\int_{B_{\rho}(\bar{y})} \tau(\bar{y})\cdot (\bar{y} - z) |y - z|^{-n-2s} \d z = 0.
\end{align*}
Therefore, using again the coarea formula and the fact that $|\bar{y} - z| \le |y-z|$,
\begin{align*}
I_{4,1} &\le \Big| \int_{\Omega \cap B_{\rho}(\bar{y}) \cap \{ (\bar{y} - z) \cdot \tau(\bar{y}) \ge 0 \}} |\bar{y} - z|^{-n-2s+1} \d z  \Big| + C \Big| \int_{B_{\rho}(\bar{y}) \setminus ( \Omega \cap \{ (\bar{y} - z) \cdot \tau(\bar{y}) \ge 0 \}) } |\bar{y} - z|^{-n-2s+1} \d z  \Big| \\
&\le C \rho^{1+\alpha-2s}.
\end{align*}

For $I_{4,2}$, since $|y-z| \ge |\bar{y}-z|$, we see that 
\begin{align*}
I_{4,2} &\le C\int_{\Omega \cap B_{\rho}(\bar{y})} |\bar{y} - z|^{-n-2s+1+\alpha} \d z \le C \rho^{1+\alpha-2s}.
\end{align*}

By combining the estimates for $I_1,I_2,I_3,I_{4,1},I_{4,2}$ with \eqref{eq:Pb-projection-formula-1} and \eqref{eq:Pb-projection-formula-2}, we conclude that
\begin{align*}
|P_b(y) - P_b(\bar{y})| &\le C d^{2s}_{\Omega^c}(y) \Big| \int_{\Omega} (P_b(z) - P_b(\bar{y})) |y-z|^{-n-2s} \d z \Big| \\
&\le C d_{\Omega^c}^{1+\alpha-2s}(y) + C |y|^{1+\alpha - 2s} + C d_{\Omega^c}(y) \rho^{-2s} + C \rho^{1+\alpha - 2s}.
\end{align*}
Let us now choose $\rho = d_{\Omega^c}^{\frac{1}{1+\alpha}}(y)$ (this choice minimizes the right-hand side in the previous display). Then, we obtain \eqref{eq:outside-projection-estimate-improved}, as desired.

\textbf{Step 2}: Let us now write $P_b = P_b^{(1)} + P_b^{(2)}$, where we let $P_b^{(1)} \in C^{1,\alpha}(B_1) \cap L^{\infty}(\R^n)$ be such that $P_b^{(1)} = P_b$ in $\Omega$, i.e. $P_b^{(1)}$ is a $C^{1,\alpha}$ extension of $P_b$ from $B_1 \cap \Omega$ to $B_1$. Then, it is well-known (see \cite[Lemma 2.2.6]{FeRo24}) that
\begin{align*}
[(-\Delta)^s P_b^{(1)}]_{C^{\eps}(B_r(x))} \le C.
\end{align*} 
It remains to prove
\begin{align}
\label{eq:Delta-s-Pb-2-est}
[(-\Delta)^s P_b^{(2)}]_{C^{\eps}(B_r(x))} \le C \left(1 + r^{\delta -\eps} + |x|^{\delta} r^{-\eps} \right).
\end{align}
To check this, we pick $x_1,x_2 \in B_r(x)$ and observe that, since $P_b^{(2)} \equiv 0$ in $\Omega$, 
\begin{align*}
|(-\Delta)^s P_b^{(2)}(x_1) - (-\Delta)^s P_b^{(2)}(x_2)| &\le \Big| \int_{\R^n} (P_b^{(2)}(x_1) - P_b^{(2)}(x_2)) |x_1-y|^{-n-2s} \d y \Big| \\
&\quad + \Big| \int_{\R^n} (P_b^{(2)}(x_2) - P_b^{(2)}(y)) \left( |x_1 - y|^{-n-2s} - |x_2 -y |^{-n-2s} \right) \d y \Big| \\
&\le C |x_1 - x_2|^{\eps} \int_{\Omega^c} |P_b^{(2)}(y)| \min\{|x_1 - y|,|x_2 - y|\}^{-n-2s-\eps} \d y.
\end{align*}
Also, since $P_b^{(2)}$ is bounded and $x_1,x_2 \in B_{1/4}$, we have that
\begin{align*}
\int_{B_{1/2}^c \setminus \Omega} |P_b^{(2)}(y)| \min\{|x_1 - y|,|x_2 - y|\}^{-n-2s-\eps} \d y \le C.
\end{align*}
Now we recall~\eqref{eq:outside-projection-estimate-improved}
and, since $P_b(\bar{y}) = P_b^{(1)}(\bar{y})$ and $\partial_{\nu} P_b^{(1)}(\bar{y}) = 0$, 
we deduce that,
for all~$y \in B_{1/2} \setminus \Omega$, 
\begin{align*}
|P_b^{(2)}(y)| \le |P_b(y) - P_b(\bar{y})| + |P_b^{(1)}(y) - P_b(\bar{y})| \le C d_{\Omega^c}^{2s}(y) |y|^{\delta} + C d_{\Omega^c}^{1+\alpha}(y).
\end{align*}
Therefore, using that $d_{\Omega}(x_1) + d_{\Omega}(x_2) \le 4r$ and $|x_1| + |x_2| \le |x| + r$,
and recalling also~\cite[Lemma B.2.4]{FeRo24},
\begin{align*}
\int_{B_{1/2} \setminus \Omega} & |P_b^{(2)}(y)| \min\{|x_1 - y|,|x_2 - y|\}^{-n-2s-\eps} \d y \\
&\le C \int_{B_{1/2} \setminus \Omega} d^{1+\alpha}_{\Omega^c}(y) \min\{|x_1 - y|,|x_2 - y|\}^{-n-2s-\eps} \d y \\
&\quad + C \int_{B_{1/2} \setminus \Omega} d^{2s}_{\Omega^c}(y) \min\{|x_1 - y|,|x_2 - y|\}^{-n-2s-\eps+\delta} \d y\\
&\quad + C (|x| + r)^{\delta} \int_{B_{1/2} \setminus \Omega} d^{2s}_{\Omega^c}(y) \min\{|x_1 - y|,|x_2 - y|\}^{-n-2s-\eps} \d y \\
&\le C + C r^{1+\alpha-2s-\eps} + C r^{\delta -\eps} + C |x|^{\delta} r^{-\eps}.
\end{align*}
All in all, we have shown that
\begin{align*}
|(-\Delta)^s P_b^{(2)}(x_1) - (-\Delta)^s P_b^{(2)}(x_2)| \le C |x_1 - x_2|^{\eps} \left(1 + r^{1+\alpha-2s-\eps} + r^{\delta -\eps} + |x|^{\delta} r^{-\eps} \right),
\end{align*}
which implies \eqref{eq:Delta-s-Pb-2-est}, yielding the desired result.
\end{proof}

\subsection{Regularity outside the solution domain}
\label{subsec:reg-outside-sol-dom}

The following result shows that the regularity of solutions is transferred to the exterior of the solution domain through the nonlocal Neumann condition.

\begin{lemma}
\label{lemma:outside-bound}
Let $s \in (0,1)$ and $\Omega \subset \R^n$ be such that $\partial \Omega \in C^{1,\alpha}$, for some $\alpha > 0$, with $0 \in \partial \Omega$. 
Let $x_0 \in B_{1/2} \setminus \Omega$, $\rho = d_{\Omega^c}(x_0)/4$, and $u$ be such that 
\begin{align*}
\cN_{\Omega}^s u = 0 ~~ \text{ in } B_{\rho}(x_0).
\end{align*}
Let $x_0^{\ast} \in \partial \Omega$ with $|x_0 - x_0^{\ast}| = d_{\Omega}(x_0)$,  and assume that, for some $\gamma, \gamma' \in (0,1]$ and~$C_1,C_2\in(0,+\infty)$,
\begin{align}
\label{eq:outside-u-bound-ass}
\begin{split}
|u(y) - u(x_0^{\ast})| \le C_1 |y - x_0^{\ast}|^{\gamma'} ~~ \forall y \in B_1 \cap \Omega,\\
|u(y) - u(x_0^{\ast})| \le C_2 |y - x_0^{\ast}|^{\gamma} ~~ \forall y \in B_{\rho}(x_0).
\end{split}
\end{align}
Then, it holds
\begin{align*}
[u]_{C^{\gamma}(B_{\rho}(x_0))} \le C \big((C_1 + \Vert u \Vert_{L^{\infty}(\Omega \setminus B_1)}) \rho^{\gamma' -\gamma} + C_2 \big),
\end{align*}
where $C > 0$ depends only on $n,s,\gamma,\gamma'$.
\end{lemma}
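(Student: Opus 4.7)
The plan is to leverage the explicit representation that the Neumann condition forces on $u$ outside $\Omega$, and then to bound the resulting integral using the interior Hölder hypothesis together with a mean-value (Lipschitz) estimate for the kernel. After normalizing so that $u(x_0^*) = 0$ (which does not alter $[u]_{C^\gamma}$, the equation, or the hypotheses; the subtracted constant $|u(x_0^*)|$ is absorbed into $C_1 + \Vert u \Vert_{L^\infty(\Omega\setminus B_1)}$ via the first bound in~\eqref{eq:outside-u-bound-ass} applied at a point of $\partial B_1 \cap \Omega$), the condition $\cN_\Omega^s u = 0$ on $B_\rho(x_0) \subset \Omega^c$ reads
\begin{align*}
u(x) \, W(x) = \int_\Omega u(y) \, |x-y|^{-n-2s} \, dy, \qquad W(x) := \int_\Omega |x-y|^{-n-2s} \, dy,
\end{align*}
with $W(x) \asymp \rho^{-2s}$ uniformly in $B_\rho(x_0)$, since $\dist(x,\Omega) \asymp \rho$.

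For $x_1, x_2 \in B_\rho(x_0)$, I split into two cases. If $|x_1 - x_2| \ge \rho/2$, the conclusion follows directly from the second bound in~\eqref{eq:outside-u-bound-ass} together with $|x_i - x_0^*| \le 5\rho$, which gives $|u(x_1) - u(x_2)| \le 2 C_2 (5\rho)^\gamma \le C C_2 |x_1 - x_2|^\gamma$. Otherwise, combining the representations at $x_1$ and $x_2$ with the auxiliary identity $u(x_2)(W(x_2) - W(x_1)) = u(x_2) \int_\Omega (|x_2-y|^{-n-2s} - |x_1-y|^{-n-2s}) \, dy$ yields the key identity
\begin{align*}
u(x_1) - u(x_2) = \frac{1}{W(x_1)} \int_\Omega \bigl(u(y) - u(x_2)\bigr) \bigl(|x_1-y|^{-n-2s} - |x_2-y|^{-n-2s}\bigr) \, dy,
\end{align*}
reducing the seminorm estimate to controlling this single integral. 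The decisive point is to use the \emph{first-order} mean-value bound
\begin{align*}
\bigl||x_1-y|^{-n-2s} - |x_2-y|^{-n-2s}\bigr| \le C \, |x_1 - x_2| \, R(y)^{-n-2s-1}, \qquad R(y) := \min(|x_1-y|,|x_2-y|) \ge 3\rho,
\end{align*}
and then to trade $|x_1 - x_2| = |x_1 - x_2|^\gamma |x_1 - x_2|^{1-\gamma} \le |x_1 - x_2|^\gamma (2\rho)^{1-\gamma}$; this step is what will produce the sharp exponent $\rho^{\gamma'-\gamma}$.

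The integral then splits into three pieces. On $y \in \Omega \cap B_1$, the first bound in~\eqref{eq:outside-u-bound-ass} combined with $|y - x_0^*| \le 2|y - x_1| \le C R$ (using $|x_i - x_0^*| \le 5\rho$ and $|y - x_i| \ge 3\rho$) gives $|u(y)| \le C C_1 R^{\gamma'}$, and the polar integral $\int_{R \ge 3\rho} R^{\gamma' - n - 2s - 1} \, dy \lesssim \rho^{\gamma' - 2s - 1}$ converges since $\gamma' \le 1 < 1+2s$. On $y \in \Omega \setminus B_1$, $|u(y)|$ is bounded by $M := \|u\|_{L^\infty(\Omega \setminus B_1)} + |u(x_0^*)|$ and $R \ge 1/2$ gives a bounded integral. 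The $u(x_2)$-subtraction contributes $|u(x_2)| \int_\Omega R^{-n-2s-1} \, dy \lesssim C_2 \rho^\gamma \cdot \rho^{-2s-1}$. Dividing each piece by $W(x_1) \asymp \rho^{-2s}$ and applying the trade yields, respectively, contributions $\lesssim C_1 \rho^{\gamma'-\gamma} |x_1-x_2|^\gamma$, $M \rho^{2s+1-\gamma} |x_1-x_2|^\gamma \le M \rho^{\gamma'-\gamma} |x_1-x_2|^\gamma$ (using $\gamma' \le 1 \le 1+2s$), and $C_2 |x_1-x_2|^\gamma$. The main obstacle I anticipate is precisely the choice of kernel estimate: the naive Hölder-$\gamma$ bound $C|x_1-x_2|^\gamma R^{-n-2s-\gamma}$ suffices for the interior piece but produces only a far-field factor $\rho^{2s}$, which is strictly weaker than the claimed $\rho^{\gamma'-\gamma}$ as soon as $\gamma' - \gamma > 2s$; only the Lipschitz-in-$x$ estimate, combined with the trade $|x_1-x_2|^{1-\gamma} \le (2\rho)^{1-\gamma}$, gives the sharp power uniformly in $\gamma, \gamma' \in (0,1]$.
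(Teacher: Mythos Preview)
Your proof is correct and follows essentially the same approach as the paper: both use the representation $u(x)W(x)=\int_\Omega u(y)|x-y|^{-n-2s}\,dy$ together with the Lipschitz-in-$x$ kernel estimate and the trade $|x_1-x_2|\le |x_1-x_2|^\gamma(2\rho)^{1-\gamma}$ to produce the exponent $\rho^{\gamma'-\gamma}$. The only cosmetic difference is that you combine everything into the single identity $u(x_1)-u(x_2)=W(x_1)^{-1}\int_\Omega(u(y)-u(x_2))(|x_1-y|^{-n-2s}-|x_2-y|^{-n-2s})\,dy$, whereas the paper estimates $|W(x)v(x)-W(\bar x)v(\bar x)|$ and $|W(x)-W(\bar x)|$ separately before recombining; the resulting computations are term-by-term identical.
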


Note that variations of the previous result also hold for $\gamma > 1$ by subtracting the functions $P_b$ from \autoref{subsec:correction}. We do not investigate higher order extensions of \autoref{lemma:outside-bound}, since they are not needed for our main result.

\begin{proof}
We denote 
\begin{align}
\label{eq:outside-u-help-0}
W(x) := \int_{\Omega} |x-y|^{-n-2s} \d y \ge C d_{\Omega^c}^{-2s}(x) \ge C \rho^{-2s} , ~~ x \in B_1 \setminus \Omega.
\end{align}
Moreover, we set $v(x) = u(x) - u(x_0^{\ast})$, and observe that
\begin{align}
\label{eq:outside-u-help-1}
\begin{split}
\cN_{\Omega}^s v &= 0 ~~~\qquad\qquad\qquad\qquad\qquad\qquad \text{ in } B_{\rho}(x_0),\\
|v(y)| & \le (C_1 + C\Vert v \Vert_{L^{\infty}(\Omega \setminus B_1)}) |y - x_0^{\ast}|^{\gamma'} ~~ \forall y \in \Omega,\\
|v(y)| & \le C_2 |y - x_0^{\ast}|^{\gamma} ~~\qquad\qquad\qquad\qquad~ \forall y \in B_{\rho}(x_0).
\end{split}
\end{align}
Let us now consider~$x,\bar{x} \in B_{\rho}(x_0)$. Then, \eqref{eq:outside-u-help-1} implies
\begin{align*}
|W(x) v(x) & - W(\bar{x})v(\bar{x})| \le C \int_{\Omega} |v(y)| \big| |x - y|^{-n-2s} - |\bar{x} - y|^{-n-2s} \big| \d y \\
&\le C |x - \bar{x}| \int_{\Omega} |v(y)| \big(|x - y|^{-n-2s-1} + |\bar{x} - y|^{-n-2s-1} \big) \d y \\
&\le C(C_1 + \Vert v \Vert_{L^{\infty}(\Omega \setminus B_1)}) |x - \bar{x}| \int_{\Omega} |y- x_0^{\ast}|^{\gamma'} \big(|x - y|^{-n-2s-1} + |\bar{x} - y|^{-n-2s-1} \big) \d y \\
&=: I.
\end{align*}
Next, using that $|x - x_0^{\ast}|^{\gamma} \le |x - x_0| + 4\rho \le 5 \rho$ and $d_{\Omega^c}(x) \ge \rho$ (and similarly for $\bar{x}$), one sees that
\begin{align*}
I &\le C(C_1 + \Vert v \Vert_{L^{\infty}(\Omega \setminus B_1)}) |x - \bar{x}| \Big( |x - x_0^{\ast}|^{\gamma'} \int_{\Omega}  |x - y|^{-n-2s-1} \d y + \int_{\Omega} |x - y|^{-n-2s-1+\gamma'} \d y \\
&\qquad\qquad\qquad\qquad\qquad \quad + |\bar{x} - x_0^{\ast}|^{\gamma'} \int_{\Omega}  |\bar{x} - y|^{-n-2s-1} \d y + \int_{\Omega} |\bar{x} - y|^{-n-2s-1+\gamma'} \d y  \Big) \\
&\le C(C_1 + \Vert v \Vert_{L^{\infty}(\Omega \setminus B_1)}) |x - \bar{x}| \Big( |x- x_0^{\ast}|^{\gamma'} ( d_{\Omega^c}^{-2s-1}(x) +  d_{\Omega^c}^{-2s-1}(\bar{x})) + d_{\Omega^c}^{-2s-1+\gamma'}(x) + d_{\Omega^c}^{-2s-1+\gamma'}(\bar{x}) \Big) \\
&\le C(C_1 + \Vert v \Vert_{L^{\infty}(\Omega \setminus B_1)}) |x - \bar{x}|^{\gamma} \rho^{-2s+\gamma'-\gamma},
\end{align*}
where we also used that $\gamma' \le 1 < 1 + 2s$ by assumption and that $|x-\bar{x}| \le 2 \rho$.

Therefore, one concludes that
\begin{align}
\label{eq:outside-u-help-2}
|W(x) v(x) - W(\bar{x})v(\bar{x})| \le C |x - \bar{x}|^{\gamma} (C_1 + \Vert v \Vert_{L^{\infty}(\Omega \setminus B_1)}) \rho^{-2s+\gamma'-\gamma}.
\end{align}
Moreover, we observe that
\begin{align}
\label{eq:outside-u-help-3}
\begin{split}
|W(x) - W(\bar{x})| &\le C \int_{\Omega} \big| |x-y|^{-n-2s} - |\bar{x} - y|^{-n-2s} \big| \d y \\
&\le C |x-\bar{x}|^{\gamma} \int_{\Omega} \big( |x-y|^{-n-2s-\gamma} + |\bar{x}-y|^{-n-2s-\gamma} \big) \d y \\
&\le C |x - \bar{x}|^{\gamma} (d_{\Omega^c}^{-2s-\gamma}(x) + d_{\Omega^c}^{-2s-\gamma}(\bar{x})) \le C |x - \bar{x}|^{\gamma} \rho^{-2s-\gamma}.
\end{split}
\end{align}
Combining \eqref{eq:outside-u-help-2} and \eqref{eq:outside-u-help-3} with the second estimate in \eqref{eq:outside-u-help-1} and \eqref{eq:outside-u-help-0}, we finally obtain
\begin{align*}
|v(x) - v(\bar{x})| &\le |W(x)|^{-1} |W(x) v(x) - W(\bar{x})v(\bar{x})| + |v(\bar{x})| |W(x)|^{-1} |W(x) - W(\bar{x})| \\
&\le C |x-\bar{x}|^{\gamma} \big( \rho^{2s}  (C_1 + \Vert v \Vert_{L^{\infty}(\Omega \setminus B_1)}) \rho^{-2s+\gamma'-\gamma} + C_2 |\bar{x} - x_0^{\ast}|^{\gamma}  \rho^{2s} \rho^{-2s-\gamma}  \big) \\
&\le C |x- \bar{x}|^{\gamma} ((C_1 + \Vert u \Vert_{L^{\infty}(\Omega \setminus B_1)}) \rho^{\gamma' -\gamma} + C_2).
\end{align*}
In the last step, we also used that $\Vert v \Vert_{L^{\infty}(\Omega \setminus B_1)} \le 2 \Vert u \Vert_{L^{\infty}(\Omega \setminus B_1)}$. This implies the desired result, using that $v(x) - v(\bar{x}) = u(x) - u(\bar{x})$.
\end{proof}

\subsection{Expansions involving first order corrections}
\label{subsec:prep-expansion}

In order to prove regularity estimates by blowup, we need to prove two auxiliary results involving the first order corrections $P_b$ from \autoref{subsec:correction}. The first one is reminiscent of \cite[Lemma 4.3]{AbRo20}, and the second one is a refinement of several estimates in \cite[Proof of Proposition 4.1]{AbRo20}. See also \cite[Lemmata 2.14 and 2.15]{RoWe25}. 

Here we denote~$\Omega_{\rho} := \Omega \cap B_{\rho}$, Also, we let $[u]_{C^{\gamma}(B_{\rho})} := \Vert u \Vert_{L^{\infty}(\Omega_{\rho})}$ when $\gamma = 0$, i.e. in that case we only take into account information inside $\Omega$ in an $L^{\infty}$ sense. 

\begin{lemma}
\label{lemma:theta-increasing}
Let $\gamma \in [0,\min\{1,2s\}]$ and $\partial\Omega \in C^{1,\alpha}$ for some $\alpha \in (0,1]$ with $\alpha \ge \gamma$. Let $u \in C^{\gamma}(B_{1/4})$. Let $\beta > 0$ such that $\beta + \gamma > 1$. 

Assume that for any $\rho \in [0,\frac{1}{4}]$ there exists $b_{\rho} \in \{ b \cdot e_n = 0\}$ such that, for some $c_0 > 0$,
\begin{align*}
[ u - P_{b_\rho} ]_{C^{\gamma}(B_{\rho})} \le c_0 \rho^{\beta},
\end{align*}
where $P_{b_{\rho}}$ is as in \autoref{subsec:correction} satisfying \eqref{eq:Pb-complement-rep}, and also \eqref{eq:Pb-integral-vanish} in case $\gamma > 0$. 

Then there exists $b_0 \in \{ b \cdot e_n = 0 \}$ such that, for any $\rho \in (0,\frac{1}{4}]$,
\begin{align*}
[ u - P_{b_0} ]_{C^{\gamma}(B_{\rho})} \le C c_0 \rho^{\beta},
\end{align*}
where $C > 0$ depends only on $n$, $\beta$, $\gamma$, and the $C^{1,\alpha}$ norm of $\Omega$.
\end{lemma}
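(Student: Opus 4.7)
The plan is to show that the family $\{b_\rho\}_{\rho \in (0,1/4]}$ is Cauchy as $\rho \to 0^+$ with a quantitative rate, identify its limit $b_0$, and then estimate $[u - P_{b_0}]_{C^\gamma(B_\rho)}$ via a triangle inequality. The argument combines two scaling estimates for the corrector $P_b$ with the linearity of $b \mapsto P_b$ from Remark~\ref{remark:Pb-subspace} and the hypothesis $\beta+\gamma>1$.

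The first step is to establish the two-sided bound
\begin{align*}
c\,|b|\,\rho^{1-\gamma} \;\le\; [P_b]_{C^\gamma(B_\rho)} \;\le\; C\,|b|\,\rho^{1-\gamma}, \qquad \forall\rho\in(0,\tfrac14],\ \forall b\in\{b\cdot e_n=0\}.
\end{align*}
The upper bound uses that $P_b \in C^{1,\alpha}(\overline{\Omega} \cap B_{1/4})$ so that $P_b(x) = b\cdot x + O(|b||x|^{1+\alpha})$ inside $\Omega$, together with the exterior-projection estimate $|P_b(y) - P_b(\bar y)| \le C|b|\,d_{\Omega^c}^{2s}(y)\,|y|^\delta$ from \autoref{lemma:LPb-estimate-2} (whose proof already yields \eqref{eq:outside-projection-estimate-improved}) to ensure $C^\gamma$ matching across $\partial\Omega$; the hypothesis $\alpha \ge \gamma$ ensures the correction is genuinely lower order. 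The lower bound is obtained by picking two points $x,y \in B_{\rho/2}\cap\Omega$ aligned along the direction $b/|b|$ at distance $\asymp\rho$ from $\partial\Omega$ (which exist for small $\rho$ because $\nu(0)=e_n$ is transverse to $b$), so that $P_b(x) - P_b(y) = b\cdot(\Phi^{-1}(x) - \Phi^{-1}(y)) = b\cdot(x-y) + O(|b|\rho^{1+\alpha}) \asymp |b|\rho$; dividing by $|x-y|^\gamma \asymp \rho^\gamma$ yields the claim. The case $\gamma = 0$, where the ``seminorm'' is $\|\cdot\|_{L^\infty(\Omega_\rho)}$, follows directly from the interior expansion. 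Note that the bump function $\kappa_b$ used to enforce \eqref{eq:Pb-integral-vanish} is supported outside $B_{1/2}$ and hence invisible on any $B_\rho \subset B_{1/4}$.

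The second step uses linearity: by Remark~\ref{remark:Pb-subspace}, $P_{b_{\rho_1}} - P_{b_{\rho_2}} = P_{b_{\rho_1} - b_{\rho_2}}$. For $0 < \rho_2 < \rho_1 \le 1/4$, the triangle inequality on $B_{\rho_2}$ together with the hypothesis gives
\begin{align*}
[P_{b_{\rho_1}-b_{\rho_2}}]_{C^\gamma(B_{\rho_2})} \;\le\; [u-P_{b_{\rho_1}}]_{C^\gamma(B_{\rho_1})} + [u-P_{b_{\rho_2}}]_{C^\gamma(B_{\rho_2})} \;\le\; 2c_0\,\rho_1^\beta,
\end{align*}
and combining with the lower scaling bound yields $|b_{\rho_1}-b_{\rho_2}| \le C c_0\,\rho_2^{\gamma-1}\rho_1^\beta$. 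Specializing to the dyadic sequence $\rho_k := 2^{-k-2}$ produces $|b_{\rho_k} - b_{\rho_{k+1}}| \le Cc_0\,\rho_k^{\beta+\gamma-1}$, which is summable precisely because $\beta+\gamma > 1$. Thus $\{b_{\rho_k}\}$ is Cauchy in the tangential subspace and converges to some $b_0 \in \{b\cdot e_n = 0\}$, and a telescoping estimate gives $|b_{\rho_k}-b_0| \le Cc_0\,\rho_k^{\beta+\gamma-1}$.

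Finally, for an arbitrary $\rho \in (0,1/4]$ we pick $k$ with $\rho_k \in [\rho, 2\rho]$ and decompose
\begin{align*}
[u - P_{b_0}]_{C^\gamma(B_\rho)} \;\le\; [u-P_{b_{\rho_k}}]_{C^\gamma(B_{\rho_k})} + [P_{b_{\rho_k}-b_0}]_{C^\gamma(B_\rho)} \;\le\; c_0\rho_k^\beta + C|b_{\rho_k}-b_0|\rho^{1-\gamma} \;\le\; Cc_0\rho^\beta,
\end{align*}
using the upper scaling estimate and $\rho_k \asymp \rho$. The main technical obstacle is the clean verification of the lower scaling bound for $[P_b]_{C^\gamma(B_\rho)}$ uniformly in the vanishing $\rho$-regime: it requires locating two admissible tangentially-aligned interior points with distance $\asymp \rho$ from $\partial\Omega$, and controlling the $C^{1,\alpha}$ curvature error of $\Phi^{-1}$ so that the leading linear contribution $b\cdot(x-y)$ dominates; the hypothesis $\alpha \ge \gamma$ is precisely what ensures this correction term does not obstruct the required $\rho^{1-\gamma}$ scaling.
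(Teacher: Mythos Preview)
Your proposal is correct and follows essentially the same approach as the paper: establish the two-sided scaling bound $[P_b]_{C^\gamma(B_\rho)} \asymp |b|\,\rho^{1-\gamma}$, use the lower half together with the triangle inequality to get $|b_\rho - b_{2\rho}| \le C c_0 \rho^{\beta+\gamma-1}$, sum the dyadic series (using $\beta+\gamma>1$) to produce $b_0$, and close with the upper half. The paper's proof is organized identically---its display~\eqref{eq:b-difference} is your lower-bound step, and its \eqref{eq:Phi-Cgamma-est-claim}--\eqref{eq:Phi-Cgamma-est-2} (which combine \eqref{eq:outside-projection-estimate-improved} with \autoref{lemma:outside-bound}) are your upper-bound step; your explicit choice of two tangentially-aligned interior points is in fact a cleaner justification of the lower bound than the paper's somewhat telegraphic~\eqref{eq:b-difference}.
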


\begin{proof}
We set $P_{\rho} := P_{b_{\rho}}$. We have that, for any $\rho \in (0,\frac{1}{4}]$,
\begin{align}\label{JAK9}
[ P_\rho - P_{2\rho} ]_{C^{\gamma}(B_{\rho})} \le [ P_\rho -u ]_{C^{\gamma}(B_{\rho})} + [ u - P_{2\rho} ]_{C^{\gamma}(B_{2\rho})} \le c_0(1 + 2^{\beta}) \rho^{\beta}.
\end{align}
We also recall that $0 \in \partial \Omega$, $(\Phi^{-1}(x))_n = d_{\Omega}(x)$, that $P_b(x) = b \cdot \Phi^{-1}(x)$ for $x \in \Omega_{\rho} \subset B_{\rho}$, and that $[d_{\Omega}]_{C^{\gamma}(\Omega_{\rho})} \ge C \rho^{1-\gamma}$. Thus, we deduce from \eqref{eq:Phi-Lipschitz} and~\eqref{JAK9} that
\begin{align}
\label{eq:b-difference}
\begin{split}
|b_{\rho} - b_{2\rho}|  \le C |b_{\rho} - b_{2\rho}| \rho^{-1+\gamma} [ d_{\Omega} ]_{C^{\gamma}(\Omega_{\rho})} &=  C |b_{\rho} - b_{2\rho}| \rho^{-1+\gamma} [ \Phi^{-1}(\cdot)_n ]_{C^{\gamma}(B_{\rho})} \\
& = C \rho^{-1+\gamma} [ P_\rho - P_{2\rho} ]_{C^{\gamma}(\Omega_{\rho})} \le C c_0 r^{\beta + \gamma -1}.
\end{split}
\end{align}
Moreover, a similar argument gives that
\begin{align}
\label{eq:b-estimate-2}
|b_{1/4}| \le C [ P_{1/4} ]_{C^{\gamma}(\Omega_{1/4})} \le C [ u - P_{1/4} ]_{C^{\gamma}(B_{1/4})} + C [ u ]_{C^{\gamma}(B_{1/4})} < \infty.
\end{align}
Therefore, since $\beta + \gamma - 1 > 0$, the limit $b_0 := \lim_{\rho \searrow 0} b_{\rho}$ exists. 

Accordingly, by \eqref{eq:b-difference},
\begin{align*}
|b_0 - b_{\rho}| \le \sum_{j = 0}^{\infty} |b_{2^{-j} \rho} - b_{2^{-j-1} \rho}| \le C c_0 \sum_{j = 0}^{\infty} (2^{-j} \rho)^{\beta + \gamma -1} \le C c_0 \rho^{\beta + \gamma -1}.
\end{align*}
Next, we observe that, for any $x,y \in \Omega_{\rho}$, 
\begin{equation}\label{JA8}
[\Phi^{-1}]_{C^{\gamma}(\Omega_{\rho})} \le C \rho^{1 - \gamma}\end{equation}Indeed, when $\gamma = 0 $, this fact follows immediately from \eqref{eq:Phi-Lipschitz} and when $\gamma > 0$ from the following computation, using that $\Phi \in C^{0,1}$: 
\begin{align}
\label{eq:Phi-Cgamma-est}
|\Phi^{-1}(x) - \Phi^{-1}(y)| \le C|x-y| \le C |x-y|^{\gamma} \rho^{1- \gamma}.
\end{align}

{F}rom~\eqref{JA8} we deduce that,
for $b = (b_0 - b_{\rho})/|b_0 - b_{\rho}|$, 
\begin{align}
\label{eq:Phi-Cgamma-est-claim}
\begin{split}
[ P_{\rho} - P_0 ]_{C^{\gamma}(B_{\rho})} &\le |b_0 - b_{\rho}| [P_{b}]_{C^{\gamma}(B_{\rho})} \\
&\le C |b_0 - b_{\rho}|[\Phi^{-1}]_{C^{\gamma}(\Omega_{\rho})} + |b_0 - b_{\rho}| [P_{b}]_{C^{\gamma}(B_{\rho} \setminus \Omega)} \le  C |b_0 - b_{\rho}| \rho^{1-\gamma}.
\end{split}
\end{align}

Now we prove that $[P_b]_{C^{\gamma}(B_{\rho} \setminus \Omega)} \le C \rho^{1-\gamma}$. To see this, we recall that $\gamma \le \min\{1,2s\}$. Hence, by \eqref{eq:outside-projection-estimate-improved},
we see that, for any $y \in B_{\rho} \setminus \Omega$ and $\bar{y} \in \partial \Omega$ with $|y-\bar{y}| = d_{\Omega^c}(y)$,
\begin{align*}
|P_b(y) - P_b(\bar{y})| \le C |y-\bar{y}|^{2s} |y|^{1 - \frac{2s}{1+\alpha}} \le C |y - \bar{y}|^{\gamma} \rho^{2s - \gamma +1 - \frac{2s}{1+\alpha}}.
\end{align*}
Moreover, using that $\Phi^{-1} \in C^{0,1}(\overline{\Omega})$,
we see that, when $y \in \Omega_{\rho}$,
\begin{align*}
|P_b(y) - P_b(\bar{y})| \le C |y - \bar{y}| \le C |y - \bar{y}|.
\end{align*}
Hence, \autoref{lemma:outside-bound} (applied with $\gamma'= 1$) implies that
\begin{align}
\label{eq:Phi-Cgamma-est-2}
[P_b]_{C^{\gamma}(B_{\rho} \setminus \Omega)} \le C (\rho^{1-\gamma} + \rho^{2s-\gamma + 1 - \frac{2s}{1+\alpha}} ) \le C \rho^{1-\gamma},
\end{align}
where we used that $2s - \gamma + 1 - \frac{2s}{1+\alpha} > 1 - \gamma$.

These observations give that
\begin{align*}
[ u - P_{0} ]_{C^{\gamma}(B_{\rho})} \le [ u - P_{\rho} ]_{C^{\gamma}(B_{\rho})} + [ P_{\rho} - P_{0} ]_{C^{\gamma}(B_{\rho})} \le c_0 \rho^{\beta} + C \rho^{1-\gamma} |b_0 - b_{\rho}| \le C c_0 \rho^{\beta},
\end{align*} as desired.
\end{proof}

\begin{lemma}
\label{lemma:aux-theta}
Let $\gamma \in [0,\min\{1,2s\}]$ and $\partial\Omega \in C^{1,\alpha}$ for some $\alpha \in (0,1]$ with $\alpha \ge \gamma$. Let $u \in C^{\gamma}(B_{1/4})$. Let $\beta > 0$ such that $\beta + \gamma > 1$. 

Assume that for any $\rho \in (0,\frac{1}{4}]$ there exists $b_{\rho} \in \{ b \cdot e_n = 0\}$ such that, for some function $\theta : (0,\frac{1}{4}] \to (0,\infty)$ with $\theta(\rho) \nearrow \infty$, as $\rho \searrow 0$ we have that
\begin{align*}
[ u - P_{b_{\rho}} ]_{C^{\gamma}(B_{\rho})} \le \theta(\rho) \rho^{\beta},
\end{align*}
where $P_{b_{\rho}}$ is as in \autoref{subsec:correction} satisfying \eqref{eq:Pb-complement-rep}, and also \eqref{eq:Pb-integral-vanish} in case $\gamma > 0$. 

Then, for any $R \ge 1$ with $R r \le \frac{1}{4}$ and any $K \in \N$
we have that
\begin{align*}
\frac{[ u - P_{b_r} ]_{C^{\gamma}(B_{Rr})}}{r^{\beta} \theta(r)} &\le C R^{\beta}, \\
\frac{|b_r|}{\theta(r)} &\le C \theta(r)^{-1} ( \theta(8^{-1}) + [ u ]_{C^{\gamma}(B_{1/4})} + C(K))  + C 2^{-K(\beta + \gamma -1)},
\end{align*}
for $C > 0$ depending only on $n,\beta,\gamma$, and the $C^{1,\alpha}$ norm of $\Omega$, where $C(K) = \sum_{i = 1}^K \theta(2^{-i-3}) 2^{-i(\beta + \gamma - 1)}$.
\end{lemma}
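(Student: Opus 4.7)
The plan is to exploit the two-sided comparability
\[
[P_b]_{C^\gamma(B_\rho)} \le C\,|b|\,\rho^{1-\gamma}, \qquad [P_b]_{C^\gamma(\Omega_\rho)} \ge c\,|b|\,\rho^{1-\gamma} \qquad (b\cdot e_n=0),
\]
already established in the proof of \autoref{lemma:theta-increasing} via \eqref{eq:Phi-Cgamma-est}, \eqref{eq:Phi-Cgamma-est-claim} and \eqref{eq:Phi-Cgamma-est-2}. This lets one convert control of Hölder seminorms of differences $P_{b_\rho}-P_{b_{\rho'}}$ into control of $b_\rho-b_{\rho'}$, after which everything is a careful telescoping along dyadic scales, exactly as in \eqref{eq:b-difference}. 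Concretely, for any $\rho\in(0,\tfrac18]$ the triangle inequality together with the defining bound and the monotonicity of $\theta$ gives
\[
|b_\rho-b_{2\rho}| \le C\,\rho^{\gamma-1}\,[P_{b_\rho}-P_{b_{2\rho}}]_{C^\gamma(\Omega_\rho)} \le C\,\rho^{\gamma-1}\bigl(\theta(\rho)\rho^\beta+\theta(2\rho)(2\rho)^\beta\bigr) \le C\,\theta(\rho)\,\rho^{\beta+\gamma-1}.
\]

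For the first inequality, fix $r, R$ with $Rr\le\tfrac14$ and pick the smallest $J\in\mathbb N$ with $2^J\ge R$. Telescoping and using again that $\theta$ is non-increasing gives
\[
|b_r-b_{2^J r}| \le \sum_{j=0}^{J-1} C\,\theta(2^j r)(2^j r)^{\beta+\gamma-1} \le C\,\theta(r)\,r^{\beta+\gamma-1}R^{\beta+\gamma-1},
\]
where the sum is controlled by its largest term because $\beta+\gamma-1>0$. Combining with the upper bound $[P_b]_{C^\gamma(B_{2^J r})}\le C|b|(2^J r)^{1-\gamma}$ and with the hypothesis $[u-P_{b_{2^J r}}]_{C^\gamma(B_{2^J r})}\le \theta(2^J r)(2^J r)^\beta\le C\theta(r)r^\beta R^\beta$, the triangle inequality yields
\[
[u-P_{b_r}]_{C^\gamma(B_{Rr})} \le [u-P_{b_{2^J r}}]_{C^\gamma(B_{2^J r})} + |b_r-b_{2^J r}|\,(2^J r)^{1-\gamma} \le C\,\theta(r)\,r^\beta R^\beta,
\]
proving the first bound.

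For the second inequality, fix $K\in\mathbb N$ and (after trivially absorbing large $r$ into the first summand) take $r\le 2^{-K-4}$ and the unique $J\ge K$ with $2^{-J-3}\le r<2^{-J-2}$. Telescoping the chain of dyadic scales $1/8,1/16,\dots,2^{-J-3}$ gives
\[
|b_r| \le |b_{1/8}| + \sum_{i=1}^{J}|b_{2^{-i+1}/8}-b_{2^{-i}/8}| \le |b_{1/8}| + C\sum_{i=1}^{J}\theta(2^{-i-3})\,2^{-i(\beta+\gamma-1)}.
\]
The base term is bounded by $C(\theta(8^{-1})+[u]_{C^\gamma(B_{1/4})})$ exactly as in \eqref{eq:b-estimate-2}. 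Splitting the sum at $i=K$, the contribution of $i\le K$ equals $C\,C(K)$ by definition of $C(K)$, while for $i>K$ the monotonicity $\theta(2^{-i-3})\le\theta(r)$ for $i\le J$ and the geometric series with ratio $2^{-(\beta+\gamma-1)}<1$ give $\sum_{i>K}\theta(2^{-i-3})2^{-i(\beta+\gamma-1)}\le C\theta(r)2^{-K(\beta+\gamma-1)}$. Dividing by $\theta(r)$ gives the second bound.

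The only genuinely technical point is the lower comparability $[P_b]_{C^\gamma(\Omega_\rho)}\ge c|b|\rho^{1-\gamma}$, which was already handled inside \autoref{lemma:theta-increasing}; everything else is bookkeeping, and the hypothesis $\beta+\gamma>1$ enters precisely to make the dyadic geometric series summable at the correct rate.
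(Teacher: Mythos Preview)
Your proof is correct and follows essentially the same route as the paper: the dyadic comparison $|b_\rho-b_{2\rho}|\le C\theta(\rho)\rho^{\beta+\gamma-1}$, telescoping along scales, and splitting the resulting sum at $K$ are exactly the paper's argument. One minor technical point: your choice of ``smallest $J$ with $2^J\ge R$'' can produce $2^J r>\tfrac14$ (since $2^J r<2Rr\le\tfrac12$), so the hypothesis at scale $2^J r$ is not directly available; this is easily fixed by using $b_{Rr}$ in place of $b_{2^J r}$ (the basic increment bound holds at non-dyadic scales too), or by taking the largest $J$ with $2^J r\le Rr$, and similarly one extra non-dyadic step connects $b_r$ to the dyadic chain in the second part.
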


Note that the second statement proves that $|b_r| \theta(r)^{-1} \to 0$, as $r \to 0$, and that the modulus of convergence does not depend on $|b_r|$.

\begin{proof}
By the same arguments as in \eqref{eq:b-difference}, and using that $\theta$ is non-increasing, we have
\begin{align*}
|b_r - b_{2r}| \le C \theta(r) r^{\beta+\gamma-1}.
\end{align*}
Iterating this inequality, we get for any $N \in \N$, as long as $2^N r \le \frac{1}{4}$:
\begin{align}
\label{eq:b-difference-2}
\begin{split}
|b_r - b_{2^Nr}| \le \sum_{i = 0}^{N-1} |b_{2^i r} - b_{2^{i+1}r}| &\le C \sum_{i = 0}^{N-1} \theta(2^i r) (2^i r)^{\beta+\gamma-1} \\
&\le C \theta(r) r^{\beta+\gamma-1} \sum_{i = 0}^{N-1} \frac{\theta(2^i r)}{\theta(r)} 2^{i(\beta+\gamma-1)} \le C \theta(r) (2^N r)^{\beta+\gamma-1}.
\end{split}
\end{align}
We also remark that $[\Phi^{-1}]_{C^{\gamma}(\Omega_{\rho})} \le C \rho^{1-\gamma}$, thanks to \eqref{eq:Phi-Cgamma-est}.
Hence,
by \eqref{eq:Phi-Lipschitz} and \eqref{eq:Phi-Cgamma-est-2}, arguing in the same way as in the proof of \eqref{eq:Phi-Cgamma-est-claim},
one deduces from~\eqref{eq:b-difference-2}
that, for all~$R \ge 1$ with $Rr \le \frac{1}{4}$,
$$[ P_{Rr} - P_{r} ]_{C^{\gamma}(B_{Rr})} \le C \theta(r) (Rr)^{\beta}.$$ Thus,
\begin{align*}
\frac{[ u - P_{r} ]_{C^{\gamma}(B_{Rr})}}{r^{\beta} \theta(r)} \le \frac{[ u - P_{Rr} ]_{C^{\gamma}(B_{Rr})}}{r^{\beta} \theta(r)}  + \frac{[ P_{Rr} - P_{r} ]_{C^{\gamma}(B_{Rr})}}{r^{\beta} \theta(r)} \le \frac{\theta(Rr)(Rr)^{\beta}}{r^{\beta} \theta(r)} + C \frac{\theta(r)(Rr)^{\beta}}{r^{\beta} \theta(r)} \le c R^{\beta}.
\end{align*}
This proves the first claim. The second claim follows by choosing for any $r > 0$ a number $N \in \N$ such that $2^N r \in [\frac{1}{8},\frac{1}{4}]$ and deducing from \eqref{eq:b-difference-2} that, for any $K \in \N$,
\begin{align*}
\frac{|b_r - b_{2^Nr}|}{\theta(r)} &\le C\sum_{i=1}^{N} \frac{\theta(2^{N-i}r)}{\theta(r)} (2^{N-i}r)^{\beta+\gamma-1} \\
&\le C\sum_{i=1}^{K} \frac{\theta(2^{-i-3})}{\theta(r)} (2^{-i})^{\beta+\gamma-1} + C \sum_{i = K+1}^{\infty} (2^{-i})^{\beta-1} =: C(K) \theta(r)^{-1} + C 2^{-K(\beta+\gamma-1)}.
\end{align*}
Moreover, by the same argument as in \eqref{eq:b-estimate-2} we have, for any $\rho \in [\frac{1}{8},\frac{1}{4}]$,
\begin{align*}
|b_{\rho}| \le C \theta(8^{-1}) + C [ u ]_{C^{\gamma}(B_{1/4})},
\end{align*}
and therefore
\begin{align*}
\frac{|b_r|}{\theta(r)} \le \frac{|b_{2^Nr}|}{\theta(r)} + \frac{|b_r - b_{2^Nr}|}{\theta(r)} \le C \big( \theta(8^{-1}) + [ u ]_{C^{\gamma}(B_{1/4})} + C(K) \big) \theta(r)^{-1} + C 2^{-K(\beta+\gamma-1)},
\end{align*}as claimed.
\end{proof}

\subsection{Treating unbounded source terms}
\label{subsec:unbounded-source-terms}

Let us recall that in \cite{AFR23} boundary regularity was only established for weak solutions to the nonlocal Neumann problem $L_{\Omega} u = f$ when $f \in L^q(\Omega)$ for some $q > \frac{n}{2s}$. However, when applied to $L_{\Omega}$, the functions $P_b$ from \autoref{subsec:correction} in general
do  not produce source terms that are in $L^q(\Omega)$, but which rather explode at the boundary like $d_{\Omega}^{1-2s}$ (see \autoref{lemma:LPb-estimate} and \autoref{lemma:LPb-estimate-2}). Therefore, we need to adapt several auxiliary results from \cite{AFR23} to such general family of source terms.

In this subsection we present an auxiliary result which is reminiscent of \cite[Lemma 3.1]{KiWe24} and prove an $H_K$-bound for weak solutions to nonlocal Neumann problems with source terms exhibiting singular boundary behavior.

Given $\Omega \subset \R^n$, $R > 0$, and $x_0 \in \Omega$, let us write $\Omega_R(x_0) := \Omega \cap B_R(x_0)$.

\begin{lemma}
\label{lemma:fv-integral-estimate}
Let $\Omega \subset \R^n$ be a bounded Lipschitz domain. Let $s \in (0,1)$, $R > 0$, $x_0 \in \Omega$, and $d_{\Omega}^{s-\eps} f \in L^{\infty}(\Omega)$ for some $\eps \in (0,s]$. 

Then, for any $v \in H^s(\Omega)$ with $v = 0$ in $\Omega \setminus \Omega_R(x_0)$,
\begin{align*}
\int_{\Omega_R(x_0)} |f v| \d x \le C R^{\frac{n}{2} + \eps} \Vert d^{s-\eps}_{\Omega} f \Vert_{L^{\infty}(\Omega_R(x_0))} [v]_{H^s(\Omega)},
\end{align*}
where $C > 0$ only depends on $n,s,\eps$ and the Lipschitz norm of $\Omega$.
\end{lemma}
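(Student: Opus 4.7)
The plan is, after normalizing $\Vert d_\Omega^{s-\eps} f\Vert_{L^\infty(\Omega_R(x_0))} = 1$ so that $|f(x)| \le d_\Omega(x)^{-(s-\eps)}$, to reduce the claim to the weighted integral estimate
\begin{equation*}
\int_{\Omega_R(x_0)} d_\Omega^{-(s-\eps)} |v| \, \d x \le C R^{n/2 + \eps} [v]_{H^s(\Omega)}.
\end{equation*}
The key intermediate ingredient is a fractional Poincar\'e-type inequality exploiting the vanishing of $v$ outside $B_R(x_0)$: namely,
\begin{equation*}
\Vert v \Vert_{L^2(\Omega_R(x_0))} \le C R^s [v]_{H^s(\Omega)}.
\end{equation*}
I would derive this by restricting the Gagliardo double integral defining $[v]_{H^s(\Omega)}^2$ to pairs $(x,y)$ with $x \in \Omega_R(x_0)$ and $y \in \Omega \setminus B_R(x_0)$, where $v(y) = 0$. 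Thanks to the Lipschitz cone condition for $\Omega$, the shell $B_{2R}(x_0) \setminus B_R(x_0)$ meets $\Omega$ in a set of measure $\gtrsim R^n$, on which $|x-y| \le 3R$, so the inner kernel integral $\int_{\Omega \setminus B_R(x_0)}|x-y|^{-n-2s}\,\d y$ is bounded below by $c R^{-2s}$ uniformly in $x \in \Omega_R(x_0)$, yielding the claim.

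Assuming $2(s-\eps) < 1$ (which covers in particular the regime $\eps > s - \tfrac{1}{2}$), the estimate is then closed by a direct Cauchy--Schwarz:
\begin{equation*}
\int_{\Omega_R(x_0)} d_\Omega^{-(s-\eps)} |v| \,\d x \le \left(\int_{\Omega_R(x_0)} d_\Omega^{-2(s-\eps)}\,\d x\right)^{1/2} \Vert v\Vert_{L^2(\Omega_R(x_0))} \le C R^{n/2 - (s-\eps)} \cdot R^s [v]_{H^s(\Omega)},
\end{equation*}
where the first factor $C R^{n/2 - (s-\eps)}$ follows from a standard coarea computation in a tubular neighborhood of $\partial\Omega$, valid precisely when $2(s-\eps) < 1$. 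This yields the desired scaling $R^{n/2 + \eps} [v]_{H^s(\Omega)}$.

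The main obstacle is the complementary range $2(s-\eps) \ge 1$, where $d_\Omega^{-2(s-\eps)}$ is no longer integrable near $\partial\Omega$. Here I would instead use H\"older with conjugate exponents $p \in [(2^*_s)', 2)$ and $q = p' \in (2, 2^*_s]$, combining $\Vert d_\Omega^{-(s-\eps)}\Vert_{L^p(\Omega_R(x_0))} \le C R^{n/p - (s-\eps)}$ (which only requires the weaker integrability $p(s-\eps) < 1$) with the scaled Sobolev-type inequality
\begin{equation*}
\Vert v \Vert_{L^q(\Omega_R(x_0))} \le C R^{n/q - n/2 + s} [v]_{H^s(\Omega)},
\end{equation*}
obtained by applying a bounded $H^s$-extension $E : H^s(\Omega) \to H^s(\R^n)$ (available on Lipschitz $\Omega$), the continuous embedding $H^s(\R^n) \hookrightarrow L^{2^*_s}(\R^n)$, and H\"older on $\Omega_R(x_0)$ to interpolate from $L^{2^*_s}$ to $L^q$. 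Multiplying the two bounds again gives the scaling $R^{n/2 + \eps}$. The delicate part is to verify that the compatibility constraints $p \ge (2^*_s)' = 2n/(n+2s)$ and $p(s-\eps) < 1$ admit a common solution for all admissible $(n,s,\eps)$; if this pinches for $s$ close to $1$ and $n$ large, an additional dyadic decomposition of $\Omega_R(x_0)$ into shells $\{2^{-k-1} R < d_\Omega \le 2^{-k} R\}$ and summation of the resulting geometric series (in which the Poincar\'e inequality above controls $\Vert v\Vert_{L^2}$ on each shell) should absorb the borderline case.
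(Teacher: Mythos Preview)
Your Poincar\'e step lower-bounds $\int_{\Omega \setminus B_R(x_0)} |x-y|^{-n-2s}\,\d y$ by $cR^{-2s}$ uniformly in $x\in\Omega_R(x_0)$, yielding $\Vert v\Vert_{L^2} \le CR^s[v]_{H^s(\Omega)}$. This is correct but throws away the crucial \emph{pointwise} information in that integral, and the loss is what breaks the rest of the argument. Separating the weight $d_\Omega^{-(s-\eps)}$ from $v$ via H\"older requires $p(s-\eps)<1$ for some $p \ge (2^*_s)' = 2n/(n+2s)$, and these constraints are incompatible once $\eps < s\tfrac{n-1}{n} - \tfrac12$; for instance with $n=3$, $s=0.9$, $\eps=0.05$ you would need $p \ge 1.25$ and $p < 1/0.85 \approx 1.18$ simultaneously. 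Your dyadic fallback fails for the same reason: feeding the global bound $\Vert v\Vert_{L^2(A_k)} \le CR^s[v]_{H^s(\Omega)}$ into each shell $A_k=\{d_\Omega \sim 2^{-k}R\}$ (with $|A_k|\lesssim 2^{-k}R\cdot R^{n-1}$) produces the series $\sum_k 2^{-k(1/2 - s + \eps)}$, which diverges precisely when $s-\eps \ge \tfrac12$, i.e.\ in the range you are trying to cover.

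The paper's argument sharpens your kernel lower bound from $cR^{-2s}$ to $c\,d_{\Omega_R(x_0)}(x)^{-2s}$: for each $x \in \Omega_R(x_0)$ one locates a ball $B_R(y_x) \subset \Omega \setminus B_R(x_0)$ with $|x-y| \ge d_{\Omega_R(x_0)}(x)$ for all $y\in B_R(y_x)$, so the inner integral dominates $cR^n d_{\Omega_R(x_0)}(x)^{-n-2s} \ge c\,d_{\Omega_R(x_0)}(x)^{-2s}$. This gives the Hardy-type inequality $\Vert v/d_{\Omega_R(x_0)}^s\Vert_{L^2} \le C[v]_{H^s(\Omega)}$ directly, valid for every $s\in(0,1)$ with no case split. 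Since $d_{\Omega_R(x_0)} \le \min\{2R, d_\Omega\}$, one has $d_\Omega^{-(s-\eps)} \le (2R)^\eps d_{\Omega_R(x_0)}^{-s}$, and a single Cauchy--Schwarz then closes the estimate without ever needing $d_\Omega^{-p(s-\eps)}$ to be integrable.
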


\begin{proof}
First, let us assume that $d_{\Omega}(x_0) \le 2R$. Then, 
\begin{align}
\label{eq:v-Hs-estimate-help-1}
\begin{split}
\int_{\Omega_R(x_0)} |f v| \d x &\le C R^{\eps} \Vert d^{s-\eps}_{\Omega} f \Vert_{L^{\infty}(\Omega_R(x_0))}  \left\Vert \frac{v}{d_{\Omega_R(x_0)}^s} \right\Vert_{L^1(\Omega_R(x_0))} \\
& \le C R^{\frac{n}{2} + \eps} \Vert d^{s-\eps}_{\Omega} f \Vert_{L^{\infty}(\Omega_R(x_0))} \left\Vert \frac{v}{d_{\Omega_R(x_0)}^s} \right\Vert_{L^2(\Omega_R(x_0))},
\end{split}
\end{align}
where we used that $d_{\Omega_R(x_0)} \le \min\{ 2R , d_{\Omega}(x) \}$ in $\Omega_R(x_0)$ and H\"older's inequality together with the fact that $|\Omega_R(x_0)| \asymp R^n$.

Next, we observe that for any $x \in \Omega_R(x_0)$ we can find $y_x \in \Omega$ such that $B_R(y_x) \subset \Omega \setminus B_R(x_0)$ and for any $y \in B_R(y_x)$ it holds $|x-y| \ge d_{\Omega_R(x_0)}(x)$. Thus, we have
\begin{align*}
\int_{B_R(y_x)} |x-y|^{-n-2s} \d y \ge c R^n d_{\Omega_R(x_0)}^{-n-2s}(x) \ge c d_{\Omega_R(x_0)}^{-2s}(x).
\end{align*} Consequently, since $v = 0$ in $B_R(y_x)$,
\begin{align*}
\left\Vert \frac{v}{d_{\Omega_R(x_0)}^s} \right\Vert_{L^2(\Omega_R(x_0))}^2  &\le c \int_{\Omega_R(x_0)} |v(x)|^2 \left( \int_{B_R(y_x)} |x-y|^{-n-2s} \d y  \right) \d x \\
& \le c \int_{\Omega_R(x_0)} \int_{B_R(y_x)} |v(x) - v(y)|^2 |x-y|^{-n-2s} \d y \d x \le c [v]_{H^s(\Omega)}^2.
\end{align*}
A combination of the previous estimate and \eqref{eq:v-Hs-estimate-help-1} yields the claim.

In case $d_{\Omega}(x_0) > 2R$ we have that~$B_R(x_0) = \Omega_R(x_0)$. Accordingly, by testing the weak formulation for $v$ with $v$ itself, following similar arguments as in \eqref{eq:v-Hs-estimate-help-1}, and using the fractional Poincar\'e-Wirtinger inequality, we conclude that
\begin{align*}
\int_{\Omega_R(x_0)} |f v| \d x \le C R^{\frac{n}{2} + \eps - s} \Vert d_{\Omega}^{s-\eps} f \Vert_{L^{\infty}(\Omega_R(x_0))} \Vert v \Vert_{L^2(B_R(x_0))} \le C R^{\frac{n}{2}+\eps}\Vert d_{\Omega}^{s-\eps} f \Vert_{L^{\infty}(\Omega_R(x_0))}  [v]_{H^s(B_R(x_0))}.
\end{align*}
This proves the claim also in the second case, and the proof is complete.
\end{proof}

In order to get the boundary regularity of order $C^{\min\{2s,1\} - \eps}$, the only difference compared to the proof in \cite{AFR23} will be a version of \cite[Lemma 6.1]{AFR23} for functions with growth $|u(x)| \le M_0(1 + |x|^{2s-\eps})$. This follows by using a Caccioppoli inequality involving tail terms. 
Since we also want to treat source terms $f$ that explode at the boundary, we need to make some further adaptations to the proof, compared to \cite[Lemma 6.1]{AFR23}.

\begin{lemma}
\label{lemma:Hk-bound}
Let $\Omega \subset \R^n$ be a Lipschitz domain and $L_{\Omega}$ be as before. Let $d_{\Omega}^{s-\eps} f \in L^{\infty}(\Omega)$ for some $\eps \in (0,s]$. Assume that $u \in H_K(\Omega)$ is a weak solution to 
\begin{align*}
L_{\Omega} u = f ~~ \text{ in } \Omega.
\end{align*}
Assume also that, for some $\eps \in (0,2s)$ and $M_0 > 0$,
\begin{align*}
|u(x)| \le M_0 (1 + |x|^{2s-\eps}) ~~ \forall x \in \R^n.
\end{align*}
Then, for any $0 < r < R$ and any $x_0 \in \Omega$,
\begin{align*}
[u]_{H_K(B_r(x_0))} \le C \left( \Vert d_{\Omega}^{s-\eps} f \Vert_{L^{\infty}(\Omega_R(x_0))} + M_0 \right),
\end{align*}
where $C> 0$ depends only on $n,s,x_0,\eps,r,R$, and the Lipschitz norm of $\Omega$.
\end{lemma}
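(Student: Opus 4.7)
The plan is to establish a Caccioppoli-type estimate for the regional operator $L_\Omega$ that simultaneously accommodates the boundary singularity of $f$ and the polynomial growth of $u$ at infinity. I would fix $\rho := (R+r)/2$ and a smooth cutoff $\eta \in C_c^\infty(B_\rho(x_0))$ with $\eta\equiv 1$ on $B_r(x_0)$ and $\|\nabla\eta\|_{L^\infty}\le C/(R-r)$. Since constants lie in the kernel of $L_\Omega$, it suffices to work with $v := u - c$ for a suitable constant $c$ chosen so that $\|v\|_{L^\infty(B_R(x_0))} \le CM_0$ (for instance the average of $u$ over a fixed annulus in $B_R(x_0)\setminus B_\rho(x_0)$, controlled via interior H\"older bounds for weak solutions to $L_\Omega v = f$). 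Then $\eta^2 v$ has compact support in $B_\rho(x_0)$ and is an admissible test function in the weak formulation, while still $|v(x)|\le M_0(1+|x|^{2s-\eps})+|c|$ holds globally.

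Testing yields $B_\Omega(v,\eta^2 v) = \int_\Omega f\eta^2 v\,dx$. The standard algebraic inequality
\begin{align*}
(a-b)(\eta_1^2 a - \eta_2^2 b) \ge \tfrac12(\eta_1\wedge\eta_2)^2(a-b)^2 - C(a^2+b^2)(\eta_1-\eta_2)^2,
\end{align*}
applied pointwise with $(a,b,\eta_1,\eta_2)=(v(x),v(y),\eta(x),\eta(y))$ and integrated against $K_\Omega$, produces the Caccioppoli-type bound
\begin{align*}
[u]_{H_K(B_r(x_0))}^2 \le C\iint_{\Omega\times\Omega} v(x)^2(\eta(x)-\eta(y))^2 K_\Omega(x,y)\,dy\,dx + \Bigl|\int_\Omega f\eta^2 v\,dx\Bigr|.
\end{align*}
The source integral is controlled by \autoref{lemma:fv-integral-estimate} applied to $\eta^2 v\in H^s(\Omega)$ supported in $\Omega_\rho(x_0)$, yielding a bound $\le CR^{n/2+\eps}\|d_\Omega^{s-\eps}f\|_{L^\infty(\Omega_R(x_0))}[\eta^2 v]_{H^s(\Omega)}$. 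Combining the fractional product estimate $[\eta^2 v]_{H^s(\Omega)}\le C[u]_{H_K(B_R(x_0))} + C(R-r)^{-s}\|v\|_{L^\infty(B_R(x_0))}$ with Young's inequality and a standard hole-filling iteration in $r \le \rho \le R$ then permits absorption of a small multiple of $[u]_{H_K(B_R(x_0))}^2$ into the left-hand side.

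The most delicate step is the Caccioppoli error integral, which I would split into a near-diagonal and a tail contribution. In the near-diagonal regime the $n$-dimensional analog of the kernel bound \eqref{eq:k-upper} (namely $K_\Omega(x,y)\le C|x-y|^{-n-2s}$ up to logarithmic factors), together with $|\eta(x)-\eta(y)|\le C|x-y|/(R-r)$, gives a contribution $\le C(R-r)^{-2s}\|v\|_{L^\infty(B_R(x_0))}^2 \le CM_0^2$. In the tail regime the cutoff difference collapses to $\eta(x)^2\le 1$ (one variable inside $\supp\eta$, the other outside $B_R(x_0)$), and the global polynomial growth $|v(y)|\le M_0(1+|y|^{2s-\eps})+|c|$ together with the far-field decay of $K_\Omega$ render the tail integral finite, with bound $CM_0^2$ where $C$ depends on $n,s,\eps,r,R,x_0$, and the Lipschitz character of $\Omega$. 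Collecting the near-diagonal, tail, and absorbed source contributions yields the stated inequality after taking a square root.
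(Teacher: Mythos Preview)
Your overall strategy is the right one and closely mirrors the paper's (a Caccioppoli-type estimate, with the source term handled via \autoref{lemma:fv-integral-estimate}). However, the tail bound for the Caccioppoli error integral has a genuine gap.

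After applying your pointwise inequality and symmetrizing, the error term is
\[
\iint_{\Omega\times\Omega} \bigl(v(x)^2+v(y)^2\bigr)(\eta(x)-\eta(y))^2 K_\Omega(x,y)\,dy\,dx .
\]
You treat only the tail region ``$x\in\supp\eta$, $y\notin B_R(x_0)$'', where indeed the cutoff collapses to $\eta(x)^2$ and $|v(x)|\le CM_0$. But the symmetric region ``$x\notin B_R(x_0)$, $y\in\supp\eta$'' contributes $v(x)^2\eta(y)^2 K_\Omega(x,y)$ with $x$ the far variable, so $v(x)^2\lesssim M_0^2(1+|x|)^{2(2s-\eps)}$. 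Against the kernel decay $\int_{B_\rho}K_\Omega(x,y)\,dy\sim |x|^{-n-2s}$ this produces $\int_{|x|>R}|x|^{2s-2\eps-n}\,dx$, which diverges whenever $\eps\le s$; the hypothesis only gives $\eps\in(0,2s)$. Thus the integral you declare finite need not be, and no choice of the representation $v(x)^2$ versus $v(y)^2$ rescues both tail regions simultaneously.

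The fix, which is exactly what the paper does, is to split off the long-range interaction \emph{before} applying the algebraic Caccioppoli inequality. Writing $B_\Omega(u,u\varphi^2)=2\iint (u(x)-u(y))\,u(x)\varphi^2(x)K_\Omega$ and restricting first to $\Omega_{2R}\times\Omega_{2R}$, the remaining tail carries only a \emph{single} factor of $u$ at infinity (the other factor is localized by $\varphi^2$), so the growth $|u(y)|\le M_0(1+|y|)^{2s-\eps}$ is integrable against $|y|^{-n-2s}$. The Caccioppoli manipulation is then performed only on the local block $\Omega_{2R}\times\Omega_{2R}$, where $|u|\le CM_0$ and the error is harmless. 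With this ordering, the source term yields $C\|d_\Omega^{s-\eps}f\|_{L^\infty}[u\varphi]_{H^s}$ linearly, and a single Young inequality absorbs it into the $[u\varphi]_{H_K}^2$ on the left; no hole-filling is needed. Your subtraction of a constant $c$ is also unnecessary, since the growth assumption already gives $\|u\|_{L^\infty(B_R(x_0))}\le C(R,x_0)M_0$ directly.
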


\begin{proof}
Let $\varphi \in C_c^{\infty}(B_R(x_0))$ with $0 \le \varphi \le 1$ and $\varphi \equiv 1$ in $B_r(x_0)$. Then, by testing the weak formulation with $\eta = u \varphi^2$ and using the symmetry of $K_{\Omega}$, we obtain that
\begin{align*}
\frac{1}{2} \int_{\Omega} f u \varphi^2 \d x &= \int_{\Omega} \int_{\Omega} (u(x) - u(y)) u(x) \varphi^2(x) K_{\Omega}(x,y) \d y \d x \\
& = \frac{1}{2}\int_{\Omega_{2R}(x_0)}\int_{\Omega_{2R}(x_0)} (u(x) - u(y))^2(\varphi^2(x) + \varphi^2(y)) K_{\Omega}(x,y) \d y \d x \\
&\quad + \int_{\Omega_{2R}(x_0)}\int_{\Omega_{2R}(x_0)} (u(x) - u(y))u(x)(\varphi^2(x) - \varphi^2(y)) K_{\Omega}(x,y) \d y \d x \\
&\quad + \int_{\Omega \setminus B_{2R}(x_0)} \int_{\Omega_{R}(x_0)} (u(x) - u(y)) u(x) \varphi^2(x) K_{\Omega}(x,y) \d y \d x \\
&= I_1 + I_2 + I_3.
\end{align*}
Note that, by \cite[Lemma 3.2]{KaWe22},
\begin{align*}
I_1 &\ge \frac{1}{2} \int_{\Omega_{2R}(x_0)}\int_{\Omega_{2R}(x_0)} (\varphi u(x) - \varphi u(y))^2 K_{\Omega}(x,y) \d y \d x \\
&\quad - 2\int_{\Omega_{2R}(x_0)} \int_{\Omega_{2R}(x_0)} u^2(x) (\varphi(x) - \varphi(y))^2 K_{\Omega}(x,y) \d y \d x.
\end{align*}
Therefore, by H\"older's inequality,
\begin{align*}
I_1 + I_2 &\ge \frac{1}{2} I_1 - C\int_{\Omega_{2R}(x_0)}\int_{\Omega_{2R}(x_0)} u^2(x)(\varphi(x) - \varphi(y))^2 K_{\Omega}(x,y) \d y \d x \\
&\ge c [u \varphi]_{H_K(\Omega_{2R}(x_0))}^2  - C M_0^2 \int_{\Omega_{2R}(x_0)}\int_{\Omega_{2R}(x_0)} |x-y|^2 K_{\Omega}(x,y) \d y \\
&\ge c [u \varphi]_{H_K(\Omega_{2R}(x_0))}^2 - C M_0^2 \int_{\Omega_{2R}(x_0)} (1 + d_{\Omega}(x)^{1-2s})(1 + |\log d_{\Omega}(x)|) \d x \\
&\ge c [u \varphi]_{H_K(\Omega_{2R}(x_0))}^2  - C M_0^2 ,
\end{align*}
where $c > 0$ is independent of $r,R$, but $C > 0$ depends on $r,R$. In the second to last step, we used the same arguments as in the estimates of $|I|$ and $|J|$ from the proof of \cite[Lemma 6.1]{AFR23}.

For $I_3$, using \cite[equation~(6.5)]{AFR23} we see that
\begin{align*}
I_3 &\ge - \int_{\Omega \setminus B_{2R}(x_0)}  |u(x)| \varphi^2(x)  \left( \int_{\Omega_{R}(x_0)}  u(y)K_{\Omega}(x,y) \d y \right) \d x \\
&\ge - C M_0^2 \int_{\Omega \setminus B_{2R}(x_0)} \left( \int_{\Omega _{R}(x_0)} K_{\Omega}(x,y) \d y \right) \d x \\
&\ge - C M_0^2 \int_{\Omega \setminus B_{2R}(x_0)}  \frac{|\log d_{\Omega}(x)|}{(1 + |x|)^{n+2s}} \d x \ge - C M_0^2.
\end{align*}

Finally, by an application of \autoref{lemma:fv-integral-estimate} with $v := u \varphi$ and $\Omega:= \Omega \cap B_{2R}(x_0)$ we obtain that
\begin{align*}
\int_{\Omega} f u \varphi^2 \d x &\le \int_{\Omega} |f u \varphi| \d x \le C R^{\frac{n}{2} + \eps} \Vert d_{\Omega}^{s-\eps} f \Vert_{L^{\infty}(\Omega_{2R}(x_0))} [u \varphi]_{H^s(\Omega_{2R}(x_0))}.
\end{align*}

By combining the aforementioned estimates, we conclude the proof.
\end{proof}

\subsection{A boundary H\"older estimate for equations with unbounded source terms}
\label{subsec:bdry-Holder}

In this subsection we prove a H\"older regularity estimate that covers source terms with a singular boundary behavior.
This can be achieved by an adaptation of the arguments in \cite{AFR23}. 

\begin{lemma}
\label{lemma:Holder-exploding}
Let $\Omega \subset \R^n$ be a bounded Lipschitz domain. Let $s \in (0,1)$ and $u$ be a weak solution of
\begin{align*}
L_{\Omega} u = f ~~ \text{ in } \Omega \cap B_R(x_0) =: \Omega_R(x_0)
\end{align*}
with Neumann conditions on $\partial \Omega \cap B_R(x_0)$ for some $R > 0$ and $x_0 \in \Omega$, and $d_{\Omega}^{s-\eps} f \in L^{\infty}(\Omega)$ for some $\eps \in (0,s]$. 

Then, there exist $\alpha \in (0,1)$ and $C > 0$ such that
\begin{align*}
[u]_{C^{\alpha}(\Omega_{R/2}(x_0))} \le C R^{-\alpha} \left(\Vert u \Vert_{L^{\infty}(\Omega)} + R^{s+\eps} \Vert d^{s-\eps}_{\Omega} f \Vert_{L^{\infty}(\Omega_R(x_0))} \right),
\end{align*}
where $\alpha,C$ depend only on $n, s, q, \eps$, and the Lipschitz norm of $\Omega$ in $B_R(x_0)$.
\end{lemma}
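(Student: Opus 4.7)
The plan is to adapt the interior and boundary Hölder regularity machinery of \cite{AFR23} to the present setting, where the only essential change is that the source term $f$ is not in $L^q$ but in the weighted space $d_\Omega^{\eps-s} L^\infty$. By a standard scaling argument (using that $L_\Omega$ is homogeneous of order $2s$), I would first reduce to $R=1$ and, after normalization, assume $\Vert u\Vert_{L^\infty(\Omega)}+\Vert d^{s-\eps}_\Omega f\Vert_{L^\infty(\Omega_1(x_0))}\le 1$. The goal is then to prove $[u]_{C^\alpha(\Omega_{1/2}(x_0))}\le C$.

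The heart of the argument is a Caccioppoli-type energy estimate for level sets of $u$, combined with a De Giorgi iteration yielding decay of oscillation on dyadic balls centered at a boundary point of $\partial\Omega\cap B_1(x_0)$ (the interior case being classical, and the two cases glue together in the usual way when $d_\Omega(x_0)\lesssim R$). Testing the weak formulation with $\eta=(u-k)_+\varphi^2$, where $\varphi$ is a suitable cutoff, one produces, via the same computation as in \autoref{lemma:Hk-bound} (cf.\ \cite[Lemma~6.1]{AFR23} and \cite[Lemma~3.2]{KaWe22}), a lower bound of the form $c[(u-k)_+\varphi]_{H_K(\Omega_{2\rho}(x_0))}^2$ minus cutoff errors of order $\Vert u-k\Vert_{L^\infty}^2$. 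The only new ingredient is the right-hand side $\int f\,(u-k)_+\varphi^2$, which the standard approach controls via Hölder/Sobolev with $f\in L^q$ for $q>n/(2s)$.

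Here I would replace that step by a direct application of \autoref{lemma:fv-integral-estimate} with $v:=(u-k)_+\varphi$, obtaining
\[
\int_{\Omega_{2\rho}(x_0)}|f\,(u-k)_+\varphi^2|\,dx
\le C\rho^{\frac{n}{2}+\eps}\,\Vert d_\Omega^{s-\eps}f\Vert_{L^\infty(\Omega_{2\rho}(x_0))}\,[(u-k)_+\varphi]_{H^s(\Omega)}.
\]
Absorbing the $H^s$-seminorm into the Caccioppoli left-hand side via Young's inequality, this produces the standard De Giorgi energy inequality with an additive term of size $\rho^{n+2\eps}\Vert d_\Omega^{s-\eps}f\Vert_{L^\infty}^2$, which scales like a lower-order perturbation (since $\eps>0$) and is therefore compatible with the nonlinear iteration scheme. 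One then proceeds exactly as in \cite[Section~6]{AFR23}: from the energy inequality one obtains an $L^\infty$-bound from $L^2$ via a Sobolev embedding on level sets, then a De Giorgi-type lemma gives that the measure of bad level sets decays geometrically, and finally one derives an oscillation-decay estimate
\[
\osc_{\Omega_{\rho/2}(x_0)} u\le \theta\,\osc_{\Omega_{\rho}(x_0)} u+C\rho^{s+\eps}\Vert d_\Omega^{s-\eps}f\Vert_{L^\infty(\Omega_\rho(x_0))},
\]
with $\theta\in(0,1)$. Iterating this inequality yields $[u]_{C^\alpha(\Omega_{1/2}(x_0))}\le C$ for some $\alpha\in(0,1)$ depending only on $n,s,\eps$ and the Lipschitz norm of $\Omega$. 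Undoing the rescaling gives the stated dependence on $R$.

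The main obstacle I anticipate is verifying that all the ingredients in the De Giorgi scheme of \cite{AFR23} (in particular the boundary Caccioppoli inequality and the iteration lemma) are compatible with the new energy contribution coming from the singular source; once one observes that \autoref{lemma:fv-integral-estimate} delivers exactly the $H^s$-pairing that the scheme is built around, this reduces to a careful bookkeeping exercise. An alternative I would keep in mind as a backup is a contradiction/compactness argument: if the estimate fails, rescale to a blow-up sequence $u_k$ with $[u_k]_{C^\alpha}\to\infty$; the weighted bound on $f$ ensures that the rescaled source terms $\tilde f_k$ still satisfy $\Vert d^{s-\eps}_{\tilde\Omega_k}\tilde f_k\Vert_{L^\infty}\to 0$, so in the limit one obtains a bounded weak solution with zero Neumann data and zero source, whose Hölder seminorm is infinite, contradicting the standard boundary regularity from \cite{AFR23}.
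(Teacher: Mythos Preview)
Your proposal is sound and reaches the same conclusion, but the route differs from the paper's in an interesting way. The paper does not run a De Giorgi iteration with the singular source kept inside. Instead it proceeds in three steps: (1) a Moser-type local boundedness for subsolutions of $L_\Omega v \le f v$, where the new term $\int f v^2$ is controlled not via \autoref{lemma:fv-integral-estimate} but via the fractional \emph{Hardy inequality} (applicable since the exponent $(s-\eps)/2 < 1/2$ allows $H^s(\Omega)$ without zero trace), combined with an interpolation $[v]_{H^{s-\eps}}^2 \le \delta^{2\eps}[v]_{H^s}^2 + C\delta^{-2(s-\eps)}\|v\|_{L^2}^2$ to absorb; (2) an $L^\infty$ bound for the auxiliary Dirichlet problem $L_\Omega v = |f|$ in $\Omega_R(x_0)$, $v=0$ outside, where \autoref{lemma:fv-integral-estimate} is used once to get the $L^2$ bound; (3) the H\"older regularity is then read off from \cite[Section~4]{AFR23} verbatim, replacing the $L^q$ quantity $R^{2s-n/q}\|f\|_{L^q}$ by $R^{s+\eps}\|d_\Omega^{s-\eps}f\|_{L^\infty}$ throughout.

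Your approach trades the Hardy-inequality trick for repeated use of \autoref{lemma:fv-integral-estimate} inside the level-set iteration, which is more self-contained but requires reworking the full De Giorgi machinery of \cite{AFR23} rather than citing it as a black box. The paper's approach is shorter to write because it isolates the two places where the singular $f$ enters and otherwise defers to \cite{AFR23}; yours is conceptually cleaner in that a single estimate (\autoref{lemma:fv-integral-estimate}) does all the work.
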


\begin{proof}
We prove the result by mimicking the procedure in \cite{AFR23}. Since large parts of the proof remain completely the same we only explain the main differences. The proof is split into several steps.

\textbf{Step 1:} First, we prove that any weak solution $v$ to 
\begin{align*}
\begin{cases}
L_{\Omega} v &\le f v ~~ \text{ in } \Omega,\\
v &\ge 0 ~~ \text{ in } \Omega
\end{cases}
\end{align*}
with Neumann conditions on $\partial \Omega$, where $\Vert d_{\Omega}^{s-\eps} f \Vert_{L^{\infty}(\Omega)} \le 1$, satisfies the estimate
\begin{align}
\label{eq:loc-bd-1}
\Vert v \Vert_{L^{\infty}(\Omega)} \le C \Vert v \Vert_{L^2(\Omega)},
\end{align}
where $C > 0$ depends only on $n,s,\eps,\Omega$, as a counterpart of \cite[Lemma 3.1]{AFR23}. 

To prove it, let us consider $\beta \ge 2$, $k \in \N$, and $u_k = \min \{ u , k\}$, and define $v = u_k^{\frac{\beta}{2} - 1} u \in H_K(\Omega)$, as in \cite{AFR23}. Following \cite[equations~(3.2) and~(3.3)]{AFR23}, we have
\begin{align*}
B_{\Omega}(v,v) \le \beta \int_{\Omega} f(x) v^2(x) \d x.
\end{align*}
By Hardy's inequality (see \cite{ChSo03,Dyd04,GrHe24}), which is applicable to $v \in H^s(\Omega)$ (i.e. without compact support in $\Omega$) since $\frac{s-\eps}{2} < \frac{1}{2}$, we estimate
\begin{align*}
\int_{\Omega} f(x) v^2(x) \d x \le \Vert d^{s-\eps}_{\Omega} f \Vert_{L^{\infty}(\Omega)} \Vert v / d^{\frac{s-\eps}{2}} \Vert_{L^2(\Omega)}^2 \le C \Vert v \Vert_{H^{s-\eps}(\Omega)}^2.
\end{align*}
Note that, for any $\delta \in (0,1)$,
\begin{align*}
[v]_{H^{s-\eps}(\Omega)}^2 &\le C \iint_{(\Omega \times \Omega) \times \{|x-y| \le \delta \}} \frac{|v(x) - v(y)|^2}{|x-y|^{n+2s}} |x-y|^{2\eps} \d y \d x \\
&\quad + C \int_{\Omega} |v(x)|^2 \left( \int_{\Omega \setminus B_{\delta}(x)} |x-y|^{-n-2s + 2 \eps} \d y \right) \d x \\
&\le C \delta^{2\eps} [v]_{H^s(\Omega)}^2 + C \delta^{-2(s-\eps)} \Vert v \Vert_{L^2(\Omega)}^2,
\end{align*}
and therefore, combining the previous three inequalities on display, we deduce that
\begin{align*}
B_{\Omega}(v,v) \le C \beta \delta^{2\eps}[v]_{H^s(\Omega)}^2 + C \beta (1 + \delta^{-2(s-\eps)}) \Vert v \Vert_{L^2(\Omega)}^2.
\end{align*}
Since $[v]_{H^s(\Omega)}^2 \le B_{\Omega}(v,v)$, we obtain, by choosing $\delta := (2C \beta)^{-\frac{1}{2\eps}}$,
\begin{align*}
B_{\Omega}(v,v) \le C \beta^{\frac{s}{\eps}} \Vert v \Vert_{L^2(\Omega)}^2.
\end{align*}
By the Sobolev inequality, this yields exactly the same estimate as \cite[equation~(3.7)]{AFR23}, and from here the proof goes in the exact same way as in \cite{AFR23}.

\textbf{Step 2:} Second, we prove that any weak solution $v$ to
\begin{align*}
\begin{cases}
L_{\Omega} v &= |f| ~~ \text{ in } \Omega \cap B_{R}(x_0),\\
v &= 0 ~~ \text{ in } \Omega \setminus \Omega_{R}(x_0)
\end{cases}
\end{align*}
with Neumann conditions on $\partial \Omega \cap B_{R}(x_0)$ for some $x_0 \in \Omega$ and $R > 0$ satisfies
\begin{align}
\label{eq:v-Linfty-estimate}
\Vert v \Vert_{L^{\infty}(\Omega_{R}(x_0))} \le C R^{s + \eps} \Vert d_{\Omega}^{s-\eps} f \Vert_{L^{\infty}(\Omega_{R}(x_0))}, 
\end{align}
where $C > 0$ depends only on $n,s,\eps$, and the Lipschitz norm of $\partial \Omega \cap B_{2R}(x_0)$, which is the counterpart of the main estimate in \cite[Lemma 3.3]{AFR23}.

Furthermore, by the same arguments as in \cite{AFR23} (in particular, using \eqref{eq:loc-bd-1}), we obtain that
\begin{align*}
\Vert v \Vert_{L^{\infty}(\Omega_{R}(x_0))} \le C R^{-\frac{n}{2}} \Vert v \Vert_{L^2(\Omega_R(x_0))} + C R^{s+\eps} \Vert d_{\Omega}^{s-\eps} f \Vert_{L^{\infty}(\Omega_{R}(x_0))}.
\end{align*}
Hence, it remains to prove that 
\begin{align}
\label{eq:v-Hs-estimate}
\Vert v \Vert_{L^2(\Omega_R(x_0))} \le c R^{\frac{n}{2} + s + \eps} \Vert d_{\Omega}^{s-\eps} f \Vert_{L^{\infty}(\Omega_R(x_0))}.
\end{align}

Then, by testing the weak formulation for $v$ with $v$ itself, and applying \autoref{lemma:fv-integral-estimate}, we obtain
\begin{align*}
[v]_{H^s(\Omega)}^2 &\le C B(v,v) = C \int_{\Omega_R(x_0)} f v \le C R^{\frac{n}{2} + \eps} \Vert d^{s-\eps}_{\Omega} f \Vert_{L^{\infty}(\Omega_R(x_0))} [v]_{H^s(\Omega)}.
\end{align*}
From here, the estimate in \eqref{eq:v-Hs-estimate} follows after dividing by $[v]_{H^s(\Omega)}$ and applying the Poincar\'e-Wirtinger inequality to $v$.

\textbf{Step 3:} Note that in Step 2 we have already established the boundedness of weak solutions. Moreover, by following the arguments in \cite[Section 4]{AFR23} we immediately obtain the desired H\"older regularity estimate. \\
To be precise, there are only two changes need to be made:
firstly, in \cite[Proof of Theorem 4.7]{AFR23},
instead of \cite[Lemma 3.3]{AFR23} we here employ \eqref{eq:v-Linfty-estimate}, secondly, in \cite[Proof of Theorem 4.1]{AFR23}, we replace everywhere $R^{2s-\frac{n}{q}} \Vert f \Vert_{L^q(\Omega_R(x_0))}$ by $R^{s+\eps} \Vert d^{s-\eps}_{\Omega} f \Vert_{L^{\infty}(\Omega_R(x_0))}$. 
\end{proof}

\subsection{An expansion at boundary points of order less than $2s$}
\label{subsec:expansion-less-2s}

The goal of this subsection is to prove an expansion of solutions to the nonlocal Neumann problem of order less than $2s$. This is the main auxiliary result in our proof of the boundary regularity estimate of the same order. By \autoref{thm:Neumann-Liouville}, such an expansion can only hold true if the order does not exceed $B_0$ (see \eqref{eq:B0} and \autoref{prop:zeros-final}). In case $B_0 > 2s$, we can also prove expansions of higher order. Since its proof requires several highly non-trivial modifications, we will treat that case separately (see \autoref{thm:bdry-expansion-higher}).

\begin{theorem}
\label{thm:bdry-expansion}
Let $s \in (0,1)$ and $\eps \in (0,\min\{2s,B_0\}-s)$ be such that $\min\{2s , B_0\} - \eps \not= 1$, and let $\alpha \in [\max\{0 , \min\{2s,B_0\} - \eps - 1 \}, 1)$. Let $\Omega \subset \R^n$ be a $C^{1,\alpha}$ domain with $0 \in \partial \Omega$ and $\nu(0) = e_n$. 

Let $u \in H_K(\Omega)$ be a weak solution in the sense of \autoref{def:weak-sol} to
\begin{align*}
L_{\Omega} u = f ~~ \text{ in } \Omega \cap B_{1/2}
\end{align*}
with Neumann conditions on $\partial \Omega \cap B_{1/2}$ and $d_{\Omega}^{2s - \min\{2s,B_0\} + \eps} f \in L^{\infty}(\Omega \cap B_{1/2})$. 

Then, there exist $C > 0$, and $b \in \R^n$ with $b \cdot e_n = 0$ such that, for any $x \in \Omega \cap B_{1/4}$, 
\begin{align*}
|u(x) - u(0) & - \1_{\{ \min\{2s,B_0\} -\eps > 1 \}} b \cdot \Phi^{-1}(x)| \\
&\le C |x|^{\min\{2s,B_0\} - \eps} \left( \Vert u \Vert_{L^{\infty}(\R^n)} + \Vert d_{\Omega}^{2s-\min\{2s,B_0\}+\eps} f \Vert_{L^{\infty}(\Omega \cap B_{1/2})} \right),
\end{align*}
where $B_0$ is as in \eqref{eq:B0} and $\Phi$ is as in \autoref{subsec:correction}. 

Moreover, $C > 0$ depends only on $n,s,\eps,\alpha$, and the $C^{1,\alpha}$ norm of $\Omega$,
\end{theorem}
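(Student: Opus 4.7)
I would prove the theorem by a blow-up and contradiction argument which, after rescaling around the boundary point $0$, reduces the behavior of $u$ to a classification of global solutions of the half-space Neumann problem and ultimately to the 1D classification in \autoref{thm:Neumann-Liouville}. Put $\gamma := \min\{2s,B_0\} - \eps$; by homogeneity it suffices to prove the estimate under the normalization $\Vert u\Vert_{L^\infty(\R^n)} + \Vert d_\Omega^{2s-\gamma} f\Vert_{L^\infty(\Omega\cap B_{1/2})} \le 1$. For $r \in (0,\tfrac14]$ set
\begin{equation*}
\Theta(r) := \inf_{b \cdot e_n = 0} \bigl\Vert u - u(0) - \1_{\{\gamma>1\}} P_b\bigr\Vert_{L^\infty(\Omega\cap B_r)},
\end{equation*}
with $P_b$ the corrector from \autoref{subsec:correction} (satisfying \eqref{eq:Pb-integral-vanish} when $\gamma > 1$). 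The conclusion is equivalent to $\sup_r r^{-\gamma}\Theta(r) \le C$; if this fails, there exist admissible triples $(u_m, f_m, \Omega_m)$ with normalization $\le 1$ and scales $r_m \searrow 0$ such that the non-increasing envelope $\theta_m(r) := \sup_{\rho\ge r} \rho^{-\gamma}\Theta_m(\rho)$ satisfies $\theta_m(r_m) \to \infty$. Let $b_m$ be near-optimal at scale $r_m$.

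\textbf{Blow-up sequence.} Denote $\widetilde\Omega_m := r_m^{-1}\Omega_m$ and define
\begin{equation*}
v_m(x) := \frac{u_m(r_m x) - u_m(0) - \1_{\{\gamma > 1\}} P_{b_m}(r_m x)}{r_m^\gamma \theta_m(r_m)}.
\end{equation*}
Via \autoref{lemma:aux-theta} one obtains, for $R r_m \le \tfrac14$, the uniform growth $\Vert v_m\Vert_{L^\infty(B_R \cap \widetilde\Omega_m)} \le CR^\gamma$ and the decay $|b_m|/\theta_m(r_m) \to 0$. The rescaled equation is $L_{\widetilde\Omega_m} v_m = \widetilde f_m$ in $\widetilde\Omega_m \cap B_{(4r_m)^{-1}}$ with Neumann conditions, where, using \autoref{lemma:LPb-estimate} and the hypothesis $1+\alpha \ge \gamma$,
\begin{equation*}
d_{\widetilde\Omega_m}^{2s-\gamma}(x) |\widetilde f_m(x)| \le \frac{1}{\theta_m(r_m)} + C\,\frac{|b_m|}{\theta_m(r_m)}\bigl(1 + |\log d_{\widetilde\Omega_m}(x)| + d_{\widetilde\Omega_m}^{1+\alpha-2s}(x)\bigr) \longrightarrow 0
\end{equation*}
locally uniformly. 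Combining the boundary H\"older estimate \autoref{lemma:Holder-exploding} (applied to $v_m$ on dyadic annuli with $2s - \gamma = s - \eps$ in the notation of that lemma), the exterior propagation \autoref{lemma:outside-bound}, the energy bound \autoref{lemma:Hk-bound}, and the interior Schauder theory, the family $(v_m)$ is equicontinuous on compact subsets of $\R^n$.

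\textbf{Reduction to 1D and Liouville.} Passing to the limit --- using that $\widetilde\Omega_m \to \R^n_+ := \{x_n > 0\}$ in Hausdorff sense on compacts since $\partial\Omega \in C^{1,\alpha}$ with $\nu(0) = e_n$, and that the kernels $K_{\widetilde\Omega_m}$ converge pointwise with integrable majorant via \eqref{eq:k-upper} --- one gets, along a subsequence, $v_m \to v_\infty$ locally uniformly with $v_\infty(0) = 0$, $|v_\infty(x)| \le C(1+|x|)^\gamma$, and
\begin{equation*}
(-\Delta)^s v_\infty = 0 \ \text{ in } \R^n_+, \qquad \cN^s_{\R^n_+} v_\infty = 0 \ \text{ in } \R^n \setminus \R^n_+.
\end{equation*}
For any tangential direction $e \in \R^{n-1}\times\{0\}$ and $t \in \R$, the translate $v_\infty(\cdot + te)$ solves the same half-space Neumann problem with the same growth; a minimality argument inherited from the definition of $b_m$ (using the linear structure of the corrector space from \autoref{remark:Pb-subspace}) together with growth considerations (implemented through finite-difference quotients when $\gamma > 1$) shows that $v_\infty(x) = \phi(x_n)$ depends only on $x_n$. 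The profile $\phi$ is then a weak solution in the sense of \autoref{def:weak-sol-Neumann} of the 1D problem \eqref{eq:Neumann-1D} with $\phi(0) = 0$ and $|\phi(t)| \le C(1+|t|)^\gamma$, $\gamma < B_0$, so \autoref{thm:Neumann-Liouville} forces $\phi \equiv 0$, hence $v_\infty \equiv 0$. This contradicts the lower bound $\Vert v_m\Vert_{L^\infty(B_1 \cap \widetilde\Omega_m)} \ge c > 0$ coming from the near-optimality of $b_m$ at scale $r_m$.

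\textbf{Main obstacle.} The most delicate step is the propagation of the minimality of $b_m$ into the limit so as to eliminate the tangential linear component of $v_\infty$ when $\gamma > 1$. This relies on two ingredients: first, the fact that $b \mapsto P_b$ is linear in $B_r$ modulo the normalization \eqref{eq:Pb-integral-vanish}, provided by \autoref{remark:Pb-subspace}, which guarantees that any tangential correction added to $v_\infty$ can be \emph{pulled back} as an admissible competitor $P_{b'}$ at scale $r_m$; second, the quantitative decay $|b_m|/\theta_m(r_m) \to 0$ from \autoref{lemma:aux-theta}, without which the subtracted linear term would drift to infinity and destroy compactness. A secondary difficulty is to absorb the logarithmically singular source term $L_\Omega P_{b_m}$ produced by the corrector (\autoref{lemma:LPb-estimate}); this is precisely what \autoref{lemma:Holder-exploding} is designed to handle, and it is also the reason why the $C^{1,\alpha}$ hypothesis with $\alpha \ge \gamma - 1$ appears in the statement.
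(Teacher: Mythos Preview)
Your overall blow-up/contradiction architecture matches the paper's, and you correctly identify the key auxiliary results (\autoref{lemma:aux-theta}, \autoref{lemma:LPb-estimate}, \autoref{lemma:Holder-exploding}, \autoref{lemma:Hk-bound}). The divergence is in the endgame, and there your description contains a genuine gap.

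\textbf{How the paper kills the tangential linear part.} The paper does \emph{not} first argue that the blow-up limit is one-dimensional. Instead it applies the $n$-dimensional Liouville theorem \autoref{thm:Neumann-Liouville-nd} directly to $v_\infty$ (whose growth exponent $\gamma=\min\{2s,B_0\}-\eps<2s$ makes this legitimate) to get $v_\infty(x)=a+b\cdot x$ with $b_n=0$; the reduction to the 1D result \autoref{thm:Neumann-Liouville} happens \emph{inside} the proof of \autoref{thm:Neumann-Liouville-nd}, via the decomposition $u(x)=w_0(x_n)+\sum_{i<n}w_i(x_n)x_i$ from \cite{AFR23}. To then force $b=0$, the paper selects the subtracted corrector not by $L^\infty$-near-optimality but as the $L^2(\Omega_k\cap B_r)$ \emph{projection} of $u_k-u_k(0)$ onto the linear space $\cP_{k,r}$ (this is where \autoref{remark:Pb-subspace} is actually used). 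That choice yields the orthogonality
\[
\int_{\tilde\Omega_j\cap B_1} v_j(x)\, r_j^{-1}P(r_jx)\,dx=0 \quad \forall P\in\cP_{k_j,r_j},
\]
which passes to the limit as $\int_{\{x_n>0\}\cap B_1} v_\infty(x)(b\cdot x)\,dx=0$ for all tangential $b$. Plugging in $v_\infty=b\cdot x$ gives $b=0$ in one line.

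\textbf{Why your route is problematic as written.} Your claim that tangential translation plus minimality shows $v_\infty(x)=\phi(x_n)$ is not justified: for tangential $h$, the translate $v_\infty(\cdot+h)$ solves the same half-space problem with the \emph{same} growth $|x|^\gamma$, so nothing forces $v_\infty^{(h)}$ to be trivial. (Contrast with the higher-order \autoref{thm:bdry-expansion-higher}, where the blow-up controls a $C^\gamma$ seminorm, so tangential increments have strictly lower growth $R^\beta$; that mechanism is unavailable here.) An $L^\infty$-minimality argument can be salvaged, but only \emph{after} you know $v_\infty=b\cdot x$ from the $n$-D Liouville: then if $b\neq 0$ one argues that subtracting $P_b$ at scale $r_m$ beats the near-infimum by a definite fraction, a contradiction. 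This works but is less clean than the $L^2$-orthogonality device the paper uses.

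In short: replace ``reduce to 1D, then apply \autoref{thm:Neumann-Liouville}'' by ``apply \autoref{thm:Neumann-Liouville-nd} directly, then use $L^2$ projection orthogonality to kill $b$''.
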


Let us first prove a Liouville theorem of order $\min\{2s,B_0\}-\eps$ in the half-space. Its proof follows immediately from \autoref{thm:Neumann-Liouville}.

\begin{theorem}
\label{thm:Neumann-Liouville-nd}
Let $u$ be a weak solution to
\begin{align*}
\begin{cases}
(-\Delta)^s u &= 0 ~~ \text{ in } \{ x_n > 0 \},\\
\mathcal{N}^s_{\{ x_n > 0 \}} u &= 0 ~~ \text{ in } \{ x_n \le 0 \}
\end{cases}
\end{align*}
with
\begin{align*}
|u(x)| \le C (1 + |x|)^{\min\{B_0,2s\}-\eps}
\end{align*}
for some $C > 0$ and $\eps \in (0,2s)$, where $B_0$ is as in \eqref{eq:B0}. Then,
\begin{align*}
u = a + b \cdot x.
\end{align*}
for some $a \in \R$ and $b' \in \R^{n}$ with $b_n = 0$.
\end{theorem}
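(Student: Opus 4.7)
The plan is to reduce the $n$-dimensional Liouville theorem to the 1D Liouville theorem (Theorem~\ref{thm:Neumann-Liouville}) by exploiting the tangential translation invariance of the half-space problem. The key observation is that for $v : \R \to \R$, the extended function $V(x) := v(x_n)$ satisfies, for all $x \in \R^n$,
\[
(-\Delta)^s_{\R^n} V(x) = c_1 (-\Delta)^s_{\R} v(x_n), \qquad \cN^s_{\{x_n > 0\}} V(x) = c_2 \cN^s_{(0,\infty)} v(x_n),
\]
with explicit positive constants $c_1, c_2$. This follows by integrating out the tangential $y'$ variable in each defining integral and invoking the identity $\int_{\R^{n-1}}(t^2 + |y'|^2)^{-(n+2s)/2}\, dy' = c_{n,s}' \, |t|^{-1-2s}$. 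Consequently, any half-space solution depending only on $x_n$, with growth below $\min\{B_0, 2s\}$, must be a constant by Theorem~\ref{thm:Neumann-Liouville}.

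To leverage this, I would use the translation invariance of both $(-\Delta)^s_{\R^n}$ and $\cN^s_{\{x_n > 0\}}$ along the tangential directions $e_1, \ldots, e_{n-1}$: for every $h \in \R^{n-1}$ the function $u(\cdot + (h, 0))$ is again a weak solution with the same growth bound, and so is any tangential finite difference $D_h^i u := u(\cdot + h e_i) - u(\cdot)$, $i < n$. In addition, since tangential linear functions $b \cdot x$ with $b_n = 0$ are annihilated by both operators, $u - b \cdot x$ remains a solution. The aim is therefore to find $b \in \{b_n = 0\}$ such that $\tilde u := u - u(0) - \mathbf{1}_{\gamma > 1}\, b \cdot x$ is independent of $x'$; the 1D embedding above then forces $\tilde u$ to be constant, yielding the claim.

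To extract such a $b$, I would run a blow-down argument in the spirit of Lemmas~\ref{lemma:theta-increasing}--\ref{lemma:aux-theta}. Setting $\gamma = \min\{B_0, 2s\} - \eps$, for each $r > 1$ pick $(a_r, b_r) \in \R \times \{b_n = 0\}$ minimizing $\|u - a_r - b_r \cdot x\|_{L^\infty(B_r)}$, rescale
\[
u_r(x) := r^{-\gamma}\bigl(u(rx) - a_r - b_r \cdot (rx)\bigr),
\]
and, along a diverging sequence $r_k \to \infty$, extract a nontrivial limit $u_\infty$ using interior $C^{2s + \alpha}$ regularity together with the boundary Hölder estimate of Lemma~\ref{lemma:Holder-exploding}. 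The limit $u_\infty$ is then a half-space Neumann solution in the generalized sense of Definition~\ref{def:up-to-poly}, with polynomial growth of degree at most $\gamma$. The minimality of $(a_{r_k}, b_{r_k})$ passes to the limit and forces $D_h^i u_\infty \equiv 0$ for every $h \in \R$ and $i < n$, so that $u_\infty$ depends only on $x_n$; the first step then forces $u_\infty$ to be constant, contradicting the nontriviality built into the blow-down normalization. The main obstacle will be carrying out this blow-down cleanly: one must pass to the limit in the nonlocal Neumann condition for solutions with polynomial growth, control the tangential coefficients $b_{r_k}$ via the refined bounds of Lemma~\ref{lemma:aux-theta} so that $|b_{r_k}|$ does not blow up faster than the renormalization, and, in the case $\gamma > 1$, rule out a limiting tangential linear profile --- a task that ultimately reduces, via the Mellin identification of admissible homogeneity degrees, to the absence of zeros of $f$ in the strip $\{0 < \re(\beta) < B_0\}$ established in Proposition~\ref{prop:zeros-final}.
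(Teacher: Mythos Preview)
Your approach differs substantially from the paper's and contains a genuine gap. The paper's proof is essentially a one-line reduction: it invokes \cite[Proposition~5.10]{AFR23}, which already shows (by analyzing tangential second differences and applying the basic Liouville theorem from \cite{AFR23}) that any half-space Neumann solution with sub-$2s$ growth decomposes as
\[
u(x) = w_0(x_n) + \sum_{i=1}^{n-1} w_i(x_n)\, x_i,
\]
and then checks, via \cite[Lemma~5.11]{AFR23}, that each $w_i$ is a weak solution of the 1D problem \eqref{eq:Neumann-1D} with the same growth. Theorem~\ref{thm:Neumann-Liouville} applied to each $w_i$ then gives $w_0 \equiv a$ and $w_i \equiv b_i$ for $i \ge 1$, finishing immediately. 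No compactness or blow-down is involved.

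Your blow-down scheme has a gap at the step ``the minimality of $(a_{r_k}, b_{r_k})$ passes to the limit and forces $D_h^i u_\infty \equiv 0$.'' Minimality of the affine $L^\infty$-approximation on $B_{r_k}$ yields, after rescaling and passing to the limit, only that $u_\infty$ is orthogonal (in the relevant sense) to affine functions on $B_1$; it does \emph{not} imply that $u_\infty$ is independent of $x'$. Without that, $u_\infty$ is simply another half-space Neumann solution with the same growth exponent $\gamma$, and classifying it would require the very Liouville theorem you are trying to prove --- the argument is circular. Note also that Lemmas~\ref{lemma:theta-increasing}--\ref{lemma:aux-theta} and Definition~\ref{def:up-to-poly} are designed for the opposite direction (blow-\emph{up} at a boundary point to prove pointwise expansions, with the Liouville theorem taken as input), not for establishing the Liouville theorem itself; and the Mellin analysis together with Proposition~\ref{prop:zeros-final} is already absorbed into Theorem~\ref{thm:Neumann-Liouville} and need not be re-invoked at this stage.
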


\begin{proof}
The proof goes exactly as in \cite[Theorem 5.1]{AFR23}, only replacing the 1D Liouville theorem from \cite{AFR23} by \autoref{thm:Neumann-Liouville} of this paper. The restriction $s \in (\frac{1}{2},1)$ can be dropped without any further change in the proof, since in \cite{AFR23} it only came from the 1D Liouville theorem. In fact, by \cite[Proposition 5.10]{AFR23} we have that
\begin{align*}
u(x) = w_0(x_n) + \sum_{i = 1}^{n-1} w_i(x_n) x_i
\end{align*}
for some functions $w_0,w_1,\dots, w_{n-1}$, depending only on $x_n$. Moreover, by \cite[Lemma 5.11, Proof of Theorem 5.1]{AFR23} the function $w_i$ are weak solutions to \eqref{eq:Neumann-1D} and have the same growth as $u$. Therefore, we can apply \autoref{thm:Neumann-Liouville} to deduce the desired result.
\end{proof}

We are now in a position to give the proof of \autoref{thm:bdry-expansion}.

\begin{proof}[Proof of \autoref{thm:bdry-expansion}]
In case $\min\{2s,B_0\} -\eps < 1$, the proof goes exactly as in \cite[Proposition 6.2]{AFR23}, but using \autoref{thm:Neumann-Liouville-nd},  \autoref{lemma:Hk-bound}, and \autoref{lemma:Holder-exploding} instead of \cite[Theorem 5.1]{AFR23}, \cite[Lemma 6.1]{AFR23}, and \cite[Theorem 4.1]{AFR23}, respectively. Let us therefore explain only the case $\min\{2s,B_0\} -\eps > 1$. In particular,  we can assume from now on that $s > \frac{1}{2}$.

\textbf{Step 1:} We now use \cite[Proposition 3.2]{AFR23}
and~\autoref{lemma:Holder-exploding}.
In this way, we see that $u$ is bounded
and
we can assume that $\Vert u \Vert_{L^{\infty}(\R^n)} + \Vert d_{\Omega}^{2s - \min\{2s,B_0\} + \eps} f \Vert_{L^{\infty}(\Omega)} \le 1$.

Accordingly, it suffices to show that
\begin{align}
\label{eq:blowup-claim}
|u(x) - u(0) - b \cdot \Phi^{-1}(x) | \le C |x|^{\min\{2s,B_0\}-\eps} ~~ \forall x \in \Omega \cap B_{1/4}.
\end{align}
Let us assume by contradiction that \eqref{eq:blowup-claim} does not hold true. In that case there are sequences $(\Omega_k)$ as in the statement of the lemma with corresponding diffeomorphisms $(\Phi_k)$, as well as $(u_k) \subset H_K(\Omega_k)$, $(f_k)$ with $d_{\Omega_k}^{2s - \min\{2s,B_0\} + \eps} f_k \in L^{\infty}(\Omega_k)$, such that
\begin{align}
\label{eq:blowup-normalization}
\Vert u_k \Vert_{L^{\infty}(\R^n)} + \Vert d_{\Omega_k}^{2s-\min\{2s,B_0\}+\eps} f_k \Vert_{L^{\infty}(\Omega_k \cap B_{1/2})}  \le 1,
\end{align}
and 
\begin{align*}
L_{\Omega_k} u_k = f_k ~~ \text{ in } \Omega_k \cap B_{1/2}
\end{align*}
in the weak sense with Neumann conditions on $\partial \Omega_k \cap B_{1/2}$. 

Moreover, there are $(x_k) \subset \Omega_k$, and $C_k \to +\infty$, such that 
\begin{align*}
\inf_{b \in \{  b_n = 0 \}} \sup_{r \in [0,\frac{1}{4}]} \frac{\Vert u_k - u_k(0) - b \cdot \Phi_k^{-1} \Vert_{L^{\infty}(\Omega_k \cap B_r)}}{|x_k|^{\min\{2s,B_0\}-\eps}} \ge \inf_{b \in \{  b_n = 0 \}} \frac{|u_k(x_k) - u_k(0) - b \cdot \Phi_k^{-1}(x_k) |}{|x_k|^{\min\{2s,B_0\}-\eps}} \ge C_k.
\end{align*}

For any $k \in \N$ and $r \in (0,\frac{1}{4}]$ we consider the $L^2(\Omega_k \cap B_{r})$ projections of $u_k(\cdot) - u_k(0)$ over the space $\cP_{k,r} := \{ b \cdot \Phi^{-1}_k(\cdot) : b \in \{ b_n = 0 \} \}$ and denote them by $P_{k,r}$. Moreover, we will extend $P_{k,r}$ in a bounded way outside $\Omega_k \cap B_1$, as in \autoref{subsec:correction}. In this way, we see that
\begin{align*}
\Vert u_k - u_k(0) - P_{k,r} \Vert_{L^2(\Omega_k \cap B_{r})} &\le \Vert  u_k - u_k(0) - P \Vert_{L^2(\Omega_k \cap B_{r})} ~~ \forall P \in \cP_{k,r},\\
\int_{\Omega_k \cap B_{r}} (u_k(x) - u_k(0) - P_{k,r}(x)) P(x) \d x &= 0 ~~ \forall P \in \cP_{k,r}.
\end{align*}

It follows from \autoref{lemma:theta-increasing} that the function 
\begin{align*}
\theta(r): = \sup_{k \in \N} \sup_{\rho \in [r , \frac{1}{4}]} \rho^{-\min\{2s,B_0\}+\eps} \Vert u_k - u_k(0) - P_{k,\rho} \Vert_{L^{\infty}(\Omega_k \cap B_{\rho})}
\end{align*} 
satisfies 
\begin{align*}
\theta(r) \nearrow +\infty ~~ \text{ as } ~~ r \searrow 0,
\end{align*}
and therefore there exist subsequences $(k_j) \subset \N$ and $r_j \searrow 0$ with $r_j \in (0,\frac{1}{8}]$ such that
\begin{align*}
\frac{\Vert u_{k_j} - u_{k_j}(0) - P_{k_j,r_j} \Vert_{L^{\infty}(\Omega_{k_j} \cap B_{r_j})}}{\theta(r_j) r_j^{\min\{2s,B_0\}-\eps}} \ge \frac{1}{2} ~~ \forall j \in \N.
\end{align*}

\textbf{Step 2:} Next, we introduce the blow-up sequence
\begin{align*}
v_j(x) = \frac{u_{k_j}(r_j x) - u_{k_j}(0) - P_{k_j,r_j}(r_j x)}{r_j^{\min\{2s,B_0\}-\eps} \theta(r_j)}, ~~ j \in \N,
\end{align*}
and observe that, for all $j \in \N$,
\begin{align}
\label{eq:vj-properties}
v_j(0) = 0, \qquad \Vert v_j \Vert_{L^{\infty}(\tilde{\Omega}_{j} \cap B_1)} \ge \frac{1}{2}, \qquad \int_{\tilde{\Omega}_j \cap B_1} v_j(x) r_j^{-1} P(r_j x) \d x = 0 ~~ \forall P \in \cP_{k_j,r_j},
\end{align}
where $\tilde{\Omega}_j = r_{_j}^{-1} \Omega_{k_j}$. 

Moreover, we claim that
\begin{align}
\label{eq:vj-growth}
\Vert v_j \Vert_{L^{\infty}(\tilde{\Omega}_j \cap B_R)} \le C R^{\min\{2s,B_0\}-\eps} ~~ \forall R \in [1 , r_j^{-1}/4 ],
\end{align}
and
\begin{align}
\label{eq:b-coeff-convergence}
\frac{|b_{k_j,r_j}|}{\theta(r_j)} \to 0 ~~ \text{ as } j \to \infty.
\end{align}

Both of these claims follow from \autoref{lemma:aux-theta}, applied with $u := u_{k_j}- u_{k_j}(0)$. Note that the assumptions of \autoref{lemma:aux-theta} are satisfied, since, by the definition of $\theta(r)$ and by \eqref{eq:blowup-normalization}, for any $r \le \frac{1}{4}$ we have that
\begin{align*}
\Vert u_{k_j} - u_{k_j}(0) - P_{k_j,r_j} \Vert_{L^{\infty}(\Omega_{k_j} \cap B_{r})} \le \theta(r) r^{\min\{2s,B_0\}-\eps}, \qquad \Vert u_{k_j} - u_{k_j}(0) \Vert_{L^{\infty}(\Omega_{k_j} \cap B_{1/4})} \le 2.
\end{align*}

Hence, \eqref{eq:b-coeff-convergence} follows immediately by the second claim in \autoref{lemma:aux-theta}, since the bound is independent of $k_j$. The growth of $v_j$ from \eqref{eq:vj-growth} follows from the first claim of \autoref{lemma:aux-theta}, which yields
\begin{align*}
\Vert v_j \Vert_{L^{\infty}(\tilde{\Omega}_j \cap B_R)} = \frac{\Vert u_{k_j} - u_{k_j}(0) - P_{k_j,r} \Vert_{L^{\infty}(\Omega_{k_j} \cap B_{r_j})}}{r_j^{\min\{2s,B_0\}-\eps} \theta(r_j)} \le C R^{\min\{2s,B_0\}-\eps}.
\end{align*}

Finally, observe that $v_j$ satisfies
\begin{align}
\label{eq:vj-PDE}
L_{\tilde{\Omega}_j} v_j = \tilde{f}_j ~~ \text{ in } \tilde{\Omega}_j \cap B_{r_j^{-1}/2}
\end{align}
with Neumann conditions on $\partial \tilde{\Omega}_j \cap B_{r_j^{-1}/2}$, where
\begin{align*}
\tilde{f}_j(x) = \frac{r_j^{2s - \min\{2s,B_0\} + \eps}}{\theta(r_j)} f_{j}(r_j x) - \frac{r_j^{2s - \min\{2s,B_0\} + \eps}}{\theta(r_j)} (L_{\Omega_j} P_{k_j,r_j})(r_j x).
\end{align*}

By applying \autoref{lemma:Holder-exploding} to $w_j := v_j \1_{B_{4R}}$, we deduce that there exists $\gamma \in (0,s)$ such that, for any $R \in [1,r_j^{-1}/8]$,
\begin{align}
\label{eq:regularity-estimate-w}
\begin{split}
&[v_j]_{C^{\gamma}(\tilde{\Omega}_j \cap B_R)} \le [w_j]_{C^{\gamma}(\tilde{\Omega}_j \cap B_R)} \\
&\qquad \le C R^{-\gamma} \Big(  \Vert w_j \Vert_{L^{\infty}(\tilde{\Omega}_j)}  + R\Vert d_{\tilde{\Omega}_j}^{2s-1} \tilde{f}_j \Vert_{L^{\infty}(\tilde{\Omega}_j \cap B_{2R})} + R \Vert d_{\tilde{\Omega}_j}^{2s-1} \tail_j(v_j ; 4R , \cdot) \Vert_{L^{\infty}(\tilde{\Omega}_j \cap B_{2R})} \Big),
\end{split}
\end{align}
where
\begin{align*}
\tail_j(v_j ; 4R , x) := \int_{\tilde{\Omega}_j \setminus B_{4R}} v_j(y) K_{\tilde{\Omega}_j}(x,y) \d y.
\end{align*}

By the same computation as in \cite[Lemma 5.9]{AFR23}, and using the growth \eqref{eq:vj-growth} (and the fact that~$\min\{2s,B_0\}-\eps < 2s$), we obtain that, for any $x \in \tilde{\Omega}_j \cap B_{2R}$,
\begin{align*}
|\tail_j(v_j ; 4R , x)| \le C(R) (1 + |\log d_{\tilde{\Omega}_j}(x)|), \qquad \Vert d_{\tilde{\Omega}_j}^{2s-1} \tail_j(v_j ; 4R , \cdot) \Vert_{L^{\infty}(\tilde{\Omega}_j \cap B_{2R})} \le C(R)
\end{align*}
for some constant $C(R) > 0$, depending on $R$.

In addition, we notice that $d_{\tilde{\Omega}_j}(x) = r_j^{-1} d_{\Omega_j}(r_j x)$.
Also, since $s > \frac{1}{2}$, there exists $\delta \in (0, \eps)$ such that $2s-1-\delta > 0$. 

Consequently, using also \autoref{lemma:LPb-estimate}, we have, when $R \le r_j^{-1}/8$,
\begin{align*}
& \frac{r_j^{2s - \min\{2s,B_0\} + \eps}}{\theta(r_j)} \Vert d_{\tilde{\Omega}_j}^{2s-1} (L_{\Omega_j} P_{k_j,r_j})(r_j \cdot) \Vert_{L^{\infty}(\tilde{\Omega}_j \cap B_{2R})} = \frac{r_j^{- \min\{2s,B_0\} + \eps + 1}}{\theta(r_j)} \Vert d_{\Omega_j}^{2s-1} (L_{\Omega_j} P_{k_j,r_j}) \Vert_{L^{\infty}(\Omega_j \cap B_{2 r_j R})} \\
&\quad\le C |b_{k_j,r_j}| \frac{r_j^{-\min\{2s,B_0\} + \eps + 1}}{\theta(r_j)} \Vert d_{\Omega_j}^{2s-1} (1 + |\log d_{\Omega_j}| + d_{\Omega_j}^{1+\alpha-2s} + |\cdot|^{\alpha} ( d_{\Omega_j}^{1-2s} + |\log d_{\Omega_j}|)) \Vert_{L^{\infty}(\Omega_j \cap B_{2 r_j R})} \\
&\quad\le  C |b_{k_j,r_j}| \frac{r_j^{-\min\{2s,B_0\} + \eps + 1}}{\theta(r_j)} \Vert(d_{\Omega_j}^{2s-1} + d_{\Omega_j}^{2s-1-\delta} + d_{\Omega_j}^{\alpha} + |\cdot|^{\alpha}) \Vert_{L^{\infty}(\Omega_j \cap B_{2 r_j R})} \\
&\quad\le C(R) |b_{k_j,r_j}| \frac{r_j^{-\min\{2s,B_0\} + \eps + 1 + \alpha} + r_j^{-\min\{2s,B_0\} + 2s + \eps - \delta}}{\theta(r_j)} \le C(R) \frac{|b_{k_j,r_j}|}{\theta(r_j)},
\end{align*}
where we used that $1+\alpha \ge \min\{2s,B_0\} - \eps$. 

Furthermore, by \eqref{eq:blowup-normalization} and since $\eps < \min\{2s,B_0\} - 1$,
\begin{align*}
\frac{r_j^{2s - \min\{2s,B_0\} + \eps}}{\theta(r_j)} & \Vert d_{\tilde{\Omega}_j}^{2s-1} f_j(r_j \cdot) \Vert_{L^{\infty}(\tilde{\Omega}_j \cap B_{2R})} = \frac{r_j^{- \min\{2s,B_0\} + \eps + 1}}{\theta(r_j)} \Vert d_{\Omega_j}^{2s-1} f_j \Vert_{L^{\infty}(\Omega_j \cap B_{2r_j R})} \\
&\le C(R) \theta(r_j)^{-1} \Vert d_{\Omega_j}^{2s - \min\{2s,B_0\} + \eps} f_j \Vert_{L^{\infty}(\Omega_j \cap B_{2r_j R})} \le C(R)\theta(r_j)^{-1}.
\end{align*}

As a result, recalling~\eqref{eq:b-coeff-convergence}, 
\begin{align}
\label{eq:f-bound}
\Vert d_{\tilde{\Omega}_j}^{2s-1} \tilde{f}_j \Vert_{L^{\infty}(\tilde{\Omega}_j \cap B_{2R})} \le C(R) \left(\frac{1 + |b_{k_j,r_j}|}{\theta(r_j)} \right) \to 0 ~~ \text{ as } j \to \infty.
\end{align}
In particular, the above norm is uniformly bounded.

We also point out that $$\Vert w_j \Vert_{L^{\infty}(\tilde{\Omega}_j)} \le \Vert v_j \Vert_{L^{\infty}(\tilde{\Omega}_j)} \le 1,$$ due to~\eqref{eq:blowup-normalization}.

Thus, by combining the previous estimates with \eqref{eq:vj-growth}
and~\eqref{eq:regularity-estimate-w}, we deduce that
\begin{align*}
\Vert v_j \Vert_{C^{\gamma}(\tilde{\Omega}_j \cap B_R)} \le C(R) \left( 1 +  \frac{1 + |b_{k_j,r_j}|}{\theta(r_j)} \right) \le C(R).
\end{align*}
Hence, by Arzel\`a-Ascoli's theorem, the sequence $(v_j)_j$ converges locally uniformly in $\{ x_n \ge 0\}$, up to a subsequence, to some $v \in C^{\gamma'}_{loc}(\{ x_n \ge 0\})$, for any $\gamma' \in (0,\gamma)$. Moreover, by \eqref{eq:vj-growth} and \eqref{eq:f-bound}, we can apply \autoref{lemma:Hk-bound} to deduce that for any $R \le r_j^{-1}/8$ it holds
\begin{align*}
[v_j]_{H^s(\tilde{\Omega}_j \cap B_R)} \le C(R) + C(R) \Vert d_{\tilde{\Omega}_j}^{2s-1} \tilde{f}_j \Vert_{L^{\infty}(\tilde{\Omega}_j \cap B_{2R})} \le C(R).
\end{align*}
Hence, the sequence $(v_j)$ is also uniformly bounded in $H_{K,loc}(\overline{\tilde{\Omega}_j})$, and therefore it also holds $v \in H_{K,loc}(\{ x_n \ge 0 \})$. 

Moreover, by the uniform convergence and \eqref{eq:vj-properties}, we also have
\begin{align}
\label{eq:v-properties}
v(0) = 0, \qquad \Vert v \Vert_{L^{\infty}(\{ x_n \ge 0 \} \cap B_1)} \ge \frac{1}{2}, \qquad \int_{\{ x_n > 0 \} \cap B_1} v(x) ( b \cdot x ) \d x = 0 ~~ \forall b \in \{ b_n = 0 \}.
\end{align}
The latter property holds true since for any $P_j = b \cdot \Phi_j^{-1} \in \cP_j$ with $b_n = 0$, it holds $r_j^{-1} P_j(r_j x) \to b \cdot x$ locally uniformly since $P_j(0) = 0$, $\nabla P_j(0) = b$, and $P_j$ is uniformly bounded in $C^{1,\alpha}$.

\textbf{Step 3:} 
Let us now pass to the limit in the equation for $v_j$.
Since $v_j$ satisfies \eqref{eq:vj-PDE} in the weak sense with Neumann conditions on $\partial \tilde{\Omega}_j \cap B_{r_j^{-1}/2}$, it also satisfies the equation in the distributional sense, namely 
\begin{align}
\label{eq:vj-distr}
\int_{\tilde{\Omega}_j} v_j L_{\tilde{\Omega}_j} \eta \d x = \frac{1}{2} \int_{\tilde{\Omega}_j} \tilde{f}_j \eta \d x ~~ \forall \eta \in C_c^{\infty}(B_{r_j^{-1}/2}).
\end{align}
This can be proved in the same way as in \cite[Proof of equation~(6.9)]{AFR23} (using \cite[equations~(6.4) and (6.5)]{AFR23}), and there are no differences due to the faster growth of $v_j$ in \eqref{eq:vj-growth} (see also \autoref{lemma:weak-distr}).

Now we note that, for any $\eta \in H_K(\R^n)$ supported in $B_R$ for some $R \ge 1$, it holds that
\begin{align*}
\int_{\tilde{\Omega}_j} \tilde{f}_j \eta \d x \le C(\eta,R) \Vert d_{\tilde{\Omega}_j}^{2s-1} \tilde{f}_j \Vert_{L^{\infty}(\tilde{\Omega}_j \cap B_{2R})} \to 0,
\end{align*}
thanks to \autoref{lemma:fv-integral-estimate} and \eqref{eq:f-bound}. 

Hence, using that $\1_{\tilde{\Omega}_j} \to \1_{\{ x_n > 0\}}$ and $K_{\tilde{\Omega}_j} \to K_{\{ x_n > 0 \}}$ a.e., we can apply Vitali's convergence theorem as in \cite[Step 3, Proposition 6.2]{AFR23} and deduce that 
\begin{align*}
\int_{\{ x_n > 0 \}} v L_{\{ x_n > 0 \}} \eta \d x = 0 ~~ \forall \eta \in C_c^{\infty}(\R^n).
\end{align*}
Furthermore, as in \cite[Step 3, Proposition 6.2]{AFR23}, we can prove that $v$ is a weak solution to
\begin{align*}
L_{\{ x_n > 0 \}} v = 0 ~~ \text{ in } \{ x_n > 0 \} 
\end{align*}
with Neumann conditions on $\{ x_n = 0 \}$.

In addition, we have that $\Vert v \Vert_{L^{\infty}(B_R)} \le C (1 + R^{\min\{2s,B_0\} - \eps})$ for any $R \ge 1$.

\textbf{Step 4:} Hence, we can apply the Liouville theorem in \autoref{thm:Neumann-Liouville-nd} and deduce that
\begin{align*}
v(x) = a + b \cdot x
\end{align*}
for some $a \in \R$ and $b \in \R^n$ with $b_n = 0$. Since $v(0) = 0$ by \eqref{eq:v-properties} we deduce that $a = 0$. Moreover, by the third property in \eqref{eq:v-properties}, we have
\begin{align*}
0 = \int_{\{ x_n > 0 \} \cap B_1} v(x) b \cdot x \d x = \int_{\{ x_n > 0 \} \cap B_1} (b \cdot x)^2 \d x,
\end{align*}
which implies that also $b = 0$, and therefore $v = 0$. This is a contradiction with the second property in \eqref{eq:v-properties}. Hence, we have shown \eqref{eq:blowup-claim}, as desired.
\end{proof}

\subsection{Regularity of order less than $2s$}
\label{subsec:reg-less-2s}

In this subsection we prove our main result in case $B_0 < 2s$:

\begin{theorem}
\label{thm:main-Neumann}
Let $s \in (0,1)$, $\eps \in (0,\min\{2s,B_0\}-s)$, $\alpha \in [\max\{ 0 , \min\{2s,B_0\} - \eps - 1\} , 1)$, and $\Omega \subset \R^n$ be a bounded $C^{1,\alpha}$ domain.
Let $u \in H_K(\Omega)$ be a weak solution to
\begin{align*}
\begin{cases}
(-\Delta)^s u &= f ~~ \text{ in } \Omega,\\
\mathcal{N}_{\Omega}^s u &= 0 ~~ \text{ in } \R^n \setminus \Omega,
\end{cases}
\end{align*}
where $d_{\Omega}^{2s-\min\{2s,B_0\}+\eps} f \in L^{\infty}(\Omega)$. 

Then, there exists $C > 0$, depending only on $n,s,\eps,\alpha$, and $\Omega$, such that 
\begin{align*}
\Vert u \Vert_{C^{\min\{2s,B_0\}-\eps}(\overline{\Omega})} \le C \left( \Vert u \Vert_{L^{2}(\Omega)} + \Vert d_{\Omega}^{2s-\min\{2s,B_0\}+\eps} f \Vert_{L^{\infty}(\Omega)} \right).
\end{align*}
\end{theorem}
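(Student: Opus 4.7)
Set $\gamma := \min\{2s,B_0\}-\eps$ for brevity. The plan is to upgrade the pointwise boundary expansion of order $\gamma$ established in \autoref{thm:bdry-expansion} into a global $C^{\gamma}$ estimate by combining it with interior Schauder-type estimates and a standard patching argument.

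The first step is to upgrade the $L^2$ control of $u$ on the right-hand side to an $L^\infty$ bound. By \autoref{lemma:Hk-bound} applied on balls covering $\Omega$ together with the Moser-type iteration in Step 1 of the proof of \autoref{lemma:Holder-exploding}, one obtains $\Vert u \Vert_{L^\infty(\Omega)} \le C\big(\Vert u \Vert_{L^2(\Omega)} + \Vert d_{\Omega}^{2s-\gamma} f\Vert_{L^\infty(\Omega)}\big)$; the exterior $L^\infty$ bound is then automatic from the explicit integral representation of $u$ on $\Omega^c$ imposed by $\mathcal N^s u=0$ (see \eqref{eq:Pb-complement-rep}). After normalization we may assume $\Vert u\Vert_{L^\infty(\R^n)} + \Vert d_{\Omega}^{2s-\gamma}f\Vert_{L^\infty(\Omega)} \le 1$, and it then suffices to prove the seminorm bound $[u]_{C^{\gamma}(\overline\Omega)} \le C$. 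Applying \autoref{thm:bdry-expansion} at every boundary point $z\in\partial\Omega$ (after translation and rotation, with the associated diffeomorphism $\Phi_z$ from \autoref{subsec:correction}), I obtain a uniformly bounded coefficient $b_z \in \R^n$ with $b_z\cdot\nu(z)=0$ such that
\begin{equation*}
|u(x)-u(z)-\mathbf 1_{\{\gamma>1\}}\,b_z\cdot\Phi_z^{-1}(x-z)| \le C\,|x-z|^{\gamma}
\qquad\forall x\in\Omega\cap B_{1/4}(z).
\end{equation*}

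Next I would establish a rescaled interior estimate. Fix $x_0\in\Omega$ with $\rho:=d_\Omega(x_0)\le \eta_0$ small, and let $z_0\in\partial\Omega$ realize the distance. Setting $L_{z_0}(x):=u(z_0)+\mathbf 1_{\{\gamma>1\}}b_{z_0}\cdot\Phi_{z_0}^{-1}(x-z_0)$ and $v:=u-L_{z_0}$, the expansion gives $\Vert v\Vert_{L^\infty(B_{2\rho}(x_0))}\le C\rho^{\gamma}$ and globally $|v(x)|\le C(1+|x-x_0|^\gamma)$. Since $u$ is a weak solution of the Neumann problem, the equation $(-\Delta)^s u=f$ holds in $\Omega$ in the weak sense; hence $v$ solves $(-\Delta)^s v = f - (-\Delta)^s L_{z_0}$ in $B_{\rho/2}(x_0)$. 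Using \autoref{lemma:LPb-estimate} (resp.\ \autoref{lemma:LPb-estimate-2} when $\gamma-2s>0$) I bound the right-hand side pointwise and in $C^{\gamma-2s}$ by $C\rho^{\gamma-2s}$, and the tail of $v$ by the explicit growth of $u$. Standard rescaled interior Schauder estimates for the fractional Laplacian applied to $v$ in $B_{\rho/2}(x_0)$ then yield $[u]_{C^{\gamma}(B_{\rho/4}(x_0))}=[v]_{C^{\gamma}(B_{\rho/4}(x_0))}\le C$, uniformly in $\rho$.

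The final step is to combine these estimates. For $x,y\in\overline\Omega$ with $|x-y|\le \eta_0$: if $\min(d_\Omega(x),d_\Omega(y))\ge 2|x-y|$ I use the interior estimate on a ball containing both; otherwise, letting $z$ be the boundary projection of the point closer to $\partial\Omega$, the two expansions at $z$ give the bound $|u(x)-u(y)-(L_z(x)-L_z(y))|\le C|x-y|^\gamma$. When $\gamma\le 1$ the linear term $L_z(x)-L_z(y)$ vanishes and the estimate is immediate; when $\gamma>1$ one has to compare $\nabla L_z$ across different boundary points, for which the interior estimate above (identifying $\nabla u$ in the interior with the appropriate $\nabla L_z$ up to an error of order $d_\Omega^{\gamma-1}$) and the Lipschitz dependence of $z\mapsto b_z$ together produce the desired $C^{1,\gamma-1}$ bound on $u$.

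The main obstacle is precisely this last patching step in the regime $\gamma>1$ (which occurs when $s>1/2$): one must check that the linear-correction coefficients $b_z$ depend Lipschitz-continuously on $z\in\partial\Omega$ and match the tangential gradient of $u$ at interior points. This is where the delicate analysis of the corrector $P_b$ in \autoref{subsec:correction} -- and in particular the fine Hölder estimate \autoref{lemma:LPb-estimate-2} -- becomes essential, in the spirit of earlier Schauder estimates for nonlocal operators in \cite{AbRo20,RoWe24}.
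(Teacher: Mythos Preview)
Your approach is essentially the paper's: normalize to $L^\infty$, apply the boundary expansion \autoref{thm:bdry-expansion} at each $z\in\partial\Omega$, run an interior Schauder estimate on $v=u-u(z)-P_{b_z}$ in balls $B_{\rho/4}(x_0)$ with $\rho\asymp d_\Omega(x_0)$, and patch. The interior step and the global growth bound $|v(x)|\le C|x|^\gamma$ are exactly what the paper does (see the estimate \eqref{eq:int-reg-appl} and \eqref{eq:v-growth-claim}), using only \autoref{lemma:LPb-estimate} since here $\gamma<2s$ and the source term only needs $L^\infty$ control.

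The ``main obstacle'' you flag --- Lipschitz dependence of $z\mapsto b_z$ and matching tangential gradients --- is not actually needed, and the paper bypasses it entirely. Once you have $[v]_{C^\gamma(B_{\rho/4}(x_0))}\le C$, simply add back $P_{b_{z_0}}$: since $P_b=b\cdot\Phi^{-1}$ with $\Phi^{-1}\in C^{1,\alpha}$ and $\alpha\ge\gamma-1$, and since $|b_{z_0}|\le C$ uniformly, you get $[P_{b_{z_0}}]_{C^\gamma(B_{\rho/4}(x_0))}\le C|b_{z_0}|\le C$, hence $[u]_{C^\gamma(B_{\rho/4}(x_0))}\le C$ uniformly in $x_0$. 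This, together with the straightforward interior estimate away from $\partial\Omega$, gives a uniform bound on $[u]_{C^\gamma}$ over a covering family of balls $\{B_{d_\Omega(x_0)/4}(x_0)\}_{x_0\in\Omega}$, and then a standard covering/gluing lemma (the paper cites \cite[Lemma~A.1.4]{FeRo24}) yields $[u]_{C^\gamma(\overline\Omega)}\le C$ directly --- no comparison of $b_z$ at different boundary points is required.
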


\begin{remark}
We stress that if $\min\{2s,B_0\}-\eps < 1$, our result allows for $\alpha = 0$, i.e. it suffices to have $\partial \Omega \in C^1$.
\end{remark}

\begin{proof}[Proof of \autoref{thm:main-Neumann}]
In case $\min\{2s,B_0\}-\eps < 1$ the proof follows from \autoref{thm:bdry-expansion} (up to a rescaling, rotation, and translation) in exact the same way as \cite[Theorem 1.1]{AFR23} follows from \cite[Proposition 6.2]{AFR23}.

Hence, we can assume from now on that $\min\{2s,B_0\} - \eps \ge 1$, which implies in particular that $s > \frac{1}{2}$. Furthermore, up to a normalization, we can assume that
\begin{align}
\label{eq:normalization-thm}
\Vert u \Vert_{L^{\infty}(\Omega)} + \Vert d_{\Omega}^{2s-\min\{2s,B_0\}+\eps} f \Vert_{L^{\infty}(\Omega)} \le 1.
\end{align}
We note that, since $\cN^s_{\Omega} u = 0$ in $\R^n \setminus \Omega$ (see \cite[Proposition A.3]{AFR23}), we have that, for all~$x \in \R^n \setminus \Omega$,
\begin{align}
\label{eq:bdness-outside}
|u(x)| \le \left| \left( \int_{\Omega} |x-y|^{-n-2s} \d y \right)^{-1} \int_{\Omega} u(y)|x-y|^{-n-2s} \d y \right| \le \Vert u \Vert_{L^{\infty}(\Omega)} \le 1.
\end{align}

We assume that $0 \in \partial \Omega$, $\nu(0) = e_n$, and that there exists a $C^{1,\alpha}$ diffeomorphism $\Phi$ as in \eqref{eq:Pb-normal-zero}. We claim that for any $x_0 \in \Omega_{1/8} := \Omega \cap B_{1/8}$ with $|x_0| = d_{\Omega}(x_0) = :r$ there exists $b \in \{ b_n = 0 \}$ with $|b| \le C$ such that
\begin{align}
\label{eq:claim-thm}
[u - u(0) - P_b]_{C^{\min\{2s,B_0\}-\eps}(B_{r/2}(x_0))} \le C,
\end{align}
where $C > 0$ depends only on $n,s,\eps$, and the $C^{1,\alpha}$ norm of $\Omega$, and $P_b$ is as in \autoref{subsec:correction}, i.e. in particular $P_b = b \cdot \Phi^{-1}$ in $\Omega_{1}$. 

We suppose for a moment that~\eqref{eq:claim-thm} and we
use it to complete the proof of the desired result.
To this end, we point out that, as a consequence of~\eqref{eq:claim-thm},
\begin{align*}
[u]_{C^{\min\{2s,B_0\}-\eps}(B_{r/2}(x_0))} \le C + [P_b]_{C^{\min\{2s,B_0\}-\eps}(B_{r/2}(x_0))} \le C(1 + |b|) \le C,
\end{align*}
where we used that $|b| \le C$. From here, applying the previous arguments again, up to a rotation, translation, and rescaling, we obtain that there exists $\rho > 0$, depending only on $\Omega$, such that, for any $x_0 \in \Omega$ with $d_{\Omega}(x_0) \le 2\rho$,
\begin{align}
\label{eq:u-bdry-est}
[u]_{C^{\min\{2s,B_0\}-\eps}(B_{d_{\Omega}(x_0)/2}(x_0))} \le C.
\end{align}
For $x_0 \in \Omega$ with $d_{\Omega}(x_0) > 2\rho$ we use that, by interior regularity estimates for the fractional Laplacian (see e.g. \cite[Lemma 6.4]{AFR23}),
\begin{align*}
[u]_{C^{\min\{2s,B_0\}-\eps}(B_{d_{\Omega}(x_0)/2}(x_0))} \le C(\rho) \left( \Vert u \Vert_{L^{\infty}(B_{\rho}(x_0))} + \Vert (1 + |\cdot|)^{-n-2s} u \Vert_{L^1(\R^n)} + \Vert f \Vert_{L^{\infty}(B_{\rho}(x_0))} \right),
\end{align*}
and, since we already know from \autoref{lemma:Holder-exploding} that $u \in C^{\gamma}(\overline{\Omega})$ for some $\gamma \in (0,s)$, we deduce from \cite[Lemma 6.3]{AFR23} and \eqref{eq:normalization-thm} that
\begin{align*}
\Vert u \Vert_{L^{\infty}(B_R)} \le C (1 + R^{\gamma}) ~~ \forall R \ge 1.
\end{align*}
Therefore, \eqref{eq:u-bdry-est} holds true for all $x_0 \in \Omega$ (with $C > 0$ depending also on $\rho$). From here, the desired result follows by \cite[Lemma A.1.4]{FeRo24}.

It remains to prove \eqref{eq:claim-thm}. To do so, let us apply the interior regularity estimate for the fractional Laplacian (see e.g. \cite[Lemma 6.4]{AFR23}) to the function
\begin{align*}
v(x): = u(x) - u(0) - P_b(x),
\end{align*}
where $b \in \{ b_n = 0 \}$ is the vector from \autoref{thm:bdry-expansion}, which solves
\begin{align*}
\begin{cases}
(-\Delta)^s v &= f - L_{\Omega} P_b ~~ \text{ in } \Omega,\\
\mathcal{N}_{\Omega}^s v &= 0 ~~\qquad\quad ~~ \text{ in } \R^n \setminus \Omega,
\end{cases}
\end{align*}
and conclude that
\begin{align}
\label{eq:int-reg-appl}
\begin{split}
& [v]_{C^{\min\{2s,B_0\}-\eps}(B_{r/2}(x_0))} \le C r^{\eps-\min\{2s,B_0\}}  \Vert v \Vert_{L^{\infty}(B_{2r/3}(x_0))} \\
&\qquad + C r^{2s-\min\{2s,B_0\}+\eps} \Vert |\cdot -x_0|^{-n-2s} v \Vert_{L^1(\R^n \setminus B_{2r/3}(x_0))} \\
&\qquad + C r^{2s-\min\{2s,B_0\}+\eps} \Vert f \Vert_{L^{\infty}(B_{2r/3}(x_0))} + C r^{2s-\min\{2s,B_0\}+\eps} \Vert  L_{\Omega} P_b \Vert_{L^{\infty}(B_{2r/3}(x_0))} .
\end{split}
\end{align}

Also, for any $x \in B_{2r/3}(x_0)$ we have that $d_{\Omega}(x) \ge \frac{r}{3}$ and accordingly
\begin{align*}
r^{\eps} \Vert f \Vert_{L^{\infty}(B_{2r/3}(x_0))} \le C \Vert d_{\Omega}^{\eps} f \Vert_{L^{\infty}(\Omega)} \le C.
\end{align*}
We now observe that \autoref{lemma:LPb-estimate} is applicable here,
since $B_{2r/3}(x_0) \subset \Omega_{2r} \subset \Omega_{1/4}$. 
Besides, 
given $x \in B_{2r/3}(x_0)$,
we point out
that $|x| \le 2 r$ and $\frac{r}{3} \le d_{\Omega}(x) 2r$. Also, we know that~$1 + \alpha \ge \min\{2s,B_0\} - \eps$.

Hence,
using again that $|b| \le C$ (thanks to~\autoref{thm:bdry-expansion} and \eqref{eq:normalization-thm}), we find that,
for any $\eps' \in (0,\eps)$,
\begin{align*}
r^{2s-\min\{2s,B_0\}+\eps} & \Vert L_{\Omega} P_b \Vert_{L^{\infty}(B_{2r/3}(x_0))} \\
&\le C r^{\eps + 1 - \min\{2s,B_0\}} \Vert d_{\Omega}^{2s-1} L_{\Omega} P_b  \Vert_{L^{\infty}(B_{2r/3}(x_0))} \\
&\le C r^{\eps + 1 - \min\{2s,B_0\}} \Vert d_{\Omega}^{2s-1} (1 + |\log d_{\Omega}| + d_{\Omega}^{1+\alpha - 2s} + |\cdot|^{\alpha} d_{\Omega}^{1-2s} ) \Vert_{L^{\infty}(B_{2r/3}(x_0))} \\
&\le C r^{\eps + 1 - \min\{2s,B_0\}} \Vert d_{\Omega}^{2s-1} + d_{\Omega}^{2s-1-\eps'} + d_{\Omega}^{\alpha} + |\cdot|^{\alpha} ) \Vert_{L^{\infty}(B_{2r/3}(x_0))} \\
&\le C r^{2s-\min\{2s,B_0\}+\eps} + C r^{2s-\min\{2s,B_0\}+\eps - \eps'} + C r^{-\min\{2s,B_0\} + \eps + 1 + \alpha} \le C,
\end{align*}as desired.

It remains to estimate the growth of $v$. To do so, we observe that, by \autoref{thm:bdry-expansion} (which is applicable by \cite[Proposition A.3]{AFR23}) and \eqref{eq:normalization-thm},
\begin{align}
\label{eq:v-growth-claim-Omega}
|v(x)| \le C |x|^{\min\{2s,B_0\}-\eps} ~~ \forall x \in \Omega_{1/4}.
\end{align}
In particular, 
\begin{align*}
r^{\eps-\min\{2s,B_0\}}  \Vert v \Vert_{L^{\infty}(B_{2r/3}(x_0))} \le r^{\eps-\min\{2s,B_0\}}  \Vert v \Vert_{L^{\infty}(\Omega_{2r})} \le C.
\end{align*}

We claim that
\begin{align}
\label{eq:v-growth-claim}
|v(x)| \le C |x|^{\min\{2s,B_0\}-\eps} ~~ \forall x \in \R^n.
\end{align}
To see this, note that by \eqref{eq:v-growth-claim-Omega}
and~\eqref{eq:normalization-thm}, and since $|P_b| \le C|b| \le C$ in $\Omega$ by construction (see \autoref{subsec:correction}), we
have that \eqref{eq:v-growth-claim} holds true for $x \in \Omega$. For $x \in \R^n \setminus \Omega$, the claim follows by the same reasoning as in \eqref{eq:bdness-outside}, using that $\cN^s_{\Omega} P_b = 0$ in $\R^n \setminus \Omega$ by construction.

Therefore, using that $|x-x_0| \ge \frac{2r}{3}$ in $B_{2r} \setminus B_{2r/3}(x_0)$ and $|x-x_0| \ge \frac{|x|}{3}$ in $\R^n \setminus B_{2r}$, we deduce
\begin{align*}
r^{2s-\min\{2s,B_0\}+\eps} & \Vert |\cdot -x_0|^{-n-2s} v \Vert_{L^1(\R^n \setminus B_{2r/3}(x_0))} \\
&\le C r^{2s-\min\{2s,B_0\}+\eps} |B_{2r} \setminus B_{2r/3}(x_0)| r^{-n-2s} r^{\min\{2s,B_0\}-\eps} \\
&\quad + C r^{2s-\min\{2s,B_0\}+\eps} \int_{\R^n \setminus B_{2r}} |x|^{-n-2s} |x|^{\min\{2s,B_0\}-\eps} \d x \le C.
\end{align*}

Plugging the previous estimates into \eqref{eq:int-reg-appl}, we deduce \eqref{eq:claim-thm}, as desired. 
\end{proof}

\subsection{An expansion at boundary points of order larger than $2s$}
\label{subsec:expansion-bigger-2s}

Throughout this subsection we assume that $B_0 > 2s$ and we prove an expansion of the solutions to the nonlocal Neumann problem of order larger than $2s$, thereby complementing the result in \autoref{thm:bdry-expansion}. Let us recall from \autoref{prop:zeros-final} that $B_0 < 2s + \frac{1}{2}$ when $s \in (0,\frac{1}{2}]$ and $B_0 < s+1$ when $s \in [\frac{1}{2},1)$. In particular, 
for every~$ s \in (0,1)$,
\begin{align}
\label{eq:B0-trivial-bound}
B_0 < s + 1 < 2 .
\end{align}

The proof in this higher regularity regime is significantly more involved since the blow-up argument leads to functions that grow too fast for the nonlocal operators to be evaluated in a classical sense. We solve this problem by a two-step procedure. First, we take incremental quotients of the blow-up sequence in the tangential directions (which exhibit slower growth at infinity) and pass them to the limit to deduce that the blow-up limit must be a 1D function, depending only on the normal direction. Using this information, in a second step we justify that the blow-up limit is a solution to the 1D nonlocal Neumann problem in the sense of \autoref{def:faster-growth}, which allows us to conclude the proof by an application of \autoref{thm:Neumann-Liouville-growth}.

\begin{theorem}
\label{thm:bdry-expansion-higher}
Let $s \in (0,1)$, $\beta \in (0,\min\{1,2s\})$, and $\gamma \in (s,1]$ be such that $\beta+\gamma \in (2s , B_0)$ and $\beta + \gamma \not = 1$. 
Additionally,
when $\beta + \gamma > 1$
we assume that $\gamma \le 2s$. 

Let $\Omega \subset \R^n$ be a bounded $C^{1,1}$ domain with $0 \in \partial \Omega$ and $\nu(0) = e_n$. 

Let $u$ be a weak solution in the sense of \autoref{def:weak-sol-Neumann} to
\begin{align*}
\begin{cases}
(-\Delta)^s u &= f ~~ \text{ in } \Omega \cap B_{1/2},\\
\cN_{\Omega}^s u &= 0 ~~ \text{ in } B_{1/2} \setminus \Omega,
\end{cases}
\end{align*}
and $f \in C^{\beta + \gamma - 2s}(\Omega \cap B_{1/2})$. 

Then, there exist $C > 0$, and $b \in \R^n$ with $b \cdot e_n = 0$, such that it holds for any $r \in (0,\frac{1}{4}]$,
\begin{align*}
[u - u(0) - \1_{\{ \beta + \gamma > 1 \}} P_b]_{C^{\gamma}(B_{r})} \le C r^{\beta} \left( \Vert u \Vert_{L^{\infty}(\R^n)} + \Vert u \Vert_{C^{\gamma}(B_{1/2})} + \Vert f \Vert_{C^{\beta + \gamma - 2s}(\Omega \cap B_{1/2})} \right),
\end{align*}
where $B_0$ is as in \eqref{eq:B0} and $P_b$ is as in \autoref{subsec:correction} satisfying \eqref{eq:Pb-integral-vanish} and \eqref{eq:Pb-complement-rep} in $\Omega^c$. 

Moreover, $C > 0$ depends only on $n,s,\beta,\gamma$, and the $C^{1,1}$ norm of $\Omega$.
\end{theorem}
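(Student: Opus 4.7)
I would follow the strategy of \autoref{thm:bdry-expansion} but blow up in the stronger $C^\gamma$ seminorm. After dividing by the natural constant, I would suppose the conclusion fails and produce sequences $(\Omega_k,\Phi_k,u_k,f_k)$ with the normalization $\Vert u_k\Vert_{L^\infty}+\Vert u_k\Vert_{C^\gamma(B_{1/2})}+\Vert f_k\Vert_{C^{\beta+\gamma-2s}(\Omega_k\cap B_{1/2})}\le 1$ and $C_k\to\infty$ violating the estimate. For each $r\in(0,\tfrac14]$ and each $k$, take $P_{k,r}\in\mathcal P_{k,r}:=\{P_b:b\cdot e_n=0\}$ (using \autoref{remark:Pb-subspace}, and $P_{k,r}\equiv 0$ if $\beta+\gamma<1$) as the $L^2(\Omega_k\cap B_r)$-projection of $u_k-u_k(0)$. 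Introduce the monotone envelope
\[\theta(r)=\sup_k\sup_{\rho\in[r,1/4]}\rho^{-\beta}[u_k-u_k(0)-P_{k,\rho}]_{C^\gamma(B_\rho)},\]
which, by \autoref{lemma:theta-increasing} combined with the contradiction hypothesis, satisfies $\theta(r)\nearrow\infty$, and extract subsequences $(k_j,r_j)$ with $r_j\searrow 0$ nearly attaining the supremum.

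\textbf{Blow-up and a priori estimates.} Define the blow-up sequence
\[v_j(x):=\frac{u_{k_j}(r_j x)-u_{k_j}(0)-P_{k_j,r_j}(r_j x)}{r_j^{\beta+\gamma}\theta(r_j)},\qquad x\in\tilde\Omega_j:=r_j^{-1}\Omega_{k_j}.\]
Then $v_j(0)=0$, $[v_j]_{C^\gamma(\tilde\Omega_j\cap B_1)}\ge\tfrac12$, and $v_j$ is orthogonal in $L^2(\tilde\Omega_j\cap B_1)$ to the rescaled correction space. Apply \autoref{lemma:aux-theta} to obtain $[v_j]_{C^\gamma(\tilde\Omega_j\cap B_R)}\le CR^\beta$ for $R\le r_j^{-1}/4$ (and hence $\Vert v_j\Vert_{L^\infty(B_R)}\le CR^{\beta+\gamma}$), together with $|b_{k_j,r_j}|/\theta(r_j)\to 0$. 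The function $v_j$ satisfies, in the weak/distributional sense,
\[(-\Delta)^s v_j=\tilde f_j\quad\text{in }\tilde\Omega_j\cap B_{r_j^{-1}/2},\qquad \cN^s_{\tilde\Omega_j}v_j=0\quad\text{in }B_{r_j^{-1}/2}\setminus\tilde\Omega_j,\]
where $\tilde f_j$ bundles the rescaled source and the contribution of $(-\Delta)^s P_{k_j,r_j}$. Using that $\Omega\in C^{1,1}$ makes \autoref{lemma:LPb-estimate-2} available with $\eps=\beta+\gamma-2s$ (since $1+\alpha=2\ge 2s+\eps$) and exploiting the assumption $f\in C^{\beta+\gamma-2s}$, verify that $\tilde f_j\to 0$ locally in $C^{\beta+\gamma-2s}$, with a uniform polynomial control in $R$. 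By Arzel\`a--Ascoli I extract a locally uniform limit $v\in C^\gamma_{\loc}(\R^n)$, satisfying $v(0)=0$, $[v]_{C^\gamma(B_1)}\ge\tfrac12$, $[v]_{C^\gamma(B_R)}\le CR^\beta$, and the orthogonality $\int_{\{x_n>0\}\cap B_1}v(x)(b\cdot x)\,dx=0$ for every $b$ with $b_n=0$.

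\textbf{Passing to the limit in the equation: the main obstacle.} Here the difficulty of the higher-order regime appears: since $\beta+\gamma>2s$, the operators $(-\Delta)^s v$ and $\cN^s v$ need not be defined pointwise, so the formalism of \autoref{def:up-to-poly} becomes indispensable. I would show that for every $R\ge 2$ the truncated equations
\[(-\Delta)^s(v\1_{B_R})=E_R+c_R\text{ in }\{x_n>0\}\cap B_{R/2},\qquad \cN^s_{\{x_n>0\}}(v\1_{B_R})=F_R+d_R\text{ in }\{x_n<0\}\cap B_{R/2},\]
hold with $E_R,F_R$ locally vanishing as $R\to\infty$. To this end I would apply $(-\Delta)^s$ and $\cN^s$ to $v_j\1_{B_R}$, controlling the tails using $\beta+\gamma<B_0<s+1<1+2s$, and pass to the limit by dominated convergence; the convergence $\tilde\Omega_j\to\{x_n>0\}$ in $C^{1,1}$ and the kernel estimates \eqref{eq:k-upper} ensure that $\cN^s_{\tilde\Omega_j}$ converges to $\cN^s_{\{x_n>0\}}$ on smooth test regions. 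This is the technically most delicate step, both because of the tail control and because the equation must be understood only up to polynomial corrections.

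\textbf{Reduction to one dimension and conclusion.} Finally I reduce $v$ to a function of $x_n$ alone and invoke the 1D Liouville theorem with faster growth. For $h\in\R^n$ with $h_n=0$, the tangential increment $w_h(x):=v(x+h)-v(x)$ inherits the Neumann problem for the half-space (by translation invariance in the tangential direction) and satisfies $\Vert w_h\Vert_{L^\infty(B_R)}\le C|h|^\gamma R^\beta$. Because $\beta<\min\{1,2s\}\le\min\{B_0,2s\}$, the growth of $w_h$ falls within the range of the lower-order Liouville theorem \autoref{thm:Neumann-Liouville-nd}, which yields $w_h(x)=a_h+c_h\cdot x$ with $c_h\cdot e_n=0$. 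The growth $O(|x|^\beta)$ with $\beta<1$ forces $c_h=0$, so $w_h$ is constant in $x$; this makes $v$ affine in the tangential coordinates, $v(x)=L(x')+g(x_n)$, and the orthogonality of $v$ to tangential linear functions kills $L$. Thus $v=g(x_n)$, and by the limit equation analyzed above $g$ is a distributional solution with faster growth to the 1D Neumann problem \eqref{eq:faster-growth} of order at most $\beta+\gamma<B_0$. \autoref{thm:Neumann-Liouville-growth} then yields $g\equiv 0$, contradicting $[v]_{C^\gamma(B_1)}\ge\tfrac12$.
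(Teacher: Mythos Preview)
Your overall strategy matches the paper's: contradiction and compactness in the $C^\gamma$ seminorm, blow-up, tangential increments to reduce to 1D, then \autoref{thm:Neumann-Liouville-growth}. However, there is a genuine technical gap. You only extract $v\in C^\gamma_{\loc}(\R^n)$, but $\gamma$ can be $\le 2s$, so neither $(-\Delta)^s v$ nor $(-\Delta)^s v^{(h)}$ is defined pointwise, and more importantly \autoref{thm:Neumann-Liouville-nd} applies to \emph{weak} solutions in the sense of \autoref{def:weak-sol}. To bridge this you need, for every $x_0\in\tilde\Omega_j\cap B_R$ with $\rho=\min\{d_{\tilde\Omega_j}(x_0)/4,1\}$, a uniform interior estimate of the form
\[
[v_j]_{C^{2s+\delta}(B_\rho(x_0))}\le C(R)\,\rho^{\delta-1}
\]
for some $\delta>0$ (this is \eqref{eq:vj-2spluseps} in the paper, obtained from the interior Schauder estimate \cite[Proposition~3.9]{AbRo20} applied to $v_j-v_j(x_0)$, using your $C^\gamma$ growth to control the weighted tail). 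This estimate is what allows you to (i) upgrade $v_j$ to a classical solution via \autoref{lemma:strong-weak}, (ii) pass to a limit $v\in C^{2s+\delta}_{\loc}(\{x_n>0\})$, (iii) invoke \autoref{lemma:incr-sol} and \autoref{lemma:incr-sol-Neumann} so that $v^{(h)}$ solves the equation classically, and then (iv) apply \autoref{lemma:strong-weak} again to $v^{(h)}$ so that it becomes a \emph{weak} solution and \autoref{thm:Neumann-Liouville-nd} is actually applicable. Without this regularity step the chain breaks at several points.

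Two smaller remarks. First, the paper takes the $L^2(B_r)$ projection (full ball, since $P_b$ is extended to $\Omega^c$ via \eqref{eq:Pb-complement-rep}), which is more natural given that the seminorm in the claim is over $B_r$; your $L^2(\Omega_k\cap B_r)$ choice still works for the final contradiction but is slightly mismatched with $\theta$. Second, after the tangential increments you get $v(x)=g(x_n)+b\cdot x'$; the paper keeps $b\cdot x'$, applies \autoref{lemma:1D-reduction} to $w=v-b\cdot x$ to obtain the 1D distributional equation rigorously, uses \autoref{thm:Neumann-Liouville-growth}, and only then kills $b$ by orthogonality. Your shortcut of killing $b$ first via orthogonality does work (the cross term $\int g(x_n)(b\cdot x')$ vanishes by odd symmetry), but you still need \autoref{lemma:1D-reduction} to pass from the $n$-dimensional ``up to polynomials'' equation to the 1D one in the sense of \autoref{def:faster-growth}; this is not automatic and should not be absorbed into ``by the limit equation analyzed above''.
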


Note that, differently than \autoref{thm:bdry-expansion}, here we prove an expansion inside and outside $\Omega$ at the same time. However, we also assume implicitly that $u \in C^{\gamma}(B_1)$. We will verify this assumption by an application of \autoref{lemma:outside-bound}.

Before we prove \autoref{thm:bdry-expansion-higher}, we need to state the following three technical lemmas. The first one says that classical solutions and weak solutions to $L_{\{ x_n > 0 \}} u = 0$ coincide in case they possess sufficient regularity up to the boundary. The second one shows that if $u$ is a solution to the nonlocal Neumann problem in the half-space up to a constant, then its tangential increments are strong solutions. Finally, the third one is reminiscent of \cite[Lemma 5.11]{AFR23} and reduces an $n$-dimensional Neumann problem in the half-space to a 1D problem if the solution is 1D.

\begin{lemma}
\label{lemma:strong-weak}
Let $\Omega \subset \R^n$ be such that $\partial \Omega \in C^{1,\eps}$ for some $\eps > 0$ and let $B \subset \R^n$ be compact. Let $u \in C^{2s+\eps}_{loc}(\Omega \cap B) \cap C^{\gamma}(\bar{\Omega} \cap B)$ for some $\eps > 0$ and $\gamma \in (s , 1]$ be such that 
\begin{align}
\label{eq:growth-ass-stong-weak}
\Vert u \Vert_{L^{\infty}(\Omega \cap B_R)} &\le C (1 + R)^{2s-\eps} ~~ \forall R \ge 1,\\
\label{eq:bdry-ass-strong-weak}
[u]_{C^{2s+\eps}(B_r(x_0))} &\le C r^{\eps - 1} \qquad\qquad \forall x_0 \in \Omega \cap B, ~~ r = \min\{d_{\Omega}(x_0)/2 , 1\}.
\end{align}

Let $f \in L^{\infty}(\Omega \cap B)$. Then, $u$ is a classical solution to
\begin{align*}
L_{\Omega} u &= f ~~ \text{ in } \Omega \cap B
\end{align*}
if and only if it is a weak solution to $L_{\Omega} u = f$ in $\Omega \cap B$ with Neumann boundary conditions on $\partial \Omega \cap B$ in the sense of \autoref{def:weak-sol}.
\end{lemma}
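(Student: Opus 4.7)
The plan is to prove the equivalence in three stages: establish that $L_\Omega u$ is well-defined pointwise and continuous on $\Omega \cap B$; derive the weak form from the classical equation by a Fubini symmetrization; and recover the classical equation from the weak one by testing against mollifiers.

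As a preliminary, I would show that $x \mapsto L_\Omega u(x)$, defined via the principal value integral over $\Omega$, is continuous on $\Omega \cap B$. For fixed $x_0 \in \Omega \cap B$ with $d = d_\Omega(x_0)$, split the integral into three regions. On $B_{d/2}(x_0)$ the interior regularity \eqref{eq:bdry-ass-strong-weak} supplies a $C^{2s+\eps}$ bound whose scaling absorbs the singularity $|x_0-y|^{-n-2s}$ via second-order antisymmetric cancellation. On the intermediate annulus $(\Omega \cap B_1(x_0)) \setminus B_{d/2}(x_0)$, the bounds on $k_\Omega$ from the $n$-dimensional analog of \eqref{eq:k-upper}, combined with $u \in C^\gamma(\overline\Omega \cap B)$ and $\gamma > s$, suffice (argued exactly as in \autoref{lemma:Neumann-L}). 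On the exterior $\Omega \setminus B_1(x_0)$, the growth \eqref{eq:growth-ass-stong-weak} with exponent $2s-\eps$ combined with the decay $K_\Omega(x_0,y) \le C|x_0-y|^{-n-2s}$ at infinity gives absolute convergence.

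For the direction classical $\Rightarrow$ weak, fix any $\eta \in H_K(B)$ with $\eta \equiv 0$ on $\R^n \setminus B$, multiply the pointwise identity $L_\Omega u(x) = f(x)$ by $\eta(x)$, and integrate over $\Omega \cap B$. By Fubini and the symmetry $K_\Omega(x,y) = K_\Omega(y,x)$,
\begin{align*}
\int_\Omega \eta(x) L_\Omega u(x) \, dx = \iint_{\Omega \times \Omega} \eta(x)(u(x)-u(y)) K_\Omega(x,y) \, dy \, dx = \tfrac{1}{2} B_\Omega(u,\eta),
\end{align*}
which yields the weak formulation (up to the convention of the bilinear form). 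The key point is justifying Fubini through absolute integrability of the symmetric integrand on $\Omega \times \Omega$: by Cauchy--Schwarz, the portion where both variables lie in a bounded neighborhood of $\supp(\eta)$ is controlled by $B_\Omega(u|_{B'}, u|_{B'})^{1/2} B_\Omega(\eta,\eta)^{1/2}$, finite because $u \in C^\gamma \subset H^s_{\loc}$ and $\eta \in H_K(B)$, while the piece where $y$ is far from $\supp(\eta)$ converges by \eqref{eq:growth-ass-stong-weak} and the kernel decay at infinity.

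For weak $\Rightarrow$ classical, fix an arbitrary $x_0 \in \Omega \cap B$ and let $\eta_\delta \in C_c^\infty(B_\delta(x_0))$ be a standard mollifier with $\int \eta_\delta = 1$ and $\delta$ small enough that $B_\delta(x_0) \subset \Omega \cap B$. Then $\eta_\delta \in H_K(B)$ vanishes outside $B$, so the weak formulation gives $B_\Omega(u,\eta_\delta) = \int f \eta_\delta \, dx$; symmetrizing as above yields $2 \int \eta_\delta L_\Omega u \, dx = \int f \eta_\delta \, dx$. Continuity of $L_\Omega u$ (from Stage 1) and of $f$ at $x_0$ allow passing $\delta \to 0$ to conclude $L_\Omega u(x_0) = f(x_0)$; since $x_0$ was arbitrary, $u$ is a classical solution. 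The main obstacle throughout is controlling the additional kernel $k_\Omega$ in the Fubini step, which can blow up like $d_\Omega^{-1-2s}$ near the boundary; the bound \eqref{eq:k-upper}, combined with $\gamma > s$ and the growth \eqref{eq:growth-ass-stong-weak}, is precisely what renders the integrand absolutely integrable on $\Omega \times \Omega$.
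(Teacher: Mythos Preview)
Your overall strategy matches the paper's, but there is a genuine gap in the symmetrization step. You write
\[
\int_\Omega \eta(x)\, L_\Omega u(x)\,dx \;=\; \iint_{\Omega\times\Omega} \eta(x)\bigl(u(x)-u(y)\bigr)K_\Omega(x,y)\,dy\,dx \;=\; \tfrac{1}{2}\,B_\Omega(u,\eta),
\]
and justify Fubini via absolute integrability of the \emph{symmetric} integrand $(u(x)-u(y))(\eta(x)-\eta(y))K_\Omega$. But the first equality---pulling the principal value outside the $x$-integral---requires absolute convergence of the \emph{one-sided} integrand $\eta(x)|u(x)-u(y)|K_\Omega(x,y)$, and this fails: near the diagonal it behaves like $|x-y|^{-n-2s+\gamma}$, which is not integrable once $\gamma<2s$ (so in particular whenever $s>\tfrac12$, since $\gamma\le1$). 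Your Cauchy--Schwarz bound only controls the symmetric form. The paper fixes this by $\varepsilon$-regularization: one restricts to $\{|x-y|\ge\varepsilon\}$, where everything converges absolutely and Fubini applies, symmetrizes there, and then sends $\varepsilon\to0$ using dominated convergence on the symmetric form (which, as you correctly note, is absolutely integrable because $\gamma>s$ and the kernel bound on $K_\Omega$ carries only a logarithmic correction).

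Two smaller points. In the weak $\Rightarrow$ classical direction you pass to the limit using ``continuity of $f$ at $x_0$'', but $f$ is merely $L^\infty$; use Lebesgue points instead, or observe that continuity of $L_\Omega u$ together with $L_\Omega u=f$ a.e.\ already gives the classical identity. And your Stage~1 establishes only interior continuity of $L_\Omega u$; the paper proves the quantitative estimate $|L_\Omega u(x)|\le C\,d_\Omega(x)^{-1+\delta}$, which is what makes $\int_\Omega \eta\,L_\Omega u$ absolutely convergent for test functions $\eta\in H_K(B)$ that do not vanish on $\partial\Omega\cap B$---needed for the full weak formulation, though your interior mollifiers $\eta_\delta$ avoid this issue.
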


\begin{proof}
We claim that there exists~$\delta > 0$ such that, for any $R > 1$, 
\begin{align}
\label{eq:strong-weak-claim}
|L_{\Omega} u(x)| \le C d_{\Omega}^{-1 + \delta}(x) ~~ \forall x \in \Omega \cap B.
\end{align}
This follows by refining the computation in \cite[equation~(6.4)]{AFR23}. Indeed, for $x \in \Omega \cap B$ and $r = \min\{ d_{\Omega}(x)/2 , 1\}$ we have that
\begin{align*}
|L_{\Omega} u(x)| &\le C[u]_{C^{2s+\eps}(B_r(x))} + \int_{B_r(x)} |u(x) - u(y)| k_{\Omega}(x,y) \d y + \int_{\Omega \setminus B_r(x)} |u(x) - u(y)| K_{\Omega}(x,y) \d y  \\
&=: I_1 + I_2 + I_3.
\end{align*}
To estimate $I_2$, we observe that if~$y \in B_r(x)$ then~$k_{\Omega}(x,y) \le C d_{\Omega}^{-n-2s}(x)$ and accordingly
\begin{align*}
I_2 \le C [u]_{C^{\gamma}(B_r(x))} r^{-2s+\gamma}.
\end{align*}
For $I_3$, we remark that
\begin{align}
\label{eq:K-bound}
K_{\Omega}(x,y) \le C \left(1 + \log_- \left( \frac{\min\{d_{\Omega}(x) , d_{\Omega}(y) \}}{|x-y|} \right) \right) |x-y|^{-n-2s} ~~ \forall x,y \in \Omega.
\end{align}
Hence, recalling \eqref{eq:growth-ass-stong-weak},
\begin{align*}
I_3 &\le \int_{(\Omega \cap B_1(x)) \setminus B_r(x)} |u(x) - u(y)| K_{\Omega}(x,y) \d y \\
&\quad + C \int_{\Omega \setminus B_1(x)} |u(x)| K_{\Omega}(x,y) \d y + C \int_{\Omega \setminus B_1(x)} K_{\Omega}(x,y) \d y \\
&\le C [u]_{C^{\gamma}(\overline{\Omega} \cap B_1(x))} \log(r) r^{-2s+\gamma} + C \log(r) (1 + |u(x)|). 
\end{align*}

As a result, using \eqref{eq:bdry-ass-strong-weak}, we get
\begin{align*}
|L_{\Omega} u(x)| &\le C \big( [u]_{C^{2s+\eps}(B_r(x))} + [u]_{C^{\gamma}(B_1(x))} (1 + |\log(r)|) r^{-2s+\gamma} + (1 + |\log(r)|) (1 + |u(x)|) \big)\\
&\le C(R) \big( r^{\eps - 1} + (1 + |\log(r)|) r^{-2s+\gamma} \big),
\end{align*}
thereby proving \eqref{eq:strong-weak-claim} (since $\gamma > \max\{2s-1,0\}$), as claimed.

Let us now take $\eta \in C^{\infty}_c(B)$. Then, proceeding as in \cite[equation~(6.20)]{AFR23}, i.e. integrating over domains $\{ (x,y) \in \Omega \times \Omega : |x-y| \ge \eps \}$ and using again \eqref{eq:growth-ass-stong-weak} to deduce an estimate analogous to \cite[equation~(6.21)]{AFR23},
after sending $\eps \searrow 0$ we obtain that
\begin{align}
\label{eq:weak-strong}
\int_{\Omega} \eta(x) L_{\Omega} u(x) \d x = \int_{\Omega} \int_{\Omega} (\eta(x) - \eta(y)) (u(x) - u(y)) K_{\Omega}(x,y) \d y \d x.
\end{align}
Moreover, both integrals converge absolutely, owing to \eqref{eq:strong-weak-claim}. Since $\gamma > s$, we also have that~$u \in H_K(\Omega \cap B)$ for any compact set $B \subset \R^n$. Indeed, by \eqref{eq:K-bound}, we deduce that
\begin{align*}
& \int_{\Omega \cap B} \int_{\Omega \cap B} |u(x) - u(y)| K_{\Omega}(x,y) \d y \d x \\
&\quad \le C [u]_{C^{\gamma}(\Omega \cap B)}^2 \int_{\Omega \cap B} \int_{\Omega \cap B} |x-y|^{-n-2s+2\gamma} \left(1 + \log_- \left( \frac{\min\{d_{\Omega}(x) , d_{\Omega}(x) \}}{|x-y|} \right) \right) \d y \d x \le C(B) < \infty.
\end{align*}

Thus, if $u$ is a classical solution, then it is also weak solution in the sense of \autoref{def:weak-sol}, since $u \in H_K(\Omega \cap B)$ and the double integral in \eqref{eq:weak-strong} equals to $\int_{\Omega \cap B} \eta f \d x$ by assumption. Furthermore, if $u$ is a weak solution, then the left-hand side in \eqref{eq:weak-strong} equals to $\int_{\Omega \cap B} \eta f \d x$, and therefore the desired result follows by density.
\end{proof}

\begin{lemma}
\label{lemma:incr-sol}
Let $u \in C^{2s+\eps}_{loc}(\{ x_n > 0 \})$ for some $\eps > 0$ be a  solution to
\begin{align*}
(-\Delta)^s u \overset{1}{=} 0 ~~ \text{ in } \{ x_n > 0 \}
,
\end{align*}
in the sense of \autoref{def:up-to-poly} with 
\begin{align}
\label{eq:u-uh-growth}
\Vert u^{(h)} \Vert_{L^{\infty}(B_R)} \le C (1 + R)^{2s-\eps}, \qquad \Vert u \Vert_{L^{\infty}(B_R)} \le C (1 + R)^{2s+1-\eps} ~~ \forall R \ge 2,
\end{align}
where $h \in B_1$ and $u^{(h)} = u(\cdot + h) - u(\cdot)$.

Then, $u^{(h)}$ is a classical solution to
\begin{align*}
(-\Delta)^s u^{(h)} = 0 ~~ \text{ in } \{ x_n > (h_n)_- \}.
\end{align*}
\end{lemma}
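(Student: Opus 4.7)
The plan is to exploit the first-order cancellation in $u^{(h)}$: although $u$ may grow like $|x|^{2s+1-\eps}$, so that $(-\Delta)^s u$ does not make pointwise sense, the assumption gives $|u^{(h)}(x)| \le C(1+|x|)^{2s-\eps}$. For any $x_0 \in \{x_n > (h_n)_-\}$ both $x_0$ and $x_0+h$ lie in $\{x_n > 0\}$, so $u^{(h)}$ inherits $C^{2s+\eps}_{\loc}$ regularity near $x_0$ from $u$; combined with the growth bound $2s-\eps < 2s$ on $u^{(h)}$, this will make $(-\Delta)^s u^{(h)}(x_0)$ a classically convergent singular integral. The whole task then reduces to proving that this value is zero.

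To implement this, I would fix $R\ge 2$ so large that $\{x_0, x_0+h\}\subset B_{R/4}$ and invoke \autoref{def:up-to-poly} at both points; subtracting, the unknown constant $c_R$ cancels and
\[ (-\Delta)^s(u\1_{B_R})(x_0+h) - (-\Delta)^s(u\1_{B_R})(x_0) = E_R(x_0+h) - E_R(x_0), \]
which tends to $0$ as $R\to\infty$ by the locally uniform convergence $E_R \to 0$. After the substitution $z = y - h$ in the first term, the left-hand side rewrites as the singular integral
\[ c_{n,s}\,\textnormal{p.v.}\!\int_{\R^n}\frac{u^{(h)}_R(x_0) - u^{(h)}_R(y)}{|x_0-y|^{n+2s}}\,\d y,\qquad u^{(h)}_R(y) := u\1_{B_R}(y+h) - u\1_{B_R}(y). \]
Since $u^{(h)}_R(x_0) = u^{(h)}(x_0)$ and $u^{(h)}_R(y) = u^{(h)}(y)$ whenever $|y|\le R-|h|$, what remains is to check that the contribution of the complement vanishes as $R\to\infty$.

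This is the only real obstacle, and it splits into two pieces. On the outer tail $\{|y|>R+|h|\}$ the truncated increment is zero and the error coincides with $u^{(h)}(y)$, whose contribution is controlled by $\int_{|y|>R}(1+|y|)^{2s-\eps}|y|^{-n-2s}\,\d y \lesssim R^{-\eps}$ via the growth of $u^{(h)}$. On the thin shell $\{R-|h|\le |y|\le R+|h|\}$ the pointwise error is bounded only by $|u(y+h)|+|u(y)|\lesssim R^{2s+1-\eps}$, but the shell has measure $O(R^{n-1})$, which again yields a contribution of order $R^{-\eps}$. Both terms tend to $0$, so the left-hand side converges to $(-\Delta)^s u^{(h)}(x_0)$; combined with $E_R(x_0+h) - E_R(x_0)\to 0$, this forces $(-\Delta)^s u^{(h)}(x_0) = 0$ for every $x_0 \in \{x_n > (h_n)_-\}$. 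The essential point is that a first-order increment buys one order of growth, which is precisely what renders both the tail and the annular shell integrable in the limit; making this cancellation and the resulting bookkeeping precise is where the whole work sits.
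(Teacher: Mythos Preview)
Your proposal is correct and follows essentially the same approach as the paper. Both arguments hinge on the identity $(-\Delta)^s(u\1_{B_R})(x_0+h)-(-\Delta)^s(u\1_{B_R})(x_0)=E_R(x_0+h)-E_R(x_0)$ (which kills the constant $c_R$), and both isolate the same two error terms---the outer tail controlled by the $(1+|x|)^{2s-\eps}$ growth of $u^{(h)}$, and the thin shell of width $|h|$ controlled by the $(1+|x|)^{2s+1-\eps}$ growth of $u$ together with the measure bound $|B_{R+|h|}\setminus B_{R-|h|}|\lesssim |h|R^{n-1}$---each of order $R^{-\eps}$. The paper packages this via the algebraic splitting $u^{(h)}\1_{B_R}=(u\1_{B_R})^{(h)}-u(\cdot+h)\1_{B_R}^{(h)}$ into three pieces $I_1^R+I_2^R+I_3^R$, whereas you compare $u^{(h)}_R:=(u\1_{B_R})^{(h)}$ directly to $u^{(h)}$, but the content is identical.
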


\begin{proof}
Let $h \in B_1$. By assumption, $u^{(h)} \in C^{2s+\eps}_{loc}(\{ x_n > 0 \}) \cap L^1_{2s}(\R^n)$ and  therefore $(-\Delta)^s u^{(h)}(x)$ is well-defined for any $x \in \{ x_n > (h_n)_- \}$.

Moreover, for any $R > 4$ and $x \in B_{R/2} \cap \{ x_n > 0 \}$,
\begin{align*}
(-\Delta)^s (u \1_{B_R})(x) = E_R(x) + c_R ~~ \forall x \in \{ x_n > (h_n)_- \},
\end{align*}
where $E_R \to 0$ as $R \to \infty$ locally uniformly in $\{ x_n > (h_n)_- \}$ and $c_R \in \R$. 

Hence,
\begin{align}
\label{eq:incr-quot-help}
\begin{split}
(-\Delta)^s u^{(h)}(x) &= (-\Delta)^s (u^{(h)} \1_{B_R}) (x) + (-\Delta)^s (u^{(h)} \1_{B_R^c}) (x)  \\
&= (-\Delta)^s ((u\1_{B_R})^{(h)})(x) - (-\Delta)^s (u(\cdot + h) \1^{(h)}_{B_R}) (x)  + (-\Delta)^s (u^{(h)} \1_{B_R^c}) (x) \\
&=: I_1^{R}(x) + I_2^R(x) + I_3^R(x).
\end{split}
\end{align}
In particular, one finds that $I_1^R = ((-\Delta)^s (u\1_{B_R}))^{(h)} = E_R^{(h)} \to 0$ as $R \to \infty$ locally uniformly in $x \in \{ x_n > (h_n)_- \}$. 

Additionally, by the growth assumption on $u^{(h)}$ in \eqref{eq:u-uh-growth}, for any $x \in B_{R/2}$
we have that
\begin{align*}
|I_3^R(x)| \le C \int_{B_R^c} |u^{(h)}(y)| |x-y|^{-n-2s} \d y \le C \int_{B_R^c} |y|^{-n-\eps} \d y \le C R^{-\eps} \to 0 ~~ \text{ as } R \to \infty.
\end{align*}
Consequently, $I_3^R \to 0$ locally uniformly in $\{ x_n > 0 \}$. 

To deal with $I_2^R$, we observe that $\supp(\1_{B_R}^{(h)}) \subset B_{R+|h|} \setminus B_{R - |h|}$, and thus, using also the growth assumption on $u$ from \eqref{eq:u-uh-growth}, we deduce that,
when $x \in B_{R/2} \cap \{ x_n > (h_n)_- \}$,
\begin{align*}
|I_2^R(x)| &\le C \int_{B_{R + |h|} \setminus B_{R - |h|}} |u(y+h)| |x-y|^{-n-2s} \d y \\
&\le C |B_{R + |h|} \setminus B_{R - |h|}| R^{2s+1-\eps} R^{-n-2s} \le C |h| R^{n-1} R^{1-\eps-n} \le C |h| R^{-\eps} \to 0 ~~ \text{ as } R \to \infty,
\end{align*}
yielding that $I_3^R \to 0$ locally uniformly in $\{ x_n > (h_n)_- \}$.

As a result, we deduce that $(-\Delta)^s u^{(h)} = 0$ in $\{ x_n > (h_n)_- \}$, as desired.
\end{proof}

\begin{lemma}
\label{lemma:incr-sol-Neumann}
Let $\eps > 0$ and $u$ be a solution to
\begin{align*}
\cN_{\{ x_n > 0 \}}^s u &\overset{1}{=} 0 ~~ \text{ in } \{ x_n < 0 \}
\end{align*}
in the sense of \autoref{def:up-to-poly}, with 
\begin{align*}
\Vert u^{(h)} \Vert_{L^{\infty}(B_R \cap \{ x_n > 0 \})} \le C (1 + R)^{2s-\eps}, \qquad \Vert u \Vert_{L^{\infty}(B_R \cap \{ x_n > - (h_n)_- \})} \le C (1 + R)^{2s+1-\eps} ~~ \forall R \ge 2,
\end{align*}
where $h \in B_1$, and $u^{(h)} = u(\cdot + h) - u(\cdot)$. 

Then, $u^{(h)}$ is a solution to
\begin{align*}
\cN_{\{ x_n > 0 \}}^s u^{(h)} &= \1_{\{h_n \not= 0\}}f_h ~~ \text{ in } \{ x_n < -(h_n)_+ \},
\end{align*}
where
\begin{align*}
f_h(x) = - c_{n,s} \int_{0}^{h_n} \int_{\R^{n-1}} (u(x+h) - u(y)) |x + h -y|^{-n-2s} \d y'\d y_n.
\end{align*}
\end{lemma}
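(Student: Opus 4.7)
The plan is to mimic the structure of the proof of \autoref{lemma:incr-sol}, adapted to the Neumann operator. The essential new feature, compared to the fractional Laplacian case, is that translations do not commute with $\cN^s_{\{x_n>0\}}$, because the half-space $\{y_n>0\}$ over which the operator integrates is not translation-invariant. This mismatch is precisely what produces the source term $f_h$, and explains why $f_h$ vanishes when $h_n=0$.

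First I would fix $h\in B_1$ and $x\in\{x_n<-(h_n)_+\}$, so that both $x$ and $x+h$ lie in $\{x_n<0\}$ at positive distance from $\{y_n=0\}$. By the first growth hypothesis on $u^{(h)}$, the integral defining $\cN^s_{\{x_n>0\}}u^{(h)}(x)$ converges absolutely (by the same estimate as in \autoref{remark:distr-well-def}). For $R>4$ large enough that $x,x+h\in B_{R/2}$, I would split
\begin{align*}
\cN^s_{\{x_n>0\}} u^{(h)}(x)=\cN^s_{\{x_n>0\}}(u^{(h)}\1_{B_R})(x)+\cN^s_{\{x_n>0\}}(u^{(h)}\1_{B_R^c})(x),
\end{align*}
the tail term vanishing as $R\to\infty$ by the $(1+R)^{2s-\eps}$ bound on $u^{(h)}$. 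For the first term, by linearity $\cN^s(u^{(h)}\1_{B_R})=\cN^s(u(\cdot+h)\1_{B_R})-\cN^s(u\1_{B_R})$, and in the first summand I would perform the change of variable $\tilde y=y+h$. This simultaneously shifts the half-space $\{y_n>0\}$ to $\{y_n>h_n\}$ and the cutoff $\1_{B_R}$ to $\1_{B_R+h}$, and after careful bookkeeping produces
\begin{align*}
\cN^s(u^{(h)}\1_{B_R})(x)=\bigl[\cN^s(u\1_{B_R})(x+h)-\cN^s(u\1_{B_R})(x)\bigr]+J_1(R)+J_2(R),
\end{align*}
where $J_1(R)$ collects the contribution of the symmetric difference $B_R\triangle(B_R+h)$ (an annulus of thickness $|h|$ near $\partial B_R$) and $J_2(R)$ collects the contribution of the thin strip $\{y_n\in(0,h_n)\}$ (or $\{y_n\in(h_n,0)\}$ when $h_n<0$).

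Next I would pass to the limit $R\to\infty$. By the up-to-poly hypothesis, the bracketed difference equals $F_R(x+h)-F_R(x)$ (the constants $d_R$ cancel) and therefore vanishes locally uniformly. The annular term $J_1(R)$ is controlled using the second, weaker growth bound $|u(y)|\le C(1+|y|)^{2s+1-\eps}$, integrated over a region of volume $\sim|h|R^{n-1}$ against a kernel of order $R^{-n-2s}$; a direct estimate yields $|J_1(R)|\le C|h|R^{-\eps}\to 0$. For $J_2(R)$, note that $x+h$ has strictly negative $n$-th component and so is at positive distance from the strip, hence the integrand is non-singular. Dominated convergence then identifies
\begin{align*}
\lim_{R\to\infty}J_2(R)=-c_{n,s}\int_{\{0<y_n<h_n\}}\frac{u(x+h)-u(y)}{|x+h-y|^{n+2s}}\,\d y=f_h(x),
\end{align*}
with the analogous expression and the convention $\int_0^{h_n}=-\int_{h_n}^0$ handling the case $h_n<0$ (and both sides being zero when $h_n=0$, consistently with the indicator in the statement).

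The main obstacle will be the change-of-variable bookkeeping isolating $J_1$ and $J_2$ cleanly. In particular, one must recognise that $J_1$ does not benefit from the sharper $u^{(h)}$ bound and must instead be closed using only the $(1+R)^{2s+1-\eps}$ bound on $u$ itself, which is precisely why the second, weaker growth hypothesis is stated in addition to the first, sharper one.
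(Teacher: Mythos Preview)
Your proposal is correct and follows essentially the same approach as the paper. The paper organizes the decomposition slightly differently --- it first uses the algebraic identity $u^{(h)}\1_{B_R}=(u\1_{B_R})^{(h)}-u(\cdot+h)\1_{B_R}^{(h)}$ to split into three pieces $\bar I_1^R,\bar I_2^R,\bar I_3^R$, and then extracts the strip term from $\bar I_1^R$ --- but the resulting terms and the estimates on each (tail via the $u^{(h)}$ bound, annulus via the weaker $u$ bound, strip via dominated convergence, increment of $\cN^s(u\1_{B_R})$ via the up-to-poly hypothesis) are exactly the ones you describe.
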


\begin{proof}
In case $h_n = 0$ the proof goes exactly as in \autoref{lemma:incr-sol}, using crucially that the property $h_n = 0$ keeps the integration domain $\{ x_n > 0 \}$ invariant under the shift $x \mapsto x + h$.

In case $h_n < 0$, we compute, in analogy to \eqref{eq:incr-quot-help},
\begin{align*}
\cN_{\{ x_n > 0 \}}^s u^{(h)}(x) & = \cN_{\{ x_n > 0 \}}^s ((u\1_{B_R})^{(h)})(x) - \cN_{\{ x_n > 0 \}}^s (u(\cdot + h) \1^{(h)}_{B_R}) (x)  + \cN_{\{ x_n > 0 \}}^s (u^{(h)} \1_{B_R^c}) (x) \\
&=: \bar{I}_1^{R}(x) + \bar{I}_2^R(x) + \bar{I}_3^R(x).
\end{align*}
The quantities in $\bar{I}_2^R, \bar{I}_3^R$ satisfy the exact same bounds since the $L^{\infty}$-bound for $u$ from the assumption holds in $\{ x_n > - (h_n)_- \}$, but the integration takes place only over $B_R^c \cap \{ y_n > 0 \}$. 

In contrast, for $\bar{I}_1^R$, we have that
\begin{align}
\label{eq:Neumann-fh-term}
\begin{split}
\bar{I}_1^R(x) &= \cN_{\{ x_n > 0 \}}^s ((u\1_{B_R})^{(h)})(x) \\
&=  (\cN_{\{ x_n > 0 \}}^s (u\1_{B_R}))^{(h)}(x) \\
&\quad + c_{n,s} \int_{0}^{h_n} \int_{\R^{n-1}} (u(x+h) - (u\1_{B_R})(y)) |x + h -y|^{-n-2s} \d y'\d y_n,
\end{split}
\end{align}
and the desired result follows by sending $R \to \infty$.
\end{proof}

\begin{lemma}
\label{lemma:1D-reduction}
Let $u$ be a solution to
\begin{align*}
\begin{cases}
(-\Delta)^s u &\overset{1}{=} 0 ~~ \text{ in } \{ x_n > 0 \},\\
\cN_{\{ x_n  > 0 \}} u &\overset{1}{=} 0 ~~ \text{ in } \{ x_n < 0 \}
\end{cases}
\end{align*}
in the sense of \autoref{def:up-to-poly}, satisfying for some $\eps > 0$,
\begin{align}
\label{eq:growth-ass-1D-reduction}
\Vert u \Vert_{L^{\infty}(B_R)} &\le C (1 + R)^{2s+1-\eps}, \qquad \Vert u^{(h)} \Vert_{L^{\infty}(B_R)} \le C (1 + R)^{2s-\eps} ~~ \forall R \ge 1,\\
\label{eq:bdry-ass-1D-reduction}
[u]_{C^{2s+\eps}(B_r(x_0))} &\le C(R) r^{\eps - 1} \qquad \forall x_0 \in \Omega \cap B_R, ~~ r = \min\{|(x_0)_n|/2 , 1\}.
\end{align}
Moreover, assume that there exists a function $U : \R \to \R$ such that $u(x) = U(x_n)$ for any $x \in \R^n$. 

Then, for any $\eta \in C_{c}^{\infty}(\R)$ with $\int_{-\infty}^0 \eta(t) \d t = \int_{\R} \eta(t) \d t = 0$,
\begin{align*}
\int_{0}^{\infty} (-\Delta)^s \eta (t) U(t) \d t + \int_{-\infty}^{0} \cN^s_{(0,\infty)} \eta(t) U(t) \d t = 0. 
\end{align*}
\end{lemma}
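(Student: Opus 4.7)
The plan is to lift a suitable 1D test function $\eta$ to a tensor product test function on $\R^n$, apply the self-adjointness of the Neumann bilinear form against the truncations $u\1_{B_R}$ (for which Definition \ref{def:up-to-poly} produces the equations up to constants), and finally reduce the resulting $n$-dimensional integral identity to a 1D one via Fubini. The moment conditions on $\eta$ will play two distinct but crucial roles: they will kill the constants $c_R, d_R$ coming from the ``up to polynomial'' definition, and they will provide the additional decay needed to pass to the limit as $R\to\infty$.

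Concretely, fix $\psi\in C_c^\infty(\R^{n-1})$ with $\int_{\R^{n-1}}\psi=1$ and set $\varphi(x):=\eta(x_n)\psi(x')$. Note that $\varphi\in C_c^\infty(\R^n)$, and the assumptions on $\eta$ yield
\begin{align*}
\int_{\R^n}\varphi=0,\qquad \int_{\{x_n>0\}}\varphi=\int_0^\infty\eta=0,\qquad \int_{\{x_n<0\}}\varphi=\int_{-\infty}^0\eta=0.
\end{align*}
For $R$ large enough that $\supp\varphi\subset B_{R/2}$, the truncation $v_R:=u\1_{B_R}$ is bounded with compact support, so the symmetric bilinear form $\cE_{\{x_n>0\}}(v_R,\varphi)$ is well-defined and enjoys the classical two-sided pointwise identity
\begin{align*}
\cE_{\{x_n>0\}}(v_R,\varphi) &= \int_{\{x_n>0\}} (-\Delta)^s v_R\cdot\varphi + \int_{\{x_n<0\}} \cN^s_{\{x_n>0\}}v_R\cdot\varphi\\
&= \int_{\{x_n>0\}} v_R\cdot(-\Delta)^s\varphi + \int_{\{x_n<0\}} v_R\cdot\cN^s_{\{x_n>0\}}\varphi.
\end{align*}
By Definition \ref{def:up-to-poly} the first line equals $\int_{\{x_n>0\}}(E_R+c_R)\varphi+\int_{\{x_n<0\}}(F_R+d_R)\varphi$, and the zero-integral properties of $\varphi$ over $\{x_n>0\}$ and $\{x_n<0\}$ wipe out the constants $c_R,d_R$, while the local uniform convergence $E_R,F_R\to 0$ on $\supp\varphi$ (valid once $R$ is large) makes the remainder tend to $0$.

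The limit of the second line is where the moment conditions pay their second dividend. Since $\varphi\in C_c^\infty$ with $\int_{\R^n}\varphi=0$, one has $|(-\Delta)^s\varphi(x)|\le C(1+|x|)^{-n-2s-1}$; and since $\int_{\{y_n>0\}}\varphi=0$, the analogous Taylor-expansion argument for
\begin{align*}
\cN^s_{\{y_n>0\}}\varphi(x)=-c_{n,s}\int_{\{y_n>0\}}\varphi(y)|x-y|^{-n-2s}\d y\qquad\text{for $|x|$ large with $x_n<0$},
\end{align*}
yields $|\cN^s_{\{y_n>0\}}\varphi(x)|\le C(1+|x|)^{-n-2s-1}$. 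Against the polynomial bound $|u(x)|\le C(1+|x|)^{2s+1-\eps}$ both products are absolutely integrable, so dominated convergence gives
\begin{align*}
\int_{\{x_n>0\}} u(x)\,(-\Delta)^s\varphi(x)\d x + \int_{\{x_n<0\}} u(x)\,\cN^s_{\{x_n>0\}}\varphi(x)\d x = 0.
\end{align*}

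It remains to collapse the $n$-dimensional integrals to 1D ones. By Fubini (now justified by the same absolute integrability), it suffices to verify the two identities
\begin{align*}
\int_{\R^{n-1}}(-\Delta)^s\varphi(x',x_n)\d x' &= (-\Delta)^s_\R\eta(x_n),\qquad x_n\in\R,\\
\int_{\R^{n-1}}\cN^s_{\{y_n>0\}}\varphi(x',x_n)\d x' &= \cN^s_{(0,\infty)}\eta(x_n),\qquad x_n<0.
\end{align*}
The first follows from the Fourier factorisation $\widehat\varphi(\xi)=\widehat\eta(\xi_n)\widehat\psi(\xi')$ evaluated at $\xi'=0$, using $\widehat\psi(0)=\int\psi=1$. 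The second is a direct computation: after swapping dummy variables, the tangential part $\psi(x')-\psi(y')$ integrates to zero by antisymmetry, and the remaining $x_n,y_n$ integral produces exactly $c_{n,s}\int_{\R^{n-1}}(1+|z'|^2)^{-(n+2s)/2}\d z'=c_{1,s}$ times $(\eta(x_n)-\eta(y_n))|x_n-y_n|^{-1-2s}$. Plugging these two identities in finishes the proof.

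The main obstacle is the interplay at Step~3 between the ``up to polynomial'' definition and the limit $R\to\infty$: one must verify both that the constants $c_R,d_R$ are annihilated by the correct moment conditions on $\eta$ and that the nonlocal operators applied to $\varphi$ decay fast enough to pair with the polynomially growing $u$. Both issues are resolved simultaneously by the hypotheses $\int_{-\infty}^0\eta=\int_\R\eta=0$, which is why these precise moment conditions appear in the statement.
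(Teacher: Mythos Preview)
Your tensor-product approach is genuinely different from the paper's and, in several respects, more direct: the paper first takes normal increments $u^{(te_n)}$, uses Lemmas~\ref{lemma:incr-sol} and~\ref{lemma:incr-sol-Neumann} to obtain classical 1D equations for $U^{(t)}$, and then laboriously reconstructs a ``1D up-to-polynomials'' equation for $U$ before integrating by parts in 1D. Your route bypasses this entire increment machinery by lifting $\eta$ to $\varphi=\eta\otimes\psi$ and doing the integration by parts directly in $n$ dimensions, then collapsing via Fubini. The Fourier argument for $\int_{\R^{n-1}}(-\Delta)^s\varphi(\cdot,x_n)=(-\Delta)^s_\R\eta(x_n)$ and the antisymmetry/constant-reduction argument for the Neumann piece are both correct, and the decay bounds $|(-\Delta)^s\varphi|,\,|\cN^s\varphi|\le C(1+|x|)^{-n-2s-1}$ coming from the two moment conditions are exactly right.

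There is, however, a real gap at the step ``the local uniform convergence $E_R,F_R\to0$ on $\supp\varphi$ makes the remainder tend to~$0$.'' Definition~\ref{def:up-to-poly} only gives $E_R\to0$ locally uniformly in the \emph{open} half-space $\{x_n>0\}$, not uniformly on $\supp\varphi$ (which meets $\{x_n=0\}$). Since the constants $c_R$ typically diverge like $R^{1-\eps}$, you cannot dominate $E_R$ by $(-\Delta)^s(u\1_{B_R})$ directly. What is needed is a uniform-in-$R$ bound of the form $|E_R(x)|\le g(x)$ with $g\in L^1(\supp\varphi\cap\{x_n>0\})$, and this is precisely where the hypothesis \eqref{eq:bdry-ass-1D-reduction} enters --- it yields $|(-\Delta)^s(u\1_{B_{R_0}})(x)|\le C(R_0)\,x_n^{\eps-1}$ near the interface, and a tail-difference estimate then gives the domination for $E_R$. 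You never invoke \eqref{eq:bdry-ass-1D-reduction}, and without it both the two-sided pointwise identity $\cE(v_R,\varphi)=\int(-\Delta)^sv_R\cdot\varphi+\int\cN^sv_R\cdot\varphi$ and the passage $\int E_R\varphi\to0$ are unjustified. The paper faces the same issue in 1D and resolves it with exactly this hypothesis (see the ``technical observation'' near the end of its proof). Once you insert this domination argument, your proof is complete and considerably shorter than the paper's.
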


\begin{proof}
We deduce from \autoref{lemma:incr-sol} that, for $h = t e_n$, 
\begin{align*}
(-\Delta)^s u^{(h)} = 0 ~~ \text{ in } \{ x_n > t_- \}.
\end{align*}
Since $u^{(h)}(x) = U^{(t)}(x_n)$, we deduce from \cite[Lemma B.1.5]{FeRo24} that 
\begin{align*}
(-\Delta)^s_{\R} U^{(t)} = 0 ~~ \text{ in } (t_- , \infty).
\end{align*}
Let now $R > 2$ and $x \in (0,\frac{R}{2})$. Then, as in \eqref{eq:incr-quot-help},
we compute that, for any $t \in (-x,\infty)$,
\begin{align}
\label{eq:error-finding}
\begin{split}
((-\Delta)^s_{\R} (U \1_{B_R}))^{(t)}(x) &= (-\Delta)^s_{\R} ((U \1_{B_R})^{(t)})(x) \\
&= I_2^{R,t}(x) + I_3^{R,t}(x) + (-\Delta)^s_{\R} U^{(t)}(x) = I_2^{R,t}(x) + I_3^{R,t}(x),
\end{split}
\end{align}
where $I_2^{R,t}$ and $I_3^{R,t}$ are as in \eqref{eq:incr-quot-help} and satisfy, for any $|t| < 1$,
\begin{align}
\label{eq:error-to-zero}
|I_2^{R,t}(x)| \le C |t| R^{-\eps}, \qquad |I_3^{R,t}(x)| \le C  R^{-\eps}.
\end{align}
Hence, given any $M \ge 1$, whenever $x \in (0,M)$ and $R \ge 2M$ we have that
\begin{align}
\label{eq:Laplacian-up-to-poly}
(-\Delta)^s_{\R} (U \1_{B_R})(x) = E_R(x) + c_R,
\end{align}
where $c_R = -(-\Delta)^s_{\R} (U \1_{B_R})(M) \in \R$, and by \eqref{eq:error-to-zero},
\begin{align*}
E_R(x) &:= -\sum_{k = 0}^{\lfloor M - x \rfloor } \big( I_2^{R,1}(x+k) + I_3^{R,1}(x+k) \big) \\
&\quad - \big( I_2^{R,M - x - \lfloor M - x \rfloor}(x + \lfloor M - x \rfloor) + I_3^{R,M - x - \lfloor M - x \rfloor}(x + \lfloor M - x \rfloor) \big) \to 0 ~~ \text{ as } R \to \infty,
\end{align*}
uniformly for $x \in (0,M)$.
Analogously, we show that for $x \in (-M,0)$, as long as $R \ge 2M$, 
\begin{align}
\label{eq:Neumann-up-to-poly}
\cN^s_{(0,\infty)} (U\1_{B_R})(x) = F_R(x) + d_R,
\end{align}
where $d_R = -\cN^s_{(0,\infty)} (U \1_{B_R})(-M) \in \R$ and $F_R(x) \to 0$ as $R \to \infty$, uniformly for $x \in (-M,0)$.

This result follows by application of \autoref{lemma:incr-sol-Neumann}, observing that 
\begin{align*}
\cN^s_{\{ x_n > 0 \}} u^{(h)} = f_h ~~ \text{ in } \{ x_n < - t_+ \},
\end{align*}
where, since $u$ is one-dimensional,
\begin{align*}
f_h(x) &= - c_{n,s}\int_{0}^{h_n} \int_{\R^{n-1}} (u(x+h) - u(y)) |x + h -y|^{-n-2s} \d y'\d y_n \\
&= - c_{n,s}\int_0^{h_n} (U(x_n + t) - U(y_n)) \int_{\R^{n-1}} |x + t e_n - y|^{-n-2s} \d y' \d y_n \\
&= -C_1 c_{1,s} \int_0^{h_n} (U(x_n + t) - U(y_n)) |x_n + t - y_n|^{-1-2s} \d y_n =: C_1 F_t(x_n),
\end{align*}
as well as
\begin{align*}
\cN^s_{\{ x_n > 0 \}} u^{(h)}(x) = c_{n,s} \int_0^{\infty} (U^{(t)}(x_n) - U^{(t)}(y_n)) \int_{\R^{n-1}} |x-y| ^{-n-2s} \d y' \d y_n  = C_1 \cN^s_{(0,\infty)} U^{(t)}(x_n)
\end{align*}
for some $C_1 = C(n,s) > 0$. 

This yields that
\begin{align*}
\cN^s_{(0,\infty)} U^{(t)} = F_t ~~ \text{ in } (-\infty , -t_+).
\end{align*}
Then, redoing the computation in \eqref{eq:error-finding} for $\cN^s_{(0,\infty)}$ and the computation in \eqref{eq:Neumann-fh-term}, we observe that the term $F_t$ cancels, namely
\begin{align*}
(\cN^s_{(0,\infty)} (U \1_{B_R}))^{(t)}(x) &= \cN^s_{(0,\infty)} ((U \1_{B_R})^{(t)})(x) - F_t^R(x) \\
&= I_2^{R,t}(x) + I_3^{R,t}(x) + \cN^s_{(0,\infty)} U^{(t)}(x) - F_t^R(x) = I_2^{R,t}(x) + I_3^{R,t}(x) + (F_t - F_t^R)(x),
\end{align*}
where
\begin{align*}
F_t^R(x) = c_{1,s} \int_0^{h_n} (U(x_n + t) - (U\1_{B_R})(y_n)) |x_n + t - y_n|^{-1-2s} \d y_n \to F_t(x) ~~ \text{ as } R \to \infty,
\end{align*}
uniformly in $(-M,0)$. 

From here, \eqref{eq:Neumann-up-to-poly} follows in the same way as \eqref{eq:Laplacian-up-to-poly}.

Thus, \eqref{eq:Laplacian-up-to-poly} and \eqref{eq:Neumann-up-to-poly} yield that
\begin{align*}
\begin{cases}
(-\Delta)^s_{\R} U &\overset{1}{=} 0 ~~ \text{ in } (0,\infty),\\
\cN^s_{(0,\infty)} U &\overset{1}{=} 0 ~~ \text{ in } (-\infty,0).
\end{cases}
\end{align*}

To conclude the proof, we multiply \eqref{eq:Laplacian-up-to-poly} and \eqref{eq:Neumann-up-to-poly} by $\eta$ as in the statement of the theorem, with $\supp(\eta) \subset (-M,M)$, integrate them over $(0,\infty)$ and $(-\infty,0)$, respectively, and add up the resulting identities. 

Then, observing that~$ c_R \int_0^{\infty} \eta(x) \d x = d_R \int_{-\infty}^{0} \eta(x) \d x = 0$,
we obtain, for all~$R > 2M$,
\begin{align*}
\int_0^{\infty} (-\Delta)^s_{\R} \eta(x) U(x) \1_{B_R}(x) \d x &+ \int_{-\infty}^0 \cN_{(0,\infty)}^s \eta(x) U(x) \1_{B_R}(x) \d x \\
&= \int_0^{\infty} \eta(x) (-\Delta)^s_{\R} (U \1_{B_R})(x) \d x + \int_{-\infty}^0 \eta(x) \cN_{(0,\infty)}^s  (U \1_{B_R})(x) \d x \\
&= \int_0^{\infty} \eta(x) E_R(x) \d x + \int_{-\infty}^0 \eta(x) F_R(x) \d x,
\end{align*}

As a technical observation, we note that in the first equality here above
we applied the nonlocal integration by parts formula (see \cite[Lemma 3.3]{DRV}), whose proof can easily be extended to the function $\eta$ and $U \1_{B_R}$, using that $(-\Delta)^s_{\R} (U \1_{B_R}) \in L^1_{loc}([0,R/2])$ and $\cN^s_{(0,\infty)}(U \1_{B_R}) \in L^1_{loc}([-R/2,0])$ as a consequence of \eqref{eq:bdry-ass-1D-reduction}. 

Next, we use the fact that $\eta \in C^{\infty}_c(\R)$ and \autoref{lemma:Neumann-L-growth} to
check the assumptions of the dominated convergence theorem, taking the limit $R \to \infty$ on both sides, and obtain the desired result. 
\end{proof}

\begin{proof}[Proof of \autoref{thm:bdry-expansion-higher}]
We restrict ourselves to proving the result in case $\beta + \gamma > 1$, since this is the most difficult case. The case $\beta + \gamma < 1$ goes in the same way, but the functions $P_b$ do not need to be subtracted, which simplifies some of the arguments. In particular, in this case the analogs of \autoref{lemma:aux-theta} and \autoref{lemma:theta-increasing} do not require the assumption $\gamma \le 2s$ and they can be found in \cite{AbRo20}.

\textbf{Step 1:} We assume that $\Vert u \Vert_{L^{\infty}(\R^n)} + \Vert u \Vert_{C^{\gamma}(B_{1/2})} + \Vert f \Vert_{C^{\beta + \gamma - 2s}(\Omega \cap B_{1/2})} \le 1$. Hence, it suffices to show
\begin{align}
\label{eq:blowup-claim-growth}
[u - u(0) - P_b]_{C^{\gamma}( B_{r})} \le C r^{\beta} ~~ \forall r \in (0,1/4].
\end{align}

Let us assume by contradiction that \eqref{eq:blowup-claim-growth} does not hold true. In that case there are sequences $(\Omega_k)$ as in the statement of the lemma, as well as $(u_k) \subset V^s(\Omega_k | \R^n)$, and $(f_k) \subset C^{\beta + \gamma -2s}(\Omega \cap B_{1/2})$, such that
\begin{align}
\label{eq:blowup-normalization-growth}
\Vert u_k \Vert_{L^{\infty}(\R^n)} + \Vert u_k \Vert_{C^{\gamma}(B_{1/2})} + \Vert f_k \Vert_{C^{\beta + \gamma - 2s}(\Omega_k \cap B_{1/2})} \le 1
\end{align}
and 
\begin{align*}
\begin{cases}
(-\Delta)^s u_k &= f_k ~~ \text{ in } \Omega_k \cap B_{1/2},\\
\cN^s_{\Omega_k} u_k &= 0 ~~~ \text{ in } B_{1/2} \setminus \Omega_k
\end{cases}
\end{align*}
in the weak sense. 

Moreover, one can find $C_k \to +\infty$ such that 
\begin{align*}
\inf_{b \in \{  b_n = 0 \}} \sup_{r \in [0,\frac{1}{4}]} [ u_k - u_k(0) - P_b^k ]_{C^{\gamma}(B_r)} \ge C_k,
\end{align*}
where we denote by $P_b^k$ the function from \autoref{subsec:correction} with respect to $\Omega_k$ satisfying \eqref{eq:Pb-complement-rep} in $\R^n \setminus \Omega_k$ and \eqref{eq:Pb-integral-vanish}. 

Note that the space $\cP_{k,r} := \{ P_b^k : b \in \{ b_n = 0 \} \}$ is a linear subspace of $L^2(B_r)$, thanks to \autoref{remark:Pb-subspace}.
Then, for any $k \in \N$ and $r \in (0,\frac{1}{4}]$ we consider the $L^2( B_{r})$ projections of $u_k(\cdot) - u_k(0)$ over the space. We denote these projections by $P_{k,r}$ and extend them to $B_r^c$, in the same way as in \autoref{subsec:correction}, such that \autoref{lemma:LPb-estimate-2} is applicable. 

Thus, in analogy to the proof of \autoref{thm:bdry-expansion-higher} we have
\begin{align*}
\Vert u_k - u_k(0) - P_{k,r} \Vert_{L^2(B_{r})} &\le \Vert  u_k - u_k(0) - P \Vert_{L^2(B_{r})} ~~ \forall P \in \cP_{k,r},\\
\int_{B_{r}} (u_k(x) - u_k(0) - P_{k,r}(x)) P(x) \d x &= 0 ~~ \forall P \in \cP_{k,r},
\end{align*}
and it follows from \autoref{lemma:theta-increasing} that the function 
\begin{align*}
\theta(r) := \sup_{k \in \N} \sup_{\rho \in [r , \frac{1}{4}]} \rho^{-\beta} [ u_k - u_k(0) - P_{k,\rho} ]_{C^{\gamma}(B_{\rho})}
\end{align*} 
satisfies 
\begin{align*}
\theta(r) \nearrow +\infty ~~ \text{ as } ~~ r \searrow 0.
\end{align*}As a result, there exist subsequences $(k_j) \subset \N$ and $r_j \searrow 0$ with $r_j \in (0,\frac{1}{8}]$ such that
\begin{align*}
\frac{[ u_{k_j} - u_{k_j}(0) - P_{k_j,r_j} ]_{C^{\gamma}(B_{r_j})}}{r_j^{\beta} \theta(r_j)} \ge \frac{1}{2} ~~ \forall j \in \N.
\end{align*}

\textbf{Step 2:} Next, we introduce the blow-up sequence
\begin{align*}
v_j(x):= \frac{u_{k_j}(r_j x) - u_{k_j}(0) - P_{k_j,r_j}(r_j x)}{r_j^{\beta + \gamma} \theta(r_j)}, ~~ j \in \N,
\end{align*}
and observe that, for all $j \in \N$,
\begin{align}
\label{eq:vj-properties-2}
v_j(0) = 0, \qquad [ v_j ]_{C^{\gamma}(B_1)} \ge \frac{1}{2}, \qquad \int_{ B_1} v_j(x) r_j^{-1} P(r_j x) \d x = 0 ~~ \forall P \in \cP_{k_j,r_j}.
\end{align}
We denote $\tilde{\Omega}_j = r_{_j}^{-1} \Omega_{k_j}$. 
We claim that
\begin{align}
\label{eq:vj-growth-2}
[ v_j ]_{C^{\gamma}(B_R)} \le C R^{\beta}, \qquad \Vert v_j \Vert_{L^{\infty}( B_R)} \le C R^{\beta + \gamma} ~~ \forall R \in [1 , r_j^{-1}/4 ],
\end{align}
and
\begin{align}
\label{eq:b-coeff-convergence-2}
\frac{|b_{k_j,r_j}|}{\theta(r_j)} \to 0 ~~ \text{ as } j \to \infty.
\end{align}

As in the proof of \autoref{thm:bdry-expansion}, both of these claims follow from \autoref{lemma:aux-theta}, applied with $u := u_{k_j}- u_{k_j}(0)$. Note that the assumptions of \autoref{lemma:aux-theta} are satisfied, since by the definition of $\theta(r)$ and by \eqref{eq:blowup-normalization-growth}, 
for any  $r \le \frac{1}{4}$ one has that
\begin{align*}
[ u_{k_j} - u_{k_j}(0) - P_{k_j,r_j} ]_{C^{\gamma}(B_{r})} \le \theta(r) r^{\beta}, \qquad [ u_{k_j} - u_{k_j}(0) ]_{C^{\gamma}(B_{1/4})} \le 1.
\end{align*}
The second claim in \eqref{eq:vj-growth-2} follows from the first claim in \eqref{eq:vj-growth-2}, which also yields $\Vert v_j \Vert_{L^{\infty}(B_1)}  \le C$.

Next, we observe that $v_j$ satisfies in the weak sense
\begin{align}
\label{eq:vj-Neumann-PDE}
\begin{cases}
(-\Delta)^s v_j &= \tilde{f}_j ~~ \text{ in } \tilde{\Omega}_j \cap B_{r_j^{-1}/2},\\
\cN^s_{\tilde{\Omega}_j} v_j &= 0 ~~~~ \text{ in } B_{r_j^{-1}/2} \setminus \tilde{\Omega}_j,
\end{cases}
\end{align}
where
\begin{align*}
\tilde{f}_j(x) = \frac{r_j^{2s - \beta - \gamma}}{\theta(r_j)} f_{j}(r_j x) - \frac{r_j^{2s - \beta - \gamma}}{\theta(r_j)} (L_{\Omega_j} P_{k_j,r_j})(r_j x).
\end{align*}

Furthermore, 
$1 + s - \beta - \gamma > 1 + s - B_0 > 0$, due to \eqref{eq:B0-trivial-bound}.
Hence,
by recalling \autoref{lemma:LPb-estimate-2} (applied with $\alpha = 1$, which yields $\frac{1+\alpha-2s}{1+\alpha} = 1-s$), and employing also~\eqref{eq:b-coeff-convergence-2}
and \eqref{eq:blowup-normalization-growth}, we have that, for any $\rho > 0$, 
\begin{align}
\label{eq:f-Holder-to-zero}
\begin{split}
[\tilde{f}_j]_{C^{\beta + \gamma - 2s}(\tilde{\Omega}_j \cap B_{R} \cap \{ d_{\tilde{\Omega}_j} \ge \rho \})} &\le \frac{[f_j]_{C^{\beta + \gamma - 2s}(\Omega_j \cap B_{r_j R})}} {\theta(r_j)} + \frac{[L_{\Omega_j} P_{k_j,r_j}]_{C^{\beta + \gamma - 2s}(\Omega_j \cap B_{r_j R} \cap \{ d_{\Omega_j} \ge r_j \rho \} )}}{\theta(r_j)}  \\
&\le \theta(r_j)^{-1} + C \frac{|b_{k_j,r_j}|}{\theta(r_j)} \left( 1 + (R r_j)^{1-s} (r_j \rho)^{2s -\beta - \gamma} \right) \\
&\le C(R) (1 + \rho^{2s-\beta - \gamma}) \frac{1 + |b_{k_j,r_j}|}{\theta(r_j)} \to 0.
\end{split}
\end{align}

%

Besides, we claim that $v_j$ is actually a strong solution to the equation in \eqref{eq:vj-Neumann-PDE}. To prove it, we observe that, by \cite[Proposition A.3]{AFR23},
\begin{align*}
L_{\tilde{\Omega}_j} v_j = \tilde{f}_j ~~ \text{ in } \tilde{\Omega}_j \cap B_{r_j^{-1}/2}
\end{align*}
with Neumann conditions on $\partial \tilde{\Omega}_j \cap B_{r_j^{-1}/2}$, and then we apply \autoref{lemma:strong-weak}, since $L_{\tilde{\Omega}_j} v_j = (-\Delta)^s v_j$. 

Due to \eqref{eq:vj-growth-2}, it remains to verify \eqref{eq:bdry-ass-strong-weak}, 
namely that 
there exists $\delta > 0$ such that,
for any $R \in [2,r_j^{-1}/2]$ and $x_0 \in B_R \cap \{ x_n > 0 \}$,
\begin{align}
\label{eq:vj-2spluseps}
[v_j]_{C^{2s+\delta}(B_{\rho_j}(x_0))} \le C(R) \rho_j^{\delta - 1} , \quad \text{ where } \quad  \rho = \min\{ d_{\tilde{\Omega}_j}(x_0)/4 , 1\},
\end{align}
as long as~$j$ is large enough.

To check \eqref{eq:vj-2spluseps}, it is useful to use the assumption $\Vert u_j \Vert_{L^{\infty}(\R^n)} \le 1$ in \eqref{eq:blowup-normalization-growth}
and infer that, for $|x| \ge r_j^{-1}/4$,
\begin{align*}
|v_j(x)| \le C r_j^{-\beta - \gamma} \theta(r_j)^{-1} \le C |x|^{\beta + \gamma}.
\end{align*}
We also recall~\eqref{eq:vj-growth-2} and~\eqref{eq:f-Holder-to-zero}.
In this way, for any $r \in (0,1]$, since $\gamma \le 1$,
\begin{align*}
\Vert v_j - v_j(x_0) \Vert_{L^{\infty}(B_r(x_0))} \le C r^{\gamma} [v_j]_{C^{\gamma}(B_r(x_0))} \le  C r^{\gamma} [v_j]_{C^{\gamma}(B_{2R})}\le C(R) r^{\gamma}
\end{align*}
and therefore (using~\eqref{eq:vj-growth-2} with~$|x| \le r_j^{-1}/4$),
\begin{align*}
\left\Vert \frac{v_j - v_j(x_0)}{|\cdot-x_0|^{\beta + \gamma}} \right\Vert_{L^{\infty}(B_{2\rho_j}(x_0)^c)} &\le \left\Vert \frac{v_j - v_j(x_0)}{|\cdot-x_0|^{\beta + \gamma}} \right\Vert_{L^{\infty}(B_1(x_0) \setminus B_{2\rho_j}(x_0))} + C \left\Vert \frac{1 + |\cdot|^{\beta + \gamma}}{|\cdot-x_0|^{\beta + \gamma}} \right\Vert_{L^{\infty}(B_{1}(x_0)^c)} \\
&\le C(R) \int_{2\rho_j}^1 r^{\gamma-(\beta + \gamma)} \d r + C \le C(R) (1 + \rho_j^{1 - \beta}).
\end{align*}

Thanks to these observations, the desired result in~\eqref{eq:vj-2spluseps}
can now be deduced by an
application of the interior $C^{\beta + \gamma}$ estimate from \cite[Proposition 3.9]{AbRo20} for solutions with finite tails of order $\beta + \gamma \in (2s , 2s + 1)$ with $v_j - v_j(x_0)$, which yields 
\begin{align*}
[v_j]_{C^{\beta + \gamma}(B_{\rho_j}(x_0))} &\le C \rho_j^{-\beta - \gamma} \Vert v_j - v_j(x_0) \Vert_{L^{\infty}(B_{2\rho_j}(x_0))} + C\left\Vert \frac{v_j - v_j(x_0)}{|\cdot-x_0|^{\beta + \gamma}} \right\Vert_{L^{\infty}(B_{2\rho_j}(x_0)^c)} \\
&\quad + C [\tilde{f}_j]_{C^{\beta + \gamma - 2s}(B_{2\rho_j}(x_0))} \\
&\le C(R) (\rho_j^{-\beta} + 1 + \rho_j^{2s-\beta-\gamma}),
\end{align*}
as claimed in \eqref{eq:vj-2spluseps}, since $- \beta > -1$ and $2s-\beta-\gamma > -1$ (we can now find $\delta > 0$ small enough by interpolation, since $\beta + \gamma > 2s$).

\textbf{Step 3:} In this section, we send~$j \to \infty$ and deduce an equation for the limit.
Clearly, by Arzel\`a-Ascoli's theorem, \eqref{eq:vj-growth-2} implies that there exists $v \in C^{\gamma}_{loc}(\R^n) \cap C^{2s + \delta}_{loc}(\{ x_n > 0 \})$ such that $v_j \to v$ locally uniformly, and from \eqref{eq:vj-properties-2}, \eqref{eq:vj-growth-2}, and \eqref{eq:vj-2spluseps}, we see that
\begin{align}
\label{eq:v-properties-2}
v(0) = 0, \qquad \Vert v \Vert_{C^{\gamma}(B_1)} &\ge \frac{1}{2}, \qquad \int_{B_1} v(x) (b \cdot x) \d x = 0 ~~ \forall b \in \{ b_n = 0 \},\\
\label{eq:v-growth-2}
\forall R \ge 1 : \qquad \Vert v \Vert_{L^{\infty}(  B_R ) } &\le C R^{\beta + \gamma}, \qquad [ v ]_{C^{\gamma}(B_R)} \le C R^{\beta},\\
\label{eq:v-reg-2}
\forall \rho \le 1 : \qquad [v]_{C^{2s+\delta}(B_R \cap \{ x_n \ge \rho \})} &\le C(R) \rho^{\delta - 1},
\end{align}

We stress that, to prove the third property in \eqref{eq:v-properties-2}, we used that all function $P_b$ with respect to $\{ x_n > 0 \}$ are of the form $b \cdot x$ for some $b \in \{ b_n =  0\}$.

Moreover, by \eqref{eq:vj-growth-2}, \eqref{eq:f-Holder-to-zero}, and \eqref{eq:vj-2spluseps},
we can pass the equation in \eqref{eq:vj-Neumann-PDE} to the limit as $j \to \infty$ and conclude that
\begin{align}
\label{eq:v-Neumann-PDE}
\begin{cases}
(-\Delta)^s v &\overset{1}{=} 0 ~~ \text{ in } \{ x_n > 0 \},\\
\cN_{\{ x_n > 0 \}}^s v &\overset{1}{=} 0 ~~ \text{ in } \{ x_n < 0 \}.
\end{cases}
\end{align}
We remark that the first condition passes to the limit by application of \cite[Lemma 3.5]{AbRo20}. The Neumann condition can be passed to the limit by mimicking the proof of \cite[Lemma 3.5]{AbRo20} in a straightforward fashion, using \eqref{eq:vj-growth-2} (note that it is even easier since $\cN^s_{\{ x_n > 0 \}}$ is not singular).

\textbf{Step 4:} The goal of this step is to prove that
\begin{align}
\label{eq:v-1D-growth}
v(x) = w(x_n) + b \cdot x
\end{align}
for some function $w : \R \to \R$ and $b \in \R^n$ with $b_n = 0$. To prove it we take increments of $v^{(h)}(x) = v(x+h) - v(x)$ in tangential directions $h \in B_{1/8}$ with $h_n = 0$. Clearly, by \eqref{eq:v-reg-2}, \eqref{eq:v-growth-2}, and H\"older interpolation we have for any $R \ge 1$ and $\rho \le 1$, since $\gamma \le 1$,
\begin{align}
\label{eq:incr-quot-est-1}
[v^{(h)}]_{C^{\gamma}(  B_R )} &\le C R^{\beta}, \qquad  [v^{(h)}]_{C^{2s+\delta}(B_R \cap \{ x_n \ge \rho \})} \le C(R) \rho^{\delta - 1}, \\
\label{eq:incr-quot-est-2}
 \Vert v^{(h)} \Vert_{L^{\infty} ( B_R)} &\le C |h|^{\gamma} [ v ]_{C^{\gamma}( B_R)} \le C R^{\beta}.
\end{align}

Now we recall \eqref{eq:v-growth-2}  (using that $\beta + \gamma < B_0 < 2s + 1$ due to \eqref{eq:B0-trivial-bound}) and \eqref{eq:incr-quot-est-2} (using that $\beta < 2s$).
This allows us to apply \autoref{lemma:incr-sol-Neumann} and obtain that $v^{(h)}$ is a solution to
\begin{align}
\cN_{\{ x_n > 0 \}}^s v^{(h)} &= 0 ~~ \text{ in } \{ x_n < 0 \}.
\end{align}


Additionally, by \eqref{eq:incr-quot-est-1}, \eqref{eq:incr-quot-est-2}, and \eqref{eq:v-growth-2}, we can apply \autoref{lemma:incr-sol} and also deduce that 
\begin{align}
(-\Delta)^s v^{(h)} &= 0 ~~ \text{ in } \{ x_n > 0 \}
\end{align}
in the classical sense. 

In particular, we have that $v^{(h)}$ is a strong solution to
\begin{align}
\label{eq:vh-equation-tangential-regional}
L_{\{ x_n > 0 \}} v^{(h)} = 0 ~~ \text{ in } \{ x_n > 0 \}.
\end{align}
Using \eqref{eq:incr-quot-est-1} and \eqref{eq:incr-quot-est-2}, we can apply \autoref{lemma:strong-weak} and get that $v^{(h)}$ is also a weak solution to \eqref{eq:vh-equation-tangential-regional} with Neumann boundary condition on $\{ x_n = 0 \}$ in the sense of \autoref{def:weak-sol}. 

Also, since $v^{(h)}$ grows slower than $t^{2s-\eps}$, we can apply the Liouville theorem in \autoref{thm:Neumann-Liouville} to see that there exist $a^{(h)} \in \R$ and $b^{(h)} \in \R^n$ with $b^{(h)}_n = 0$ such that
\begin{align*}
v^{(h)}(x) = a^{(h)} + b^{(h)} \cdot x ~~ \text{ in } \{ x_n > 0 \}.
\end{align*}
Since $v^{(h)}$ also grows sub-linearly, as a consequence of $\beta < 1$, we deduce that $b^{(h)} \equiv 0$. Since $h \in B_1$ with $h_n = 0$ was arbitrary, this implies \eqref{eq:v-1D-growth}, as claimed.

\textbf{Step 5:} In this step, we conclude the proof by applying \autoref{thm:Neumann-Liouville-growth} to $w(x) := v(x) - b \cdot x$. It follows from \eqref{eq:v-1D-growth} that $w(x) = w(x_n)$, i.e. $w$ is one-dimensional. 

Moreover, since $x \mapsto b \cdot x$ solves \eqref{eq:v-Neumann-PDE}, we find that $w$ solves (in the sense of \autoref{def:up-to-poly})
\begin{align*}
\begin{cases}
(-\Delta)^s w &\overset{1}{=} 0 ~~ \text{ in } \{ x_n > 0 \},\\
\cN_{\{ x_n  > 0 \}} w&\overset{1}{=} 0 ~~ \text{ in } \{ x_n < 0 \}.
\end{cases}
\end{align*}

Then, owing to \eqref{eq:v-growth-2}, \eqref{eq:v-reg-2}, and \eqref{eq:incr-quot-est-2}, we can apply \autoref{lemma:1D-reduction} and deduce that,
for any $\eta \in C_{c}^{\infty}(\R)$ with $\int_{-\infty}^0 \eta(x) \d x = \int_{\R} \eta(x) \d x = 0$,
\begin{align*}
\int_{0}^{\infty} (-\Delta)^s_{\R} \eta (x) w(x) \d x + \int_{-\infty}^{0} \cN^s_{(0,\infty)} \eta(x) w(x) \d x = 0.
\end{align*}
Hence, we conclude that $w - w(0)$ is a distributional solution with faster growth in the sense of \autoref{def:faster-growth}. Thus, due to \eqref{eq:v-growth-2}, we can apply \autoref{thm:Neumann-Liouville-growth} and deduce that $w - w(0) = 0$. Therefore, we have shown that $v(x) = a + b \cdot x$ for some $a \in \R$. Since $v(0) = 0$ by \eqref{eq:v-properties-2}, it must be $a = 0$. 

As a result, it follows from the third property in \eqref{eq:v-properties-2} that
\begin{align*}
0 = \int_{B_1} v(x) (b \cdot x) \d x = \int_{B_1} (b \cdot x)^2 \d x ,
\end{align*}
which implies that $b = 0$. 

Consequently, $v \equiv 0$ in $B_1$, which contradicts the second property in \eqref{eq:v-properties-2}. In this way, we obtain \eqref{eq:blowup-claim-growth}, and the proof is complete.
\end{proof}

\subsection{Regularity of order larger than $2s$}
\label{subsec:reg-bigger-2s}

In this subsection, we prove the optimal regularity of solutions to the nonlocal Neumann problem in case $B_0 > 2s$.

\begin{theorem}
\label{thm:main-Neumann-higher}
Let $s \in (0,1)$, $\eps > 0$ be such that $B_0 - \eps \not= 1$ and $B_0 - \eps > 2s$. Let $\Omega \subset \R^n$ be a bounded $C^{1,1}$ domain.
Let $u \in H_K(\Omega)$ be a weak solution to
\begin{align*}
\begin{cases}
(-\Delta)^s u &= f ~~ \text{ in } \Omega,\\
\mathcal{N}_{\Omega}^s u &= 0 ~~ \text{ in } \R^n \setminus \Omega,
\end{cases}
\end{align*}
where $f \in C^{B_0 - \eps - 2s}(\Omega)$. 

Then, there exists $C > 0$, depending only on $n,s,\eps$, and $\Omega$, such that 
\begin{align*}
\Vert u \Vert_{C^{B_0-\eps}(\overline{\Omega})} \le C \left( \Vert u \Vert_{L^{2}(\Omega)} + \Vert f \Vert_{C^{B_0 - \eps - 2s}(\Omega)} \right).
\end{align*}
\end{theorem}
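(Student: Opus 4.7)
Plan. The proof mirrors the strategy used for \autoref{thm:main-Neumann}: one establishes a local Hölder bound on each ball $B_{r/2}(x_0)$ with $r = d_\Omega(x_0)$ and glues them via a standard covering lemma (e.g. \cite[Lemma A.1.4]{FeRo24}). The three principal differences are that \autoref{thm:bdry-expansion-higher} replaces \autoref{thm:bdry-expansion}, the Hölder bound for $L_\Omega P_b$ from \autoref{lemma:LPb-estimate-2} replaces the $L^\infty$ bound \autoref{lemma:LPb-estimate}, and a pointwise-tail form of the interior Schauder estimate (in the spirit of \cite[Proposition 3.9]{AbRo20}, which was already invoked in the proof of \autoref{thm:bdry-expansion-higher}) replaces the classical $L^1$-tail form. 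After normalizing so that $\Vert u\Vert_{L^2(\Omega)} + \Vert f\Vert_{C^{B_0-\eps-2s}(\Omega)}\le 1$, an application of \autoref{thm:main-Neumann} with a sufficiently small exponent $\eps'>0$ yields $u\in C^{2s-\eps'}(\overline\Omega)$ and, via \eqref{eq:bdness-outside}, $u\in L^\infty(\R^n)$; combined with \autoref{lemma:outside-bound} this upgrades to $u\in C^\gamma(B_{1/2})$ globally, for any $\gamma \in (s,\min\{1,2s-\eps'\}]$. One then picks $\gamma$ and $\beta := B_0-\eps-\gamma$ matching the hypotheses of \autoref{thm:bdry-expansion-higher}; such a choice exists for every admissible $s,\eps$ by \autoref{prop:zeros-final} and \eqref{eq:B0-trivial-bound}.

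Fix now $y_0\in\partial\Omega\cap B_{1/8}$; by rigid motion assume $y_0=0$ and $\nu(0)=e_n$. \autoref{thm:bdry-expansion-higher} produces $b\in\R^n$ with $b\cdot e_n=0$, $|b|\le C$, and a corrector $P_b=P_b^{y_0}$ as in \autoref{subsec:correction} (satisfying \eqref{eq:Pb-integral-vanish} and \eqref{eq:Pb-complement-rep}) such that $v:=u-u(y_0)-P_b$ obeys $[v]_{C^\gamma(B_\rho(y_0))}\le C\rho^\beta$ for all $\rho\in(0,1/4]$. Since $\cN_\Omega^s P_b=0$ in $\R^n\setminus\Omega$ by construction, $(-\Delta)^s v = f-(-\Delta)^s P_b =: g$ inside $\Omega$. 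For an interior point $x_0\in\Omega\cap B_{1/8}$ with $r:=d_\Omega(x_0)=|x_0-y_0|$, the pointwise-tail interior Schauder estimate for $(-\Delta)^s$ of order $B_0-\eps\in(2s,2)$ yields
\[
[v]_{C^{B_0-\eps}(B_{r/4}(x_0))}\le C\,r^{-(B_0-\eps)}\Vert v-v(x_0)\Vert_{L^\infty(B_{r/2}(x_0))} + C\left\Vert\frac{v(\cdot)-v(x_0)}{|\cdot-x_0|^{B_0-\eps}}\right\Vert_{L^\infty(B_{r/2}(x_0)^c)} + C[g]_{C^{B_0-\eps-2s}(B_{r/2}(x_0))}.
\]

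Both bulk terms on the right are $O(1)$ thanks to the expansion: applying it on $B_{2r}(y_0)\supset B_{r/2}(x_0)$ gives $\Vert v\Vert_{L^\infty(B_{r/2}(x_0))}\le Cr^{B_0-\eps}$, and combining the expansions at all scales $\rho\in[r,1/4]$ with the global bound $|v|\le C$ (using $|y-y_0|\le 3|y-x_0|$ for $|y-x_0|\ge r/2$) controls the pointwise tail. The source term splits as $[f]_{C^{B_0-\eps-2s}(B_{r/2}(x_0))}\le 1$ plus $[(-\Delta)^s P_b]_{C^{B_0-\eps-2s}(B_{r/2}(x_0))}$, which is bounded by \autoref{lemma:LPb-estimate-2} with $\alpha=1$ (since $\partial\Omega\in C^{1,1}$) and $\eps_0:=B_0-\eps-2s$. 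The admissibility condition $1+\alpha\ge 2s+\eps_0$ reduces to $B_0-\eps\le 2$, which holds by \eqref{eq:B0-trivial-bound}; with $\delta:=(1+\alpha-2s)/(1+\alpha)=1-s$, and using $|x_0|=r$,
\[
[L_\Omega P_b]_{C^{\eps_0}(B_{r/2}(x_0))}\le C|b|\left(1+r^{\delta-\eps_0}+|x_0|^{\delta}r^{-\eps_0}\right)\le C\left(1+r^{1+s-B_0+\eps}\right)\le C,
\]
where the last inequality uses the sharp bound $B_0<s+1$ from \autoref{prop:zeros-final}. Combining everything, $[v]_{C^{B_0-\eps}(B_{r/4}(x_0))}\le C$, and since $P_b^{y_0}$ is $C^{1,1}$ on $B_{r/4}(x_0)$ with norm $\le C|b|$ (as $\Phi$ is $C^{1,1}$) and $B_0-\eps<2$, also $[P_b^{y_0}]_{C^{B_0-\eps}(B_{r/4}(x_0))}\le C$, so $[u]_{C^{B_0-\eps}(B_{r/4}(x_0))}\le C$. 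A covering argument together with \cite[Lemma A.1.4]{FeRo24} and interior Schauder on balls compactly contained in $\Omega$ finally produce $\Vert u\Vert_{C^{B_0-\eps}(\overline\Omega)}\le C$.

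The main obstacle, absent in the proof of \autoref{thm:main-Neumann}, is that the regime $B_0-\eps>2s$ makes a naïve $L^1$-tail interior Schauder estimate introduce a factor $r^{2s-B_0+\eps}$ that blows up as $r\to 0$; switching to a pointwise-tail form, compatible with the precise growth of $v$ given by \autoref{thm:bdry-expansion-higher}, is essential. The second delicate point is the Hölder control of $L_\Omega P_b$, where the exponent bookkeeping works out only because $1+\alpha-2s=2(1-s)$ cancels against $\eps_0=B_0-\eps-2s$ thanks to the sharp bound $B_0<s+1$ of \autoref{prop:zeros-final}; in other words, the very same arithmetic that ensures optimality of the regularity exponent is what makes the proof close.
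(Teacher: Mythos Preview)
Your approach is essentially the paper's: normalize, extract $C^\gamma$ regularity from \autoref{thm:main-Neumann}, feed it into \autoref{thm:bdry-expansion-higher} to get the expansion, and close with the pointwise-tail interior Schauder estimate and \autoref{lemma:LPb-estimate-2}. The estimates you write for the tail term and for $[L_\Omega P_b]_{C^{B_0-\eps-2s}}$ are exactly the ones in the paper.

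There is, however, a genuine gap in your reduction to a \emph{single} application of \autoref{thm:bdry-expansion-higher}. You assert that one can always choose $\gamma\in(s,\min\{1,2s-\eps'\}]$ and $\beta:=B_0-\eps-\gamma$ satisfying the hypotheses of that theorem. The constraint $\beta<\min\{1,2s\}$ forces $\gamma>B_0-\eps-2s$, while your a~priori input from \autoref{thm:main-Neumann} only gives $\gamma<2s$. Hence you need $B_0-\eps<4s$ (up to $\eps'$). For $s>\tfrac14$ this follows from $B_0<2s+\tfrac12<4s$, but for $s\le\tfrac14$ it can fail: \autoref{prop:zeros-final} only guarantees $B_0<2s+\tfrac12$, and $2s+\tfrac12\ge 4s$ precisely in this range. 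Neither \autoref{prop:zeros-final} nor \eqref{eq:B0-trivial-bound} gives $B_0<4s$ here.

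The paper handles this by an explicit bootstrap for $s\in(0,\tfrac14]$: one iterates the implication ``$u\in C^{\gamma}\Rightarrow u\in C^{\gamma+\beta}$'' with increments $\beta<2s$ until reaching $B_0-\eps$. For that iteration to make sense one must also control $[u]_{C^{\gamma}}$ in a \emph{neighborhood} $\overline{\Omega^{1/8}}$ of $\overline\Omega$ at each step (since $\gamma$ eventually exceeds $2s$ and cannot be obtained from \autoref{thm:main-Neumann}); this is why the paper states and proves the companion exterior claim \eqref{eq:claim-thm-2-outside} (for $\beta+\gamma\le 1$, which is automatic here since $B_0-\eps<1$). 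Your proposal omits both the iteration and this exterior step; adding them in the small-$s$ regime would complete the argument.
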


\begin{proof}[Proof of \autoref{thm:main-Neumann-higher}]
Up to a normalization and using \eqref{eq:bdness-outside}, we can suppose that
\begin{align}
\label{eq:normalization-thm-2}
\Vert u \Vert_{L^{\infty}(\R^n)} + \Vert f \Vert_{C^{B_0 - \eps - 2s}(\Omega)} \le 1.
\end{align}

We also assume that $0 \in \partial \Omega$, $\nu(0) = e_n$, and that there exists a $C^{1,1}$ diffeomorphism $\Phi$ as in \autoref{subsec:correction} satisfying \eqref{eq:Pb-integral-vanish} and \eqref{eq:Pb-normal-zero} in $\Omega^c$. 

We claim that, for any $x_0 \in \Omega_{1/8} := \Omega \cap B_{1/8}$ with $|x_0| = d_{\Omega}(x_0) = :r$, there exists $b \in \{ b_n = 0 \}$ such that
\begin{align}
\label{eq:claim-thm-2}
[u]_{C^{\gamma}(B_{1/2})} \le C_1 \qquad \Rightarrow \qquad [u
 - u(0) - \1_{\{\beta+\gamma > 1 \}}P_b]_{C^{\beta + \gamma}(B_{r/2}(x_0))} \le C( 1+ C_1),
\end{align}
for any $\beta \in (0,\min\{2s,1\})$ and $\gamma \in (s, 1]$ such that $\beta + \gamma \le B_0 - \eps$ and $\beta + \gamma \not= 1$, with $\gamma \le 2s$ in case $\beta + \gamma >1$. 

Here, $C > 0$ depends only on $n,s,\eps,\beta,\gamma$, and the $C^{1,1}$ norm of $\Omega$, and $P_b$ is as in \autoref{subsec:correction}.  

We also claim that, if $x_0 \in B_{1/8} \setminus \Omega$ with $|x_0| = d_{\Omega}(x_0) =:r$, and also $\beta + \gamma \le 1$, then
\begin{align}
\label{eq:claim-thm-2-outside}
[u]_{C^{\gamma}(B_{1/2})} \le C_1 \qquad \Rightarrow \qquad [u
 - u(0)]_{C^{\beta + \gamma}(B_{r/2}(x_0))} \le C( 1+ C_1).
\end{align}

Let us suppose for a moment that the above claims hold true.
In this case, denoting $\overline{\Omega^{\kappa}} := \{x \in \R^n : \dist(x,\Omega) \le \kappa \}$, we deduce that 
\begin{align}
\label{eq:claim-thm-beta-gamma}
[u]_{C^{\gamma}(\overline{\Omega^{1/2}})} \le C_1 \qquad \Rightarrow \qquad \begin{cases} [u]_{C^{\beta + \gamma}(\overline{\Omega})} &\le C( 1+ C_1), \\
[u]_{C^{\beta + \gamma}(\overline{\Omega^{1/8}})} &\le C( 1+ C_1), ~~ \text{ if } \beta + \gamma \le 1
\end{cases}
\end{align}
by the exact same arguments as in the proof of \autoref{thm:main-Neumann}.
 
Let us first explain how \eqref{eq:claim-thm-beta-gamma} implies the desired result. Clearly, in case $s \in [\frac{1}{2},1)$, since $B_0 < s + 1$ and since $B_0 - \eps > 2s$, we can choose $\beta = s - \bar{\eps} \in (0,\min\{1,2s\})$ for some $\bar{\eps} \in (\eps,s)$, and  $\gamma = B_0 - \eps - \beta \in (s,1)$ such that $\beta + \gamma = B_0 - \eps$. 

Therefore, since $[u]_{C^{\gamma}(\overline{\Omega})} \le C$ by \autoref{thm:main-Neumann} and $\gamma < \min\{1,2s\} = 1$, recalling \cite[Lemma 6.3]{AFR23} one finds that \autoref{lemma:outside-bound} is applicable, and accordingly $[u]_{C^{\gamma}(\overline{\Omega^{1/2}})} \le C_1$. 

Thus, the implication
\eqref{eq:claim-thm-beta-gamma} yields the desired result in that case. Note that this choice of $\bar{\eps}$ is always possible since $B_0 - \eps > 2s$ yields that $\eps < B_0 - 2s < 1 - s \le s$.

In case $s \in (\frac{1}{4},\frac{1}{2})$, we observe that $B_0 < 2s + \frac{1}{2} < 4s$. Hence, we can take $\gamma = 2s - \bar{\eps}$ for some $\bar{\eps} \in (0,\eps)$ and $\beta = B_0 - \eps - \gamma < 4s - \eps - 2s + \bar{\eps} < 2s - \eps + \bar{\eps} < 2s \le \min\{2s , 1\}$. Then, we clearly have $\gamma \in (s,\min\{1,2s\})$. Moreover, by \autoref{thm:main-Neumann}, we have $[u]_{C^{\gamma}(\overline{\Omega})} \le C_1$. Since $\gamma < 2s$, \cite[Lemma 6.3]{AFR23} verifies the assumptions of \autoref{lemma:outside-bound}, which gives that~$[u]_{C^{\gamma}(\overline{\Omega_{1/2}})} \le C_1$. Thus, \eqref{eq:claim-thm-beta-gamma} implies the desired result, since $\beta + \gamma = B_0 - \eps$.

Finally, in case $s \in (0,\frac{1}{4}]$, we need to iterate \eqref{eq:claim-thm-beta-gamma} several times. Let us observe that by \autoref{prop:zeros-final}, it holds $B_0 - \eps < 2s + \frac{1}{2} < 1$, which is why we will never apply \eqref{eq:claim-thm-beta-gamma} with $\beta+\gamma > 1$, $\gamma > 1$, or $\beta > 1$, and we are therefore allowed to  choose $\gamma > 2s$.

Hence, we define $k_0 := \min \{ k \in \N : (2s - \bar{\eps})k \le B_0 - \eps \}$ for some $\bar{\eps} \in (0,s)$ to be chosen later, and set
\begin{align*}
\gamma_i := \gamma_{i-1} + \beta_{i-1}, \qquad \beta_i := 2s - \bar{\eps}, \qquad \gamma_0 := 2s - \bar{\eps}, ~~ i \in \{1,\dots,k_0\}.
\end{align*}
Note that since $\gamma_0 < 2s$, by the same arguments as in the previous case we have $[u]_{C^{\gamma_0}(\overline{\Omega_{1/2}})} \le C$. Hence, we can iterate $k_0-1$ times
the implication \eqref{eq:claim-thm-beta-gamma} and, after a scaling argument, infer that 
\begin{align*}
[u]_{C^{(2s-\bar{\eps})k_0}(\overline{\Omega^{1/2}})} \le C.
\end{align*}
Let us now choose $\bar{\eps}$ such that $(2s - \bar{\eps})k_0 = B_0 - \eps$, concluding the proof also in this case. 

It remains to prove \eqref{eq:claim-thm-2} and \eqref{eq:claim-thm-2-outside}, which we will explain now in detail. 
First, by assumption, we have that 
\begin{align*}
[u]_{C^{\gamma}(B_{1/2})} \le C.
\end{align*}
We also remark that the function
$v(x) = u(x) - u(0) - \1_{\{\beta + \gamma > 1\}} P_b(x)$
satisfies $v(0) = 0$
Hence, recalling that $\gamma \le 1$,
we are in a position to apply \autoref{thm:bdry-expansion-higher} and deduce that, for any $\rho \in (0,\frac{1}{4})$, 
\begin{align}
\label{eq:v-expansion-main-proof}
\Vert v \Vert_{L^{\infty}(B_{\rho})} \le \rho^{\gamma} [v]_{C^{\gamma}(B_{\rho})} \le C \rho^{\gamma} \rho^{\beta} \le C \rho^{\beta + \gamma}.
\end{align}

We stress that \eqref{eq:v-expansion-main-proof} allows us to apply \autoref{lemma:outside-bound} in case $\beta + \gamma \le 1$ and deduce the second claim \eqref{eq:claim-thm-2-outside}.

To prove the claim in~\eqref{eq:claim-thm-2}, we observe that the estimate \eqref{eq:v-expansion-main-proof} remains true for $\rho \ge \frac{1}{4}$ due to \eqref{eq:normalization-thm-2}, and therefore, since $|x-x_0| \ge \frac{2r}{3}$ in $B_{2r} \setminus B_{2r/3}(x_0)$ and $|x-x_0| \ge \frac{|x|}{3}$ in $\R^n \setminus B_{2r}$, we have that
\begin{align}
\label{eq:growth-est-thm-2}
\Vert |\cdot -x_0|^{-B_0 + \eps} v \Vert_{L^{\infty}(\R^n \setminus B_{2r/3}(x_0))} \le C.
\end{align}

Next, let us observe that
\begin{align*}
\begin{cases}
(-\Delta)^s v &= f - \1_{\{\beta + \gamma > 1\} }L_{\Omega} P_b =: \tilde{f} ~~ \text{ in } \Omega,\\
\cN^s_{\Omega} v &= 0 \qquad\qquad\qquad\qquad\qquad ~  \text{ in } \R^n \setminus \Omega.
\end{cases}
\end{align*}
Moreover, by \autoref{lemma:LPb-estimate-2}, 
\begin{align}
\label{eq:f-est-thm-2}
[\tilde{f}]_{C^{B_0 - \eps - 2s}(B_{r/2}(x_0))} \le 1 + \1_{\{\beta + \gamma > 1\}}  \left( C r^{1-s-(B_0 - \eps - 2s)} + C |x_0|^{1-s} r^{-(B_0 - \eps - 2s)} \right) \le C.
\end{align}
Here, we used that $B_0 - \eps - 2s < 1 - s$ due to \autoref{prop:zeros-final} and that $|x_0| = r$ by construction, as well as $|b| \le C$, which follows from \autoref{thm:bdry-expansion-higher},  \eqref{eq:normalization-thm-2}, and the construction of the function $P_b$.

Thus, using the interior regularity estimate with growth from \cite[Proposition 3.9]{AbRo20}, 
it follows from \eqref{eq:growth-est-thm-2} and \eqref{eq:f-est-thm-2}
(in analogy to the proof of \eqref{eq:int-reg-appl}) that
\begin{align*}
[v]_{C^{B_0-\eps}(B_{r/2}(x_0))} &\le C r^{\eps-B_0}  \Vert v \Vert_{L^{\infty}(B_{2r/3}(x_0))} + C \Vert |\cdot -x_0|^{-B_0 + \eps} v \Vert_{L^{\infty}(\R^n \setminus B_{2r/3}(x_0))} \\
&\qquad + C r^{2s-B_0+\eps} [ \tilde{f} ]_{C^{B_0 - \eps - 2s}(B_{2r/3}(x_0))} \\
&\le C.
\end{align*}
This implies \eqref{eq:claim-thm-2} and allows us to complete the proof.
\end{proof}

\subsection{Proof of main regularity results}
\label{subsec:main-proofs}

\begin{proof}[Proof of \autoref{thm0} and \autoref{thm:main-intro}]
To prove the regularity estimate for $u$ in \autoref{thm:main-intro}, we combine \autoref{thm:main-Neumann} and \autoref{thm:main-Neumann-higher} with \eqref{eq:v-Hs-estimate}. To deduce the regularity result in \autoref{thm0}, we use \autoref{prop:zeros-final}. Finally, the property $\partial_{\nu} u = 0$, whenever $B_0 - \eps > 1$, follows immediately from the expansions \autoref{thm:bdry-expansion} and \autoref{thm:bdry-expansion-higher}, using that $b \cdot \Phi^{-1}(0) = 0 = \partial_{\nu} (b \cdot \Phi^{-1})(0)$, by construction (see \autoref{subsec:correction}).
\end{proof}

\end{document}